\theoremstyle{plain}
\theoremstyle{plain}
\newtheorem{theorem}{Theorem}[section]
\newtheorem{proposition}[theorem]{Proposition}
\newtheorem{Proposition}[theorem]{Proposition}
\newtheorem{lemma}[theorem]{Lemma}
 \DeclareOldFontCommand{\rm}{\normalfont\rmfamily}{\mathrm}
\pgfplotsset{width=7cm,compat=newest} 
\font\manual=manfnt
\newcommand\xqed[1]{%
  \leavevmode\unskip\penalty9999 \hbox{}\nobreak\hfill
  \quad\hbox{#1}}
\newcommand\triang{\xqed{\manual\char'170}}
\theoremstyle{definition}
\newtheorem{defin}[theorem]{Definition}
\newtheorem{remark}[theorem]{Remark}
\newtheorem{example}[theorem]{Example}
\theoremstyle{remark}
\def\luo{L^{1}(\Omega)}
\def\dys{\displaystyle}
\numberwithin{equation}{section}
\def\huz{H^{1}_{0}(\Omega)}
\def\dis{\displaystyle}
\def\supp{\text{\text{supp}}}
\DeclareMathOperator{\diver}{div}
\DeclareMathOperator{\R}{\mathbb{R}}
\DeclareMathOperator{\io}{\int_\Omega}
\newcommand{\car}[1]{\raise1pt\hbox{$\chi$}_{#1}}
\def\rn{\mathbb{R}^N}
\def\XXint#1#2#3{{\setbox0=\hbox{$#1{#2#3}{\int}$ }
		\vcenter{\hbox{$#2#3$ }}\kern-.6\wd0}}
\begin{document}
\title[Singular Elliptic PDEs]{Singular Elliptic PDEs: an extensive overview}

\address[Francescantonio Oliva]{francescantonio.oliva@uniroma1.it}
\author[F. Oliva]{Francescantonio Oliva}
%\address{Dipartimento di Scienze di Base e Applicate per l' Ingegneria, ``Sapienza" Universit\`a di Roma, Via Scarpa 16, 00161 Rome,  Italy}
\address[Francesco Petitta]{francesco.petitta@uniroma1.it}

\author[F. Petitta]{Francesco Petitta}
\address{Dipartimento di Scienze di Base e Applicate per l' Ingegneria, ``Sapienza" Universit\`a di Roma, Via Scarpa 16, 00161 Rome, Italy}

\makeatletter
\@namedef{subjclassname@2020}{%
  \textup{2020} Mathematics Subject Classification}
\makeatother

\keywords{$p$-Laplacian, Nonlinear elliptic equations, Singular elliptic equations, Dirichlet boundary conditions} \subjclass[2020]{35J25, 35J60, 35J75,35R99, 35A02}

\begin{abstract}
In this survey we provide an overview of nonlinear elliptic homogeneous  boundary value problems  featuring singular zero-order terms with respect to the unknown variable whose prototype equation is 
$$
-\Delta u = {u^{-\gamma}} \ \text{in}\ \Omega 
$$
where  $\Omega$  is  a bounded subset of $\mathbb{R}^N$ ($N\geq 2$), and $\gamma>0$.  

\smallskip 

We start by outlining the basic  concepts and the mathematical framework needed for setting  the problem. Both old and new   key existence and uniqueness results are presented, alongside regularity issues depending on the regularity of the data.   
The presentation aims to be modern,  self-contained and consistent.  Some examples and open problems are also discussed. 
  
\end{abstract}

\maketitle

\tableofcontents

\section{Introduction}
This survey aims to represent a  rundown of results and techniques in the study of  problems whose prototype is 
\begin{equation}
\begin{cases}
\displaystyle - \Delta u= \frac{f(x)}{u^\gamma} &  \text{in}\ \Omega, \\
u\geq 0&  \text{in}\ \Omega, \\
u=0 & \text{on}\ \partial \Omega,
\label{pbintro}
\end{cases}
\end{equation}
that presents a singularity  when the solutions $u$ reaches the value zero. Here     $\Omega\subset \rn$ is  an open bounded set ($N\geq 2$), $f$ is a nonnegative function, and $\gamma>0$.   
In last decades these problems have gained growing interest both for purely theoretical tricky issues and for his intimate connections with applications.

\medskip
Physical motivations for studying problems like \eqref{pbintro} arise in various fields. For instance, in the study of thermo-conductivity, where ${u^{\gamma}}$ represents the resistivity of the material, in signal transmissions,  in  boundary layer models,  and in the theory of non-Newtonian pseudoplastic fluids (see Section \ref{sec2} below). 

\medskip
From a purely theoretical standpoint, after the pioneering existence and uniqueness results presented in \cite{fulks}, a systematic treatment of problems like \eqref{pbintro} began with \cite{stuart, crt}.

 If $f$ is smooth enough (say Hölder continuous) and bounded away from zero on $\Omega$, then the existence and uniqueness of a classical solution to \eqref{pbintro} is established by desingularizing the problem and applying a suitable sub- and super-solution method.  Significant refinements are then  provided in \cite{lm}; specifically, the authors demonstrate that $u \not \in C^{1}(\overline{\Omega})$ if $\gamma > 1$, and $u$ has finite energy, i.e. $u \in \huz$, if and only if $\gamma < 3$ (see also \cite{ghl,dp} for further insights).
In case of a more general datum $f\in L^{m}(\Omega)$   the previous result has been extended in \cite{OP} proving that a solution  of problem \eqref{pbintro} does exist for any such data $f$ if and only if $\gamma<3-\frac{2}{m}$.

The classical theory for equations like \eqref{pbintro}, also known as singular Lane–Emden–Fowler equations, has been extended to cases where the term ${s^{-\gamma}}$ is replaced by a $C^{1}$ non-increasing nonlinearity $h(s)$ that blows up at zero at a specific rate (see \cite{stuart, crt, ls, zach}).

\medskip

  Now let us turn the attention  to the case in which $f$ is a nonnegative function in some $L^m(\Omega)$ ($m \geq 1$) or possibly even a measure.  If $f \in \luo$, \cite{bo} proves the existence of a distributional solution $u$ to \eqref{pbintro}. Specifically, the authors  show  that a locally strictly positive function $u$ exists such that the equation in \eqref{pbintro} is satisfied in the distributional sense: moreover, $u \in W^{1,1}_0(\Omega)$ if $\gamma < 1$, $u \in \huz$ if $\gamma = 1$, and $u \in H^1_{\rm loc}(\Omega)$ if $\gamma > 1$  where, in the latter case, the boundary data are assumed only in a weaker sense than the usual trace sense, i.e. $u^{\frac{\gamma+1}{2}} \in H^1_{0}(\Omega)$. Note that if $\gamma > 1$, solutions with infinite energy do exist, even for smooth data \cite{lm}. Additionally, in \cite{gmm, GMM, OP},   existence and uniqueness of finite energy solutions are considered for $f \in L^{\frac{2N}{N+2}}(\Omega)$, even in the case of a continuous nonlinearity $h(s)$ that mimics ${s^{-\gamma}}$.

When $f$ is a measure, the situation becomes  remarkably  different. Nonexistence of solutions to problem \eqref{pbintro} is proven, at least in the sense of approximating sequences in \cite{bo} if the measure is too concentrated. Conversely, sharp existence results are obtained in \cite{orpe} if the measure is diffuse; here, concentration and diffusion are understood in terms of capacity. For general, possibly singular, measure data, the existence of a distributional/renormalized solution is considered in \cite{do, ddo}, even for a more general (not necessarily monotone) nonlinearity;  the renormalized solution is also shown to be unique in presence of a non-increasing nonlinearity and of measures which are purely diffuse.

\medskip
The aim of the present work is to provide a comprehensive and self-contained survey on  the most noteworthy previous results we mentioned and to present them in a more unified and modern fashion.  Most of the proofs we perform, also those of  classical results, are revised  in their form and some of them are simplified. The prerequisites for this survey are basic knowledge in differential calculus, Lebesgue integration theory, and Sobolev spaces. 

\medskip
{\bf Here is what is included in this survey.} The  plan of this paper is the following:

\medskip 
In Section \ref{11}    
we introduce  the necessary background information, including key preliminaries and the notation used throughout the paper.

\medskip 
Section \ref{sec2}
is devoted to offer a brief description of some of the background mathematical models from which singular problems arise and to  discuss their practical applications. 

\medskip 
In Section \ref{sec:classical} we   present the classical theory of singular elliptic equations, and in particular, existence and uniqueness of classical solutions (Section \ref{sec:lm}) and their regularity (Section \ref{regLM}).

\medskip 
Section \ref{sec:integrable}   shifts focus to solutions in the sense of distributions of semilinear problems as in  \eqref{pbintro} , especially when dealing with data  belonging to some Lebesgue space. 

 Existence of distributional solutions for a merely integrable data is discussed in Section  \ref{exidis}, while Section \ref{gennon} is devoted to extend our results to cases involving a more general nonlinearity as lower order term.

 We then establish  that there is only one finite energy distributional solution of problems as in \eqref{pbintro} (this is done  in Section \ref{sec:uniquenessweak})

\medskip

Section \ref{sec:reg} addresses the relevant question of the weak regularity of the solution. In particular we are interested in discussing the interplay between    the singular nonlinearity and the datum in order to expect finite energy solutions to problems as in \eqref{pbintro}.

\medskip 
The previous results and considerations are pushed on in Section \ref{sec:measure} where one  deals with more general principal operators (e.g. $p$-Laplacian, Leray-Lions type monotone operators) and measure data.  Here we introduce the notion of renormalized solutions for such kind of problems providing a brief overview of the truncations methods in a very classical context.   Then we prove existence of such solutions  under suitable diffusion properties of the measure datum. 

In Section \ref{sec:nonex} instead we present a nonexistence result for measure  data  too concentrated (with respect to the capacity) provided the lower order term vanishes at infinity (e.g. $h(s)=s^{-\gamma}$).  We also prove (Section \ref{um1}) that such renormalized solutions are unique provided    the singular nonlinearity is monotone non-increasing and not to explosive at $s=0$ (say $h(s)=s^{-\gamma}, \gamma\leq 1$).

\medskip 
Weak regularity property are also considered (Section \ref{regup}) in the  general framework of monotone leading operators depending on both the behaviour of the singular nonlinear term and on the data. 

\medskip 
Finally, Section \ref{unilinear} faces the question of uniqueness for general monotone non-decreasing singular nonlinear terms and measure data. For the sake of exposition here  we give the results restricted to the semilinear case.   
 
 \medskip 
In the appendices we provide useful information and details on  the theory of nonlinear capacity and, for the sake of completeness,  and due to its relevance in our arguments,    we present   the 
 Hopf Lemma in its weakest form.

\medskip
{\bf What we will not discuss. } 
In this survey,  despite the extensive and rapidly evolving literature on the subject, for the sake of exposition we will not face some relevant problems related to singular equations.  Among the others, we will not discuss here  singular equations with first  order terms with natural growth  in the gradient (see for instance \cite{B,a6,ABLP, dura, fms2,Ol}), equations with regularizing absorption terms (\cite{ol2,cmmt}), equations with singular type reaction (\cite{DMO,OPS}), singular concave-convex problems (\cite{CoPa, gisa, gss}), and, in broad terms, the possible variational approaches to singular problems (\cite{cade}). Moreover, we will not discuss singular nonlocal equations (\cite{CoCo,ags, bdmp}), singular equations for double phase, anisotropic operators and   variable exponent problems (\cite{cma,gapa, parazh, bpz, csv}), singular parabolic problems (\cite{OP2}), singular Monge-Ampère problems (\cite{mo,LeSa}),  existence and classification of solutions in unbounded domains (\cite{mms,mms2}),  and problems arising from Lagrangians with growth 1 (\cite{DGOP,lops}). This latter topic will be addressed in a subsequent work. As homogeneous Dirichlet boundary conditions represent the critical case of singular problems, different type of boundary  data (such as Neumann, Robin, mixed and periodic) will not be discussed here either. 
 Without aiming to be exhaustive, we also refer the reader to the following papers and the references cited therein, which explores various relevant extensions, refinements, and related topics  \cite{t, gow,  hema, ghra,CoCo2, dhr, c,  bougia,  gps2,gps1, bct,sz, S, Ty,CST, edr,CaMoScSq, EsSc,  DCA, SoTe, GoGu,  SaGhCh, GuMaMo, GuMa, Ga, dugi, FeMeSe, Ha, DeOlSe, prr}.

\medskip \medskip \medskip 

\subsection{Basics and notations}\label{11}

For the entire paper $\Omega$ will be  an open bounded subset of $\mathbb{R}^N$, $N\geq 2$; when some regularity on its boundary is required it will be mentioned.   If $\Omega$ is smooth, with a little abuse of notation, in order to avoid technicalities and
without loosing generality, we will  refer to $d(x)$ as a suitable positive smooth (say $C^1$) modification of the distance function which agrees with it in a neighbourhood of $\partial\Omega$. We also  need to define the following $\varepsilon$-neighborhood of $\partial\Omega$:
\begin{equation}\label{not:omegaeps}\Omega_{\varepsilon}=\{x\in\Omega : d(x)<\varepsilon\},\end{equation}
that we will always assume to be smooth (up to the choice of a suitable small $\varepsilon$). Observe that, with this agreement, one has that $|\Omega_\varepsilon|\sim \varepsilon$.

\begin{figure}[htbp]\centering
\includegraphics[width=3in]{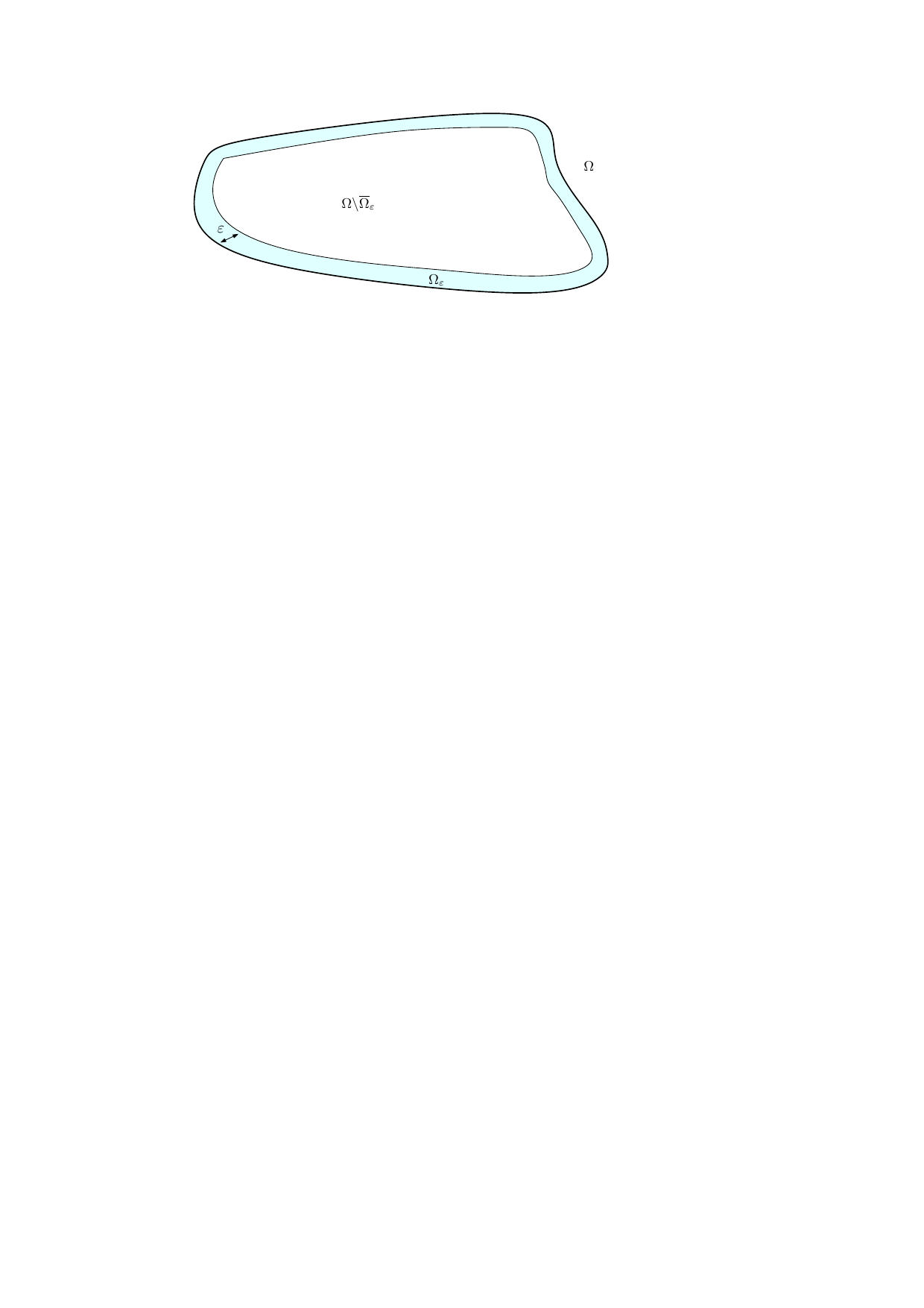}
\caption{The $\varepsilon$-neighborhood of $\partial\Omega$}\label{oe}
\end{figure}

Moreover  $B_R(x_0)$  will always represent   the ball of  radius $R>0$ in $\rn$ centered at $x_0$. 

 We denote by $C^1_0({\Omega})$ (resp. $C^\infty_0({\Omega})$) the set of functions in $C^{1}(\Omega)$ (resp. $C^{\infty}(\Omega)$) with compact support in $\Omega$, while  $C^1_0(\overline{\Omega})$ will represent the set of functions $\varphi\in C^1(\Omega)$ such that $\varphi = 0$ on $\partial \Omega$.

\medskip 
We recall that, for a given $p > 1$, the Hölder conjugate $ p' $ is defined as follows:
\[
\frac{1}{p} + \frac{1}{p'} = 1 \quad \text{or equivalently} \quad p' = \frac{p}{p-1}.
\]   
 For any  positive $\varepsilon$ we have:
$$
ab\leq \varepsilon^{p}\frac{a^p }{p} +\frac{1}{\varepsilon^{p'}}\frac{b^{p'}}{p'}, \ \ \ \ \forall a,b>0,
$$
which is  the  \underline{Generalized Young inequality.}

We will make use of the following simplified  notation for integrals: 
$$\int_{\Omega}f(x)\ dx := \int_{\Omega}f,$$
when no ambiguity is possible. 

\medskip 
We systematically use most basic tools in Lebesgue theory. Among them: 
\begin{enumerate}

\item \underline{H\"{o}lder's inequality:} for $1<p<\infty$, $p'=\frac{p}{p-1}$, we have, for every $f\in L^{p}(\Omega)$ and every  $g\in L^{p'}(\Omega)$:
$$
\dys\int_{\Omega} |fg|\leq \left(\int_{\Omega} |f|^p \right)^{\frac{1}{p}}  \left(\int_{\Omega} |g|^{p'} \right)^{\frac{1}{p'}}\, .
$$
\item  Let $1<p<\infty$,  $p'=\frac{p}{p-1}$, $f_n\in  L^p (\Omega)$, $g_n \in L^{p'} (\Omega)$ be such that $f_n$ strongly converges to $f$ in $ L^p (\Omega)$ and $g_n $ weakly converges to $g$ in $L^{p'}(\Omega)$. Then
$$
\dys\lim_{n\to \infty}\int_{\Omega} f_n \, g_n =\int_{\Omega} f g\,.
$$
The same conclusion holds true if $p=1$, $p'=\infty$ and the weak convergence of $g_n$ is replaced by the $\ast$-weak convergence in $ L^\infty (\Omega)$.  

\item  Let $f_n $ converge to $f$ in measure and suppose that:
$$
\exists C>0, \ \ q>1:\ \ \|f_n \|_{L^{q}(\Omega)}\leq C, \ \ \forall  n. 
$$
Then
$$f_n \longrightarrow f\ \ \ \text{strongly in } L^{s}(\Omega), \ \text{for every } 1\leq s<q.
$$
\item \underline{Fatou's Lemma:}   let $f_n  \geq 0$ be a sequence of measurable functions such that $f_n\to f $ a.e. in $\Omega$, then
$$
\int_{\Omega} f\leq\liminf_{n\to \infty}\int_{\Omega} f_n.
$$
\item   \underline{Generalized dominated convergence Lebesgue Theorem:}  Let $1\leq p<\infty$, and let $ f_n\in L^{p}(\Omega)$ be a sequence such that $f_n\to f $ a.e. in $\Omega$ and $|f_n |\leq g_n $ with $g_n$ strongly convergent in  $L^{p}(\Omega)$, then $f\in L^{p}(\Omega)$ and $f_n $ strongly converges to $f$ in $L^{p}(\Omega)$.

\end{enumerate}

\medskip 
We use standard notation for the Sobolev spaces  $W^{n,p}(\Omega)$ of functions in $L^{p}(\Omega)$, $p\geq 1$ with distributional derivatives in $L^{p}(\Omega)$  up to order $n$ (particular instances being $W^{0,p}(\Omega)=L^{p}(\Omega)$ and $W^{1,2}(\Omega)=H^{1}_0 (\Omega)$).  

\medskip

The Sobolev inequality for $ 1 \leq p < N $ states that there exists a constant $ \mathcal{S}_p $ such that for all functions $ u \in W^{1,p}(\mathbb{R}^N) $,
\[
\left( \int_{\mathbb{R}^N} |u|^{p^*} \right)^{\frac{1}{p^*}} \leq \mathcal{S}_p \left( \int_{\mathbb{R}^N} |\nabla u|^p \right)^{\frac{1}{p}},
\]
where $ p^* = \frac{Np}{N-p} $ is the Sobolev conjugate of $ p $.

\medskip 
For a fixed $k>0$, we define the truncation functions $T_{k}:\R\to\R$ and $G_{k}:\R\to\R$ as follows 
\begin{align*}
T_k(s):=&\max (-k,\min (s,k)),\\
G_k(s):=&(|s|-k)^+ \operatorname{sign}(s).
\end{align*}

\begin{figure}[htbp]\centering
\includegraphics[width=2in]{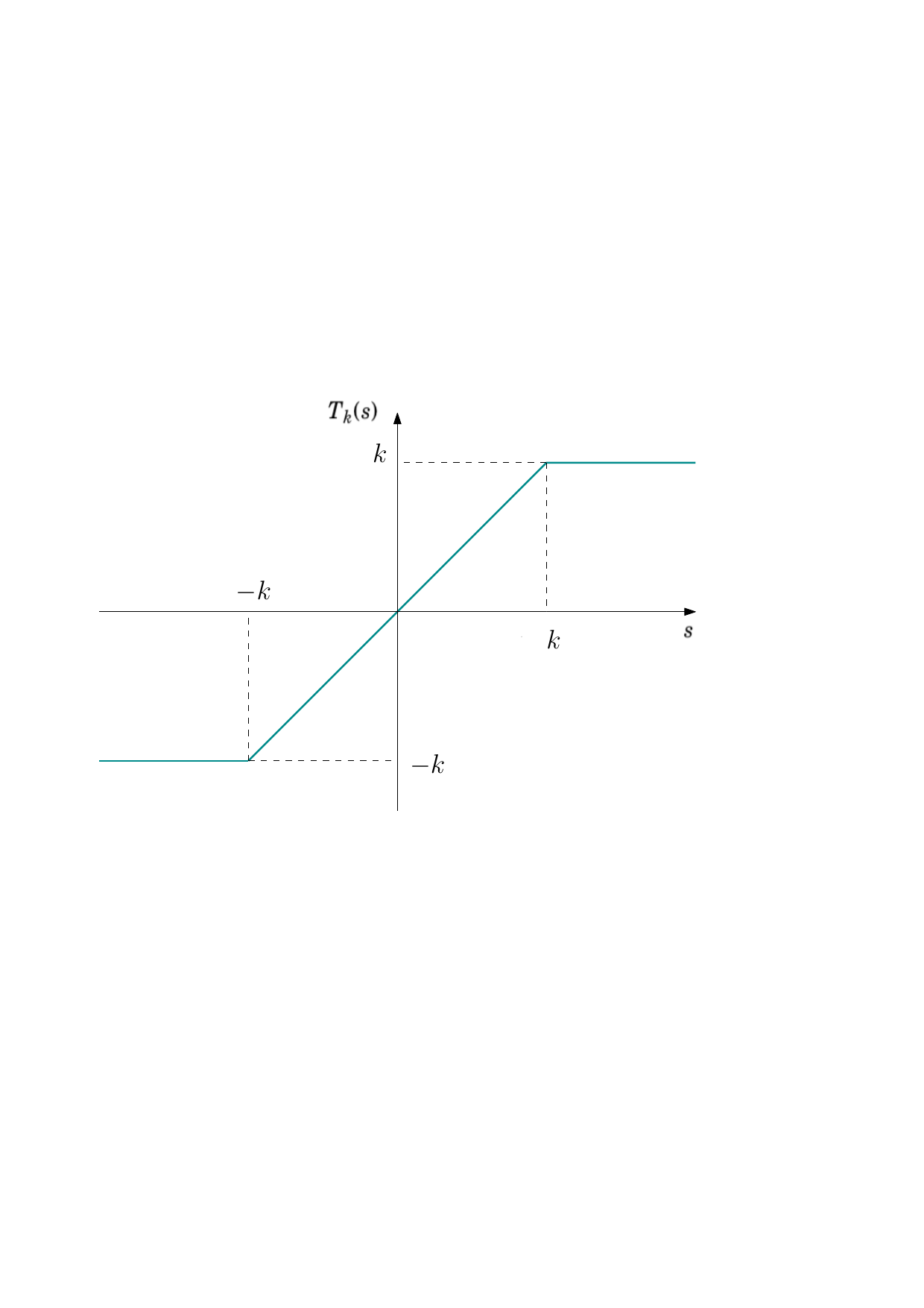}\ \ \ \includegraphics[width=2in]{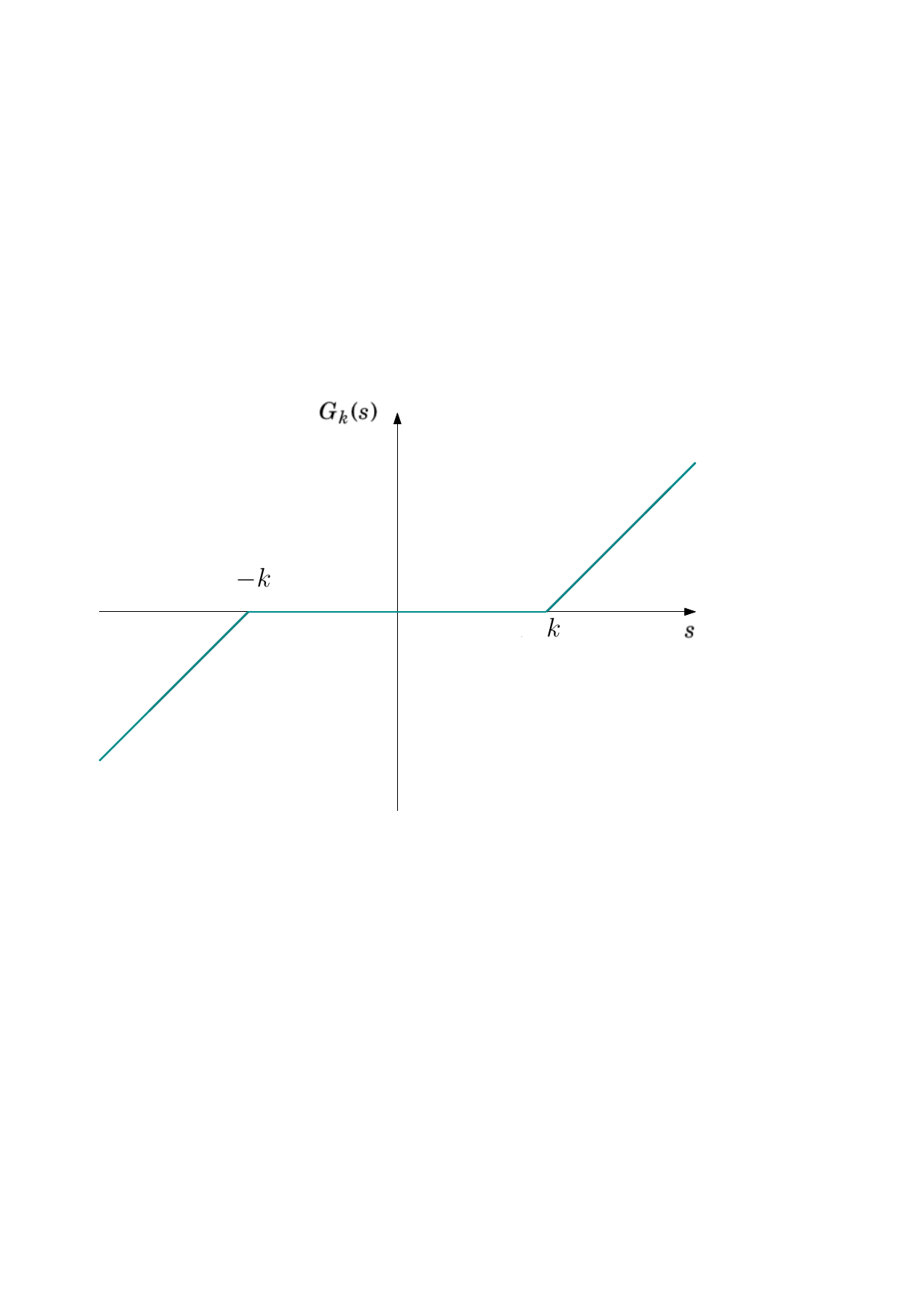}
\caption{The truncation functions $T_k(s)$ and $G_k(s)$}\label{tkgk}
\end{figure}

We will also use the following auxiliary  functions
\begin{align}\label{not:Vdelta}
\displaystyle
V_{\delta}(s):=
\begin{cases}
1 \ \ &s\le \delta, \\
\displaystyle\frac{2\delta-s}{\delta} \ \ &\delta <s< 2\delta, \\
0 \ \ &s\ge 2\delta,
\end{cases}
\end{align}
and 
\begin{equation}\label{not:pi}\pi_\delta(s)=1-V_\delta(s),\end{equation}
\begin{figure}[htbp]\centering
\includegraphics[width=2in]{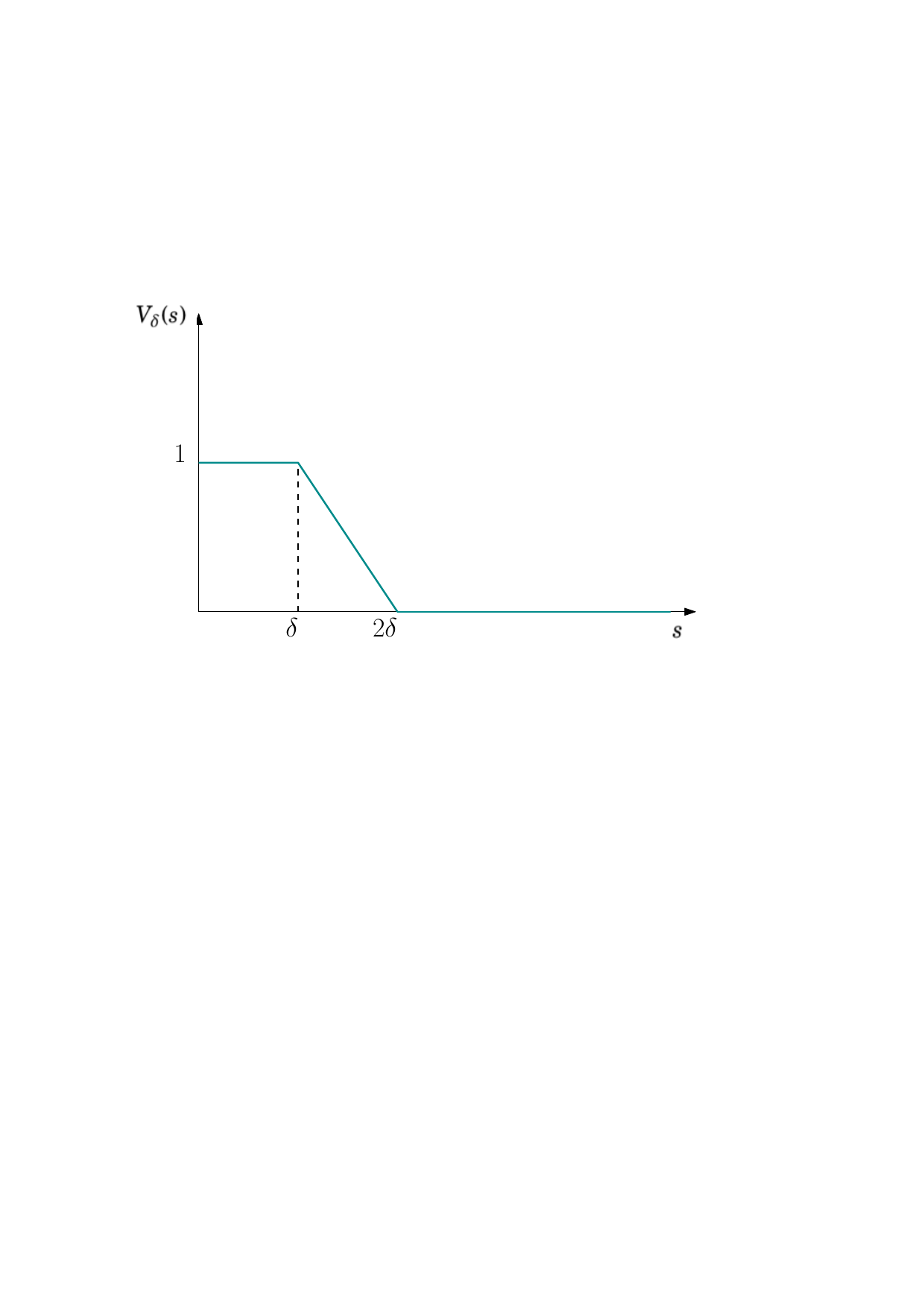} \ \ \includegraphics[width=2in]{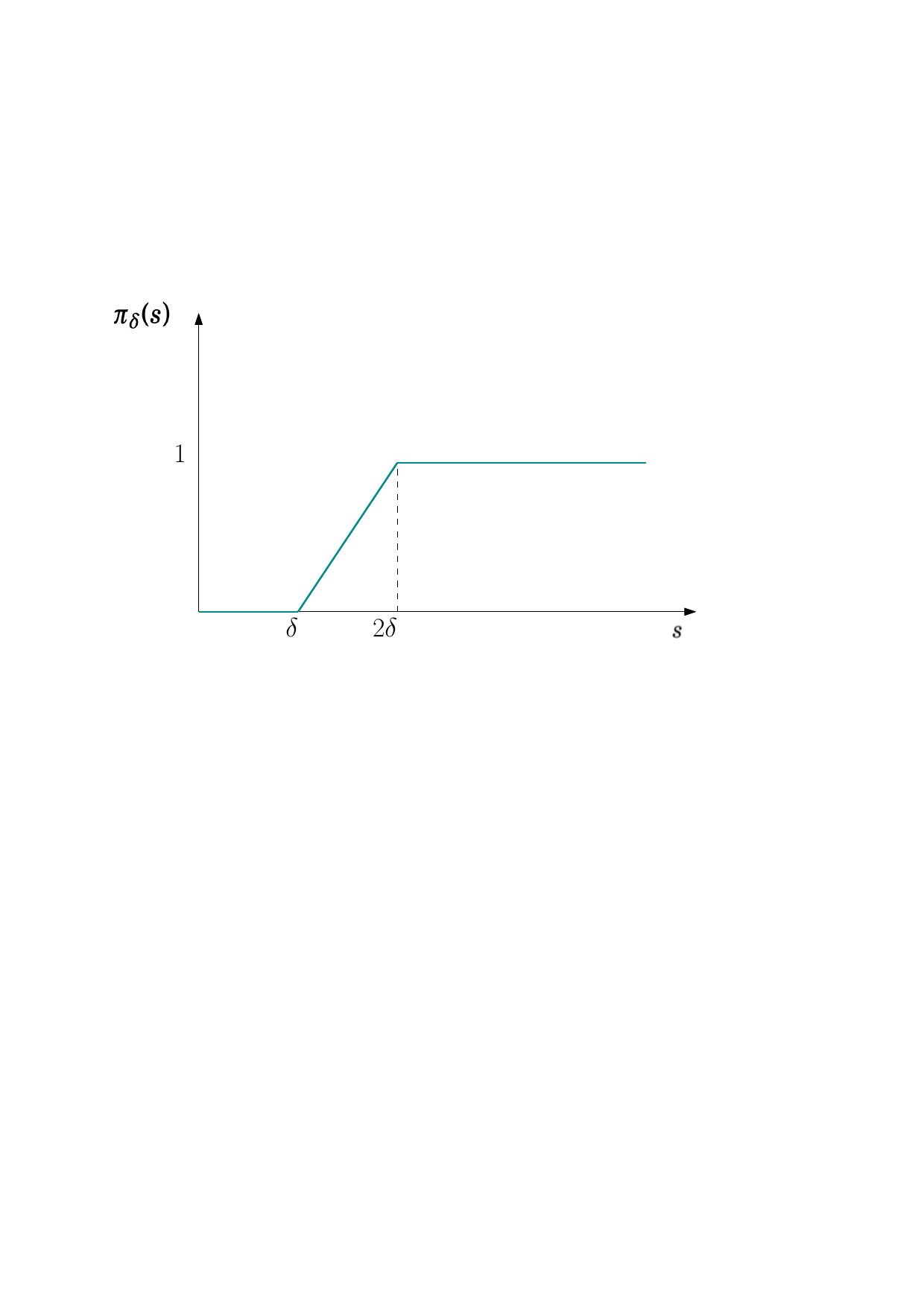}
\caption{The auxiliary functions $V_\delta (s)$ and $\pi_\delta (s)$}\label{vdpd}
\end{figure}

\medskip 

In the following we denote by $\varphi_{1,A}\in W^{2,q}(\Omega)\cap H^1_0(\Omega)$ for any $1<q<\infty$ the positive solution to the  problem  
\begin{equation}
\begin{cases}
\displaystyle -\operatorname{div}(A(x) \nabla \varphi_{1,A})= \lambda_1 \varphi_{1,A} &  \text{in}\ \Omega, \\
\varphi_{1,A}=0 & \text{on}\ \partial \Omega,
\end{cases}\label{not:phi1}
\end{equation}
where $A$ is  a symmetric, elliptic and bounded matrix with coefficients $a_{ij}\in C^{0,1}(\overline{\Omega})$. Finally we denote by $\varphi_{1,p}\in C^1(\overline{\Omega})$ the first positive eigenfunction associated to the $p$-Laplacian, i.e. $\varphi_{1,p}$ solves
\begin{equation}
	\begin{cases}
		\displaystyle -\operatorname{div}(|\nabla \varphi_{1,p}|^{p-2}\nabla \varphi_{1,p})= \lambda_1 \varphi_{1,p}^{p-1} &  \text{in}\ \Omega, \\
		\varphi_{1,p}=0 & \text{on}\ \partial \Omega.
	\end{cases}\label{not:phi1p}
\end{equation}
In the sequel $\varphi_1 := \varphi_{1,2} \in C^{\infty}_0(\overline{\Omega})$ is the first eigenfunction of the Laplacian.

\medskip

We refer to the space of Radon measures as $\mathcal{M}(\Omega)$ and by  $\mathcal{M}_{\rm loc}(\Omega)$ the space of Radon measure which are locally finite in $\Omega$. It is worth mentioning that we will make the following abuse of notation in presence of an integral with respect to a general measure
$$\int_{\Omega}\varphi \mu := \int_{\Omega}\varphi d\mu,$$
when no ambiguity is possible.

\medskip

We refer to the Lebesgue space with respect to a measure $\mu$ as $L^q(\Omega,\mu)$.
Moreover we denote by $\mathcal{M}(\Omega,d)$ as the subspace of the $\mu\in\mathcal{M}_{\rm{loc}}(\Omega)$ satisfying
$$\int_{\Omega} d(x) \mu <\infty.$$

By $\operatorname{cap}_p$ we denote the standard $p$-capacity which is an exterior measure.  

 Here   we only fixed the notations. For the sake of completeness, in  Appendix \ref{app:radon} below we present a more detailed    presentation of these tools.

\medskip 

For any $0<r<\infty$, by $M^r(\Omega)$ we denote the Marcinkiewicz   (or weak Lebesgue, or Lorentz)  space,   which is the space of functions $f$
such that $|\{|f|>t\}|\le C t^{-r}$, for any $t>0$. If $|\Omega|<\infty$, then $L^{r}(\Omega)\subset M^r(\Omega) \subset L^{r-\varepsilon}(\Omega)$, for any $0<\varepsilon \leq  r-1$. We refer to \cite{hunt} for more details  on Lorentz   spaces.

\medskip 

Finally we explicitly remark that, if no otherwise specified, we will denote by $C,c$ several positive constants whose value may change from line to line and, sometimes, on the same line. These values will only depend on the data but they will never depend on the indexes of the sequences we will introduce.

\section{Derivation of some models and applications}\label{sec2}
Up to our knowledge, physical motivations for studying \eqref{pbintro} comes from $'60$s when singular problems explicitly appear for the first time in literature. Precisely, in \cite{fulks}, the authors prove an existence result for a singular equation of the following type
\begin{equation}\label{modello}
cu_t -k\Delta u= \frac{E^2(x,t)}{q(u)} \ \ \text{in} \ \ \Omega \times (0,T),
\end{equation}
assuming some initial and boundary data, where  $T$ is a positive constant and $\Omega$ is a three dimensional region occupied by an electrical conductor. Equation \eqref{modello} is derived from the Fourier's law;   as the current is passing through the body, we have a source of heat in each point of $\Omega$. Here $u(x,t)$ is the temperature at the time $t$ in the position $x\in \Omega$ and $c, k$ are respectively the specific heat capacity of the conductor (i.e. a measure of the ability to preserve the heat) and thermal conductivity of $\Omega$ (i.e. a measure of the capability of the conductor to transfer heat). Moreover, $q(u)$ is the resistivity while the term $E(x, t)$ describes the local voltage drop in $\Omega$ given as a function of position and time. In this setting   $\frac{E^2(x,t)}{q(u)}$ can be seen as the rate of generation of heat.
A realistic example of $q(u)$ is the function $C u$, for some $C>0$,  so that equation \eqref{pbintro} (with $\gamma=1$) can be seen as a stationary case of \eqref{modello} when equipped with an homogeneous boundary condition (i.e. a reference zero temperature outside the body). 

\medskip 
Equations as in \eqref{pbintro}
that stand, in general, for a  generalized version of the so-called Lane-Emden-Fowler equation  arise also in the  theory of  gaseous dynamics in
astrophysics (\cite{fo}), in signal transmissions (\cite{nowo}), relativistic mechanics, nuclear physics and in the study of chemical reactions, glacial advance, in transport of coal slurries down conveyor belts and in several other geophysical and industrial applications (see  \cite{wo,gr}, and references therein).

\medskip

One of the most relevant example of application of singular equations as \eqref{pbintro} appear, among many other models of boundary layers, in the theory of non-Newtonian fluids and in particular the  pseudo-plastic fluids (\cite{Sch, asp, nc, vsm}). 

Prandtl introduced boundary layer theory in 1904 to understand the flow of a slightly viscous fluid near the boundary of a solid body. This theory  was  used by Blasius in \cite{Bla} in order to investigate the case of a flow of a viscous incompressible fluid past a plate region (or a smooth one). Later on  in \cite{nc,nt}  the study flow of a slightly viscous fluid over a moving conveyor belt was proposed. Many other application of the boundary layer theory appear through the years as in the study of laminar and turbulent boundary layer model for aerodynamics. 

\begin{figure}[htbp]\centering
\includegraphics[width=3in]{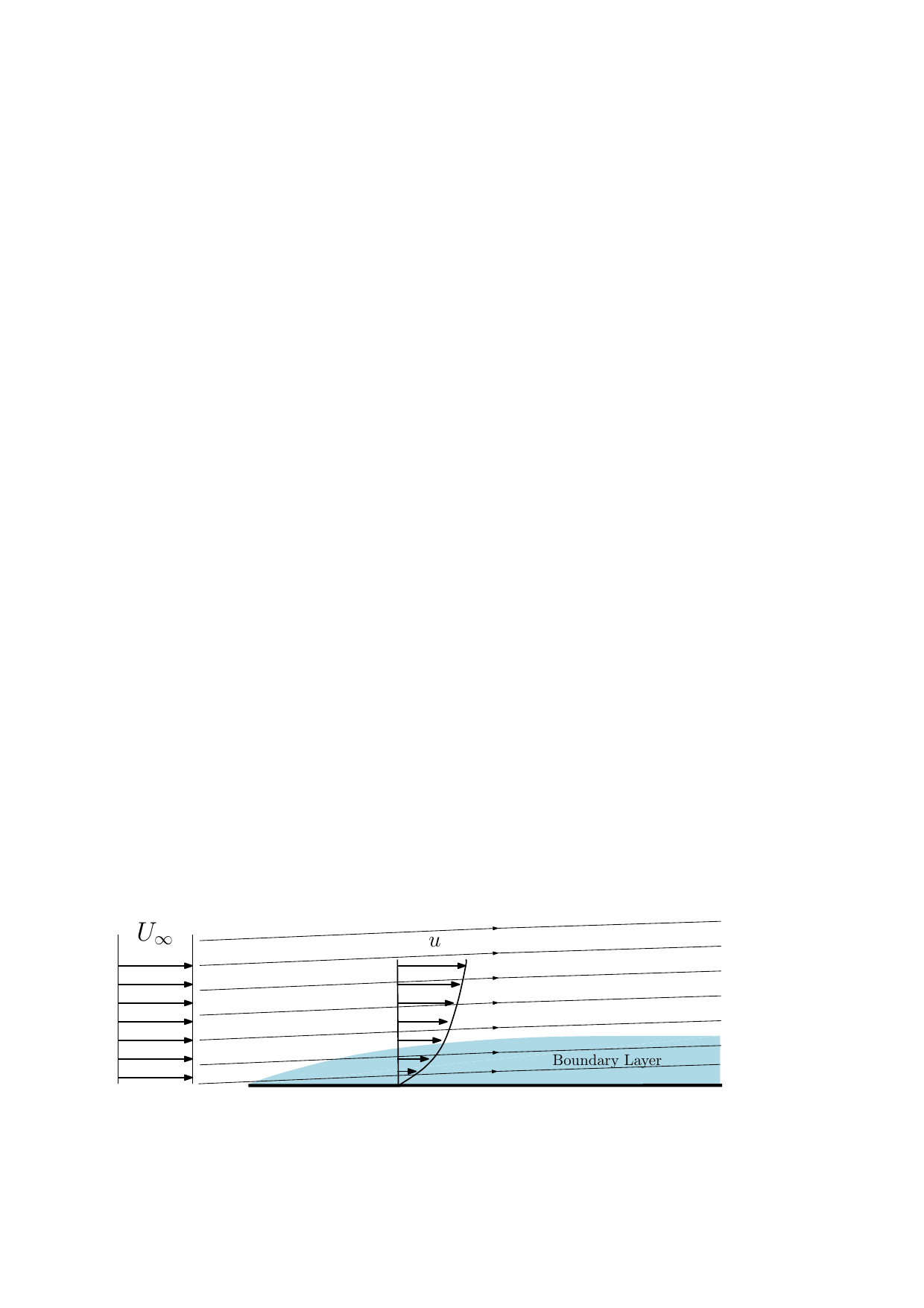}
\caption{Blasius model of a fluid on a plate in the boundary layer regime}\label{conef}
\end{figure}

 \medskip 
For the sake of exposition let us briefly summarize the derivation of the Blasius model.

Let $x$ represent the coordinate of the fluid  motion  past a plate and $y$ the transverse coordinate. Under the boundary layer approximation, if  the pressure of the fluid in the direction $x$ is assumed to be constant,  then from the stationary Navier-Stokes equation for an incompressible fluid one formally derives 
\begin{equation} \label{ins0}
u u_x+v u_y =  \frac{\tau_y}{\rho}, 
\end{equation}
coupled with the continuity equation 
\begin{equation} \label{ce}
 u_x+ v_y = 0,
\end{equation}
which is the so-called boundary layer equations for steady flow over a semi-infinite flat plate. 
Here $v$ is the velocity of the fluid in the orthogonal component to the plate  and $u$ is the parallel one,  $\rho$ is the density of the fluid while $\tau$ is the shear stress. 
Natural boundary conditions are also assumed 
\begin{equation*} \label{bci}
u(x,0)=v(x,0)=0, \ \ \ \text{and} \ \ \ u(x,\infty)=U_\infty\,, 
\end{equation*}
where $U_\infty$ is the free stream  velocity, i.e.  the velocity of the fluid  at the moment the fluid touches the plate, which remains uniform far from the boundary layer.

In case of Newtonian fluids, the shear stress $\tau$, coplanar with the  plate surface, is standardly assumed  to be proportional to  the strain rate $u_y$, i.e. $\tau=c u_y$, for some constant $c>0$.

A general law to link  the shear stress at the boundary layer and the strain rate is much more complicated in case of so-called  non-Newtonian fluids; experimental evidences (see for instance \cite{Met}) show that, in many cases, a  \underline{power law} is suitable to do the job,  for  instance 
 \begin{equation} \label{pl}
\tau = k |u_y|^{n-1}u_y\,, \ \ n>0,  
\end{equation}
where $k>0$ is the flow consistency index, i.e. $k |u_y|^{n-1}$ represents the viscosity (here, for historical reasons we use the outdated symbol $n$ to indicate the real positive power appearing in \eqref{pl}).  These type of nonlinear power laws, also called  Ostwald-de Waele relation, are the ones that, in the purely \underline{free stream}  case,  lead to the study of classical $p$-Laplacian problems (with $p=n+1$).  If $p=2$ (i.e. $n=1$) this coincide with the Newtonian  fluids  case (e.g.  water and air). 
On the other hand, Non-Newtonian fluids generally  fall into two main categories depending on  $n$: \underline{Pseudo-plastic}  ($n< 1$) and \underline{Dilatant} ($n> 1$). In terms of  $p$-Laplace models those correspond respectively to the singular case  ($p<2$) in which the viscosity is inversely proportional to the shear rate ("shear thinning fluids") and to the degenerate  ($p>2$) in which the viscosity increases as the shear rate increases ("shear thickening fluids"). 

\medskip 
Examples of dilatant  fluids are are so-called oobleck (a mixture of cornstarch and water)  and  wet sand which hardens upon application of high enough forces. 
Examples of pseudo-plastic fluids, which are the most relevant from the point of view of the applications,  are  latex paint,  blood, molten plastic, emulsions,    quicksand (try to go over it) and also sticky fluids as the  Ketchup (squeezing  decreases its viscosity). For these reasons, pseudo-plastic fluids enter in a variety of applications as in the study of glacial advance, transport of coal slurries,  or biofluid dynamics. Of a certain interest in applications are the Bingham plastic fluids in which a sort of "waiting time" appears in order for the shear stress to start the flow. In this case the related  power low reads  as $(\tau -\tau_0)^+=   k |u_y|^{n-1} u_y$, an example of this fluid being toothpaste. 

\begin{figure}[htbp]\centering
\includegraphics[width=2.5in]{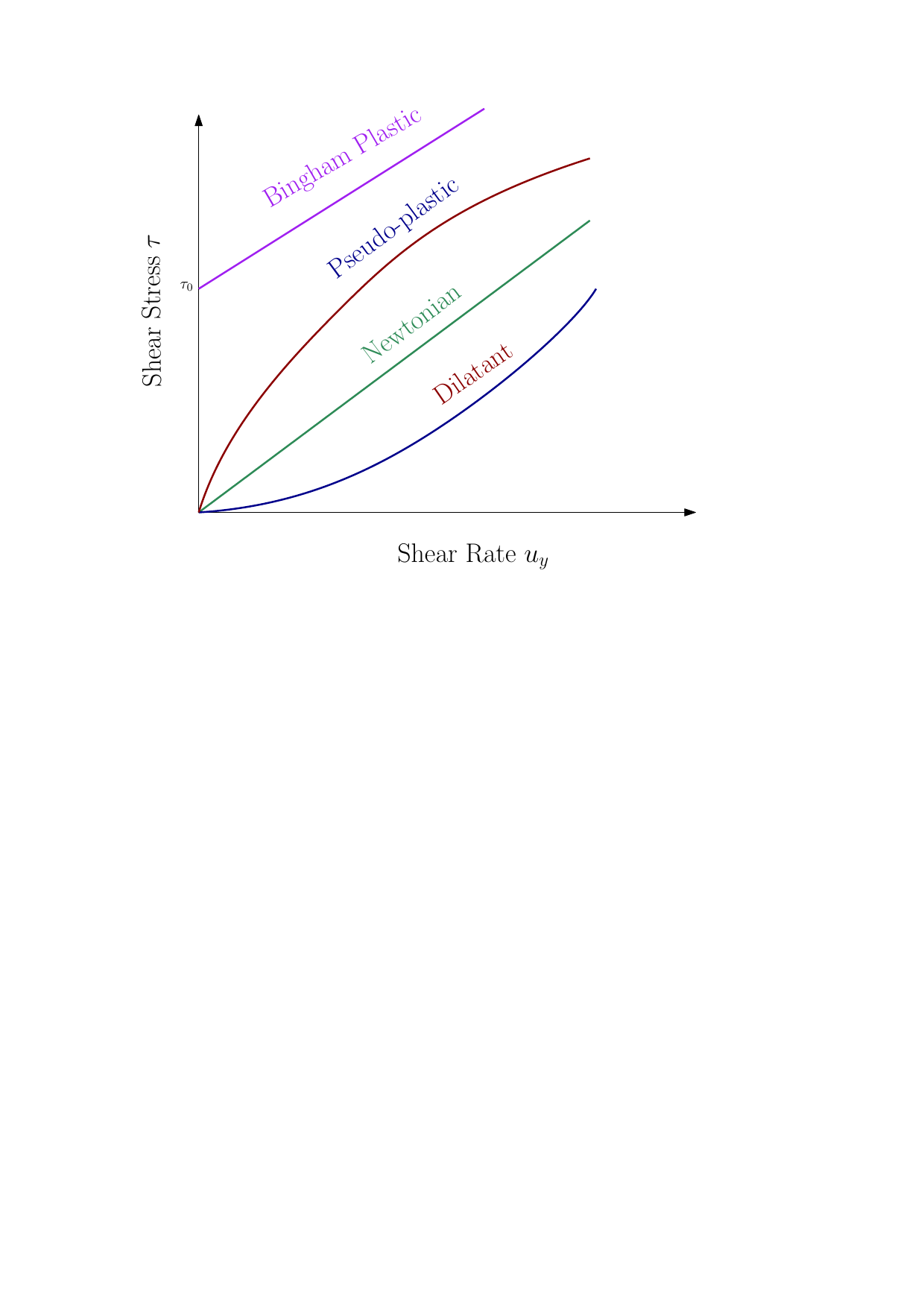}
\caption{Non-Newtonian fluid classification}\label{nonn}
\end{figure}

\medskip

Coming back to \eqref{pl} let us show how the laminar flows regime \eqref{ins0} and \eqref{ce} lead to an elliptic problem that may be singular with  respect to its unknown.  
Of course,   in the boundary layer approximation, one may think at a positive shear  rate $u_y$; that is 
 \begin{equation*} \label{pl1}
\tau =k u_y^{n}\,, \ \ n>0.  
\end{equation*}

Hence, \eqref{ins0} reads as 
\begin{equation} \label{ins0p}
u u_x+v u_y = \frac{k}{\rho}\frac{d}{dy}\left({ u_y^{n}}\right). 
\end{equation}

In order to encode the conservation of mass, i.e.  \eqref{ce},  we introduce the so-called stream function $\psi$ defined by 
$$
u=\psi_y\ \ \ \text{and}\ \ \ v=-\psi_x. 
$$

The main idea of Blasius then consists in performing a suitable change of variable that aims to normalize  both  the thickness of the boundary layer  and the velocity in a fixed box and to derive, by \eqref{ins0p}, a third  order ODE for the so-called non-dimensional stream function. 

Let us introduce the similarity parameter $\eta$ as 
\begin{equation*} \label{eta}
\eta:=y\left({\frac{U_\infty}{kx}}\right)^{\frac{1}{n+1}}, 
\end{equation*}
and the normalized stream function $f$ by 
\begin{equation*} \label{f}
f(\eta):=\frac{\psi(x,y)}{({U_\infty kx })^{\frac{1}{n+1}}}. 
\end{equation*}

We now formally rewrite all terms appearing in \eqref{ins0p} in terms of $\eta$ and $f$. 

We have 
\begin{equation} \label{iu}
u=\frac{d\psi}{dy}= \frac{d\psi}{d\eta}\frac{d\eta}{d y }= f'(\eta) ({U_\infty kx }  )^{\frac{1}{n+1}}\left({\frac{U_\infty}{kx}}\right)^{\frac{1}{n+1}}=U_{\infty}^{\frac{2}{n+1}}f'(\eta),
\end{equation}
and 
\begin{align} \label{vi}
& \nonumber v=-\frac{d\psi}{dx}=-\left(\frac{d f}{dx} ({U_{\infty} k x})^{\frac{1}{n+1}} + \frac{1}{n+1} \left({\frac{U_{\infty} k}{x^n}}\right)^{\frac{1}{n+1}} f(\eta) \right)\\ &=-\left(f'(\eta)\left(-\frac{1}{(n+1)x}\eta\right)({U_{\infty} k x})^{\frac{1}{n+1}}+ \frac{1}{n+1} \left({\frac{U_{\infty} k}{x^n}}\right)^{\frac{1}{n+1}}f(\eta) \right) = \frac{1}{n+1}  \left({\frac{U_{\infty} k}{x^n}}\right)^{\frac{1}{n+1}}(\eta f'(\eta)-f(\eta)). 
\end{align}

Moreover, 
\begin{equation} \label{ux}
u_x= U_{\infty}^{\frac{2}{n+1}} f''(\eta) \frac{d\eta}{dx}= -\frac{1}{(n+1)x}U_{\infty}^{\frac{2}{n+1}} \eta f''(\eta), 
\end{equation}
and  
\begin{equation} \label{uy}
u_y= U_{\infty}^{\frac{2}{n+1}} f''(\eta)\frac{d\eta}{dy}= U_{\infty}^{\frac{2}{n+1}} f''(\eta)\left({\frac{U_\infty}{kx}}\right)^{\frac{1}{n+1}}.
\end{equation}

\medskip

Now we gather \eqref{iu}--\eqref{uy} into \eqref{ins0p} in order to get
$$
-\frac{U^{\frac{4}{n+1}}_{\infty}f'(\eta)f''(\eta)\eta}{(n+1)x} +  \frac{U^{\frac{4}{n+1}}_{\infty}}{(n+1)x}f''(\eta) (\eta f'(\eta) -  f(\eta))=  \frac{k}{\rho} \frac{U^{\frac{3n}{n+1}}_{\infty}}{ (kx)^{\frac{n}{n+1}}}n   f''(\eta)^{n-1}f'''(\eta) \left(\frac{U_{\infty} }{kx}\right)^{\frac{1}{n+1}}, 
$$
that, by simplifying,  gives the so-called Blasius equation for non-Newtonian fluids
\begin{equation} \label{eblasius}
f''(\eta)^{n-2} f'''(\eta) +\overline{c} f(\eta)=0,
\end{equation}
where
\begin{equation*} \label{cblasius}
\overline{c}:= \frac{\rho U_\infty^{\frac{3(1-n)}{n+1}}}{n(n+1)}.
\end{equation*}
Equation \eqref{eblasius} is well posed equipped with the boundary conditions
$$
f(0)=0,\ \ \ f'(0)=0,\ \ \ f'(\infty)=U_\infty, 
$$
that physically  represent, resp.,  no flow at the surface,  no slip condition, and  that one has  free stream velocity far from the plate.

\medskip
Assuming that a solution $f$ of \eqref{eblasius} is convex then  the idea, that dates back to L. Crocco (\cite{crocco}),  is to consider the non-dimensional stream velocity as a new variable, i.e. to consider 
\begin{equation*} \label{croc}
s=f'(\eta)
\end{equation*}
and to look for an equation solved by the shear stress $g(s)$ defined by 
\begin{equation*} \label{croc2}
g(s)= f''(\eta)^{n}
\end{equation*}
One then differentiates $g(f')= (f'')^{n}$ by obtaining
$$
g'(f') f'' = n(f'')^{n-1}f''', 
$$
that, by \eqref{eblasius}, gives 
$$
g'(f')  = - n\overline{c}f(\eta). 
$$
We differentiate again and we get 
$$
g''(f')f''  = - n\overline{c} f'(\eta), 
$$
that, in the new coordinates, gives the Crocco type equation 
$$
g''(s)g(s)^{\frac{1}{n}}=-n\overline{c}s
$$
or, in other words, the singular elliptic problem 
$$
-g''(s)=\frac{n\overline{c}s}{g(s)^{\frac{1}{n}}}
$$
altogether with the boundary conditions that translates into 
$$
g(U_\infty)=0,\ \ \ \text{and}\ \ \ g'(0)=0. 
$$

\section{Classical Solutions}
\label{sec:classical}

 In this section our aim is to discuss existence, uniqueness and regularity of the solution to \eqref{pbintro} in presence of a regular source term $f$.  
Here $\Omega$ has regular boundary, $\gamma > 0$ and $f>0$ belongs to $C^{\eta}(\overline{\Omega})$ for some $0<\eta<1$. 
In particular, for this section, we suppose that for any $x\in \Omega$ 
\begin{equation}\label{condf}
\min_{x\in\overline\Omega}f(x)=:m \le f(x) \le M:=\max_{x\in\overline\Omega}f(x).
\end{equation}
We deal with \underline{classical solutions}, i.e. positive functions $u\in C^2(\Omega)\cap C(\overline{\Omega})$ satisfying 
\begin{equation}
\begin{cases}
\displaystyle - \Delta u= \frac{f(x)}{u^\gamma} &  \text{in} \ \, \Omega, \\
u=0 & \text{on}\ \partial \Omega.
\label{pb}
\end{cases}
\end{equation}
Let us underline that, since the right-hand of the equation in \eqref{pb} is not   continuous at the boundary, in general one cannot expect the solution to be of class $C^2(\overline{\Omega})$.
Among the first  pioneering  works  dealing with \eqref{pb}  one has to mention \cite{crt}. Here the authors mainly treat the case where the dependence on $x$ is $C^1$ and the nonlinear term can be slightly more general; moreover, when the source is less regular they first provide a notion of so-called  generalized solutions of  \eqref{pb}. 
Later, in \cite{lm}, the authors give a renewal attention to this kind of problems, providing also a simplification of the proof of the existence and uniqueness presented in \cite{crt}.  The most interesting result of \cite{lm} concerns both weak and classical regularity of the solution depending on the behavior of the nonlinearity (say on $\gamma$) (see Section \ref{regLM} below). Some of these results were lately extended in \cite{ghl}. 

Finally we also recall the work \cite{stuart} in which the author proves existence along with the uniqueness of the solution to the same problem in a slightly more general setting.

\subsection{Existence and uniqueness of classical solutions}
\label{sec:lm}

We start  by showing  existence of a solution to \eqref{pb}; as we already mentioned, the result is already contained in \cite{crt} and in \cite{stuart} but we prefer to present the simplest proof of \cite{lm}. Other than simplicity, the proof contained in Theorem $1.1$ of \cite{lm} has implications on the regularity of the solution, which is the main novelty of the paper.
\begin{theorem}
	Let $0<\eta<1$ and let  $0<f \in C^\eta(\overline{\Omega})$ satisfy \eqref{condf}.  Then there exists a unique solution $u\in C^{2,\eta}(\Omega)\cap C(\overline{\Omega})$ to problem \eqref{pb}.  In particular there exists $a,b>0$ such that
	\begin{equation}\label{controllo}
		a \varphi_1 \le u(x) \le b \varphi_1^t \ \ \ \forall x \in \Omega,
	\end{equation}	 
	where $t= \frac{2}{1+\gamma}$ if $\gamma>1$ and $t\in (0,1)$ if $\gamma\le 1$. 
	\label{teoexlm}
\end{theorem}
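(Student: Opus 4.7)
The plan is to desingularize the equation and solve the resulting regular problems, then pass to the limit using explicit two-sided barriers built from the eigenfunction $\varphi_1$. Fix $n\in\mathbb{N}$ and consider
$$-\Delta u_n = \frac{f(x)}{(u_n + 1/n)^\gamma}\quad\text{in}\ \Omega,\qquad u_n = 0\ \text{on}\ \partial\Omega.$$
Existence of $u_n \in C^{2,\eta}(\overline{\Omega})$ follows from Schauder's fixed point theorem applied to the map $v \mapsto u$, where $u$ solves $-\Delta u = f/(v^+ + 1/n)^\gamma$ with zero boundary datum: the right-hand side is bounded and belongs to $C^\eta(\overline{\Omega})$ whenever $v$ is continuous, so Schauder theory and Arzelà--Ascoli compactness on a suitable ball of $C(\overline{\Omega})$ yield a fixed point. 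The weak maximum principle gives $u_n > 0$ and, by comparing the $n$-th and $(n+1)$-th problems, the monotonicity $u_n \leq u_{n+1}$.

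The crucial step is the $n$-uniform two-sided control $a\varphi_1 \leq u_n \leq b\varphi_1^t$. For the lower barrier, $\underline{w} = a\varphi_1$ satisfies $-\Delta \underline{w} = a\lambda_1 \varphi_1$ and is a subsolution of the regularized problem provided $a\lambda_1 \|\varphi_1\|_\infty (a\|\varphi_1\|_\infty + 1)^\gamma \leq m$, which holds for $a$ small enough uniformly in $n$; the weak maximum principle then yields $u_n \geq a\varphi_1$. For the upper barrier $\overline{w} = b\varphi_1^t$, a direct computation gives
$$-\Delta \overline{w} = bt(1-t)\varphi_1^{t-2}|\nabla\varphi_1|^2 + bt\lambda_1\varphi_1^t,$$
and requiring $-\Delta \overline{w} \geq M\overline{w}^{-\gamma} = Mb^{-\gamma}\varphi_1^{-t\gamma}$ near $\partial\Omega$ forces $t - 2 \leq -t\gamma$, i.e.\ $t \geq 2/(1+\gamma)$. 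When $\gamma > 1$ the choice $t = 2/(1+\gamma)\in(0,1)$ matches the exponents of $\varphi_1$ exactly, and the Hopf lemma (see the Appendix) gives $|\nabla \varphi_1| \geq c_0 > 0$ on a neighborhood of $\partial\Omega$, so the first term dominates the right-hand side there for $b$ large, while away from $\partial\Omega$ the function $\varphi_1$ is bounded from below and the second term handles the bound. When $\gamma \leq 1$ any $t \in (0,1)$ works and the verification is easier, since a comparison with the solution of $-\Delta w = M(a\varphi_1)^{-\gamma}$ already provides a uniform $L^\infty$-bound.

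Once the uniform barriers are in hand, passing to the limit is routine: on any compact $K \Subset \Omega$ we have $u_n \geq c(K) > 0$, so $f/(u_n + 1/n)^\gamma$ is uniformly bounded in $C^\eta(K)$, and interior Schauder estimates produce uniform $C^{2,\eta}(K)$ bounds. A diagonal Arzelà--Ascoli argument yields the limit $u \in C^{2,\eta}_{\mathrm{loc}}(\Omega)$ solving the equation pointwise; the upper bound $u \leq b\varphi_1^t$ together with $\varphi_1|_{\partial\Omega} = 0$ gives $u \in C(\overline{\Omega})$ with $u = 0$ on $\partial\Omega$. Uniqueness follows from the classical maximum principle: if $u_1,u_2$ are two solutions and $\sup(u_1 - u_2) > 0$, then since $u_1 - u_2 = 0$ on $\partial\Omega$ the supremum is attained at an interior $x_0$, at which $-\Delta(u_1 - u_2)(x_0) \geq 0$; but this equals $f(x_0)\bigl(u_1(x_0)^{-\gamma} - u_2(x_0)^{-\gamma}\bigr) < 0$ because $u_1(x_0) > u_2(x_0) > 0$ and $s \mapsto s^{-\gamma}$ is strictly decreasing, a contradiction. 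The main technical hurdle is the supersolution verification for $b\varphi_1^t$ near $\partial\Omega$ when $\gamma > 1$, which crucially relies on the Hopf lemma to produce the quantitative lower bound on $|\nabla\varphi_1|$ needed to match the singular exponent $\varphi_1^{-t\gamma}$.
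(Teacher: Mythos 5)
Your proposal follows essentially the same strategy as the paper: desingularize with the $1/n$-perturbation, trap $u_n$ between $a\varphi_1$ and $b\varphi_1^t$ uniformly in $n$, pass to the limit via interior elliptic regularity, and conclude uniqueness by the classical maximum principle. Using Schauder's fixed point theorem to produce each $u_n$ in place of the sub/super-solution machinery is an equivalent, equally standard choice.

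There is, however, an algebraic slip in the supersolution step that you should correct. From $t-2\le -t\gamma$ you conclude $t\ge 2/(1+\gamma)$, but the correct rearrangement is $t(1+\gamma)\le 2$, i.e.\ $t\le 2/(1+\gamma)$. This matters: if $\gamma>1$ and one chose $t$ strictly between $2/(1+\gamma)$ and $1$, then $\varphi_1^{\,2-t(1+\gamma)}$ blows up at $\partial\Omega$ and the first term of $-\Delta\overline{w}$ cannot dominate $Mb^{-\gamma}\varphi_1^{-t\gamma}$, so the supersolution check fails. Your actual choice $t=2/(1+\gamma)$ is the borderline admissible value, so the argument still closes, but the stated constraint would mislead a reader. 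Relatedly, your explanation of the case $\gamma\le 1$ is not quite a proof of the stated barrier: noting that comparison with the solution of $-\Delta w = M(a\varphi_1)^{-\gamma}$ gives an $L^\infty$ bound does not by itself deliver $u_n\le b\varphi_1^t$ with $t\in(0,1)$, which is the bound actually claimed. The clean route, as in the paper, is to run the same supersolution computation with $\overline{w}=b\varphi_1^{s}$, $0<s<1$: since $s<1\le 2/(1+\gamma)$ when $\gamma\le 1$, the exponent $s(1+\gamma)-2$ is negative, so $\varphi_1^{s(1+\gamma)-2}$ is bounded below on the collar $\Omega_\varepsilon$ and the Hopf lemma again closes the estimate there, while the $\lambda_1\varphi_1^{s(1+\gamma)}$ term handles the interior.
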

\begin{proof}
We look for a solution to problem \eqref{pb} via the following scheme of approximation
	\begin{equation}
	\begin{cases}
	\displaystyle - \Delta u_n= \frac{f}{(u_n+\frac{1}{n})^\gamma} &  \text{in}\ \Omega, \\
	u_n=0 & \text{on}\ \partial \Omega;
	\label{pbapprox}
	\end{cases}
	\end{equation}

	\medskip
	
the first move consists  in showing the existence of the solution $u_n$ through the method of sub and super-solutions (see for instance \cite{sat}).
	The sub and the super-solutions found are of the type $c\varphi_1^t$, $c,t>0$ ($\varphi_1$ defined in \eqref{not:phi1} where $A(x)=I$); then we will pass to the limit \eqref{pbapprox} as $n\to\infty$ in order to conclude. We split the proof into a few steps. 
	
	\medskip
		
		{\bf Step 1. Sub-solution.} The sub-solution is $z=a\varphi_1$ for $a>0$ sufficiently small. Indeed it is sufficient to require
	$$ a<\frac{\displaystyle m}{\lambda_1||\varphi_1+1||^{\gamma+1}_{L^\infty(\Omega)}}$$
	in order to deduce that
	$$-\Delta (a \varphi_1) = \lambda_1 a \varphi_1<\frac{f}{(\varphi_1+\frac{1}{n})^\gamma}.$$
	
	\medskip
	
	{\bf Step 2. Super-solution.} For a reason which will be clear in a moment, we need to distinguish between cases $\gamma\le1$ and $\gamma>1$. 
	
	\medskip
	
	If $\gamma>1$ the super-solution is $w=b\varphi_1^t$, for some $b,t>0$.
	We need
	\begin{equation}\label{lmsuper1}
	-\Delta (b\varphi_1^t)\ge \frac{f}{(b\varphi_1^t+ \frac{1}{n})^\gamma},
	\end{equation}
	that is implied by 
$$
\frac{1}{b^\gamma\varphi_1^{t\gamma}}(b^{1+\gamma}t(1-t)|\nabla \varphi_1|^2\varphi_1^{t(1+\gamma)-2}  + b^{1+\gamma}t\lambda_1\varphi_1^{t(1+\gamma)})\ge \frac{f}{b^\gamma \varphi_1^{t\gamma}}, 
$$
	that is 
		\begin{equation}\label{lmsuper}
b^{1+\gamma}(t(1-t)|\nabla \varphi_1|^2 +t\lambda_1\varphi_1^{2})\ge {f}, 
	\end{equation}
	provided we fixed $t:=\frac{2}{1+\gamma}<1$. 
	In order to check \eqref{lmsuper} we reason as follows: by the Hopf Lemma (see Lemma \ref{hopf}) we can pick $\varepsilon$ small enough such  that $|\nabla \varphi_1| \not= 0$ on $\Omega_\varepsilon$ ($\Omega_\varepsilon$ is defined in \eqref{not:omegaeps}).
	If  $x\in \Omega_\varepsilon$ then \eqref{lmsuper} holds once one requires 
	\begin{equation}\label{lm1}
	b\ge \left(\frac{\displaystyle\max_{x\in\overline{\Omega}_\varepsilon} f(x)}{t(1-t)\displaystyle \min_{x\in \overline{\Omega}_\varepsilon}|\nabla\varphi_1|^2}\right)^\frac{1}{\gamma+1}.
	\end{equation}
	While if $x\in \Omega\setminus\Omega_\varepsilon$ we ask $b$ to satisfy
	\begin{equation}\label{lm2}
	b\ge \left(\frac{\displaystyle\max_{x\in\Omega\setminus\Omega_\varepsilon} f(x)}{t\lambda_1\displaystyle \min_{x\in\Omega\setminus\Omega_\varepsilon}\varphi_1^{2}}\right)^{\frac{1}{\gamma+1}}.
	\end{equation}
	Choosing $b$ as   the maximum among the quantities in the right-hand of  \eqref{lm1} and \eqref{lm2} then \eqref{lmsuper} holds, and so \eqref{lmsuper1}.\\ 
	If $0<\gamma\le 1$ we reason similarly but we need to look for a super-solution $w= b\varphi_
	1^s$ where $0<s<1$; this guarantees that both terms in \eqref{lmsuper} are nonnegative.
	Now observe that, since $s<1$, then $s<\frac{2}{1+\gamma}$ and this allows us to reason as before as the term  $\varphi_1^{s(1+\gamma)-2}$  can be easily be bounded from below on $\Omega_\varepsilon$.

	\medskip 
	
	{\bf Step 3. Ordering $z$ and $w$.} In  order to apply the method we also  need
	\begin{equation*}
	z(x) \le w(x), \ \ \ \forall x \in \Omega, 
	\end{equation*}
	that is true if $a$ also satisfies
$$
	a \le \frac{b}{||\varphi_1||^{1-t}_{L^\infty(\Omega)}},
$$
if $\gamma>1$; obviously, if $0<\gamma\leq 1$,  we require 
	$$a \le \frac{b}{||\varphi_1||^{1-s}_{L^\infty(\Omega)}}.$$

{\bf Step 4. Sub and Super-solution applied and monotonicity of $u_n$.} We are now in the position to  apply the method of sub and super-solutions to conclude the existence of a smooth  solution $u_n$ of \eqref{pbapprox} such that for any $n\in \mathbb{N}$ 
	\begin{equation}\label{orde} z(x) \le u_n(x) \le w(x), \ \ \ \forall x \in \Omega.\end{equation}
	
	We show that the sequence $u_n$ is non-decreasing with respect to $n$, proving it by contradiction. Indeed let us suppose that $E=\{x\in \Omega: u_n(x)-u_{n+1}(x)>0\}$ is not empty. 
	Then it is easy to check that for $x\in E$ 
	\begin{equation*}
	-\Delta(u_n(x)- u_{{n+1}}(x))\le 0,
	\label{contrlm}
	\end{equation*}
	and, since
	$$u_n(x)=u_{n+1}(x), \ \ \ \forall x \in \partial E,$$
	then by the maximum principle we have a contradiction; this shows that $E$ is empty.
	
	\medskip
	
	{\bf Step 5. Strong convergence in $C^{2,\eta}(\Omega)$ and proof completed.} Since 
	$$z(x)\le u_n(x) \le u_{n+1}(x) \le w(x), \ \ \forall x \in \overline{\Omega}, \forall n \in \mathbb{N},$$
	then there exists a bounded function $u$ such that $\displaystyle \lim_{n\to \infty} u_n(x)=u(x)$ for all $x \in \overline{\Omega}$. Moreover \eqref{controllo} holds. 
	We need to prove that $u$ satisfies \eqref{pb}. To show this fact, let us prove that $u_n$ converges (at least locally) to $u$ in $C^{2,\eta}(\Omega)$. This part of the proof is quite standard and we  only give a brief sketch.
	Let $x_0 \in \Omega$ and let $r>0$ such that $\overline{B_r(x_0)}\subset \Omega$. Let $\psi\in C^\infty(\Omega)$ that is equal to $1$ on $\overline{B_\frac{r}{2}(x_0)}$ and equal to $0$ outside $B_r(x_0)$. We clearly have
	\begin{equation}
	\Delta (\psi u_n) = 2\nabla \psi \cdot \nabla u_n + \psi \Delta u_n + u_n \Delta \psi,
	\label{pblocal}
	\end{equation}
	where the second and the third term on the right-hand of \eqref{pblocal} are bounded uniformly with respect to $n$. Indeed, using \eqref{orde},    one has  resp.,  that  $-\Delta u_n \le M (a \varphi_1)^{-\gamma}$ and  $u_n \le w$.
	
	Then, multiplying by $\psi u_n$ equation \eqref{pblocal} and integrating on $B_r(x_0)$ one gets that 
	$$\psi  |\nabla u_n| \text{  is bounded in  } L^2(B_r(x_0)),$$
which gives
	\begin{equation}\label{3.2} |\nabla u_n| \text{  is bounded in  } L^2(B_\frac{r}{2}(x_0)).\end{equation}
	Now we proceed by iteration taking $\psi_1\in C^\infty(\Omega)$ that is equal to $1$ on $\overline{B_\frac{r}{4}(x_0)}$ and equal to $0$ outside $B_\frac{r}{2}(x_0)$. We renew    \eqref{pblocal} for $\psi_1 u_n$ and, in view of \eqref{3.2},  we apply   standard elliptic regularity to get that  $\psi_1 u_n$ is bounded in $W^{2,2}(B_\frac{r}{2}(x_0))$ so that $u_n$ is bounded in $W^{2,2}(B_\frac{r}{4}(x_0))$. Moreover by Sobolev embedding we have that $|\nabla u_n|$ is bounded in $L^{q_1}(B_\frac{r}{4}(x_0))$ with $q_1=2^*$.
	We keep reasoning as above, we take $\psi_2$ that is equal to $1$ on $\overline{B_\frac{r}{8}(x_0)}$ and equal to $0$ outside $B_\frac{r}{4}(x_0)$; this  allows us to deduce  that $u_n$ is bounded in $L^{q_2}(B_\frac{r}{8}(x_0))$ with $q_2=q_1^*$. With a finite number of steps $i$ this bootstrap argument  shows  that $u_n$ is bounded in $W^{2,q_i}(B_\frac{r}{4i}(x_0))$ such that $q_i>\frac{N}{1-\eta}$.
	Thus the Morrey Theorem guarantees the existence of a subsequence of $u_n$ that converges in $C^{1,\eta}(\overline{B_\frac{r}{2i}(x_0)})$.
	From equation \eqref{pblocal} where $\psi$ is equal to $1$ in $\overline{B_\frac{r}{4i}(x_0)}$ and $0$ outside $B_\frac{r}{4i}(x_0)$ it follows from Schauder theory that a subsequence of $u_n$ converges in $C^{2,\eta}(\overline{B_\frac{r}{4i}(x_0)})$.
	It follows from the arbitrariness of $x_0$ that $u$ belongs to $C^{2,\eta}(\Omega)$, which also implies that  it solves problem \eqref{pb}. Moreover,  $u \in C(\overline{\Omega})$ as a consequence of  \eqref{controllo}. 
	Uniqueness follows by the  maximum principle similarly to what we did in the previous step in order to get monotonicity. 
\end{proof}

\begin{remark}\label{remexlm}
Let us stress that, thanks to \eqref{controllo} (see also  \eqref{hopfvarphi2}),   the solution found in Theorem \ref{teoexlm} satisfies 
\begin{equation}\label{gM1}
c_1 d (x)\le u(x)\le c_2 d (x)^{\frac{2}{\gamma+1}},\ \ \text{if} \ \ \gamma\geq 1
\end{equation}
and, for any $s\in(0,1)$
\begin{equation}\label{gm1}
c_1 d (x)\le u(x)\le c_2 d (x)^{s},\ \,\ \ \text{if}\ \  \ \gamma <1. 
\end{equation}

Let us point out  that the previous  estimates can be improved, with a refinement of the above arguments,  by showing the existence of positive constants $c_1$, $c_2$, and $C$ such that 
\begin{equation}\label{gM11}
c_1 d (x)^{\frac{2}{\gamma+1}}\le u(x)\le c_2 d (x)^{\frac{2}{\gamma+1}},\ \ \text{if} \ \ \gamma> 1,
\end{equation}
\begin{equation}\label{gm1i}
c_1 d (x)(C-\ln d (x))^{\frac{1}{2}}\le u(x)\le c_2 d (x)(C-\ln d (x))^{\frac{1}{2}},\ \,\ \ \text{if}\ \  \ \gamma =1, 
\end{equation}
and 
\begin{equation}\label{gm1i2}
c_1 d (x)\le u(x)\le c_2 d (x),\ \,\ \ \text{if}\ \  \ \gamma <1, 
\end{equation} 
as it has been proven in  \cite[Theorem 2.1]{ghl}. 

The aforementioned  estimates give some further insight on the boundary regularity of the solutions of  \eqref{pb} that will be detailed in next section. Let us also explicitly observe that, once that the previous estimates work, the singular right-hand $fu^{-\gamma}$ is not integrable if $\gamma\ge 1$ while it always lies in $L^1(\Omega)$ if $0<\gamma<1$.
\triang \end{remark}

\subsection{Regularity of the solution}\label{regLM}
Here we analyze  the regularity of the solution of problem  \eqref{pb}. It is interesting to point out the following:  by the proofs of the next two global regularity results will be clear how  the dependence on the space dimension $N$ plays essentially  no roles. Indeed, as the problem is, roughly speaking, smooth inside, bad things can only happen, depending on the parameter $\gamma$,  at the boundary of the domain in which only one-dimensional like estimates will be involved. 

\medskip  
Let us  state and prove  the next  regularity results which are due to  \cite{lm} and \cite{ghl}:
\begin{theorem}\label{teogui}
	Let $f$ be an H\"older continuous function satisfying \eqref{condf}. Then the solution $u$ of problem \eqref{pb} satisfies:
	\begin{itemize}
		\item[i)] if $\gamma<1$ then $u\in C^{1,1-\gamma}(\overline{\Omega})$;

		\item[ii)] if $\gamma=1$ then $u\in C^{t}(\overline{\Omega})$ for all $0<t<1$; 
		\item[iii)] 	if $\gamma>1$ then $u\in C^{\frac{2}{\gamma+1}}(\overline{\Omega})$ and $u\not\in C^1(\overline{\Omega})$.  		
	\end{itemize}
\end{theorem}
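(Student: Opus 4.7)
The strategy is to separate the interior and boundary analysis. By Theorem \ref{teoexlm}, $u>0$ on every compactly contained subset $K\subset\subset\Omega$, so $f/u^\gamma\in C^\eta(K)$ and classical Schauder theory yields $u\in C^{2,\eta}_{\rm loc}(\Omega)$; the whole issue thus reduces to the behaviour of $u$ near $\partial\Omega$, which is encoded in the sharp two-sided bounds \eqref{gM11}, \eqref{gm1i}, \eqref{gm1i2} of Remark \ref{remexlm}.

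The main tool in the boundary zone is a rescaling argument. Fix $x_0\in\Omega$ with $r:=d(x_0)$ small, set $\alpha:=\tfrac{2}{\gamma+1}$ if $\gamma>1$ and $\alpha:=1$ if $\gamma\leq 1$, and define $v(y):=r^{-\alpha}u(x_0+ry)$ on $B_{1/2}$. A straightforward computation gives
\[
-\Delta v(y)=r^{\,2-\alpha(1+\gamma)}\,f(x_0+ry)\,v(y)^{-\gamma}\qquad\text{on }B_{1/2},
\]
and by Remark \ref{remexlm} the function $v$ (and hence $v^{-\gamma}$) is controlled above and below by constants independent of $r$, up to an inessential $(-\log r)^{1/2}$ factor in the critical case $\gamma=1$. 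Since $2-\alpha(1+\gamma)$ equals $0$ for $\gamma\geq 1$ and $1-\gamma>0$ for $\gamma<1$, the rescaled right-hand side is uniformly bounded and H\"older continuous in $B_{1/2}$; classical interior Schauder estimates give uniform $C^{1,\beta}$ bounds on $v$, which translate back into the pointwise boundary estimate $|\nabla u(x_0)|\leq C\,d(x_0)^{\alpha-1}$.

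Combining this gradient bound with the interior $C^{2,\eta}$ regularity, a standard two-cases argument -- comparing $|x-y|$ with $\min(d(x),d(y))$, and either integrating $\nabla u$ along the segment $[x,y]$ when the former is smaller, or comparing $u(x)$ and $u(y)$ through their nearest boundary projections and invoking the asymptotics of Remark \ref{remexlm} when the latter is larger -- produces $|u(x)-u(y)|\leq C|x-y|^{2/(\gamma+1)}$ on $\overline{\Omega}$ in case (iii), $|u(x)-u(y)|\leq C_t|x-y|^t$ for every $t<1$ in case (ii) (the logarithmic factor being absorbed into $C_t$), and $u\in C^1(\overline{\Omega})$ in case (i). The negative claim in (iii) is then immediate: if $u\in C^1(\overline{\Omega})$ were true, then $u|_{\partial\Omega}=0$ would force the Lipschitz bound $u(x)\leq \|\nabla u\|_\infty d(x)$, incompatible with the lower estimate $u(x)\geq c_1 d(x)^{2/(\gamma+1)}$ of \eqref{gM11} as $d(x)\to 0^+$ because $2/(\gamma+1)<1$.

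The most delicate step is the sharp regularity $\nabla u\in C^{1-\gamma}(\overline{\Omega})$ in case (i). The generic Schauder bound on $v$ used above only yields $\nabla u\in C^1_{\rm loc}$; to extract the optimal exponent $1-\gamma$ at the boundary one has to sharpen the analysis, for instance by subtracting off the leading affine part of $v$ (which is $O(1)$ but has zero Laplacian) and applying Schauder to the remainder, whose right-hand side now carries the small factor $r^{1-\gamma}$. This is consistent with, and dictated by, the one-dimensional model $-u''=d^{-\gamma}$, which integrates explicitly to $u'\sim d^{1-\gamma}/(1-\gamma)$ near the boundary and shows that $1-\gamma$ is indeed the sharp H\"older exponent for $\nabla u$.
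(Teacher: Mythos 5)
Your treatment of parts ii) and iii) mirrors the paper's: both use interior gradient estimates on balls $B_{d(x_0)/2}(x_0)$ (your rescaling argument is the same estimate with the factor of $d(x_0)$ made explicit), combined with the sharp boundary asymptotics \eqref{gM11}--\eqref{gm1i} to convert a pointwise gradient bound $|\nabla u(x_0)|\lesssim d(x_0)^{\alpha-1}$ (log-corrected when $\gamma=1$) into a global H\"older modulus via the two-cases dichotomy on $|x_1-x_2|$ vs.\ $d(x_1)$. The non-$C^1$ claim in iii) is also the paper's argument verbatim, just phrased as a Lipschitz obstruction rather than a normal derivative blow-up.

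The genuine gap is in part i). You correctly identify this as the delicate step, but the proposed fix does not close it. Subtracting an affine (or harmonic) part $P_r$ from $v$ and applying Schauder to $w=v-P_r$, whose right-hand side carries the factor $r^{1-\gamma}$, controls only the \emph{inhomogeneous} contribution: one gets $[\nabla w]_{C^{1-\gamma}(B_{1/4})}\lesssim r^{1-\gamma}$, which after undoing the scaling yields $[\nabla(u-\ell_r)]_{C^{1-\gamma}(B_{r/4}(x_0))}\lesssim 1$, fine. But the discarded part $P_r$ (equivalently, the harmonic component $v_h$ of $v$) is only $O(1)$ in $C^{1,1-\gamma}(B_{1/4})$ in the rescaled variable, and rescaling back sends its $C^{1-\gamma}$-seminorm to $O(r^{\gamma-1})$, which blows up as $r\to0$ precisely because $\gamma<1$. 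Concluding $[\nabla u]_{C^{1-\gamma}(\overline{\Omega})}<\infty$ therefore requires a cross-scale decay estimate on the harmonic parts $v_h$ at nested scales — a full Campanato iteration — and not a single local Schauder application; your sketch omits this and, as written, does not yield more than $\nabla u\in L^\infty$. The paper sidesteps the issue entirely by representing $\nabla u$ through the Green function, $\nabla u(x)=\int_\Omega\nabla_xG(x,y)f(y)u(y)^{-\gamma}\,dy$, and exploiting the two-sided kernel bounds \eqref{greene}: the far contribution is differenced by the fundamental theorem of calculus with $D^2_xG$, and the near contribution is absorbed directly using $u\gtrsim d$; both pieces come out as $|x_1-x_2|^{1-\gamma}$ with no scale iteration needed. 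If you want to keep a local (rescaling) proof, you would have to run the Campanato scheme or otherwise argue that the gradients of the harmonic approximants at scales $r$ and $r/2$ differ by $O(r^{1-\gamma})$; this is true but is exactly the content you are trying to prove, so it must be established, not assumed.
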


\begin{proof}[\textbf{ \rm \bf Proof of i)}]

We aim to prove that there exists $c>0$ such that for any $x_1,x_2 \in \Omega$ one has 
\begin{equation}\label{gulin1}
|\nabla u(x_1)- \nabla u(x_2)| \le c |x_1- x_2|^{1-\gamma}.
\end{equation}
By classical Green's formula one has both
	$$u(x)=\int_\Omega G(x,y)f(y)u^{-\gamma}(y)dy,$$
and
	$$\nabla u(x)=\int_\Omega \nabla_x G(x,y)f(y)u^{-\gamma}(y)dy,$$
	where $G$ is the Green function relative to $\Omega$.  Observe that, since $\gamma<1$ the previous are well defined.   Recall, see for instance \cite[Proposition 3.1]{ghl},  that it is always possible to find a constant $c>0$ (that only depends on $\Omega$) such that, for $x\neq y$, one has 
	\begin{equation}\label{greene}
	|\nabla_x G(x,y)|\leq \min\left\{\frac{c}{|x-y|^{N-1}},\frac{c d(y)}{|x-y|^N}\right\}\ \ \ \ \ \  \text{and}\ \ \ \ |D_x^2 G(x,y)|\leq \min\left\{\frac{c}{|x-y|^{N}},\frac{c d (y)}{|x-y|^{N+1}}\right\}.
	\end{equation}

We pick $x_1,x_2 \in \Omega$ such that $|x_1-x_2|<\varepsilon$ for some $\varepsilon>0$. One has  
	\begin{equation}\label{gui3}
	\begin{aligned}
	|\nabla u(x_1)- \nabla u(x_2)| &\le \int_{\Omega\setminus B_R(x_1)} |\nabla_x G (x_1,y)-\nabla_x G(x_2,y)|f(y)u^{-\gamma}(y)dy 
	\\
	&+ \int_{B_R(x_1)}|\nabla_x G(x_1,y)-\nabla_x G(x_2,y)|f(y)u^{-\gamma}(y)dy=I+II,
	\end{aligned}
	\end{equation}
	where $R=2|x_1-x_2|$. Now consider the path  $\xi:[0,1]\to\Omega$ given by $\xi (t)=x_1+t(x_2-x_1)$, with $t\in [0,1]$,  that connects $x_1$ and $x_2$ (assume without loosing generality that $\varepsilon$ is small enough). Observe that, if  $y\in \Omega\setminus B_R(x_1)$, one has both
	\begin{equation}\label{disu}
	|x_1-y|\leq |x_1-\xi(t)|+|\xi(t)-y|\leq |x_1-x_2| + |\xi(t)-y| \leq 2|\xi(t)-y|\,,
	\end{equation}
	and
	\begin{equation}\label{disu2}
	 |\xi(t)-y|\leq |\xi(t)-x_1| + |x_1 -y| \leq 2|x_1-y|\,,
	\end{equation}
	where we also  used that, by definition of $R$, it holds $|x_1-x_2|,  |\xi(t)-x_1|\leq |\xi(t)-y|$. 
	So that, thanks to \eqref{disu} and \eqref{disu2},  one has 
	 	\begin{equation}\label{disuglob}
	\underline{c}|x_1-y| \leq |\xi(t)-y|\leq \overline{c}|x_1-y|\,, 
	\end{equation}
	for some positive constants any $\underline{c},  \overline{c}$ and for any $t\in (0,1)$.
	
	\begin{figure}[htbp]\centering
\includegraphics[width=3in]{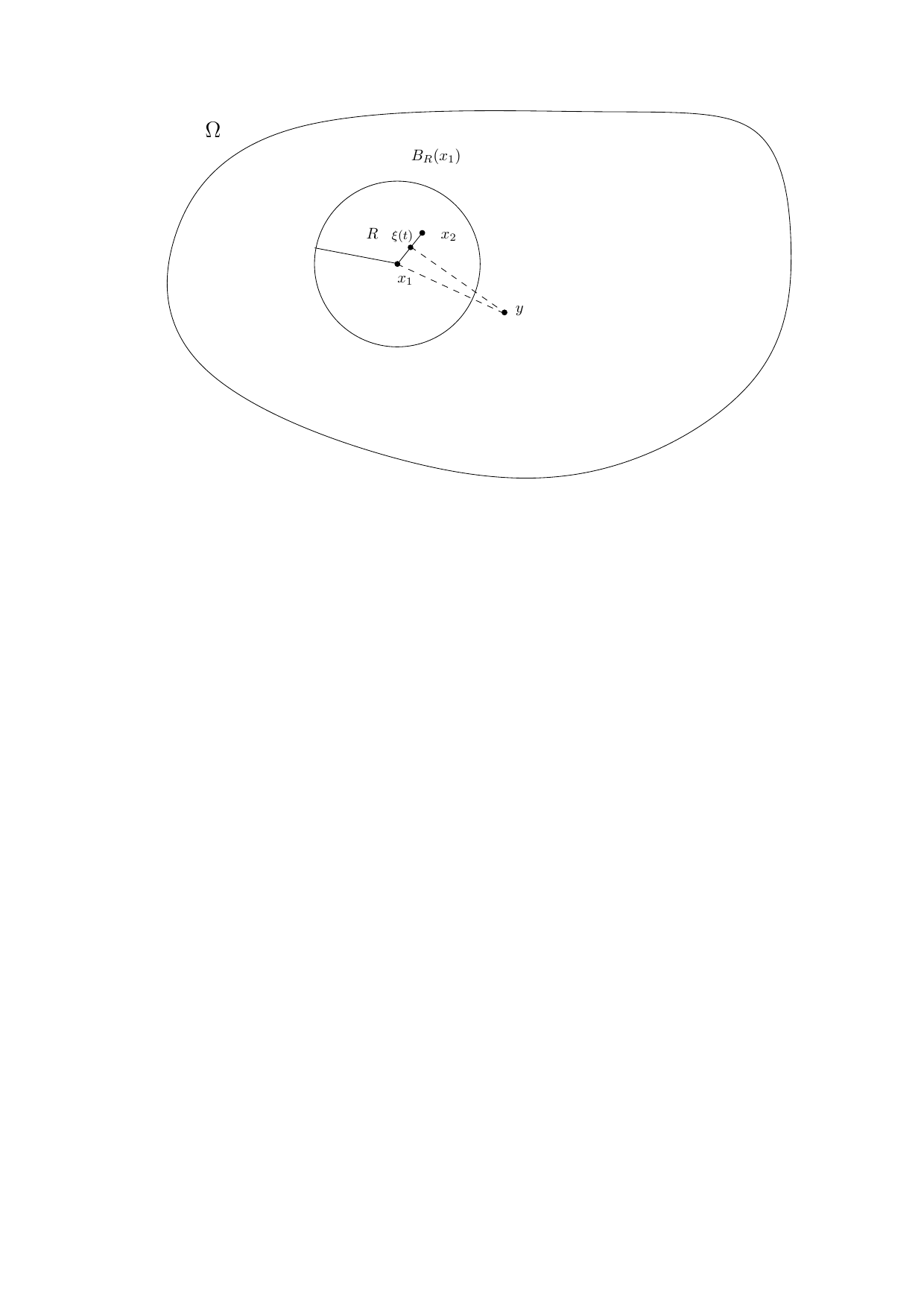}
\caption{Visualizing \eqref{disuglob}}\label{faster}
\end{figure}

	Now,  one  has
	\begin{equation*}
	\begin{aligned}
	&I \le \int_{\Omega\setminus B_R(x_1)} \left(\int_{0}^1 |D_{x}^{2} G(\xi(t),y)||\xi'(t)|dt \right)f(y)u^{-\gamma}(y)dy
	\\
	& \stackrel{\eqref{greene}}{\le} C|x_1-x_2|\int_{\Omega\setminus B_R(x_1)} \left(\int_{0}^1 \frac{\min(c|\xi(t)-y|,c d (y))}{|\xi(t)-y|^{N+1}}dt \right)f(y)u^{-\gamma}(y)dy  
	\\
	&\stackrel{\eqref{disuglob}}{\le} C|x_1-x_2|\int_{\Omega\setminus B_R(x_1)} \frac{\min(|x_1-y|, d (y))}{|x_1-y|^{N+1}} f(y)u^{-\gamma}(y)dy\,.
	\end{aligned}
	\end{equation*}

	\medskip 
	
	Then, it follows from \eqref{condf} and \eqref{gm1}, that
	\begin{equation*}
	\begin{aligned}
	&\int_{\Omega\setminus B_R(x_1)} |\nabla_x G(x_1,y)-\nabla_x G(x_2,y)|f(y)u^{-\gamma}(y)dy \le C|x_1-x_2|\int_{\Omega\setminus B_R(x_1)} \frac{\min(|x_1-y|, d (y))}{|x_1-y|^{N+1}}  d^{-\gamma}(y)dy
	\\
	&\le C|x_1-x_2|\int_{\Omega\setminus B_R(x_1)} \frac{1}{|x_1-y|^{N+\gamma}}dy\le  C|x_1-x_2| \int_R^\infty r^{-1-\gamma}dr \le C|x_1-x_2|R^{-\gamma}=C|x_1-x_2|^{1-\gamma}.
	\end{aligned}
	\end{equation*}
	For the second term of \eqref{gui3},  using that   $u(y)\ge {c_1} d (y)$ (recall \eqref{gm1}),  one obtains 
	\begin{equation*}
	\begin{aligned}
	&II=\int_{B_R(x_1)}|\nabla_x G(x_1,y)-\nabla_x G(x_2,y)|f(y)u^{-\gamma}(y)dy \le \int_{B_R(x_1)}| \nabla_x G(x_1,y)|f(y)u^{-\gamma}(y)dy  \\& +\int_{B_{R+ \frac{R}{2}}(x_2)}|\nabla_x G(x_2,y)|f(y)u^{-\gamma}(y)dy
	\stackrel{\eqref{greene}}{\le} C \int_{B_R(x_1)}\frac{\min(|x_1-y|, d (y))}{|x_1-y|^{N}} d ^{-\gamma}(y)dy \\ &+C \int_{B_{R+ \frac{R}{2}}(x_2)}\frac{\min(|x_2-y|, d (y))}{|x_2-y|^{N}} d^{-\gamma}(y)dy  \le C \int_{B_R(x_1)}\frac{dy}{|x_1-y|^{N-1+\gamma}}  +C \int_{B_{R+ \frac{R}{2}}(x_2)}\frac{dy}{|x_2-y|^{N-1+\gamma}}. 
	\end{aligned}	
	\end{equation*}
	 
	 Hence, observing  that $|x_1-y|, |x_2-y|\geq |x_1-x_2|=\frac{R}{2}$, this yields to 
	$$\int_{B_R(x_1)}|\nabla_x G(x_1,y)- \nabla_x G(x_2,y)|f(y)u^{-\gamma}(y) \le C |x_1-x_2|^{1-\gamma},$$
which, together with \eqref{gui3},  implies \eqref{gulin1},  and this concludes the proof of i). 
	\medskip 
	
	\textbf{Proof of ii).}
	Assume first $x_1, x_2\in \Omega$ such that $|x_1-x_2|\le \frac{d}{4}$ where $d:=d(x_1) \ge d(x_2)$.		
	
	Then it follows from classical elliptic regularity estimates (see for instance  \cite[Proposition $2.18$]{fernandezros} or \cite[Theorem 6.2]{gt})  that it holds
	\begin{equation}\label{stimagradiente}
		\frac{d}{2}|\nabla u(y)| \le c(||u||_{L^\infty(B_{\frac{d}{2}}(x_1))} + \frac{d^2}{4}||fu^{-1}||_{L^\infty(B_{\frac{d}{2}}(x_1))},\end{equation}
		for any $y\in B_{\frac{d}{4}}(x_1)$ and for some positive constant $c$.  
	Hence, using \eqref{gm1i} in \eqref{stimagradiente}, one obtains
		\begin{equation*}
		|\nabla u(y)| \le c\left(\left(C-\ln\left(\frac{d}{2}\right)\right)^{\frac{1}{2}} + \max_{x\in B_{\frac{d}{2}}(x_1)}f(x) \left(C-\ln\left(\frac{d}{2}\right)\right)^{-\frac{1}{2}}\right),
	\end{equation*}	
	for any $y\in B_{\frac{d}{4}}(x_1)$.
	\\
	Now as we assumed $|x_1-x_2|\le \frac{d}{4}$, it follows, by possibly changing the value of $c$,  that 
	\begin{equation}
		\label{stimagradiente2}
		|u(x_2)-u(x_1)|\le c \left(C-\ln\left(|x_1-x_2|\right)\right)^{\frac{1}{2}} |x_1-x_2|.
	\end{equation}
If $|x_1-x_2| > \frac{d}{4}$ instead, then, recalling that $d(x_1) \ge d(x_2)$,   the validity of \eqref{stimagradiente2}  follows from \eqref{gm1i}.  This concludes the proof of ii) as \eqref{stimagradiente2} implies the desired regularity.

	\medskip 

	\textbf{Proof of iii).} The proof of the H\"older regularity strictly follows the one of ii) with straightforward modifications and we omit it.

In order to conclude we want to show that $u\not\in C^1(\overline{\Omega})$. 
	To this aim, let $x_0\in \partial \Omega$ and let us denote by $\vec{n}$ the inner normal to $\partial\Omega$ at $x_0$.
	As $\gamma>1$ we can use \eqref{gM11}; we have    $u(x)\ge c_1 d(x)^\frac{2}{\gamma+1}$. 
	Thus
	$$\frac{u(x_0+t\vec{n})-u(x_0)}{t}\ge c_1 d(x_0+t\vec{n})^{\frac{1-\gamma}{1+\gamma}}\frac{d(x_0+t\vec{n})}{t}.$$
	Since
	$$\dis \lim_{t\to 0^+}\frac{d(x_0+t\vec{n})}{t} = \lim_{t\to 0^+}\frac{d(x_0+t\vec{n})-d(x_0)}{t}= \nabla d(x_0) \cdot \vec{n}>0$$ then
	$$\displaystyle \lim_{t\to 0^+}\frac{u(x_0+t\vec{n})-u(x_0)}{t}=\infty,$$
	that is what we need to prove.
	This concludes the proof of Theorem \ref{teogui}.	
\end{proof}

\medskip 
\begin{remark}
Observe that, if  $\gamma\to 3^-$ then ${\frac{2}{\gamma +1}}$ tends to $\frac12$ from above;  this fact is completely reasonable and it fits with the remarkable result we will prove next (Theorem \ref{teoreg} below).  As shown in \cite[Theorem 1.2]{ghl}, for  merely nonnegative data (i.e. $f\geq 0$) i) and ii) in Theorem \ref{teogui} remain true. On the contrary, for technical reasons,  iii) transforms into $u\in C^{\frac{2}{\gamma^2 +1}}(\overline{\Omega})$ that, for what we saw before,  seems to be not sharp. Another interesting open problem consists in showing whether, for $\gamma=1$, $u$ belongs to $C^1$ up to the boundary of $\Omega$.  
\triang \end{remark}
\medskip 
The following (at first glance) surprising  result shows that the solution to \eqref{pb} ceases to be even in $H^1_0(\Omega)$ when $\gamma\ge 3$.  
Since  uniqueness holds then, without loosing generality, we will use that the solution is constructed through the scheme of approximation \eqref{pbapprox}.

\begin{theorem}
	The solution $u$ to problem \eqref{pb} found  in Theorem \ref{teoexlm} belongs to $H^1_0(\Omega)$ if and only if $\gamma<3$.
	\label{teoreg}
\end{theorem}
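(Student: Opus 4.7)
The plan exploits the sharp boundary profile $u(x)\sim d(x)^{2/(\gamma+1)}$ (cf. \eqref{controllo} and \eqref{gM11}) together with the approximation scheme \eqref{pbapprox} to obtain both implications.

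For the \emph{only if} direction, I would argue by contradiction: suppose $u\in H^1_0(\Omega)$ while $\gamma\geq 3$. Since $u$ is bounded, $T_k(u)\in H^1_0(\Omega)$ for every $k>0$. Testing the classical equation $-\Delta u = f\, u^{-\gamma}$ against $T_k(u)$ (by approximating it in $H^1_0$ from below via nonnegative $\phi_j\in C^\infty_c(\Omega)$, integrating by parts, and using Fatou on the nonnegative right-hand side) yields
\begin{equation*}
\int_{\{u<k\}}|\nabla u|^2 \;=\; \int_\Omega \nabla u\cdot \nabla T_k(u) \;\geq\; \int_\Omega f\, T_k(u)\, u^{-\gamma}.
\end{equation*}
Letting $k\to\infty$ and using monotone convergence on both sides gives $\int_\Omega|\nabla u|^2 \geq \int_\Omega f\, u^{1-\gamma}$. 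The upper bound $u(x)\le b\varphi_1^{2/(\gamma+1)}\le C\, d(x)^{2/(\gamma+1)}$ from \eqref{controllo}, combined with $f\geq m>0$, forces
\begin{equation*}
\int_\Omega f\, u^{1-\gamma} \;\geq\; m\, C^{1-\gamma}\int_\Omega d(x)^{\frac{2(1-\gamma)}{\gamma+1}}\,dx \;=\;+\infty,
\end{equation*}
because $\gamma\geq 3$ makes the exponent $\le -1$. This contradicts $\int|\nabla u|^2<\infty$.

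For the \emph{if} direction, the plan is to extract a uniform $H^1_0$-bound on the approximating sequence $u_n$ solving \eqref{pbapprox}; weak compactness and the pointwise convergence $u_n\to u$ then transfer the bound to $u$. If $\gamma\leq 1$, test \eqref{pbapprox} with $u_n\in H^1_0\cap L^\infty$ and use $(u_n+1/n)^\gamma\geq u_n^\gamma$ together with $u_n\leq u\in L^\infty$ to obtain $\int|\nabla u_n|^2\leq \int f\, u_n^{1-\gamma}\leq \|f\|_\infty\|u\|_\infty^{1-\gamma}|\Omega|$ since $1-\gamma\geq 0$. If $1<\gamma<3$, test \eqref{pbapprox} instead with $u_n^\gamma\in H^1_0$ to find
\begin{equation*}
\int_\Omega u_n^{\gamma-1}|\nabla u_n|^2 \;=\; \frac{1}{\gamma}\int_\Omega \frac{f\, u_n^\gamma}{(u_n+1/n)^\gamma} \;\leq\; \frac{1}{\gamma}\int_\Omega f,
\end{equation*}
so that $v_n := u_n^{(\gamma+1)/2}$ is uniformly bounded in $H^1_0(\Omega)$ and $v := u^{(\gamma+1)/2}\in H^1_0(\Omega)$. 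Combining $v\in H^1_0$ with the sharp pointwise estimate $u\sim d^{2/(\gamma+1)}$ (hence $v\sim d$ near $\partial\Omega$) from \eqref{gM11} in the identity
\begin{equation*}
|\nabla u|^2 \;=\; \tfrac{4}{(\gamma+1)^2}\, u^{1-\gamma}|\nabla v|^2 \;\leq\; C\, d(x)^{\frac{2(1-\gamma)}{\gamma+1}}|\nabla v|^2
\end{equation*}
and splitting $\Omega=\Omega_\varepsilon\cup(\Omega\setminus\Omega_\varepsilon)$, the interior piece is controlled by $\int_\Omega|\nabla v|^2$ while on $\Omega_\varepsilon$ the bounded character of $|\nabla v|$ (from the sharper boundary $C^1$-regularity in \cite{ghl}) makes the integral finite precisely when $\tfrac{2(1-\gamma)}{\gamma+1}>-1$, i.e. $\gamma<3$.

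The main obstacle is the last step for $1<\gamma<3$. Testing with $u_n$ directly gives only $\int|\nabla u_n|^2\leq \int f\, u_n^{1-\gamma}$, and with the elementary sub-solution $a\varphi_1\sim d$ supplied in Theorem \ref{teoexlm} one would reach only $\gamma<2$. Reaching the sharp threshold $\gamma<3$ thus forces either (a) verifying that the refined function $\tilde z := c\,\varphi_1^{2/(\gamma+1)}$ is itself a sub-solution of \eqref{pbapprox} for $c$ small enough, which is delicate because the singular exponents at $\partial\Omega$ match exactly ($\beta-2=-\beta\gamma$) and one must balance the constant $c$ against the Hopf lower bound for $|\nabla\varphi_1|^2$; or (b) the route outlined above, leveraging $v\in H^1_0$ together with the sharp two-sided estimate \eqref{gM11} via a weighted inequality near $\partial\Omega$. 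In either case, the refined boundary estimate $u\sim d^{2/(\gamma+1)}$ is the key ingredient beyond \eqref{controllo}.
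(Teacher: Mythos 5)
Your ``only if'' direction is essentially the paper's argument: test against an approximating sequence converging to $u$ in $H^1_0(\Omega)$, apply Fatou on the nonnegative right-hand side, and observe that the sharp upper bound $u\le C d^{2/(\gamma+1)}$ together with $f\ge m>0$ forces $\int_\Omega f\,u^{1-\gamma}=+\infty$ as soon as $\gamma\ge 3$, contradicting finiteness of the Dirichlet energy. This part is correct.

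Your primary route (b) for $1<\gamma<3$ has a genuine gap. You correctly obtain $v:=u^{(\gamma+1)/2}\in H^1_0(\Omega)$ by testing \eqref{pbapprox} with $u_n^\gamma$, but to pass from $v\in H^1_0(\Omega)$ to $u\in H^1_0(\Omega)$ via
\[
|\nabla u|^2 = \tfrac{4}{(\gamma+1)^2}\,u^{1-\gamma}|\nabla v|^2 ,
\]
you need $|\nabla v|$ to be bounded on the boundary layer $\Omega_\varepsilon$. You attribute this to ``the sharper boundary $C^1$-regularity in \cite{ghl}'', but no such result is available here: Theorem~\ref{teogui}(iii) gives $u\in C^{2/(\gamma+1)}(\overline\Omega)$ and explicitly $u\notin C^1(\overline\Omega)$, and boundary $C^1$-regularity of the auxiliary power $v=u^{(\gamma+1)/2}$ is neither proved in the survey nor in \cite{ghl}; scaling heuristics suggest $\nabla v=O(1)$, but that is not an argument. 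As written, route (b) does not close.

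Route (a), which you flag but do not carry through, is in fact the paper's proof and contains no hidden delicacy. Because $t=\frac{2}{\gamma+1}$ is chosen precisely so that $t-2=-t\gamma$, the leading singular powers in the subsolution inequality for $c\,\varphi_1^{t}$ cancel, and near $\partial\Omega$ the inequality reduces to a comparison of constants that holds for $c$ small (cf.\ Step~2 of Theorem~\ref{teoexlm}, run in reverse). The refined two-sided bound $u_n\ge c\,\varphi_1^{2/(\gamma+1)}$ is proved for the whole approximating sequence in \cite[Theorem 2]{lm}, which is exactly what the paper invokes. Given that bound, the proof is one line: testing \eqref{pbapprox} with $u_n$ gives
\[
\int_\Omega |\nabla u_n|^2 \;\le\; M\int_\Omega u_n^{1-\gamma} \;\le\; M\,c^{1-\gamma}\int_\Omega \varphi_1^{\frac{2(1-\gamma)}{\gamma+1}} \;<\;\infty
\]
since $\frac{2(1-\gamma)}{\gamma+1}>-1$ is equivalent to $\gamma<3$, and no passage through $v$ is needed.
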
  
\begin{proof}
	First let $\gamma<3$ and let us show that the limit  $u$  of $u_n$, solution to \eqref{pbapprox}, belongs to $H^1_0(\Omega)$; it suffices to check that  
	$$\displaystyle \int_{\Omega}|\nabla u_n|^2\leq C,$$
	for some constant $C$ not depending on $n$.
	
	 \noindent We  take $u_n$ as  test function in \eqref{pbapprox} obtaining
	\begin{equation*}
	\displaystyle \displaystyle \int_{\Omega}|\nabla u_n|^2 = \displaystyle \int_{\Omega}f u_n \left(u_n+\frac{1}{n}\right)^{-\gamma}\le \int_{\Omega}M u_n^{1-\gamma}.
	\label{stimalm1}
	\end{equation*}
	If $\gamma=1$ the conclusion holds. Assume that $0<\gamma< 1$ and use the H\"older and the Sobolev inequalities to obtain  	\begin{equation*}
	\displaystyle \displaystyle \int_{\Omega}|\nabla u_n|^2 \le M\left (\int_\Omega u_n^{2^*}\right)^{\frac{1-\gamma}{2^*}}|\Omega|^{\frac{2^* +\gamma-1}{2^*}}\leq M|\Omega|^{\frac{2^* +\gamma-1}{2^*}} \mathcal{S}_{2}^{1-\gamma} \left (\int_\Omega |\nabla u_n|^2 \right)^{\frac{1-\gamma}{2}},
	\label{stimalm2}
	\end{equation*}
	which implies that $u\in H^1_0(\Omega).$

	 Now if $1<\gamma<3$ one has $u_n \ge a\varphi_1^t$ with $t=\frac{2}{\gamma+1}$. In fact, as shown in \cite[Theorem 2]{lm}, an  estimate as in  \eqref{gM11} holds for the approximating sequence $u_n$. Thus,   by taking $u_n$ to test \eqref{pbapprox}, one obtains 
\begin{equation}
\displaystyle \displaystyle \int_{\Omega}|\nabla u_n|^2 \le \int_{\Omega}M a^{1-\gamma}\varphi_1^{t(1-\gamma)},
\label{stimalm3}
\end{equation} 
whose right-hand of \eqref{stimalm3} is finite since $\gamma<3$.
Now we need to prove that if $\gamma\ge 3$ then $u$ is not in $H^1_0(\Omega)$; we show it by contradiction.
First let observe that, using \eqref{gM11}, one gets 
\begin{equation}
\displaystyle \int_{\Omega} \frac{f}{u^{\gamma-1}}\ge \int_{\Omega} \frac{m}{c_2^{\gamma-1}d^{t(\gamma-1)}}=\infty.
\label{stimalm4}
\end{equation} 
If $u$ belongs to $H^1_0(\Omega)$ then there exists a nonnegative sequence $w_n\in C^1_c(\Omega)$ converging to $u$ in $H^1_0(\Omega)$ as $n\to\infty$.
It follows from \eqref{stimalm4} and from the Fatou Lemma that 
$$ \displaystyle \liminf_{n\to\infty}\int_{\Omega}w_nfu^{-\gamma}\ge \int_{\Omega}\frac{f}{u^{\gamma-1}}=\infty.$$
On the other hand one  can take $w_n$ as a test function in \eqref{pb} obtaining
$$\displaystyle \int_{\Omega}\nabla u\cdot \nabla w_n= \int_{\Omega} w_n f u^{-\gamma},$$
and taking the limit in $n$, one gets
$$\displaystyle\int_{\Omega}|\nabla u|^2=\infty,$$
which contradicts that $u$ belongs to $H^1_0(\Omega)$. This concludes the proof.	

\end{proof}

We summarize the regularity properties of $u$ in the following table:
\begin{center}
	\begin{table}[H]
		\setlength{\tabcolsep}{8pt}
		\renewcommand{\arraystretch}{2.4} 
		\begin{tabular}{| c | c | c |}  
			\cline{1-3}
			$\gamma$ & Strong regularity  & Weak regularity
			\\
			\hline
			$\gamma<1$
			&
			$u\in C^{1,1-\gamma}(\overline{\Omega})$ & 
			\\ 			\cline{1-2}	
			$\gamma=1$
			&
			${u\in C^{t}(\overline{\Omega})}$ for any $0<t<1$ & ${u\in H^1_0(\Omega)}$
			\\  			\cline{1-2}	
			$1<\gamma<3$
			&
			${u\in C^{\frac{2}{\gamma+1}}(\overline{\Omega})}$, ${u\notin C^1(\overline{\Omega})}$, & 
			\\ 			\cline{1-3}	
			$\gamma\ge3$
			&
			$u\in C^{\frac{2}{\gamma+1}}(\overline{\Omega})$, $u\notin C^1(\overline{\Omega})$ & ${u\notin H^1_0(\Omega)}$		
			\\
			\cline{1-3}	
		\end{tabular}
		\vspace*{3mm}
		\caption{Strong and weak regularity \\ for  smooth  $f$'s depending on $\gamma$} 
	\end{table}
\end{center}

\begin{figure}[htbp]\centering
\includegraphics[width=3in]{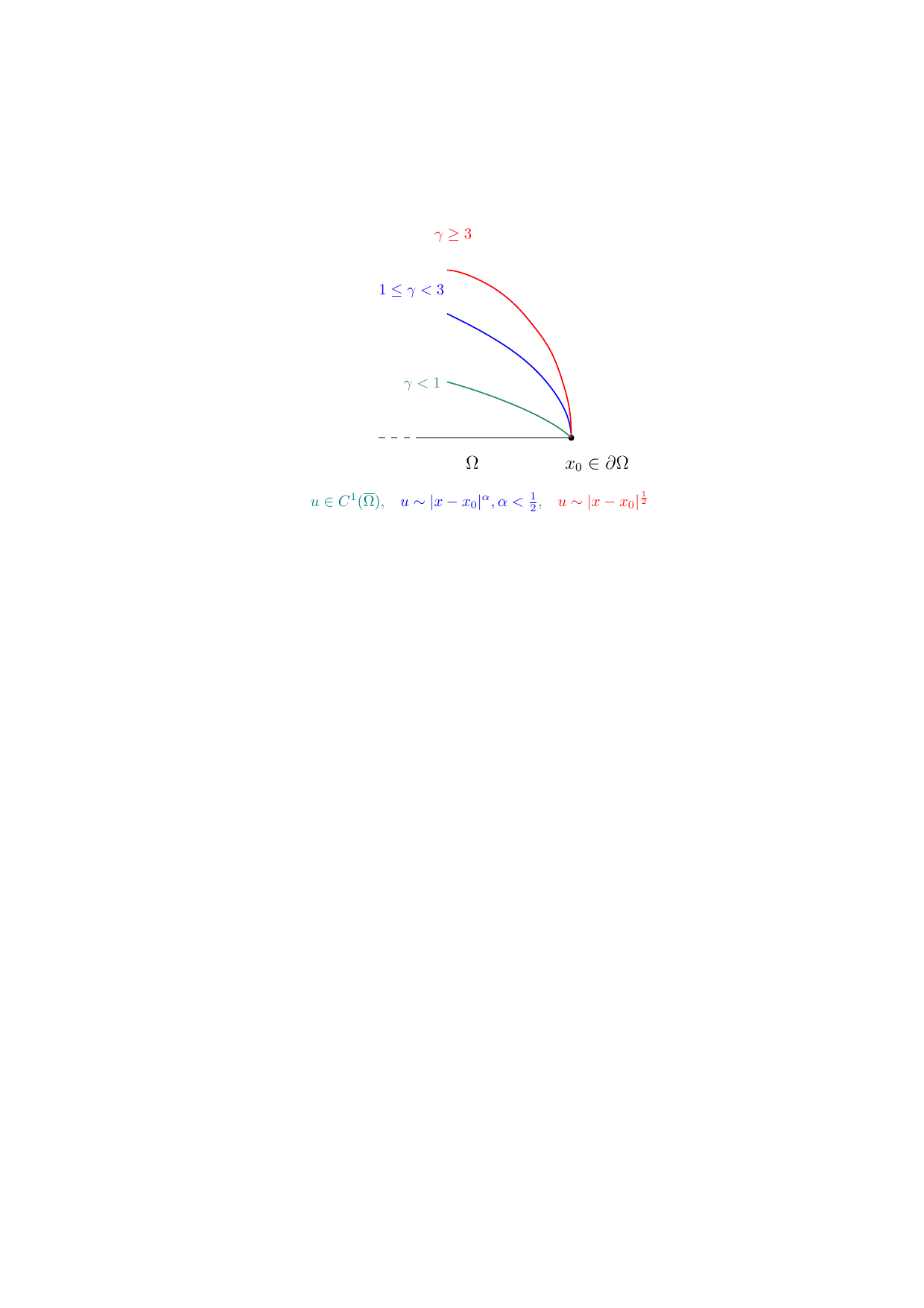}
\caption{Behaviour of the solution $u$ around some boundary point as $\gamma$ grows}\label{faster}
\end{figure}

\section{Integrable data and distributional solution}
\label{sec:integrable}

Natural questions arise about existence of solutions to problems as \eqref{pbintro} when the source term $f$ is less regular.  
This section is devoted to present the  weaker framework  that covers this case  in which, in general,  a  classical solution is not expected to exist. 

\medskip

We first prove existence of \underline{distributional solutions} for semilinear elliptic problems when $f$ belongs to a suitable Lebesgue space. Then the results will be extended to the case  of a general and not necessarily  monotone nonlinearity $h(s)$ in place of the model $s^{-\gamma}$. In the last part of the section we show that uniqueness of distributional solutions belonging to $H^1_0(\Omega)$ holds.

\medskip

Concerning  the model case of a power nonlinearity,  we will mainly present the strategy given by \cite{bo} that   takes advantage   of the strong maximum principle in order to deduce that the sequence of non-decreasing approximating solutions are far away from zero inside the domain. For generic  non-monotone nonlinearities, this technique does not apply in general and we will make use of a suitable  truncation approach,  as  in \cite{gmm},  in order to control the singularity.  Uniqueness of finite energy solutions, as in  \cite{boca},   is obtained through an extension  of the admissible test functions.  

\medskip

We deal with
	\begin{equation}
\begin{cases}
\displaystyle - \operatorname{div}(A(x)\nabla u)= \frac{f}{u^\gamma} &  \text{in }\, \Omega, \\
u=0 & \text{on}\ \partial \Omega,
\label{pbbo}
\end{cases}
\end{equation}
where $A$ is a bounded elliptic matrix, namely
\begin{equation}\label{Abounded}
	\exists \alpha,\beta>0, \ A(x)\xi\cdot\xi \ge \alpha|\xi|^2, \ |A(x)|\le \beta,
\end{equation}
for almost every $x \in \Omega$ and for every $\xi\in \mathbb{R}^N$. The nonnegative (and not identically zero) datum $f$ belongs to a suitable Lebesgue space we specify later and, finally, $\gamma>0$. Let us also specify that in this section it is not assumed any regularity on $\partial\Omega$.

\medskip

First we set the concept of distributional solutions to \eqref{pbbo}. For the sake of exposition we fix the following notation:
\begin{equation}\label{sigma}
	\sigma:= \max(1,\gamma)\,.
\end{equation}

\begin{defin}\label{defbo}
	A nonnegative function $u\in W^{1,1}_{\rm{loc}}(\Omega)$ is a {\it distributional solution} to problem \eqref{pbbo} if	
	\begin{equation}
	fu^{-\gamma}\in L^1_{\rm{loc}}(\Omega),
	\label{lweakdef1}
	\end{equation}
	\begin{equation}
	u^{\frac{\sigma+1}{2}}\in W^{1,1}_0(\Omega), 
	\label{lweakdef2}
	\end{equation}
	and
	\begin{equation} 
	\displaystyle \int_{\Omega}A(x)\nabla u \cdot \nabla \varphi =\int_{\Omega} \frac{f\varphi}{u^\gamma},\ \ \ \forall \varphi \in C^1_c(\Omega).\label{lweakdef3}
	\end{equation}	
	\label{lweakdef}
\end{defin}
\begin{remark}
Some comments are in order. Let us observe that condition \eqref{lweakdef1} implies that the right-hand of the weak formulation \eqref{lweakdef3} is well defined, while \eqref{lweakdef2} represents a weak way to recover the Dirichlet datum. In case $\gamma\le1$,  the zero boundary datum is prescribed in the usual Sobolev trace sense; on the other hand, if $\gamma >1$,  this will not anymore be the case  and, in general,  only a suitable power of the solution is asked to attain the zero value in the usual sense of the Sobolev traces. 

We also underline that, from here on, we call \underline{finite energy solution} or \underline{weak solution} a distributional solution that belongs to $H^1_0(\Omega)$. As we will show later (see Lemma \ref{lemextest} in Section \ref{sec:uniquenessweak}) a weak solution takes to an equivalent formulation that reads as:
	$$
	\int_{\Omega} A(x)\nabla u\cdot\nabla v = \int_{\Omega}fu^{-\gamma}v,
$$	 
	for every $v\in H^1_0(\Omega)$. 
\triang \end{remark}

\subsection{Existence of distributional solutions for Lebesgue data}\label{exidis}

Here we  state and prove existence and Sobolev regularity of a distributional solution to problem \eqref{pbbo}.  
\begin{theorem}
	\label{boesistenza}
	Let $A$ satisfy \eqref{Abounded} and let $0\le f \in L^m(\Omega)$ with $m\ge1$. Then there exists a distributional solution $u$ of \eqref{pbbo} such that
	\begin{itemize}
		\item[i)] if $\gamma=1$ then $u \in H^1_0(\Omega)$;
		\item[ii)] if $\gamma<1$ and $m\ge \left(\frac{2^*}{1-\gamma}\right)'$ then $u \in H^1_0(\Omega)$, otherwise if $1 \le m <\left(\frac{2^*}{1-\gamma}\right)'$ then $u \in W^{1,\frac{Nm(\gamma+1)}{N-m(1-\gamma)}}_0(\Omega)$;
		\item[iii)] if $\gamma>1$ then $u \in H^1_{\rm{loc}}(\Omega)$.
	\end{itemize}
\end{theorem}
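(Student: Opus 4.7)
I would build the solution through the natural truncated approximation
\begin{equation*}
\begin{cases}
-\operatorname{div}(A(x)\nabla u_n) = \dfrac{f_n}{(u_n+\tfrac{1}{n})^\gamma} & \text{in }\Omega,\\
u_n=0 & \text{on }\partial\Omega,
\end{cases}
\end{equation*}
with $f_n:=T_n(f)$. The right-hand side is bounded by $n^{\gamma+1}$ and continuous in $u_n$, so Schauder's fixed point (combined with standard $L^\infty$ and elliptic regularity) yields a nonnegative solution $u_n\in H^1_0(\Omega)\cap L^\infty(\Omega)$. Testing the difference of the equations for $u_{n+1}$ and $u_n$ with $(u_n-u_{n+1})^+$, and exploiting that both $f_n$ is non-decreasing in $n$ and $(s+1/n)^{-\gamma}$ is non-increasing in $n$, one checks that $\{u_n\}$ is pointwise non-decreasing. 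Since $u_1$ satisfies $-\operatorname{div}(A\nabla u_1)\ge 0$ and $f\not\equiv 0$, the strong maximum principle (together with Hopf, Appendix) guarantees $u_1>0$ in $\Omega$, giving the crucial local lower bound $u_n\ge u_1\ge c_K>0$ on every $K\Subset\Omega$, which tames the singularity $u_n^{-\gamma}$ away from $\partial\Omega$.

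Next come the a priori estimates, split case by case. In case i) and in the subregime of case ii) with $m\ge(2^*/(1-\gamma))'$, I would use $u_n$ itself as test function, obtaining
\begin{equation*}
\alpha\int_\Omega |\nabla u_n|^2\le\int_\Omega\frac{f_n u_n}{(u_n+1/n)^\gamma}\le\int_\Omega fu_n^{1-\gamma}.
\end{equation*}
When $\gamma=1$ the right-hand side is $\le\|f\|_{L^1}$, and when $\gamma<1$ H\"older plus Sobolev yields $\|\nabla u_n\|_2^{1+\gamma}\le C$. For the complementary range $1\le m<(2^*/(1-\gamma))'$ in case ii), I would use the power test $\varphi_n=(u_n+1/n)^{\beta}-(1/n)^{\beta}\in H^1_0(\Omega)$, producing
\begin{equation*}
\int_\Omega (u_n+\tfrac{1}{n})^{\beta-1}|\nabla u_n|^2\le C\int_\Omega f(u_n+\tfrac{1}{n})^{\beta-\gamma},
\end{equation*}
and then combine with the Sobolev inequality applied to $w_n:=(u_n+1/n)^{(\beta+1)/2}-(1/n)^{(\beta+1)/2}$. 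Tuning $\beta$ so that the Lebesgue exponent on the right, namely $m'(\beta-\gamma)$, matches the Sobolev exponent $2^*(\beta+1)/2$ on the left closes the estimate for $u_n$ in a suitable Lebesgue space; a further H\"older splitting $|\nabla u_n|^q=|\nabla u_n|^q(u_n+1/n)^{q(\beta-1)/2}\cdot(u_n+1/n)^{-q(\beta-1)/2}$ delivers the $W^{1,q}_0$ bound with precisely $q=Nm(\gamma+1)/(N-m(1-\gamma))$.

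Case iii) requires a different treatment, since $u$ need not lie in $H^1_0(\Omega)$ (by Theorem \ref{teoreg}). I would first test with $(u_n+1/n)^\gamma-(1/n)^\gamma\in H^1_0(\Omega)$ and use that the resulting right-hand side is bounded by $\|f\|_{L^1}$, which gives a uniform bound on $\int_\Omega|\nabla u_n^{(\gamma+1)/2}|^2$ and hence, via Sobolev embedding, the global information $u_n^{(\gamma+1)/2}\in H^1_0(\Omega)$ uniformly in $n$. For the local $H^1$ estimate, I would pick $\psi\in C^1_c(\Omega)$ and plug $u_n\psi^2$ into the equation: the cross term is absorbed by Cauchy--Schwarz, the gradient of $\psi$ multiplies $u_n^2$ which is locally bounded by the previous step, and the singular term is dominated by $c_K^{1-\gamma}\int f\psi^2$ thanks to the local lower bound. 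This yields the uniform $H^1_{\rm loc}(\Omega)$ estimate.

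Passing to the limit, $u_n\uparrow u$ pointwise by monotonicity, and the uniform bounds give weak convergence of $\nabla u_n$ in the appropriate space. For every $\varphi\in C^1_c(\Omega)$, the linear side passes by weak convergence, while on $\operatorname{supp}\varphi$ the uniform positivity $u_n\ge c_{\operatorname{supp}\varphi}>0$ and the pointwise monotone convergence make $f_n(u_n+1/n)^{-\gamma}\varphi\to fu^{-\gamma}\varphi$ dominated by $c_{\operatorname{supp}\varphi}^{-\gamma}\|\varphi\|_\infty f\in L^1(\Omega)$, so Lebesgue's theorem transfers the right-hand side. Conditions \eqref{lweakdef1} and \eqref{lweakdef3} are thus verified, and \eqref{lweakdef2} follows from the uniform control on $u_n^{(\sigma+1)/2}$ in $H^1_0(\Omega)$ (trivially $\sigma=1$ if $\gamma\le 1$, while if $\gamma>1$ it is exactly the estimate obtained in case iii)). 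The main obstacle I foresee is precisely in case iii): one must be careful to use only globally admissible test functions (powers of $u_n+1/n$ vanishing on $\partial\Omega$) to capture the correct boundary behaviour $u^{(\gamma+1)/2}\in H^1_0(\Omega)$, and one must rely on interior cut-offs for gradient estimates since no $H^1$ control can be expected up to $\partial\Omega$. A secondary technical point is the fine tuning of $\beta$ in case ii) to land on the sharp exponent $q=Nm(\gamma+1)/(N-m(1-\gamma))$.
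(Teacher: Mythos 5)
Your plan coincides with the paper's proof: same monotone approximation, same strong-maximum-principle lower bound $u_n\ge u_1\ge c_\omega>0$, same test functions in cases i) and ii), and the same mechanism for passing to the limit (dominated convergence via the local lower bound for the singular term, weak convergence for the principal part). The only deviations are cosmetic: in case ii) you keep $\varepsilon=1/n$ in the power test function whereas the paper introduces an independent $\varepsilon\to 0^+$ to obtain the clean bound on $u_n^{(\eta+1)/2}$; and in case iii) you get the local $H^1$ bound by a separate cutoff test with $u_n\psi^2$ (relying on the $L^2_{\mathrm{loc}}$ bound on $u_n$ provided by the $u_n^{(\gamma+1)/2}$ estimate), while the paper reads it off more directly from the inequality $c_\omega^{\gamma-1}\int_\omega|\nabla u_n|^2\le\int_\Omega|\nabla u_n|^2 u_n^{\gamma-1}\le\alpha^{-1}\|f\|_{L^1(\Omega)}$ obtained by testing with $u_n^\gamma$; both routes are correct.
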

\begin{proof}
	\noindent We reason by approximation by considering
	\begin{equation}
	\begin{cases}
	\displaystyle - \operatorname{div}(A(x)\nabla u_n)= \frac{f_n}{(u_n+\frac{1}{n})^\gamma} &  \text{in }\, \Omega, \\
	u_n=0 & \text{on}\ \partial \Omega,
	\label{pbn}
	\end{cases}
	\end{equation}
	where $f_n:=T_n(f)$. It follows from standard theory (\cite{ll}) that there exists a nonnegative $u_n\in H^1_0(\Omega)\cap L^\infty(\Omega)$ such that
	\begin{equation}\label{boapprox}
		\int_\Omega A(x)\nabla u_n\cdot \nabla \varphi = \int_\Omega \frac{f_n\varphi}{(u_n+\frac{1}{n})^\gamma}, \ \forall \varphi \in H^1_0(\Omega).
	\end{equation}
	Moreover the sequence $u_n$ is non-decreasing with respect to $n$.
	\\Indeed, we can take $(u_n-u_{n+1})^+$ as a test function in the weak formulation of $\eqref{pbn}_{n}-\eqref{pbn}_{n+1}$ getting 
	\begin{align*}\label{monotoniaapprox}
	\displaystyle \alpha\int_{\Omega} |\nabla (u_n-u_{n+1})^+|^2 &\le \int_{\Omega} \left(\frac{f_n}{(u_n+\frac{1}{n})^\gamma} - \frac{f_{n+1}}{(u_{n+1}+\frac{1}{n+1})^\gamma}\right) (u_n-u_{n+1})^+  \\ \nonumber
	&\le \int_\Omega\left(\frac{f_{n+1}}{(u_n+\frac{1}{n+1})^\gamma} - \frac{f_{n+1}}{(u_{n+1}+\frac{1}{n+1})^\gamma}\right)(u_n-u_{n+1})^+\le 0,
	\end{align*}
	that implies $u_{n+1}\ge u_n$ a.e.  in $\Omega$.
	Moreover, since the right-hand of the equation solved by $u_1$ is not identically zero, it follows from the strong maximum principle that for all $n \in \mathbb{N}$  
	\begin{equation}\label{stimadb}\forall\;\omega\subset\subset\Omega\; \ \ \exists\; c_{\omega}>0: \ u_n\geq u_1\ge  c_{\omega}\;\text{ in }\;\omega.\end{equation}
	
	\noindent {\bf{Proof of i)}.} We take $u_n$ as a test function in the weak formulation of \eqref{pbn} obtaining
	$$\displaystyle \alpha\int_{\Omega} |\nabla u_n|^2 \le \int_{\Omega} \frac{f_n u_n}{u_n+\frac{1}{n}} \le \int_{\Omega} f,$$
	so that $u_n$ is bounded in $H^1_0(\Omega)$.
	 We call $u\in H^1_0(\Omega)$ the weak (and a.e.) limit, up to subsequences, as $n\to \infty$ of the sequence $u_n$. We first  observe that for any $\varphi\in C^{1}_{c}(\Omega)$	 
	 	\begin{equation}\label{4.4}\frac{f_n\varphi}{u_n+\frac{1}{n}}\le \frac{f\varphi}{c_\omega} \in L^1(\Omega)\,,\end{equation}
	so that \eqref{lweakdef1} holds by an application of the Fatou Lemma. Finally we can pass to the limit in  \eqref{boapprox}. In particular one can simply use the weak convergence of $u_n$ on the left-hand and, by \eqref{4.4},  the Lebesgue dominated convergence Theorem on the right one. 

	This concludes the proof   that $u\in H^1_0(\Omega)$ is a solution of \eqref{pbbo} in the case $\gamma=1$. 
	
	\medskip 
	
	\noindent {\bf{Proof of ii)}.}  We take $u_n$ as a test function in \eqref{pbn} and after an application of the H\"older inequality, one obtains
	\begin{equation}
	\displaystyle \alpha\int_{\Omega} |\nabla u_n|^2 \le \int_{\Omega} f_n u_n^{1-\gamma} \le ||f||_{L^{\left(\frac{2^*}{1-\gamma}\right)'}(\Omega)}\left( \int_{\Omega} u_n^{2^*}\right)^{\frac{1-\gamma}{2^*}},
	\label{reviewg<1}
	\end{equation}
	and by  Sobolev inequality 
	\begin{equation*}
	\displaystyle \frac{\alpha}{\mathcal{S}^2_2}\left(\int_{\Omega} u_n^{2^*}\right)^{\frac{2}{2^*}} \le ||f||_{L^{\left(\frac{2^*}{1-\gamma}\right)'}(\Omega)}\left( \int_{\Omega} u_n^{2^*}\right)^{\frac{1-\gamma}{2^*}}.
	\end{equation*}		
	Thus, since ${2}>{1-\gamma}$, we have
	\begin{equation}
	\displaystyle \|u_n\|_{L^{2^*}(\Omega)}\le C.
	\label{reviewg<1stima}
	\end{equation}
	We plug \eqref{reviewg<1stima} into \eqref{reviewg<1} in order to deduce the uniform boundedness of $u_n$ in $H^1_0(\Omega)$ and  then  the same regularity for its weak limit.

	\medskip 
	 Now we turn our attention to case $f\in L^m(\Omega)$ with $1\le m<\left(\frac{2^*}{1-\gamma}\right)'$. For fixed $n$,  let $0<\varepsilon<\frac{1}{n}$ and $\gamma\le\eta<1$. Consider $(u_n+\varepsilon)^{\eta} - \varepsilon^{\eta}$ as a test function in the weak formulation of \eqref{pbn} in order to get
	\begin{equation}
	\displaystyle \eta\alpha\int_{\Omega} |\nabla u_n|^2(u_n+\varepsilon)^{\eta-1} \le \int_{\Omega} f_n (u_n+\varepsilon)^{\eta-\gamma}.
	\label{aprioriminore1}
	\end{equation}
	The left-hand of \eqref{aprioriminore1} is estimated as follows:
	\begin{equation*}
	\begin{aligned}
	\displaystyle \int_{\Omega} |\nabla u_n|^2(u_n+\varepsilon)^{\eta-1} &= \frac{4}{(\eta+1)^2}\int_{\Omega} |\nabla (u_n+\varepsilon)^\frac{\eta+1}{2}-\varepsilon^\frac{\eta+1}{2}|^2
	\\
	&\ge \frac{4}{(\eta+1)^2\mathcal{S}^2_2}\left(\int_{\Omega} |(u_n+\varepsilon)^\frac{\eta+1}{2}-\varepsilon^\frac{\eta+1}{2}|^{2^*}\right)^{\frac{2}{2^*}}.
	\label{aprioriminore11}
	\end{aligned}
	\end{equation*}
	Thus, letting $\varepsilon$ tend to zero,  one gets
	\begin{equation}
\displaystyle \frac{4\eta\alpha}{(\eta+1)^2\mathcal{S}^2_2}\left(\int_{\Omega} u_n^\frac{2^*(\eta+1)}{2}\right)^{\frac{2}{2^*}} \le \int_{\Omega} f_n u_n^{\eta-\gamma}.
\label{aprioriminore111}
\end{equation}	
	If $m=1$ then we take $\eta=\gamma$ in  the previous obtaining  that $u_n$ is bounded in $L^{\frac{N(\gamma+1)}{N-2}}(\Omega)$. 
	
	 Otherwise, in case $m>1$, we apply the H\"older  inequality in \eqref{aprioriminore111}, yielding to  
	\begin{equation*}
	\left(\int_{\Omega} u_n^\frac{2^*(\eta+1)}{2}\right)^{\frac{2}{2^*}} \le C\int_{\Omega} f_n u_n^{\eta-\gamma} \le C||f||_{L^m(\Omega)} \left(\int_{\Omega} u_n^{(\eta-\gamma)m'}\right)^{\frac{1}{m'}}. 
	\end{equation*}
	We choose $\eta$ such that $\frac{2^*(\eta+1)}{2} = (\eta-\gamma)m'$ which takes to  $\eta= \frac{N(m-1)+\gamma m (N-2)}{N-2m}>\gamma$. This implies that $u_n$ is bounded in $L^{\frac{Nm(\gamma+1)}{N-2m}}(\Omega)$. 
	\\ Thus, we deduce
	\begin{align*}
	\int_{\Omega} |\nabla u_n|^q = \int_{\Omega} \frac{|\nabla 
		u_n|^q}{(u_n+\varepsilon)^{\frac{q(1-\eta)}{2}}}(u_n+\varepsilon)^{\frac{q(1-\eta)}{2}} &\le \left(\int_{\Omega} \frac{|\nabla u_n|^2}{(u_n+\varepsilon)^{(1-\eta)}}\right)^\frac{q}{2} \left( \int_{\Omega} (u_n+\varepsilon)^{\frac{(1-\eta)q}{2-q}}\right)^{\frac{2-q}{2}} 
	\\
	& \le C\left( \int_{\Omega} (u_n+\varepsilon)^{\frac{(1-\eta)q}{2-q}}\right)^{\frac{2-q}{2}},
	\label{aprioriminore12}
	\end{align*}
	where if $q=\frac{Nm(\gamma+1)}{N-m(1-\gamma)}$ we have that, by the  choice of $\eta$, $\frac{(1-\eta)q}{2-q}= \frac{Nm(\gamma+1)}{N-2m}$. This implies that $u_n$ is bounded in $W^{1,q}_0(\Omega)$.
	
	Reasoning similarly to the case $\gamma=1$, we pass to the limit in \eqref{pbn} and we  prove the existence of a solution to \eqref{pbbo} satisfying ii). 
	
	\medskip 
	
	{\bf{Proof of iii)}.} We take $u_n^\gamma$ as a test function in the weak formulation of \eqref{pbn} and, recalling \eqref{stimadb}, we obtain that
	$$\displaystyle c_\omega^{\gamma-1}\alpha\int_{\omega} |\nabla u_n|^2 \le \alpha\int_{\Omega} |\nabla u_n|^2u_n^{\gamma-1} \le \int_{\Omega} f_n \le \int_{\Omega} f,$$
	where $\omega$ is any open subset of $\Omega$ such that $\omega\subset\subset \Omega$; this implies that $u_n$ is locally bounded in $H^1(\Omega)$.
	Moreover we have also shown that 
	$$
	\frac{4}{(\gamma+1)^2}\int_{\Omega} |\nabla u_n^{\frac{\gamma+1}{2}}|^2 =\int_{\Omega} |\nabla u_n|^2u_n^{\gamma-1} \le C,
	$$
	from which we deduce that $u_n^{\frac{\gamma+1}{2}}$ is bounded in $H^1_0(\Omega)$.  With the local estimate in mind the proof of the existence of a solution to \eqref{pbbo} satisfying $u \in H^1_{\rm{loc}}(\Omega)$ strictly follows the reasoning of the previous cases.
\end{proof}

\begin{remark} 
We stress once more  the strong regularization effect given by the singular nonlinearity;  this should  be compared with the case $m=1$ and $\gamma=0$. For instance, if $0<\gamma<1$ then 
$$
\left(\frac{2^*}{1-\gamma}\right)'<(2^*)'; 
$$
i.e. finite energy solutions do exists for a larger class of data. Also notice that, in continuity with ii), if $\gamma=1$, solutions are always in $H^1_0(\Omega)$ even if $f$ is merely in $L^1(\Omega)$. Finally, a more subtle regularizing effect appears formally sending $\gamma$ to $0$. In fact, if $f\in L^1(\Omega)$ then $u \in W^{1,\frac{N(\gamma+1)}{N-1+\gamma}}_0(\Omega)$, while for $\gamma=0$, in general,  one only expects solutions to belong to $W^{1,q}_0(\Omega)$ for any $q<\frac{N }{N-1}$.  \triang 
\end{remark}

\subsection{Existence in case of a general nonlinearity}\label{gennon}

Here we extend the result of the previous section to the case of a  general, possibly non-monotone nonlinearity as zero order term;  we look for solutions to the following problem
\begin{equation}
\begin{cases}
\displaystyle - \operatorname{div}(A(x)\nabla u)= h(u)f &  \text{in }\, \Omega, \\
u=0 & \text{on}\ \partial \Omega,
\label{pbgeneralh}
\end{cases}
\end{equation}
where $A$ satisfies \eqref{Abounded}, $f \in L^1(\Omega)$ is nonnegative and $h:[0,\infty)\to [0,\infty]$ is continuous and finite outside the origin such that 
\begin{equation}
\exists\;\gamma \ge 0, \ c_1,s_1>0: \  h(s) \le \frac{c_1}{s^\gamma}\;\text{ if }\;s<s_1,\ \ h(0)\not=0,
\label{h1}
\end{equation} 
and 
\begin{equation}
\displaystyle {\limsup_{s\to \infty}h(s)<\infty.}
\label{h2}
\end{equation}
 Due to the generality of our assumptions we only face the case of a nonnegative datum $f\in L^1(\Omega)$; regularity results in the spirit of Theorem  \ref{teoreg} and Theorem \ref{boesistenza} are presented in Section \ref{sec:reg} provided a control at infinity is also assumed on the nonlinear term and depending on the summability of the datum. 
\begin{remark}
	Let us spend some words on \eqref{h1} and \eqref{h2};	first observe that  \eqref{h1} covers also the case of bounded  function $h$. Due to the lack of conditions at infinity for the function $h$  one can  not expect, in general, regularity as in  \eqref{lweakdef2} to hold; e.g. considering $h\equiv 1$ in \eqref{pbgeneralh} leads to classic elliptic equations with $L^1$-data for which  infinite energy solutions  are known to appear. Then one has to keep in mind that the regularity effect established in the previous section is basically due to the behavior at infinity of the nonlinearity. As we can not expect better regularity than the classical case in which $h\equiv1$, we have to look, as in that case,  to kind of  truncations  of the solution to  satisfy suitable estimates in the energy space.   This in particular affects the way the boundary datum is achieved. \triang \end{remark}

\medskip

 We set the natural extension of the notion of distributional solution for this case:
\begin{defin}\label{defhu}
	A nonnegative function $u\in W^{1,1}_{\rm{loc}}(\Omega)$ is a {\it distributional solution} to problem \eqref{pbgeneralh} if
	\begin{equation}
	h(u)f\in L^1_{\rm{loc}}(\Omega),
	\label{lweakdef21}
	\end{equation}
	\begin{equation}
	T_k^{\frac{\sigma+1}{2}}(u)\in W^{1,1}_0(\Omega), 	\ \forall k>0, 
	\label{lweakdef22}
	\end{equation}
	and
	\begin{equation} 
	\displaystyle \int_{\Omega}A(x)\nabla u \cdot \nabla \varphi =\int_{\Omega} h(u)f\varphi,\ \ \ \forall \varphi \in C^1_c(\Omega).\label{lweakdef23}
	\end{equation}	
\end{defin}

\begin{theorem}
	\label{esistenzah}
	Let $A$ satisfy \eqref{Abounded}, let $0\le f \in L^1(\Omega)$ and let $h$ satisfy \eqref{h1} and \eqref{h2}. Then there exists a distributional solution $u$ of \eqref{pbgeneralh}. Moreover, $u$ is such that $G_k(u)\in W^{1,q}_0(\Omega)$ for every $q<\frac{N}{N-1}$ and $T_k(u) \in H^1_{\rm loc}(\Omega)$ for every $k>0$.
\end{theorem}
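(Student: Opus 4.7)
My plan is a double approximation of both the datum and the nonlinearity. I would set $f_n := T_n(f) \in L^\infty(\Omega)$ and introduce the bounded continuous $h_n(s) := T_n(h(s + 1/n))$, which still inherits from \eqref{h1} the local bound $h_n(s)\leq c_1(s+1/n)^{-\gamma}$. A Schauder fixed point argument applied to the solution map $v\mapsto u$ of the linear problem $-\operatorname{div}(A(x)\nabla u)=h_n(v)f_n$ should then produce a nonnegative $u_n\in H^1_0(\Omega)\cap L^\infty(\Omega)$ with
$$-\operatorname{div}(A(x)\nabla u_n)=h_n(u_n)f_n.$$

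The main obstacle is the lack of monotonicity of $h$, which prevents replaying the approach of Theorem \ref{boesistenza} (where the strong maximum principle applied directly to the monotone sequence $u_n$). My substitute is a comparison argument yielding a \emph{uniform local lower bound} $u_n\geq c_\omega>0$ on each $\omega\subset\subset\Omega$: the hypothesis $h(0)\neq 0$ and the continuity of $h$ provide constants $c_0,s_*>0$ with $h_n(s)\geq c_0$ on $[0,s_*/2]$ for $n$ large; calling $w_n\in H^1_0(\Omega)$ the solution of $-\operatorname{div}(A\nabla w_n)=c_0 f_n$, the function $v_n:=\min(w_n,s_*/2)\in H^1_0(\Omega)$ is a subsolution of the same problem. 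Testing the difference equation against $(v_n-u_n)^+$ and noting that on $\{v_n>u_n\}$ one has $u_n<s_*/2$ and hence $h_n(u_n)\geq c_0$, forces $u_n\geq v_n$, and the strong maximum principle applied to $w_n$ closes the argument.

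Once the lower bound is established, I would derive two decoupled a priori bounds. For $k\geq s_1$, on $\{u_n>k\}$ the assumption \eqref{h2} bounds $h_n(u_n)$ uniformly, so $G_k(u_n)$ satisfies an equation with uniformly $L^1$-bounded right-hand side, and the Boccardo--Gallou\"et technique gives $G_k(u_n)$ bounded in $W^{1,q}_0(\Omega)$ for every $q<\tfrac{N}{N-1}$. Locally, testing with $\phi^2 T_k(u_n)$ for $\phi\in C^1_c(\Omega)$ and using the lower bound on $\operatorname{supp}\phi$ to bound $h_n(u_n)$ by a constant there (via \eqref{h1}--\eqref{h2}) yields $T_k(u_n)$ bounded in $H^1_{\mathrm{loc}}(\Omega)$. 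To recover \eqref{lweakdef22}, I would test with $(T_k(u_n)+1/n)^\sigma-(1/n)^\sigma$, $\sigma$ as in \eqref{sigma}: on $\{u_n\leq s_1\}$ the factor $h_n(u_n)$ is compensated by $(u_n+1/n)^\gamma$ through \eqref{h1}, and on $\{u_n>s_1\}$ it is controlled by \eqref{h2}, producing a uniform $H^1_0$-bound for $T_k^{(\sigma+1)/2}(u_n)$.

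From these bounds, a Rellich--Kondrachov argument extracts a subsequence with $u_n\to u$ a.e.\ and $\nabla u_n\rightharpoonup\nabla u$ in $L^q_{\mathrm{loc}}$, while $u$ inherits the claimed regularity by weak lower semicontinuity. Passing to the limit is then straightforward: for any $\varphi\in C^1_c(\Omega)$, the local lower bound makes $h_n(u_n)$ uniformly bounded on $\operatorname{supp}\varphi$ (using \eqref{h2} to bound $h$ on $[c_\omega,\infty)$), so $h_n(u_n)f_n\varphi\to h(u)f\varphi$ by Lebesgue dominated convergence. This gives both $h(u)f\in L^1_{\mathrm{loc}}(\Omega)$ and the distributional identity \eqref{lweakdef23}.
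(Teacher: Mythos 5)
Your overall strategy --- replacing the paper's $V_\delta$ technique with a uniform local lower bound $u_n \ge c_\omega > 0$ on compact subsets and then passing to the limit by dominated convergence --- is a genuine alternative, and it is precisely the one alluded to in Remark~\ref{upositiva}; for $L^1$ data this route works, and the rest of your a priori estimates ($G_k$ in $W^{1,q}_0$, local $H^1$ bound on $T_k$, the $T_k^{(\sigma+1)/2}$ bound) match the paper's.

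However, the step that is supposed to produce the lower bound is flawed. You claim that $v_n := \min(w_n, s_*/2)$ is a subsolution of $-\operatorname{div}(A\nabla v) = h_n(v)f_n$ (or, implicitly, of the linear problem $-\operatorname{div}(A\nabla v) = c_0 f_n$). This is false: truncating a solution from above at a constant level produces a \emph{nonnegative singular measure} supported on the interface $\{w_n = s_*/2\}$. Concretely, writing $v_n = w_n - (w_n - s_*/2)^+$ and testing with a nonnegative $\psi$, one obtains
\begin{equation*}
\int_\Omega A(x)\nabla v_n \cdot \nabla\psi \;\ge\; \int_{\{w_n < s_*/2\}} c_0 f_n \psi,
\end{equation*}
with the inequality in the \emph{wrong direction}: $v_n$ is a \emph{supersolution}, not a subsolution (a one-dimensional example $w(x) = 1-x^2$, $c=1/2$ already shows that $-\min(w,c)''$ carries positive Dirac masses at $x=\pm 1/\sqrt2$, hence is not $\le -w''$ as a distribution). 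Consequently, when you test the difference against $(v_n - u_n)^+$, both sides of the resulting inequality have the same sign and the argument does not close. The correct way to implement your idea --- and the one used in the paper in the proof of Theorem~\ref{teogammamaggiore} and in \cite{do,ddo} --- is to introduce a continuous, bounded, \emph{non-increasing} minorant $\underline{h}$ with $\underline{h}(0)>0$, $\underline{h}\equiv 0$ on $[s_*/2,\infty)$, satisfying $\underline{h}\le h_n$ for $n$ large, and to solve the auxiliary (monotone) semilinear problem $-\operatorname{div}(A\nabla v_n)=\underline{h}(v_n)f_n$; testing the difference with $(v_n-u_n)^+$ and using that on $\{v_n>u_n\}$ one has $\underline{h}(v_n)\le\underline{h}(u_n)\le h_n(u_n)$ (monotonicity plus $\underline{h}\le h_n$) gives $u_n\ge v_n$, and then the strong maximum principle applied to $v_1$ yields the desired local lower bound. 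Once this repair is made, the rest of your proposal goes through; note finally that the paper deliberately chose the $V_\delta$ route instead because it extends unchanged to measure data, where the lower-bound argument requires additional hypotheses on $h$.
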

\begin{proof}
Let us consider  nonnegative solutions 
\begin{equation}
\begin{cases}
\displaystyle - \operatorname{div}(A(x)\nabla u_n)= h_n(u_n)f_n &  \text{in }\, \Omega, \\
u_n=0 & \text{on}\ \partial \Omega,
\label{pbngeneral}
\end{cases}
\end{equation}
where $h_n(s):=T_n(h(s))$ and $f_n:=T_n(f)$. The existence of such  nonnegative $u_n\in H^1_0(\Omega)\cap L^\infty(\Omega)$ follows by standard theory (see again \cite{ll}). We take $T_j (G_k(u_n))$ ($j,k>0$) as test function in the weak formulation of  \eqref{pbngeneral}, yielding to
$$
\begin{aligned}\label{do_tg}
\displaystyle \alpha\int_{\Omega}|\nabla T_j (G_k(u_n))|^2 &\le \int_{\Omega}  h_n(u_n)f_nT_j (G_k(u_n))\\
&\le   j\displaystyle \sup_{s\in (k,\infty)}h(s) ||f||_{L^1(\Omega)}.
\end{aligned}	 
$$
The previous estimate allows to apply Lemmas $4.1$ and $4.2$ of \cite{b6} in order to deduce that $G_k(u_n)$ is bounded in   $W^{1,q}_{0}(\Omega)$ for every $q<\frac{N}{N-1}$ and for every $k>0$. 

\medskip 
 Now let $\varphi\in C^1_c(\Omega)$ and let take $(T_k(u_n)-k)\varphi^2$ as a test function in  the weak formulation of \eqref{pbngeneral}, obtaining 
\begin{equation*}
	\alpha\int_\Omega |\nabla T_k(u_n)|^2\varphi^2 + 2\int_\Omega A(x)\nabla u_n \cdot \nabla \varphi \varphi (T_k(u_n)-k) \le 0, 
\end{equation*} 
from which, applying the Young inequality, one gets
\begin{equation*}
\alpha\int_\Omega |\nabla T_k(u_n)|^2\varphi^2 \le  2k \beta\varepsilon \int_\Omega |\nabla T_k(u_n)|^2\varphi^2 +  \frac{2k\beta}{{C_\varepsilon}} \int_{\Omega}|\nabla \varphi|^2, 
\end{equation*} 
and taking $\varepsilon$ small enough one obtains that
\begin{equation}\label{reg:tk}
\int_\Omega |\nabla T_k(u_n)|^2\varphi^2 \le C. 
\end{equation}
Hence, from the estimate on $G_k(u_n)$ in $W^{1,q}_{0}(\Omega)$ for every $q<\frac{N}{N-1}$ and from \eqref{reg:tk}, one deduces that $u_n$ is bounded in $W^{1,q}_{\rm{loc}}(\Omega)$ for every $q<\frac{N}{N-1}$; then there exists a function $u\in W^{1,q}_{\rm{loc}}(\Omega)$  to which, up to a subsequence, $u_n$ converges almost everywhere in $\Omega$ and weakly in $W^{1,q}(\omega)$ for any open subset $\omega\subset\subset\Omega$. 
The last estimate we need is a global one which will lead us to check the homogeneous  boundary condition.
In order to show \eqref{lweakdef22}, we take $T_k^\sigma(u_n)$ as a test function in the weak formulation of \eqref{pbngeneral}, deducing 
\begin{equation*}
	\frac{4\alpha \sigma}{(\sigma+1)^2}\int_\Omega |\nabla T_k^{\frac{\sigma+1}{2}}(u_n)|^2 \le c_1\int_{\{u_n \le s_1\}}f + k^\sigma\sup_{s\in (s_1,\infty)}h(s)\int_{\{u_n>s_1\}}f\le C\,;
\end{equation*}
in particular, up to subsequences,  $T_k^{\frac{\sigma+1}{2}}(u_n)$ converges to $T_k^{\frac{\sigma+1}{2}}(u)$ weakly in  $H^1_0(\Omega)$ and this shows \eqref{lweakdef22}.

Now we want to pass to the limit, with respect to  $n$, in  the weak formulation of  \eqref{pbngeneral}.  By weak convergence we easily pass to the limit in  the term involving the principal part.  For the term concerning $h$, in contrast with the power-like model case, we cannot reproduce the  argument of the  previous section (see also Remark \ref{upositiva} below) as no monotonicity is assumed and we need to carefully handle the zones where $u_n$ are small. 
 
 Let us underline that, from here on, we assume that $h(0)=\infty$; indeed, if $h(0)<\infty$, the passage to the limit is an immediate consequence of the Lebesgue Theorem. 

  Now let $\varphi\in C^1_c(\Omega)$ be nonnegative and, as $u_n$ is locally bounded in $W^{1,q}(\Omega)$ for $q<\frac{N}{N-1}$ with respect to $n$, one obtains 
\begin{equation*}
	\int_\Omega h_n(u_n)f_n \varphi = \int_\Omega A(x)\nabla u_n \cdot \nabla \varphi \le \beta||\nabla \varphi||_{L^\infty(\Omega)} \int_{\{\supp \varphi\}} |\nabla u_n|\le C,
\end{equation*}
and an application of the Fatou Lemma  gives   
\begin{equation}\label{lemstimal1fatou}
\int_{\Omega}h(u)f\varphi\le C,
\end{equation}
namely \eqref{lweakdef21}. 
Moreover, recalling that here $h(0)=\infty$, from \eqref{lemstimal1fatou} one deduces
\begin{equation}\label{upos}
\{u = 0\} \subset \{f = 0\}, 
\end{equation}
up to a set of zero Lebesgue measure. Now,  take $\delta>0$ such that $\delta \not\in \{\eta: |u=\eta|>0\}$,  which is at most a countable set, and write 
\begin{equation}\label{rhs}
\int_{\Omega}h_n(u_n)f_n\varphi = \int_{\{u_n\le \delta\}}h_n(u_n)f_n\varphi + \int_{\{u_n> \delta\}}h_n(u_n)f_n\varphi.
\end{equation}
We want to pass to the limit first as $n\to \infty$ and then as $\delta\to 0^+$.
For the second term of \eqref{rhs} we observe 
$$h_n(u_n)f_n\varphi\chi_{\{u_n> \delta\}} \le \sup_{s\in [\delta,\infty)}[h(s)]\ f\varphi \in L^1(\Omega),$$
which permits to apply the Lebesgue Theorem. Hence one has
\begin{equation*}\label{rhs2}
\lim_{n\to \infty}\int_{\{u_n> \delta\}}h_n(u_n)f_n\varphi= \int_{\{u> \delta\}}h(u)f\varphi.
\end{equation*}
Moreover from \eqref{lemstimal1fatou} one gets that 
$$h(u)f\varphi\chi_{\{u> \delta\}} \le h(u)f\varphi \in L^1(\Omega),$$
and then, once again by the Lebesgue Theorem
\begin{equation}\label{rhs21}
\lim_{\delta\to 0^+}\lim_{n\to \infty}\int_{\{u_n> \delta\}}h_n(u_n)f_n\varphi= \int_{\{u> 0\}}h(u)f\varphi.
\end{equation}

\medskip 
What is left is to show that first term of the right-hand of \eqref{rhs} vanishes in $n$ and then in $\delta$; we take $V_{\delta}(u_n)\varphi$ ($V_{\delta}$ is defined in \eqref{not:Vdelta}) as test function in the weak formulation of \eqref{pbngeneral}, obtaining (recall $V'_{\delta}(s)\le 0$ for $s\ge 0$)
\begin{equation*}\label{limn1}
\begin{aligned}
\int_{\{u_n\le \delta\}}h_n(u_n)f_n\varphi\le \int_{\Omega}h_n(u_n)f_nV_{\delta}(u_n)\varphi \le \int_{\Omega}A(x)\nabla u_n\cdot \nabla \varphi V_{\delta}(u_n),
\end{aligned}
\end{equation*}
and then, as $n\to\infty$, by the weak convergence and by the Lebesgue Theorem, one gets
\begin{equation*}\label{limn2}
\begin{aligned}
\limsup_{n\to\infty}\int_{\{u_n\le \delta\}}h_n(u_n)f_n\varphi\le   \beta\int_{\Omega}|\nabla u||\nabla \varphi | V_{\delta}(u).
\end{aligned}
\end{equation*}
We can now take the limit as $\delta\to 0^+$
\begin{equation}\label{limn3}
\begin{aligned}
\lim_{\delta\to 0^+}\limsup_{n\to\infty}\int_{\{u_n\le \delta\}}h_n(u_n)f_n\varphi\le \beta\int_{\{u=0\}}|\nabla u|| \nabla \varphi|= 0.
\end{aligned}
\end{equation}
  Finally observe that \eqref{rhs21}, \eqref{limn3} and \eqref{upos} imply 
\begin{equation*}
\lim_{n\to \infty}\int_{\Omega}h_n(u_n)f_n\varphi = \int_{\{u>0\}}h(u)f\varphi \stackrel{\eqref{upos}
}{=} \int_{\Omega}h(u)f\varphi,
\end{equation*}
which proves \eqref{lweakdef23} and it concludes the proof. 
\end{proof}

\begin{remark}\label{upositiva}
	  As already mentioned,  in the proof of Theorem \ref{esistenzah}, we avoided the use of the maximum principle by employing the function $V_\delta$ defined in \eqref{not:Vdelta}.  It is worth pointing out that one can reason as in \cite{do,ddo} via  a suitable comparison with an  auxiliary problem  which allows to  deduce that for any $\omega\subset\subset\Omega$
	$$\exists n_0, c_\omega>0 : u_n \ge c_\omega >0 \text{ in } \Omega, \ \forall n\ge n_0.$$
Here  the strict positivity of $h$ is required. This is a  technical assumption which is only needed in case of a measure datum (see Section \ref{sec:nonex}). For merely Lebesgue data one can show  that a solution of \eqref{pbgeneralh} exists and survives up to the first degeneration point of $h$, i.e. the smaller $\overline{s}$ such that $h(\overline{s})=0$, that is $u\leq \overline{s}$. Indeed, if this is the case and just to have a rough idea, one can consider the following approximation problems in place of \eqref{pbngeneral}:
\begin{equation}
	\begin{cases}
		\displaystyle - \operatorname{div}(A(x)\nabla u_n)= \tilde{h}_n(u_n)f_n &  \text{in }\, \Omega, \\
		u_n=0 & \text{on}\ \partial \Omega,
		\label{pbngeneraldeg}
	\end{cases}
\end{equation}
where $\tilde{h}(s) = h(s)$ if $s\le \overline{s}$ and $\tilde{h}(s) = 0$ if $s>\overline{s}$.
\\
Then if one takes $G_{\overline{s}}(u_n)$ as test function in the weak formulation of \eqref{pbngeneraldeg}, after some calculations one yields to $$\int_\Omega |\nabla G_{\overline{s}}(u_n)|^2 = 0,$$
which means that
\begin{equation}\label{hdegen}
	u_n\le \overline{s} \ \ \ \text{a.e. in } \Omega,
\end{equation}
and so $u\le \overline{s}$ almost everywhere in $\Omega$. 
By definition of $\tilde{h}$, \eqref{hdegen}  implies that $u_n$ is  a solution of \eqref{pbngeneral} and, as proven in Theorem \ref{esistenzah},  its almost everywhere limit $u$ is a solution to \eqref{pbgeneralh}.   
\triang \end{remark}

We summarize the (weak) regularity results we have: 

\begin{center}
	\begin{table}[H]
		\setlength{\tabcolsep}{8pt}
		\renewcommand{\arraystretch}{2.4} 
		\begin{tabular}{ | c | c | c | }  
			\cline{1-3}
			$h(s)$ & $f\in L^m(\Omega)$ & Regularity of $u$
			\\
			\hline
			\multirow{2}{*}{$s^{-\gamma}$, $\gamma<1$}
			&$m\ge \left(\frac{2^*}{1-\gamma}\right)'$
			& $u\in H^1_0(\Omega)$
			\\\cline{2-3}
			&
			$1 \le m <\left(\frac{2^*}{1-\gamma}\right)'$
 			& $u\in W^{1,\frac{Nm(\gamma+1)}{N-m(1-\gamma)}}_0(\Omega)$
			\\\cline{1-3}
			$s^{-\gamma}$, $\gamma=1$
			&$m = 1 $
			& $u\in H^1_0(\Omega)$
			\\\cline{1-3}
			$s^{-\gamma}$, $\gamma>1$
			&$m = 1 $
			& $u\in H^1_{\rm loc}(\Omega)$\,,\ \   $u^{\frac{\gamma+1}{2}}\in H^1_{0}(\Omega)$
			\\\cline{1-3}
		    \eqref{h1} with $\gamma\le 1$, \eqref{h2}
			&$m = 1 $
			& $u\in W^{1,q}_0(\Omega)$  $\forall q<\frac{N}{N-1}$
			\\\cline{1-3}
			\multirow{2}{*}{\eqref{h1} with $\gamma > 1$, \eqref{h2}}
			&\multirow{2}{*}{$m = 1$}
			& $G_k(u)\in W^{1,q}_0(\Omega)$ $\forall q<\frac{N}{N-1},$ $\forall k>0$ \\
			&
			&
			 $T_k(u)\in H^1_{\rm loc}(\Omega)$ $\forall k>0$
			\\\cline{1-3}			
		\end{tabular}
		\vspace*{3mm}
		\caption{Regularity of solutions to \eqref{pbgeneralh}} 
	\end{table}
\end{center}

\subsection{Uniqueness of a solution}	
\label{sec:uniquenessweak}

In this section we show that, provided $h$ is non-increasing,  there exists at most one weak solution to \eqref{pbgeneralh}, i.e. a distributional solution in $H^1_0(\Omega)$.  More general uniqueness instances will be illustrated  later in Section \ref{sec:measureex} (renormalized solutions) and Section \ref{op2_uniqueness_distributional} (infinite energy distributional solutions).

\medskip

Here the strategy relies on the possibility to extend the set of test functions in formulation \eqref{lweakdef23} aiming to consider  any  solution  as test function in order to  provide a comparison principle among two different solutions. 

\medskip 

This is the content of the following: 

\begin{lemma}\label{lemextest}
 Let $0\le f\in L^1(\Omega)$ and let  $u$ be a weak solution  to \eqref{pbgeneralh} in the sense of Definition \ref{defhu}. Then  
\begin{equation}\label{uni5}
\displaystyle\int_{\Omega} A(x)\nabla u \cdot \nabla v = \int_{\Omega} h(u)fv , \ \ \ \forall v \in H^1_0(\Omega).
\end{equation}
\end{lemma}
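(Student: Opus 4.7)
The plan is to progressively enlarge the class of admissible test functions in \eqref{lweakdef23} from $C^1_c(\Omega)$ to all of $H^1_0(\Omega)$, through a sequence of approximations.

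\textbf{Step 1 (compactly supported bounded test functions).} For $v\in H^1_0(\Omega)\cap L^\infty(\Omega)$ with $\mathrm{supp}(v)\subset\subset \Omega$, density of $C^\infty_c(\Omega)$ (via mollification combined with truncation) produces $\varphi_n \in C^\infty_c(\Omega)$ with $|\varphi_n| \leq \|v\|_{L^\infty(\Omega)} + 1$, supports uniformly contained in a fixed compact $K' \subset\subset \Omega$, and $\varphi_n \to v$ in $H^1_0(\Omega)$ and a.e.\ (along a subsequence). Taking $\varphi=\varphi_n$ in \eqref{lweakdef23} and passing to the limit: the left-hand side converges by $H^1_0$-convergence of the gradients, while the right-hand side converges by dominated convergence, using the integrable dominant $(\|v\|_\infty + 1)\,h(u)f\,\chi_{K'}\in L^1(\Omega)$, guaranteed by \eqref{lweakdef21}.

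\textbf{Step 2 (summability $h(u)fu \in L^1$).} Since $0\le u\in H^1_0(\Omega)$, one can select a sequence $\varphi_n\in C^\infty_c(\Omega)$ with $\varphi_n \ge 0$, $\varphi_n \to u$ in $H^1_0(\Omega)$ and a.e. Plugging $\varphi=\varphi_n$ into \eqref{lweakdef23}, the left-hand side tends to $\int_\Omega A(x)\nabla u\cdot \nabla u<\infty$, and Fatou's lemma applied to the nonnegative right-hand side yields
\[
\int_\Omega h(u) f u \;\leq\; \liminf_{n\to\infty}\int_\Omega h(u) f \varphi_n \;=\; \int_\Omega A(x)\nabla u\cdot \nabla u \;<\;\infty.
\]

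\textbf{Step 3 (bounded test functions).} For nonnegative $v \in H^1_0(\Omega)\cap L^\infty(\Omega)$, first apply Fatou exactly as above (with any $\varphi_n\in C^\infty_c$, $\varphi_n\ge0$, $\varphi_n\to v$ in $H^1_0$ and a.e.) to get $\int_\Omega h(u)fv \leq \int_\Omega A(x)\nabla u\cdot \nabla v <\infty$, so $h(u)fv \in L^1(\Omega)$. Then set $\tilde \varphi_n := \min(\varphi_n, v)$, which lies in $H^1_0(\Omega)\cap L^\infty(\Omega)$, has compact support inherited from $\varphi_n$, satisfies $\tilde\varphi_n\le v$, and converges to $v$ both in $H^1_0(\Omega)$ (by the continuity of lattice operations) and a.e. Applying Step 1 to $\tilde\varphi_n$, the left-hand side converges by $H^1_0$-convergence and the right-hand side converges by dominated convergence with dominant $h(u)fv\in L^1(\Omega)$, giving \eqref{uni5} for nonnegative $v\in H^1_0\cap L^\infty$. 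The general bounded case follows writing $v = v^+ - v^-$.

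\textbf{Step 4 (general $v \in H^1_0$) and main obstacle.} Decompose $v=v^+-v^-$ and apply Step 3 to $T_k(v^\pm)\in H^1_0\cap L^\infty$:
\[
\int_\Omega A(x)\nabla u\cdot \nabla T_k(v^\pm) \;=\; \int_\Omega h(u)f\,T_k(v^\pm).
\]
On the left $\nabla T_k(v^\pm)\to \nabla v^\pm$ a.e.\ with $|\nabla T_k(v^\pm)|\le |\nabla v^\pm|\in L^2$, so dominated convergence gives the limit $\int_\Omega A(x)\nabla u\cdot \nabla v^\pm$; on the right $T_k(v^\pm)\nearrow v^\pm$ pointwise, so monotone convergence gives $\int_\Omega h(u)f v^\pm$. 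Finiteness of the left-hand limit forces $h(u)fv^\pm \in L^1(\Omega)$ and equality, and subtracting yields \eqref{uni5} for $v$. The one delicate point is Step 3: producing a compactly supported $H^1_0$-approximation of $v$ that stays below $v$, so that dominated (rather than Fatou) convergence can be applied on the right-hand side; the device $\tilde \varphi_n = \min(\varphi_n, v)$ overcomes this thanks to the continuity of the min operation in $H^1_0(\Omega)$.
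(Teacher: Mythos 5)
Your argument is correct and rests on the same core technique as the paper's proof: approximate $v$ from below by compactly supported $H^1_0(\Omega)\cap L^\infty(\Omega)$ functions (your $\min(\varphi_n,v)$ is the same object as the paper's $v\wedge\phi_n$), then pass to the limit using Fatou's lemma on the singular right-hand side and $H^1_0$-convergence on the bilinear form, with the resulting $L^1$ bound feeding a dominated-convergence step. The differences are organizational rather than substantive: the paper uses a double-indexed mollified sequence $\rho_\eta\ast(v\wedge\phi_n)$ and $\ast$-weak $L^\infty$ convergence to run both limits, whereas you split the reduction to compactly supported bounded test functions into Steps 1--3 and handle unbounded $v$ separately via truncation in Step 4; the paper obtains the Fatou bound via Young's inequality applied to $\int_\Omega A\nabla u\cdot\nabla v_n$, while you use directly that $\int_\Omega A\nabla u\cdot\nabla\varphi_n$ converges, which is a touch cleaner. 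Your Step 2 is dispensable, being the special case $v=u$ of Step 4, but otherwise the argument is complete and in fact slightly more explicit than the paper's, which leaves the reduction to nonnegative $v$ and the passage from bounded to general test functions implicit.
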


\begin{proof}
	 Let  $u	\in H^1_0(\Omega)$ satisfy 
\begin{equation}
\displaystyle\int_{\Omega} A(x)\nabla u \cdot \nabla \varphi = \int_{\Omega} h(u)f\varphi , \ \ \ \forall \varphi \in C^1_c(\Omega).
\label{distrsol}
\end{equation}	
We consider a nonnegative $v\in H^1_0(\Omega)$ and also a sequence of nonnegative functions $v_{\eta,n}\in C^1_c(\Omega)$ such that  (we call $v_n$ the almost everywhere limit of $v_{\eta,n}$ as $\eta \to 0^+$)
	\begin{equation}\label{propapprox}
	\begin{cases}
	v_{\eta,n} \stackrel{\eta  \to 0^+}{\to} v_{n}  \ \ \ \text{in } H^1_0(\Omega) \text{ and } \text{$*$-weakly in } L^\infty(\Omega), 
	\\
	v_{n} \stackrel{n  \to \infty}{\to} v \ \ \ \ \ \text{in } H^1_0(\Omega),
	\\
	\supp v_n\subset \subset \Omega: 0\le v_n\le v \ \ \ \text{for all } n\in \mathbb{N}.
	\end{cases}
	\end{equation}
	An example of $v_{\eta,n}$ is $\rho_\eta \ast (v \wedge \phi_n)$ ($v \wedge \phi_n:= \inf (v,\phi_n)$) where $\rho_\eta$ is a smooth mollifier while $\phi_n$ is a sequence of nonnegative functions in $C^1_c(\Omega)$ which converges to $v$ in $H^1_0(\Omega)$.
	Hence we can take $v_{\eta,n}$ as a test function in \eqref{distrsol} deducing that 
	\begin{equation*}\label{uni1}
	\int_{\Omega} A(x)\nabla u\cdot\nabla 	v_{\eta,n}  = \int_{\Omega}h(u)f v_{\eta,n}.
	\end{equation*}
	We want to take first $\eta \to 0^+$ and then $n\to \infty$. Since  $v_{\eta,n}$ converges to $v_n$ in $H^1_0(\Omega)$ we can pass to the limit  on the left-hand. For the right-hand one has that
	 $h(u)f \in L^1_{\textrm{loc}}(\Omega)$ and we  pass here to the limit since $v_{\eta,n}$ converges $*$-weakly in $L^\infty(\Omega)$ to $v_n$ which has compact support in $\Omega$. Hence one gets
	\begin{equation}\label{uni2}
	\int_{\Omega} A(x)\nabla u\cdot\nabla v_n  = \int_{\Omega}h(u)fv_n.
	\end{equation} 	
	Now an application of the Young inequality yields to 
	\begin{equation*}\label{uni3}
	\int_{\Omega}h(u)fv_n \le \beta\int_{\Omega} |\nabla u|^{2} + \beta\int_{\Omega}|\nabla v_n|^2,	
	\end{equation*}	
	and, by \eqref{propapprox}, the right-hand of the previous is bounded with respect to $n$. Hence one can apply the Fatou Lemma with respect to $n$, obtaining 
	\begin{equation*}\label{uni4}
	\int_{\Omega}h(u)fv \le C.	
	\end{equation*}	
	Now  we can easily pass to the limit as $n\to \infty$ in \eqref{uni2}. Indeed for the right-hand one can apply the Lebesgue Theorem since 
	$$h(u)fv_n\le h(u)fv\in L^1(\Omega).$$
	Therefore this easily implies that 
	\begin{equation*}
	\int_{\Omega} A(x)\nabla u\cdot\nabla v = \int_{\Omega}h(u)fv,
	\end{equation*} 		 
	for every $v\in H^1_0(\Omega)$.  
	\end{proof}
	
	\begin{theorem}\label{boca}
  Let $0\le f\in L^1(\Omega)$ and let $h$ be non-increasing. There is at most one weak solution to \eqref{pbgeneralh} in the sense of Definition \ref{defhu}. 
\end{theorem}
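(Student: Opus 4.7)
The plan is to exploit the extension of admissible test functions provided by Lemma \ref{lemextest}: since any weak solution lives in $H^1_0(\Omega)$, the difference of two such solutions (and in particular its positive part) is an admissible test function in the formulation of the equation. Coupling this with the ellipticity of $A$ and the monotonicity of $h$ should yield the desired comparison.

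More precisely, let $u_1, u_2 \in H^1_0(\Omega)$ be two weak solutions to \eqref{pbgeneralh}. The key observation is that $(u_1-u_2)^+ \in H^1_0(\Omega)$ is nonnegative, so by Lemma \ref{lemextest} it is an admissible test function in the equation satisfied by each $u_i$. Writing the two identities
$$
\int_{\Omega} A(x)\nabla u_i \cdot \nabla (u_1-u_2)^+ = \int_{\Omega} h(u_i)\, f\, (u_1-u_2)^+, \qquad i=1,2,
$$
and subtracting the second from the first, the left-hand side becomes
$$
\int_{\Omega} A(x)\nabla (u_1-u_2)\cdot \nabla (u_1-u_2)^+ \;=\; \int_{\Omega} A(x)\nabla (u_1-u_2)^+ \cdot \nabla (u_1-u_2)^+ \;\geq\; \alpha \int_{\Omega} |\nabla (u_1-u_2)^+|^2,
$$
by \eqref{Abounded}. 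On the right-hand side, on the set $\{u_1>u_2\}$ the monotonicity of $h$ gives $h(u_1)-h(u_2)\leq 0$, whereas on $\{u_1\leq u_2\}$ the factor $(u_1-u_2)^+$ vanishes, so the whole integral is non-positive. Combining both sides forces $\nabla (u_1-u_2)^+ \equiv 0$ in $\Omega$, and since $(u_1-u_2)^+ \in H^1_0(\Omega)$, we conclude $(u_1-u_2)^+=0$ a.e., i.e. $u_1\leq u_2$ a.e.; swapping the roles of $u_1$ and $u_2$ yields equality.

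The main delicate point is to make sure that the subtraction on the right-hand side is legitimate, that is, that each of the two integrals $\int_\Omega h(u_i)\,f\,(u_1-u_2)^+$ is finite in its own right (otherwise we would be handling an $\infty-\infty$ indeterminacy, which would be especially worrisome near the singular set $\{u=0\}$ where $h(u)f$ is only \emph{locally} $L^1$ by \eqref{lweakdef21}). This is exactly the content of Lemma \ref{lemextest} applied separately with $v=(u_1-u_2)^+\in H^1_0(\Omega)$ to each solution, which guarantees both integrals are finite and that the two identities above hold in the classical sense; once this is granted, the rest of the argument is a direct application of ellipticity and monotonicity.
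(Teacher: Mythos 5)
Your proof is correct and takes essentially the same route as the paper's: both rely on Lemma~\ref{lemextest} to admit the difference of the two solutions (you use $(u_1-u_2)^+$, the paper uses $u_1-u_2$ directly) as a test function, then combine ellipticity of $A$ with monotonicity of $h$ to force the gradient of the difference to vanish. The only cosmetic difference is that the paper obtains $u_1=u_2$ in one step via the pointwise inequality $(h(u_1)-h(u_2))(u_1-u_2)\le 0$, whereas you obtain $u_1\le u_2$ and then swap; and your remark about the legitimacy of the subtraction is indeed exactly what Lemma~\ref{lemextest} guarantees.
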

	\begin{proof}
	  Let $u_1, u_2 \in H^1_0(\Omega)$ be distributional solutions to \eqref{pbgeneralh}. Then it follows by Lemma \ref{lemextest}   that one can take $u_1-u_2$ as a test function in difference between formulation \eqref{uni5} solved respectively   by $u_1$ and by $u_2$. 
	
	  As $h$ is non-increasing, one obtains 
	\begin{equation*}\label{uni6}
	\alpha\io |\nabla u_1 - \nabla u_2|^2\le  \int_{\Omega}(h(u_1)-h(u_2))f(u_1-u_2) \le 0,
	\end{equation*} 	
	which implies that $u_1=u_2$ almost everywhere in $\Omega$. This concludes the proof.  
\end{proof}
\begin{remark}\label{remarkgeneralizzazione}
	The above result can be easily extended to the case of a more general nonlinear leading term, i.e.  to the following type of problems
	\begin{equation*}
	\begin{cases}
	\displaystyle - \operatorname{div} (a(x,u))= h(u)f &  \text{in }\, \Omega, \\
	u=0 & \text{on}\ \partial \Omega,
	\end{cases}
	\end{equation*}
	where $a(x,u)$ has Leray-Lions structure type (i.e. it satisfies \eqref{cara1}-\eqref{cara3} below) and $h$ is a non-increasing function.  
	
	We also remark that  uniqueness of solutions for the case of the $p$-Laplacian is proven in   \cite{CST} by introducing a slightly different 
notion of solution. The more relevant difference with the case we treat here relies in the boundary datum that, in \cite{CST} is assumed by asking that $(u-\varepsilon)^+$ belongs to $W^{1,p}_{0}(\Omega)$, for any $\varepsilon>0$.

	Also observe that the fact that a distributional solution of  \eqref{pbgeneralh} satisfies 
	\eqref{uni5} has a proper independent interest as it shows that a stronger formulation holds even if, in general, $h(u)f$ needs not to be in $L^1(\Omega)$. 
\triang \end{remark}
\begin{remark}
	  Theorem \ref{boca} gives a basic proof of uniqueness for weak solutions.  It is worth to point out that, in Theorem \ref{boesistenza}, we have seen that solutions are in $H^1_0(\Omega)$ if $\gamma<1$ and $f\in L^{\left(\frac{2^*}{1-\gamma}\right)'}(\Omega)$ where we read $\left(\frac{2^*}{1-\gamma}\right)'=1$ if $\gamma=1$.   Then a first interesting question relies on understanding if solutions lying outside of $H^1_0(\Omega)$ actually exist; as shown below, this will be the case since, in general, one can provide instances of infinite energy solutions.  On the other hand, it is of interest comprehend if there are  situations  where the solutions have finite energy, especially in case $\gamma>1$.  
\triang \end{remark}

\section{Weak regularity of the solution}
\label{sec:reg}

 In Section \ref{sec:uniquenessweak} we pointed out as the Sobolev  regularity of the distributional solution is strictly related to its uniqueness; therefore we aim to analyze various occurrences of problem \eqref{pbgeneralh} in which  a solution in $H^1_0(\Omega)$ does exist. The results will obviously depend on both the regularity of the datum $f$ and on the behavior of the nonlinearity $h$.  We already know that, in the model case $h(s)=s^{-\gamma}$,  finite energy solutions exist in case $\gamma<3$ for a smooth datum $f$ (Theorem \ref{teoreg}) and for any integrable $f$ if $\gamma=1$ (Theorem \ref{boesistenza}). Instances of infinite energy solutions can also be produced; besides the smooth case with $\gamma\geq 3$ some further examples will be provided in Example \ref{ex} below. In this section we push forward this analysis to the case of a generic Lebesgue datum $f$ and a merely continuous nonlinearity $h$. 

\medskip

We underline that in the sequel $\Omega$ is a regular bounded open subset of $\mathbb{R}^N$; we will specify the results where the boundary regularity is not needed. For more comments on the boundary of $\Omega$  we refer to Remark \ref{rembordo} below.

\medskip

Precisely we look for a solution $u\in H^1_0(\Omega)$ to the following
\begin{equation}
\begin{cases}
\displaystyle -\operatorname{div} (A(x)\nabla u)= h(u)f &  \text{in}\,\ \Omega, \\
u=0 & \text{on}\ \partial \Omega,
\end{cases}\label{op2_regular}
\end{equation}
where, for most of the section, $A$ is a symmetric matrix   satisfying 
\begin{equation}
A\in C^{0,1}(\overline{\Omega}): \ \exists \alpha,\beta >0  \ \ A(x)\xi \cdot \xi \ge \alpha|\xi|^2, \ |A(x)|\le \beta.\label{lip}
\end{equation}  
The nonnegative datum $f\in L^m(\Omega)$ with $m\ge 1$.  The function $h:[0,\infty)\to [0,\infty]$ is continuous, finite outside the origin,  satisfying  \eqref{h1},  and for this section we could require a  specific behavior at infinity:
\begin{equation}\label{h2bis}
\exists\;\theta\ge 0, c_2,s_2>0: \ h(s) \le \frac{c_2}{s^\theta}\; \ \text{if}\ \;s>s_2.
\end{equation}
The above request is someway natural as confirmed by Example \ref{exnoth1} below which shows that, in general, solutions have not finite energy if $h$ does not degenerate at infinity at a certain rate.

\medskip

Let us stress that for the current section by $u_n$ we mean the  nonnegative solution to 
\begin{equation}\label{pbgeneralapprox}\begin{cases}
-\operatorname{div}(A(x)\nabla u_n)= h_n(u_n)f_n & \text{in} \ \Omega,\\
u_n=0 & \text{on}\ \partial\Omega,
\end{cases}\end{equation}
where  $h_n(s):= T_n(h(s))$  and $f_n:= T_n(f)$. We observe that, in Theorem \ref{esistenzah}, we have shown that $u_n$ converges to a distributional solution $u$ to \eqref{op2_regular}. Thus, our aim becomes proving estimates on the sequence $u_n$ in $H^1_0(\Omega)$; this will be sufficient to deduce the existence of a distributional solution $u\in H^1_0(\Omega)$. We also highlight that, if the function $h$ is non-increasing and the datum $f\in L^1(\Omega)$ then, as already shown in Theorem \ref{boesistenza}, one can prove that $u_n$ is non-decreasing in $n$.
 We finally recall that in Theorem \ref{boesistenza} we have already shown that, in case $A$ is an elliptic and bounded matrix, $u$ is in $H^1_0(\Omega)$ under the following assumptions:
\begin{itemize}
	\item [i)] if $h(s)= \frac{1}{s}$ and $0\le f\in L^1(\Omega)$,
	\item [ii)] if $h(s)= \frac{1}{s^\gamma}$ with $\gamma<1$ and $0\le f\in L^{(\frac{2^*}{1-\gamma})'}(\Omega)$.	
\end{itemize}
Moreover, if  $\gamma>1$, the solutions found in Theorem \ref{boesistenza} only belong to $H^1(\Omega)$ locally, and not, in general,  to $H^1_0(\Omega)$; see also Example \ref{ex} below.

\subsection{Case $\gamma\le 1$}
\noindent First of all, we present a generalization of the result i) listed above. In some sense we look for the right behaviour of $h$ at the origin and at  infinity in order to have finite energy solutions even in presence of merely integrable data $f$'s. It is also worth to point out that the following theorem does not need any regularity on $\Omega$ nor on the matrix $A$ apart from boundedness; moreover it can be simply extended to nonlinear operators satisfying Leray-Lions structure conditions.
\begin{theorem}\label{reg_generico}
	Let $A$ satisfy \eqref{Abounded} and let $h$ satisfy \eqref{h1} and \eqref{h2bis} where $\gamma\le1$ and $\theta\ge1$. If $0\le f\in L^1(\Omega)$ then there exists a solution $u\in H^1_0(\Omega)$ to problem \eqref{op2_regular}.
\end{theorem}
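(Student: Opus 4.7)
The natural plan is to work with the approximating problem \eqref{pbgeneralapprox}, whose solutions $u_n \in H^1_0(\Omega) \cap L^\infty(\Omega)$ exist by standard theory, and to derive an $H^1_0(\Omega)$--estimate uniform in $n$. Once this is in hand, a weakly convergent subsequence produces a candidate limit $u \in H^1_0(\Omega)$; the identification of $u$ as a distributional solution of \eqref{op2_regular} then follows immediately from Theorem \ref{esistenzah} (no regularity of $\partial\Omega$ nor of $A$ is required, which matches the statement).

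The core of the argument is to take $u_n$ itself as test function in the weak formulation of \eqref{pbgeneralapprox}. By \eqref{Abounded} this yields
\begin{equation*}
\alpha \int_\Omega |\nabla u_n|^2 \le \int_\Omega h_n(u_n)\, f_n\, u_n \le \int_\Omega h(u_n)\, f\, u_n,
\end{equation*}
and I would split the right-hand side according to the three regimes of $h$:
\begin{equation*}
\int_\Omega h(u_n)\, f\, u_n = \int_{\{u_n \le s_1\}} + \int_{\{s_1 < u_n \le s_2\}} + \int_{\{u_n > s_2\}}.
\end{equation*}
On $\{u_n \le s_1\}$ assumption \eqref{h1} gives $h(u_n) u_n \le c_1 u_n^{1-\gamma} \le c_1 s_1^{1-\gamma}$ because $\gamma \le 1$; on $\{s_1 < u_n \le s_2\}$ continuity of $h$ yields $h(u_n) u_n \le s_2 \max_{[s_1,s_2]} h$; on $\{u_n > s_2\}$ assumption \eqref{h2bis} gives $h(u_n) u_n \le c_2 u_n^{1-\theta} \le c_2 s_2^{1-\theta}$ because $\theta \ge 1$. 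All three quantities are bounded by a constant depending only on the data, so
\begin{equation*}
\alpha \int_\Omega |\nabla u_n|^2 \le C\, \|f\|_{L^1(\Omega)},
\end{equation*}
with $C$ independent of $n$. This is the key estimate.

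From this uniform bound one extracts a subsequence $u_n \rightharpoonup u$ in $H^1_0(\Omega)$ (and a.e. in $\Omega$, up to a further subsequence). Theorem \ref{esistenzah} applies verbatim in the present setting and guarantees that the almost-everywhere limit $u$ is a distributional solution to \eqref{op2_regular} in the sense of Definition \ref{defhu}; combined with $u \in H^1_0(\Omega)$ this finishes the proof. I do not foresee a real obstacle: the mechanism is simply that the hypotheses $\gamma \le 1$ and $\theta \ge 1$ jointly force $s \mapsto s\, h(s)$ to be bounded on $(0,\infty)$, which is precisely what is needed to get an energy bound from the natural test function $u_n$, even for a merely $L^1$ datum.
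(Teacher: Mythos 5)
Your proposal is correct and follows essentially the same route as the paper: test the approximating problem \eqref{pbgeneralapprox} with $u_n$, split the right-hand side into the three regimes of $h$, and use $\gamma\le 1$ (near zero) and $\theta\ge 1$ (near infinity) to bound $s\,h(s)$ so that the resulting $H^1_0(\Omega)$-estimate is uniform in $n$. The paper then invokes (implicitly) the passage-to-the-limit machinery of Section \ref{sec:integrable}, exactly as you do via Theorem \ref{esistenzah}.
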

\begin{proof}
	\noindent 
	 As we have already pointed out, we need some a priori estimates on the sequence of approximating solutions $u_n$ to \eqref{pbgeneralapprox} in $H^1_0(\Omega)$, so that we can proceed as in Section \ref{sec:integrable} to pass to the limit deducing the existence of the  sought solution.	
	\medskip
	
	To this aim, we take $u_n$ as a test function in \eqref{pbgeneralapprox} obtaining 
	\begin{align*}
	\alpha\int_{\Omega} |\nabla u_n|^2 &\le\int_{\Omega} h_n(u_n)f_n u_n \leq c_1\int_{\{u_n<s_1\}}u_n^{1-\gamma}f_n+ \max_{s\in[s_1,s_2]}h(s)\int_{\{s_1\le u_n \le s_2\}}f_n u_n \notag 
	\\
	&+c_2\int_{\{u_n>s_2\}}u_n^{1-\theta}f_n\le c_1s_1^{1-\gamma}\int_{\{u_n<s_1\}}f + s_2\max_{s\in[s_1,s_2]}h(s)\int_{\{s_1\le u_n \le s_2\}}f  
	\\
	&+c_2s_2^{1-\theta}\int_{\{u_n>s_2\}}f \le C.
	\label{stimavicino0}
	\end{align*}
	The proof is concluded.
\end{proof}

\noindent We now highlight that a milder control on $h$ at infinity (namely $\theta < 1$) is not enough in order to ensure the existence of finite energy solutions as the following example shows.
\begin{example}\label{exnoth1}
	For $N>2$ we fix $\theta<1$ and we choose two parameters $m$ and $q$ such that 
	$$1\le m< q<\frac{2N}{N+2} \text{   and   } \theta\le\frac{N(q-m)}{m(N-2q)}.$$
	Let us consider the following
	\begin{equation*}
	\begin{cases}
	\displaystyle -\Delta u= f &  \text{in}\ \Omega, \\
	u=0 & \text{on}\ \partial \Omega,
	\end{cases}
	\end{equation*}
	where $0\le f\in L^q(\Omega)$. By classical regularity results, $u$ is known to be in $W^{1,q^*}_0(\Omega)$ ($q^*$ is the Sobolev embedding exponent) but not, in general,  in $H^1_0(\Omega)$; we fix an $f$ doing this job. 
	
	\medskip 
	
	Now consider a continuous and positive $h$ on which we assume the following control from below
	$$
	\exists\;\theta <1, \ \ c_2,s_2>0: \ h(s) \ge \frac{c_2}{s^\theta}\; \ \text{if}\ \;s>s_2.
	$$ 
	Then we have that
	\begin{equation*}
	\begin{cases}
	\displaystyle -\Delta u= h(u)g &  \text{in}\ \Omega, \\
	u=0 & \text{on}\ \partial \Omega,
	\end{cases}
	\end{equation*}
	where $g:= fh(u)^{-1}$.
	Observe that  $g\in L^m(\Omega)$: indeed, since $m<q$, we have using the H\"older inequality
	\begin{align*}
	\displaystyle \int_\Omega g^m &\le  \sup_{s\in [0,s_2)} h(s)^{-m} \int_{\{u\leq s_2\}} f^m + \frac{1}{c_2^m}\int_{\{u>s_2\}} f^mu^{\theta m}
	\\
	&\le \sup_{s\in [0,s_2)} h(s)^{-m} \int_{\{u\leq s_2\}} f^m + C\left( \int_{\Omega} u ^{\frac{\theta m q}{q-m}}\right)^{\frac{q-m}{q}}<\infty,
	\end{align*}
	since $\frac{\theta m q}{q-m} \le \frac{q N}{N- 2q}=q^{**}$. Thus, we have found a solution to our singular problem not belonging to $H^1_0(\Omega)$ in case $\theta < 1$. Let us also observe that the behaviour at zero plays no roles. 
\end{example}	

\medskip
	
The assumption on $\theta$ can be relaxed by assuming some further requests on $f$, namely more regularity inside and a control near the boundary of the following type (recall $\Omega_\varepsilon$ is defined in
 \eqref{not:omegaeps})
\begin{equation}\label{condfaltoL1}
f(x)\le \frac{C}{d(x)} \text{  a.e.  in }\Omega_\varepsilon, 
\end{equation}
where $\varepsilon$ is small enough in order to guarantee that $\Omega\setminus\overline{\Omega}_\varepsilon$  is smooth and compactly contained in $\Omega$.
\begin{theorem}
	Let $A$ be a symmetric matrix satisfying \eqref{lip} and $0\le f\in L^1(\Omega) \cap L^m(\Omega\setminus\overline{\Omega}_\varepsilon)$ with $m > \frac{N}{2}$ satisfying \eqref{condfaltoL1}.
	 If $h$ is a   function satisfying \eqref{h1} and \eqref{h2bis} with $\gamma<1$ and $\theta>0$. Then there exists a solution $u$ to \eqref{op2_regular} belonging to $H^1_0(\Omega)$.
	\label{gamma<1}
\end{theorem}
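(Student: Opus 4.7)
The plan is to follow the template established in this section: derive a uniform $H^1_0(\Omega)$-estimate on the approximating sequence $u_n$ solving \eqref{pbgeneralapprox}, then invoke the passage-to-the-limit of Theorem \ref{esistenzah} to conclude that the limit $u$ is a distributional solution belonging to $H^1_0(\Omega)$.

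The initial observation to exploit is that \eqref{h2bis} with $\theta>0$, together with continuity of $h$ on $(0,\infty)$, forces $h$ to be bounded on $[s_1,\infty)$ by some constant $M_\infty$: on $[s_2,\infty)$ one has $c_2 s^{-\theta}\le c_2 s_2^{-\theta}$, while on $[s_1,s_2]$ the function $h$ is continuous. I would then test \eqref{pbgeneralapprox} with $u_n$ and split the right-hand side along $\{u_n\le s_1\}$ and $\{u_n>s_1\}$: the first contribution is bounded by $c_1 s_1^{1-\gamma}\|f\|_{L^1(\Omega)}$ thanks to \eqref{h1} and $\gamma<1$, exactly as in Theorem \ref{reg_generico}, whereas the second is bounded by $M_\infty\int_\Omega f u_n$.

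The core of the argument is to control $\int_\Omega f u_n$, which I would split spatially as $\Omega=\Omega_\varepsilon\cup(\Omega\setminus\overline{\Omega}_\varepsilon)$. On $\Omega\setminus\overline{\Omega}_\varepsilon$, the hypothesis $f\in L^m$ with $m>N/2$ gives $m'<2^*$, so H\"{o}lder and Sobolev yield
\begin{equation*}
\int_{\Omega\setminus\overline{\Omega}_\varepsilon} f u_n \le \|f\|_{L^m(\Omega\setminus\overline{\Omega}_\varepsilon)}\|u_n\|_{L^{m'}(\Omega)} \le C\|\nabla u_n\|_{L^2(\Omega)}.
\end{equation*}
On the boundary strip $\Omega_\varepsilon$, the bound \eqref{condfaltoL1}, Cauchy--Schwarz, and the classical Hardy inequality (valid since $\Omega$ is regular and $u_n\in H^1_0(\Omega)$) produce
\begin{equation*}
\int_{\Omega_\varepsilon} f u_n \le C\int_{\Omega_\varepsilon}\frac{u_n}{d(x)}\le C|\Omega_\varepsilon|^{1/2}\left(\int_\Omega\frac{u_n^2}{d^2}\right)^{1/2}\le C|\Omega_\varepsilon|^{1/2}\|\nabla u_n\|_{L^2(\Omega)}.
\end{equation*}
Collecting all the pieces yields $\alpha\|\nabla u_n\|_{L^2}^2 \le C_1 + C_2\|\nabla u_n\|_{L^2}$, whence a uniform bound on $\|u_n\|_{H^1_0(\Omega)}$ follows by Young's inequality.

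The main obstacle is precisely the boundary contribution $\int_{\Omega_\varepsilon} f u_n$: the factor $1/d(x)$ appearing in \eqref{condfaltoL1} is borderline for $H^1_0$-functions, and without the smallness $|\Omega_\varepsilon|^{1/2}$ extracted from Cauchy--Schwarz combined with Hardy the estimate would only give a term comparable to the full $\|\nabla u_n\|_{L^2}^2$, which could not be absorbed on the left-hand side. Once the uniform $H^1_0$-estimate is in place, the passage to the limit follows exactly the scheme of Theorem \ref{esistenzah}: weak $H^1_0$-compactness and a.e. convergence, together with the $V_\delta$-argument employed there to handle the possibly singular source away from $\{u=0\}$, deliver a distributional solution $u\in H^1_0(\Omega)$ of \eqref{op2_regular}.
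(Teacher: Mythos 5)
Your proof is correct and takes a genuinely different — and in fact lighter — route than the paper's. The paper controls the dangerous boundary term by first comparing $u_n$ with the solution $w_n$ of an auxiliary problem driven by a monotone majorant $\overline h\ge h$, then building an explicit supersolution $C\varphi_{1,A}^{t}$, $t=\tfrac{1}{\gamma+1}$, over the boundary strip via Hopf's Lemma and the elliptic comparison principle; the resulting pointwise bound $u_n\le C\varphi_{1,A}^{t}$ converts $\int_{\Omega_\varepsilon}f_nu_n^{1-\theta}$ into a finite weighted integral $\int_{\Omega_\varepsilon}d^{-(\gamma+\theta)/(\gamma+1)}$, which is finite precisely because $\theta<1$. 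You instead give up the sharper bound $u_n^{1-\theta}$ and keep only boundedness of $h$ on $[s_1,\infty)$, then split $\int_\Omega fu_n$ spatially and close with Cauchy--Schwarz plus the Hardy inequality $\int_\Omega u_n^2/d^2\le C\int_\Omega|\nabla u_n|^2$ on the strip, and H\"older plus Sobolev outside. This yields a \emph{linear} term $C\|\nabla u_n\|_{L^2}$ absorbed by Young's inequality, so no smallness is in fact required (your remark that smallness is ``needed'' is harmless but inessential: even $|\Omega_\varepsilon|^{1/2}=|\Omega|^{1/2}$ would do). Your route buys several things: it does not use the symmetry of $A$ (needed in the paper for $\varphi_{1,A}$), does not use the Lipschitz regularity of the coefficients (needed for Hopf's Lemma and Calder\'on--Zygmund regularity in the paper's comparison argument), does not use the strong maximum principle or Remark \ref{upositiva}, and does not actually use the decay rate $\theta>0$ at all — boundedness of $h$ at infinity suffices. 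On the other hand the paper's argument produces the explicit pointwise boundary bound $u_n\le C\varphi_{1,A}^{1/(\gamma+1)}$, which is information beyond the energy estimate. Both approaches pass to the limit the same way, via Theorem \ref{esistenzah}.
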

\begin{proof}
	Without loss of generality we assume $\theta < 1$ otherwise we can apply Theorem \ref{reg_generico} to conclude. From a classical De Giorgi-Stampacchia  regularity results one has that  $u_n$ is in $C(\Omega\setminus\Omega_\varepsilon)$ and its norm is independent from $n$. 
	We take $u_n$ as a test function in the weak formulation of \eqref{pbgeneralapprox} obtaining
	\begin{equation}
	\begin{aligned}	\label{stimaungammaminore}
	\displaystyle \alpha\int_{\Omega} |\nabla u_n|^2 & \le \int_{\Omega} h_n(u_n)f_nu_n \le c_1\int_{\Omega_\varepsilon\cap \{u_n< s_1\}}u_n^{1-\gamma}f_n + \int_{\Omega_\varepsilon\cap \{s_1 \le u_n \le s_2 \}}h_n(u_n)f_nu_n
	\\ 
	&+ c_2\int_{\Omega_\varepsilon\cap \{u_n> s_2\}}u_n^{1-\theta}f_n + 	\int_{\Omega\setminus\overline{\Omega}_\varepsilon}  h_n(u_n)f_nu_n \le  c_1s_1^{1-\gamma}\int_{\Omega_\varepsilon\cap \{u_n< s_1\}}f_n
	\\ 
	&+		s_2 \max_{s\in [s_1,s_2]} h(s) \int_{\Omega_\varepsilon\cap \{s_1 \le u_n \le s_2 \}}f +  c_2\int_{\Omega_\varepsilon\cap \{u_n> s_2\}}f_nu_n^{1-\theta}  
	\\ 	
	&+ \max_{s\in [c_{\Omega\setminus\overline{\Omega}_\varepsilon}, \infty)}h(s) ||u_n||_{L^\infty(\Omega\setminus\overline{\Omega}_\varepsilon)}\int_{\Omega\setminus\overline{\Omega}_\varepsilon}  f,
	\end{aligned} 
	\end{equation}
	where we have also employed Remark \ref{upositiva}.
	Inequality \eqref{stimaungammaminore} implies that in order to provide some boundedness on the sequence $u_n$ in $H^1_0(\Omega)$ we need to 
	control 
	$$\int_{\Omega_\varepsilon\cap \{u_n> s_2\}}f_n u_n^{1-\theta}.$$
	Let us consider the smooth  nonnegative solution to the following problem
	\begin{equation}
	\label{aux}
	\begin{cases}
	-\operatorname{div}(A(x)\nabla w_n)= \overline{h}_n(w_n)f_n & \text{in}\ \Omega,\\
	w_n=0 & \text{on}\ \partial\Omega,
	\end{cases}
	\end{equation}		
	 with $\overline{h}_n(s):=T_n(\overline{h}(s))$ where the  function  $\overline{h}:[0,\infty)\to [0,\infty]$
	is both non-increasing and such that $\overline{h}(s)\ge h(s)$ for any $s>0$. The existence of such an $h$ is easy (see for instance \cite{do, lops}).

		\begin{figure}[htbp]\centering
\includegraphics[width=3in]{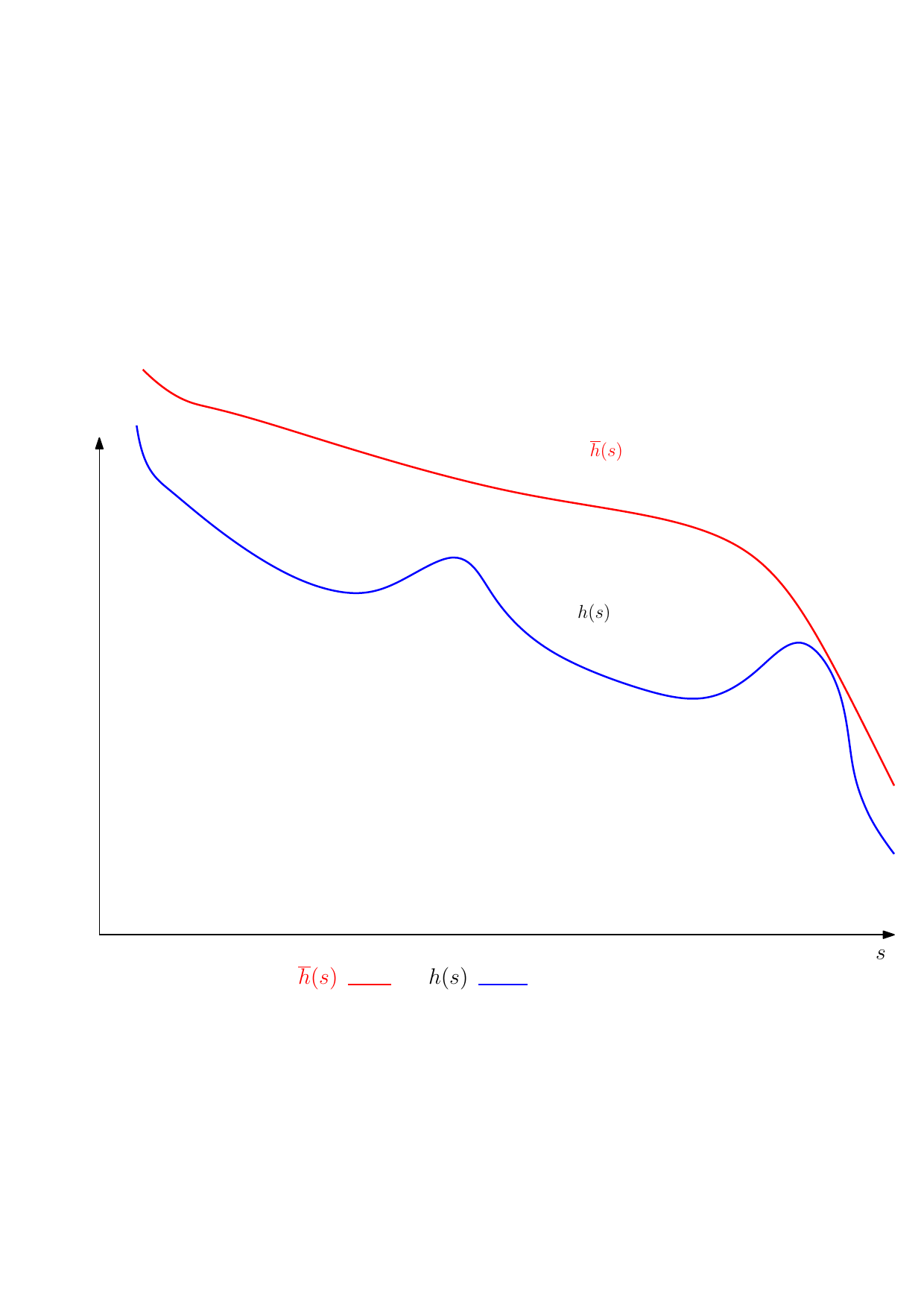}
\caption{A monotone non-increasing $\overline{h}$ above $h$}\label{gover}
\end{figure}

	By comparison $u_n$ is a sub-solution to problem \eqref{aux} and so $u_n\le w_n$ in $\Omega$; indeed  it is sufficient to test   with $(w_n - u_n)^-$.
	
	Now we look for a super-solution to \eqref{aux} in $\Omega_\varepsilon$   in the form of a positive power of $\varphi_{1,A}$ (as defined in \eqref{not:phi1})  in order to prove that for some $C, t>0$, then
	\begin{equation}
	C\varphi_{1,A}^t\ge w_n\ge  u_n, \ \ \ \text{in } \Omega_\varepsilon.
	\label{sopragamma<1}
	\end{equation}
	We fix $t=\frac{1}{\gamma+1}$ and we observe that if $\gamma<1$ then $\frac{1}{2}<t<1$ and it follows that $\varphi_{1,A}^t\in H^1(\Omega_\varepsilon)$.
	We need to prove the first inequality in 
	\begin{align*}			\label{soprasolgammaminore}
	-\operatorname{div}(A(x)\nabla (C\varphi_{1,A}^t))&=\overline{h}(C\varphi_{1,A}^t)\left(Ct(1-t)\frac{\varphi_{1,A}^{t-2}}{\overline{h}(C\varphi_{1,A}^t)}A(x)\nabla \varphi_{1,A}\cdot\nabla \varphi_{1,A}  + Ct\lambda_1\frac{\varphi_{1,A}^{t}}{\overline{h}(C\varphi_{1,A}^t)}\right)
	\\ \nonumber
	&\ge f\overline{h}(C\varphi_{1,A}^t)\ge f_n\overline{h}_n(C\varphi_{1,A}^t)  \text{ in } \Omega_\varepsilon.
	\end{align*}
	Dropping positive terms and recalling both Lemma \ref{hopfdiaz} and \eqref{condfaltoL1} the previous is implied by the request
	$$\alpha Ct(1-t)\frac{\varphi_{1,A}^{t-2}}{\overline{h}(C\varphi_{1,A}^t)}|\nabla \varphi_{1,A}|^2\ge \frac{c}{\varphi_{1,A}} \ge \frac{c}{d}\ge f,$$
	that, in view of Lemma \ref{hopfvarphi2}, essentially reduces (up to normalization of the constants) in proving
	that there exists a positive constant $C$ such that (recall that $t=\frac{1}{\gamma+1}$) 
	\begin{equation}\label{maxo}
	\overline{h}(C\varphi_{1,A}^t)(C\varphi_{1,A}^t)^\gamma\le C^{ 1+\gamma} \ \ \ \text{in }\Omega_\varepsilon.
	\end{equation}
	We have 
	$$\overline{h}(C\varphi_{1,A}^t)(C\varphi_{1,A}^t)^\gamma\le\max (c_1, \max_{[s_1, \infty)}\overline{h}(s)(C\varphi_{1,A}^t)^\gamma),$$
	and \eqref{maxo} is satisfied up the following choice
	$$C\ge \max (c_1^t,\max_{[s_1, \infty)}\overline{h}(s)||\varphi_{1,A}^t||_{L^\infty(\Omega_\varepsilon)}^\gamma).$$
	By possibly increasing the value of such a $C$ one can also assume
	$$C\varphi_{1,A}^t\ge w_n, \ \ \ \text{in } \partial(\Omega\setminus\overline{\Omega}_\varepsilon),$$ 
	then we can apply a comparison principle (see for example Theorem $10.7$ and pages $45-46$ of \cite{gt}) obtaining that \eqref{sopragamma<1} holds.\\
	Thus, we estimate 
	\begin{equation*}
	\displaystyle \int_{\Omega_\varepsilon\cap \{u_n>s_2\}}u_n^{1-\theta}f_n  \le \int_{\Omega_\varepsilon}\frac{c\varphi_{1,A}^{\frac{1-\theta}{\gamma+1}}}{d}\le \int_{\Omega_\varepsilon}\frac{c}{d^{\frac{\gamma+\theta}{\gamma+1}}}<\infty,
	\end{equation*}
	since $\theta<1$. This concludes the proof. 
\end{proof}

\medskip 

  Let us conclude with the case $\gamma\leq 1$ by making the following observation. As we know, in the model case, one has finite energy solutions for $\gamma=1$ and any $f\in L^1(\Omega)$;  this can be view as a limit point of the following general regularity criterion: 
\begin{Proposition}
	 Let $h(s) = s^{-\gamma}$ with $\gamma<1$, let $0\le f\in L^1(\Omega)$ and let $u$ be a distributional solution to \eqref{op2_regular}. Then $u\in H^1_0(\Omega)$ if and only if  
	\begin{equation}\label{necsuf}
	\int_{\Omega} fu^{1-\gamma}<\infty.
	\end{equation}
\end{Proposition}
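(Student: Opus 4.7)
The statement is an equivalence and I plan to prove the two directions separately, leveraging Lemma~\ref{lemextest} for one implication and the monotone approximating scheme \eqref{pbgeneralapprox} for the other.

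For the direct implication, assume $u \in H^1_0(\Omega)$. Then $u$ is a weak solution of \eqref{op2_regular} and Lemma~\ref{lemextest} makes every element of $H^1_0(\Omega)$ admissible in the weak formulation. Choosing $v=u$ (nonnegative) gives
\begin{equation*}
\int_\Omega A(x)\nabla u \cdot \nabla u \;=\; \int_\Omega \frac{f\, u}{u^\gamma} \;=\; \int_\Omega f\, u^{1-\gamma},
\end{equation*}
and since the left-hand side is bounded by $\beta\|\nabla u\|_{L^2(\Omega)}^2 < \infty$, condition \eqref{necsuf} follows at once.

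For the converse, the plan is to work with the approximating sequence $u_n \in H^1_0(\Omega)\cap L^\infty(\Omega)$ solving \eqref{pbgeneralapprox}. Since $h(s)=s^{-\gamma}$ is non-increasing and $f_n=T_n(f)$ is non-decreasing in $n$, the sequence $u_n$ is non-decreasing and its pointwise limit coincides with $u$. Testing \eqref{pbgeneralapprox} with $u_n$ itself and using the elementary inequality $(u_n+\tfrac{1}{n})^\gamma \ge u_n^\gamma$ yields
\begin{equation*}
\alpha\int_\Omega |\nabla u_n|^2 \;\le\; \int_\Omega \frac{f_n\, u_n}{(u_n+\tfrac{1}{n})^\gamma} \;\le\; \int_\Omega f_n\, u_n^{1-\gamma}.
\end{equation*}
Since $1-\gamma>0$, the integrand $f_n u_n^{1-\gamma}$ is monotone non-decreasing in $n$ and converges a.e.\ to $f u^{1-\gamma}$, so by the monotone convergence theorem
\begin{equation*}
\int_\Omega f_n\, u_n^{1-\gamma} \;\le\; \int_\Omega f\, u^{1-\gamma} \;<\; \infty.
\end{equation*}
This produces a uniform bound on $\{u_n\}$ in $H^1_0(\Omega)$; by weak compactness, lower semicontinuity of the Dirichlet seminorm, and a.e.\ identification of the weak limit with $u$, we conclude $u\in H^1_0(\Omega)$.

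The main technical subtlety, on which I would spend most care in a fully rigorous write-up, concerns the identification of $u$ with the monotone limit of $u_n$. For the canonical solution built by approximation this is automatic; for a generic distributional solution one instead needs a comparison argument yielding $u_n \le u$ a.e., typically obtained by testing the difference of the two equations with a truncated version of $(u_n-u)^+$. The delicacy there comes from the fact that, a priori, $u$ is not known to be in $H^1_0(\Omega)$ (that being precisely what we are trying to establish), so admissibility of the test function has to be justified by hand; once this comparison is in place, however, the rest of the argument is exactly the one outlined above.
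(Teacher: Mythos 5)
Your proof matches the paper's argument essentially line for line: the direct implication is exactly the paper's (invoke Lemma~\ref{lemextest}, take $v=u$, and read off finiteness of $\int_\Omega fu^{1-\gamma}$ from the Dirichlet energy), and the converse follows the same path (test the approximating problem \eqref{pbn} with $u_n$, dominate $\int_\Omega f_n u_n^{1-\gamma}$ by $\int_\Omega f u^{1-\gamma}$ using $u_n\le u$, and conclude via weak lower semicontinuity). The paper's own proof is terse on the converse, writing only ``we also used that $u_n$ is non-decreasing with respect to $n$, and we conclude by weak lower semicontinuity.''

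The subtlety you flag is a real one, and it is to your credit that you surfaced it rather than glossing over it. What the paper actually proves, cleanly, is the statement for \emph{the} solution $u$ constructed as the increasing limit of the $u_n$ (this is the convention stated at the beginning of Section~\ref{sec:reg}, where $u$ is by default the SOLA from Theorem~\ref{esistenzah}); in that case $u_n\le u$ is built in, the monotone estimate closes, and $u_n\rightharpoonup u$ identifies the weak $H^1_0$ limit with $u$. If instead $u$ is an \emph{arbitrary} distributional solution, the chain $\int_\Omega f_n u_n^{1-\gamma}\le\int_\Omega f u^{1-\gamma}$ needs the comparison $u_n\le u$, and then one must further identify the weak limit $\tilde u=\lim u_n$ with $u$ -- and neither step is automatic, since $u$ is only known to lie in $W^{1,1}_{\rm loc}(\Omega)$ with $T_k(u)\in W^{1,1}_0(\Omega)$, so the comparison and any maximum-principle argument for $u-\tilde u$ must be run at low regularity (e.g.\ via Kato-type inequalities as in Section~\ref{unilinear}). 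The paper does not carry this out; the cleanest reading is that the Proposition is meant for the canonical approximating solution, which is how you should finalize your write-up unless you want to import the uniqueness machinery from Section~\ref{op2_uniqueness_distributional}.
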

\begin{proof}
	 If $u\in H^1_0(\Omega)$,   using  Lemma \ref{lemextest},  we can use $u$ as a test function  in \eqref{op2_regular}  showing that   \eqref{necsuf} holds.
	\\ 
	Now assume the validity of  \eqref{necsuf}  and consider the approximation given by \eqref{pbn}, namely
	\begin{equation*}\begin{cases}
	\dis-\operatorname{div}(A(x)\nabla u_n)= \frac{f_n}{(u_n+\frac{1}{n})^\gamma} & \text{in}\ \Omega,\\
	u_n=0 & \text{on}\ \partial\Omega.
	\end{cases}\end{equation*}
	We take $u_n$ as a test function in the previous obtaining	
	$$\dis \alpha \int_{\Omega}|\nabla u_n|^2 \le  \int_{\Omega}f u_n^{1-\gamma} \le \int_{\Omega}f u^{1-\gamma}<\infty,$$	
	where in the last step we also used that $u_n$ is non-decreasing with respect to n, and we conclude by weak lower semicontinuity.	
\end{proof}
\begin{remark} 
One could conjecture that the above kind of regularity principle extends for any $\gamma> 0$ (or, more, for any $h$); namely one should wonder whether, given a nonnegative $f\in L^1(\Omega)$ and the solution $u$ to \eqref{op2_regular}, it holds that 
$$u\in H^1_0(\Omega) \text{  if and only if  } \int_{\Omega} fh(u)u<	\infty.$$
Observe that if $h(s)s$ is non-decreasing then a simple re-adaption of the above proof works fine. Also notice that solutions in Example \ref{ex} below will satisfy (sharply) this criterion: if $\gamma> 1$ then $\int_{\Omega}fu^{1-\gamma}<\infty$ if and only if $\gamma< 3 -\frac{2}{m}$. 
\triang \end{remark}

\subsection{Case $\gamma>1$}
\noindent For merely nonnegative data we have the following.
\begin{theorem}\label{teogammamaggiore}
	Let $A$ satisfy \eqref{lip} and let $0\le f\in L^m(\Omega)$ with $m> 1$. If $h$ satisfies \eqref{h1} and \eqref{h2bis} with $\theta\ge 1$ then there exists a solution $u$ to \eqref{op2_regular} belonging to $H^1_0(\Omega)$ provided
	$$\displaystyle 1<\gamma<2 - \frac{1}{m}.$$   
\end{theorem}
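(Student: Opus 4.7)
The plan is to reason by approximation, as in Theorem~\ref{esistenzah}: show that the sequence $u_n$ solving \eqref{pbgeneralapprox} is uniformly bounded in $H^1_0(\Omega)$, and then pass to the limit. I would test \eqref{pbgeneralapprox} with $u_n$ itself and split the right-hand side according to the three regimes $\{u_n<s_1\}$, $\{s_1\le u_n\le s_2\}$, $\{u_n>s_2\}$, obtaining
\begin{equation*}
\alpha\int_\Omega |\nabla u_n|^2 \le c_1 \int_{\{u_n<s_1\}} f\, u_n^{1-\gamma} \,+\, s_2\max_{[s_1,s_2]} h\cdot \int_\Omega f \,+\, c_2 s_2^{1-\theta}\int_\Omega f,
\end{equation*}
where the middle term is obvious and the last one uses the hypothesis $\theta\ge 1$, which turns $s^{1-\theta}$ into a quantity bounded by $s_2^{1-\theta}$ on $\{u_n>s_2\}$. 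The only genuine obstruction is the singular integral.

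Since $\gamma>1$, the factor $u_n^{1-\gamma}$ blows up where $u_n$ is small, so I would apply H\"older's inequality with exponents $m$ and $m'$:
\begin{equation*}
\int_{\{u_n<s_1\}} f\, u_n^{1-\gamma} \le \|f\|_{L^m(\Omega)}\left(\int_{\{u_n<s_1\}} u_n^{-(\gamma-1)m'}\right)^{1/m'}.
\end{equation*}
The hypothesis $\gamma<2-1/m$ is exactly $(\gamma-1)m'<1$, which gives $\int_\Omega d(x)^{-(\gamma-1)m'}\,dx<\infty$. Thus the whole argument reduces to establishing a uniform \emph{linear} lower bound of the form $u_n(x)\ge c\, d(x)$ in $\Omega$, with $c>0$ independent of~$n$.

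To produce this lower bound I would introduce an auxiliary monotone problem, in the same spirit as the super-solution argument used in Theorem~\ref{gamma<1}. Since $h(0)\neq 0$ and $h$ is continuous, there exist $c_0, s_0>0$ such that $h\ge c_0$ on $[0,s_0]$. Let $\underline h:[0,\infty)\to [0,\infty)$ be non-increasing with $\underline h\le h$, $\underline h\equiv c_0$ on $[0,s_0/2]$, smoothly decreasing to $0$ on $[s_0/2,s_0]$, and $\underline h\equiv 0$ on $[s_0,\infty)$. Let $w_n\in H^1_0(\Omega)\cap L^\infty(\Omega)$ solve
\begin{equation*}
-\operatorname{div}(A(x)\nabla w_n) = \underline h_n(w_n)\, f_n\quad\text{in }\Omega,\qquad w_n=0\quad\text{on }\partial\Omega.
\end{equation*}
Testing with $(w_n-s_0)^+$ gives $w_n\le s_0$; the standard monotone comparison (tested with $(w_n-u_n)^+$, using $\underline h$ non-increasing and $\underline h\le h$) gives $w_n\le u_n$; and, arguing as in the proof of Theorem~\ref{boesistenza}, one checks that $w_n$ is non-decreasing in $n$. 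It is therefore enough to bound $w_{n_0}$ from below by $c\, d(x)$ for some large fixed $n_0$. On the set $\{w_{n_0}\le s_0/2\}$, which contains a neighborhood of $\partial\Omega$, the equation reduces to the linear problem $-\operatorname{div}(A\nabla w_{n_0})=c_0 f_{n_0}$, and since $A\in C^{0,1}(\overline\Omega)$ and $f_{n_0}\not\equiv 0$ for $n_0$ large, either the Hopf-type result recalled in the appendix or the classical lower bound on the Green function of $-\operatorname{div}(A\nabla\cdot)$ yields $w_{n_0}\ge c\, d(x)$ near $\partial\Omega$; this extends to all of $\Omega$ via the strong maximum principle. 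Hence $u_n\ge w_n\ge c\, d(x)$ globally, uniformly in $n$.

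Plugging this lower bound into the H\"older estimate closes the uniform $H^1_0$-bound on $u_n$, and the passage to the limit proceeds exactly as in Theorem~\ref{esistenzah}, so that the limit $u$ is a distributional solution belonging to $H^1_0(\Omega)$ by weak lower semicontinuity. The technical heart of the argument, and its main obstacle, is the uniform linear lower bound $u_n\ge c\, d(x)$: the fact that one has to use the linear exponent $t=1$, instead of the classical $t=2/(\gamma+1)$ coming from the smooth theory, is precisely what pins down the threshold $\gamma<2-1/m$, and the $n$-independence of the constant forces one to go through a monotone auxiliary problem, since a direct explicit sub-solution of the form $c\varphi_{1,A}^{t}$ is unavailable when $f$ has no pointwise lower bound.
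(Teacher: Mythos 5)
Your proposal follows essentially the same route as the paper: test with $u_n$, split the right-hand side into the three regimes governed by $s_1,s_2$, reduce to the singular term, and defeat it with H\"older's inequality once a uniform linear lower bound $u_n\ge c\,d(x)$ is in place, obtained by comparison with the solution of an auxiliary problem driven by a non-increasing minorant $\underline h\le h$. Your reading of the threshold — $\gamma<2-\tfrac1m$ is exactly $(\gamma-1)m'<1$ — and your remark that the natural power $t=2/(\gamma+1)$ is unavailable without a pointwise lower bound on $f$ are both correct and match the paper's reasoning.

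The only spot worth tightening is how you extract the linear lower bound from the auxiliary solution. You construct $\underline h$ so that it vanishes beyond $s_0$, deduce $w_{n_0}\le s_0$, and then argue that on $\{w_{n_0}\le s_0/2\}$ the equation ``reduces to a linear problem'' to which Hopf or a Green-function estimate can be applied. This localization is unnecessary and a little loose: the set $\{w_{n_0}\le s_0/2\}$ is merely open and the Hopf Lemma as stated in the appendix (Lemma~\ref{hopf}) asks for $C^1(\overline\Omega)$ regularity of the function, not just a PDE satisfied on a subset. The paper sidesteps this entirely: it takes $v_1$, the solution of the auxiliary problem with the fixed datum $f_1=T_1(f)\in L^\infty(\Omega)$ (not identically zero since $f\not\equiv 0$), observes that $\underline h$ bounded and $f_1\in L^\infty$ make the full right-hand side bounded, invokes classical global regularity ($v_1\in C^1(\overline\Omega)$ by \cite[Theorem~3.17]{tro}), and applies Hopf directly to get $v_1\ge C\,d(x)$ on all of $\Omega$. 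Since $u_n\ge v_1$ for every $n$ by comparison, this is the uniform bound you need, with no ``large $n_0$'' or set decomposition required. Your argument as written can be repaired by making the same global-regularity observation, so the gap is one of presentation rather than of substance.
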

\begin{proof}
	We take $u_n$ as a test function in  the weak formulation of \eqref{pbgeneralapprox}, yielding to
	\begin{align*}
	\displaystyle \alpha\int_{\Omega}|\nabla u_n|^2 &\le \int_{\Omega}h_n(u_n)f_nu_n \le c_1\int_{\{u_n< s_1\}}f_n u_n^{1-\gamma} + \int_{\{s_1\le u_n \le s_2\}}h_n(u_n) f_n u_n \nonumber \\
	&+ c_2\int_{\{u_n> s_2\}}f_n u_n^{1-\theta} \le  c_1\int_{\{u_n< s_1\}}f_n u_n^{1-\gamma}  + s_2\max_{s\in [s_1,s_2]}h(s) \int_{\{s_1\le u_n \le s_2\}} f_n  \nonumber \\
	&+ c_2s_2^{1-\theta}\int_{\{u_n> s_2\}}f_n.
	\label{stimaun1}
	\end{align*}
	\noindent Thus in order to have an estimate in $H^1$ for $u_n$ we just need to control
	$$\int_{\{u_n< s_1\}}f_n u_n^{1-\gamma}.$$
	Now let us consider a     function $\underline{h}:[0,\infty)\to [0,\infty)$ such that   $\underline{h}$ is bounded, non-increasing with respect to  $s$ and $\underline{h}(s)\leq h(s)$ for all $s\geq0$. In particular, for   $n$ large enough  we may assume  $\underline{h}(s)\le h_n(s)$   for all $s\geq 0$ (see again  \cite{do, lops} for a possible construction of such an $\underline{h}$).
	\begin{figure}[htbp]\centering
\includegraphics[width=3in]{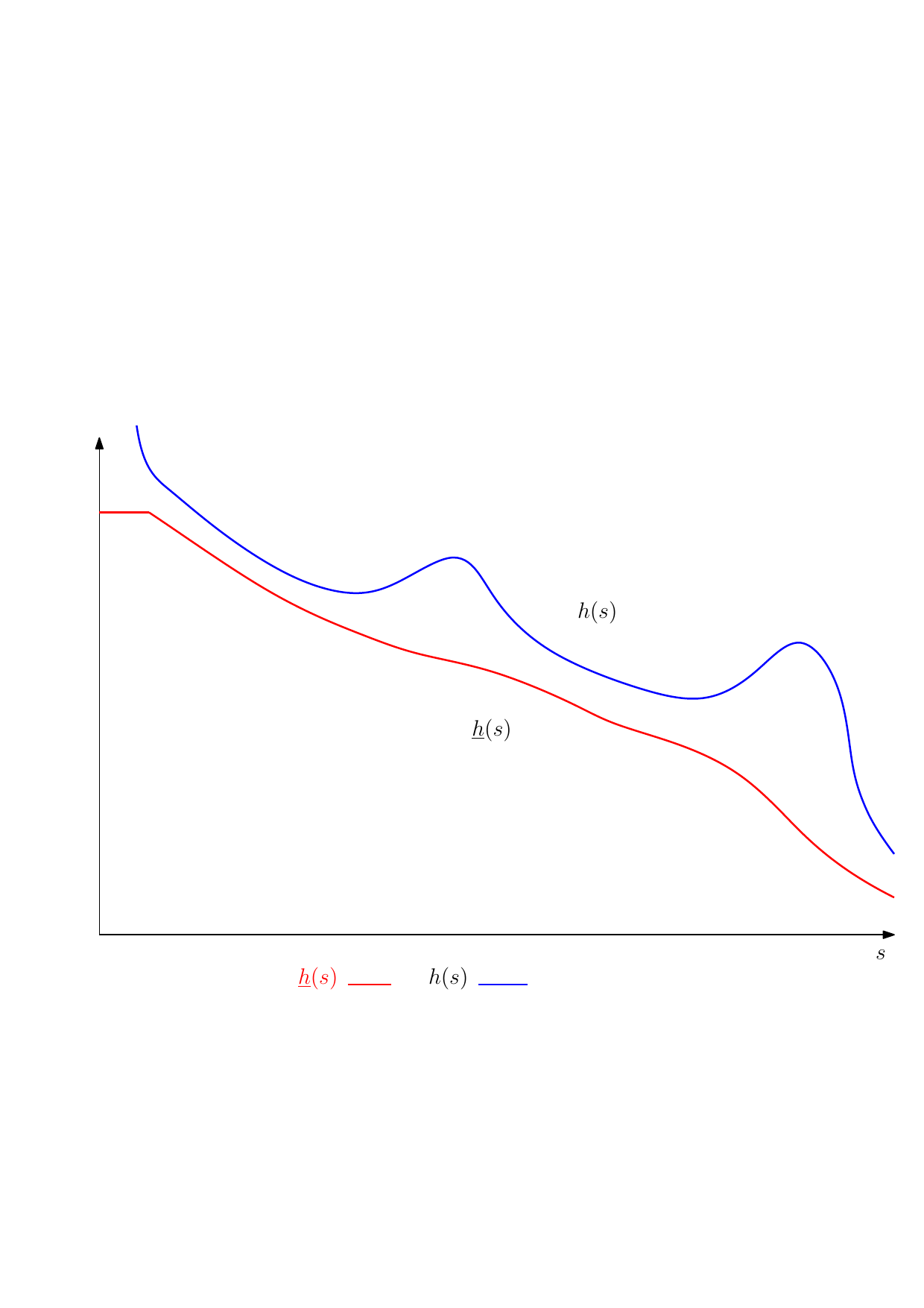}
\caption{A monotone non-increasing $\underline{h}$ below $h$}\label{g}
\end{figure}

	Let us consider $v_n\in H^1_0(\Omega)$ solution to the following problem
	\begin{equation}
	\label{auxsotto}
	\begin{cases}
	-\operatorname{div}(A(x)\nabla v_n)= \underline{h}_n(v_n)f_n & \text{in} \ \Omega,\\
	v_n=0 & \text{on} \ \partial\Omega.
	\end{cases}
	\end{equation}				
	By standard comparison  we have that $u_n\ge v_1$ in $\Omega$. As  $v_1\in C^1(\overline{\Omega})$ (it follows by standard regularity results, see for instance  Theorem $3.17$ of \cite{tro}) we can apply Hopf's Lemma \ref{hopf} to $v_1$ deducing
	$$v_1(x)\ge Cd(x) \ \  \forall x\in\Omega.$$ 
	Thus, it follows from H\"older inequality and from the previous that
	$$\int_{\{u_n< s_1\}}f_n u_n^{1-\gamma} \le \int_{\Omega}fv_1^{1-\gamma}\le ||f||_{L^m(\Omega)} \int_{\Omega} \frac{C}{d^{(\gamma-1)m'}} < \infty,$$
	since $\displaystyle \gamma<2 - \frac{1}{m}$. This concludes the proof.
\end{proof}

\medskip

Though, in this generality, it seems not  easy to improve the threshold on $\gamma$ given in the above theorem, it turns out to be not the optimal one. For instance, let $f>0$ in $L^m(\Omega)$ with $m>1$ in the model case  $h(s) = s^{-\gamma}$. Then, in \cite{am} the authors prove the existence of a solution $u\in H^1_0(\Omega)$ to \eqref{pbbo} provided $\gamma<\frac{3m-1}{m+1}$.  Moreover prototypical examples show however that finite energy solutions can be found up to $\gamma<3-\frac{2}{m}$ (see Example \ref{ex} below). It is worth to underline that 
$$3-\frac{2}{m}>\frac{3m-1}{m+1}$$
and, as $m$ tends to infinity, one formally recovers the Lazer-McKenna threshold $\gamma<3$.
Let us also remark that, on the other hand, as $m \to 1^+$  the regularity threshold tends to one in continuity with case $\gamma = 1$ for which finite energy solutions always exist for any $f\in L^1(\Omega)$. Apart from explicit examples, we also refer to \cite{zach} and \cite{bougia} in which, as suggested in \cite{lm}, these thresholds are reached in the case of the Laplacian and a smooth and bounded away from zero datum $f$ that blows up uniformly at $\partial\Omega$ at a precise rate.  Let us only mention the, in some sense, opposite case of $f$ having compact support on $\Omega$: if $\gamma>0$, indeed,  then the estimate on $u_n$ in $H^1_0(\Omega)$ is for free for any $f\in L^1(\Omega)$.

\medskip

  In the model  case $h(s)=s^{-\gamma}$ we are able to prove the following sharp result concerning  problem \eqref{pbbo}. 
 
\begin{theorem}
	Let $A$ be a symmetric matrix satisfying \eqref{lip},  and   let  $\gamma>1$.   Then there exists a solution $u \in H^1_0(\Omega)$ for any $0<f\in L^{m}(\Omega)$ ($m>1$) to \eqref{pbbo}  if and only if $\gamma<3-\frac{2}{m}$. 
	\label{teosharp}
\end{theorem}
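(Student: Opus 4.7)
The statement is a biconditional, so the plan is to prove sufficiency of $\gamma<3-\frac{2}{m}$ for existence of an $H^1_0$-solution, and to construct an explicit counterexample in $L^m(\Omega)$ when $\gamma\geq 3-\frac{2}{m}$. Throughout, the reference heuristic (that guides both directions) is the formal scaling: if $f(x)\sim d(x)^{-a}$ near $\partial\Omega$ with $a$ just below $1/m$ (so that $f\in L^m$), then $u$ should behave like $d^{(2-a)/(\gamma+1)}$, and
\[
\int_{\Omega_\varepsilon}|\nabla u|^2\sim \int_{\Omega_\varepsilon} d(x)^{-2(\gamma-1+a)/(\gamma+1)}\,dx,
\]
which converges iff $\gamma<3-2a$; letting $a\to 1/m^-$ pinpoints the threshold $3-\frac{2}{m}$.

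\textbf{Sufficiency ($\gamma<3-\frac{2}{m}$).} I would work with the usual approximation scheme \eqref{pbapprox}, which by Theorem \ref{boesistenza} already yields a non-decreasing sequence $u_n\in H^1_0(\Omega)\cap L^\infty(\Omega)$ converging to a distributional solution $u$ with $u^{(\gamma+1)/2}\in H^1_0(\Omega)$. It is thus enough to prove a uniform bound $\|u_n\|_{H^1_0}\leq C$. Testing with $u_n$ gives $\alpha\int|\nabla u_n|^2\leq \int f u_n^{1-\gamma}$, so the task is to control this last integral. The first step is to produce a sharp uniform boundary bound $u_n(x)\geq c\,d(x)^{2/(\gamma+1)}$ on $\Omega_\varepsilon$ by constructing a sub-solution of the form $c\,\varphi_{1,A}^{2/(\gamma+1)}$, exactly analogous to the super-solution built in Theorem \ref{teoexlm}; on $\Omega\setminus\Omega_\varepsilon$ one uses \eqref{stimadb}. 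The second, more delicate step, is to combine this sharp pointwise estimate with the integrability of $f$ in such a way as to recover the full range $\gamma<3-\frac{2}{m}$. A plain Hölder inequality based on $u_n\geq c\,d^{2/(\gamma+1)}$ yields only $\gamma<\frac{3m-1}{m+1}$ (the Arcoya--Moreno-Mérida bound from \cite{am}). To bridge the gap, I would exploit the uniform $H^1_0$-bound on $u_n^{(\gamma+1)/2}$ (from Theorem \ref{boesistenza}) together with the identity
\[
\int_\Omega |\nabla u_n|^2=\frac{4}{(\gamma+1)^2}\int_\Omega u_n^{-(\gamma-1)}\bigl|\nabla u_n^{(\gamma+1)/2}\bigr|^2,
\]
applying a weighted Hardy-type inequality that transfers the $L^m$-integrability of $f$ onto the right boundary weight. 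Once the uniform $H^1_0$-bound is obtained, the passage to the limit follows the scheme of Theorem \ref{esistenzah}.

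\textbf{Necessity ($\gamma\geq 3-\frac{2}{m}$).} The plan is to display a concrete positive $f\in L^m(\Omega)$ for which any distributional solution fails to be in $H^1_0(\Omega)$. For $\gamma>3-\frac{2}{m}$ the interval $\bigl(\tfrac{3-\gamma}{2},\tfrac{1}{m}\bigr)$ is non-empty, and choosing $a$ in it, the datum $f(x)=d(x)^{-a}$ (modulated by a smooth positive factor) belongs to $L^m(\Omega)$. A standard barrier argument comparing with sub- and super-solutions of the form $c\,d^{(2-a)/(\gamma+1)}$ forces the unique distributional solution $u$ to have exactly that boundary profile; the computation above then yields $\int|\nabla u|^2=+\infty$, so no $H^1_0$-solution can exist. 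For the borderline case $\gamma=3-\frac{2}{m}$ the pure power fails and a logarithmic refinement is needed: take $f(x)\sim d(x)^{-1/m}|\log d(x)|^{-b}$ with $\tfrac{1}{m}<b<2-\tfrac{1}{m}$. Then $f\in L^m(\Omega)$ while the ansatz $u\sim d^{1/2}|\log d|^{-b/(1+\gamma)}$ gives $|\nabla u|^2\sim d^{-1}|\log d|^{-2b/(1+\gamma)}$, whose integral reduces (by $s=|\log d|$) to $\int s^{-bm/(2m-1)}\,ds$, divergent in the prescribed range of $b$.

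\textbf{Main obstacle.} The hard part is by far the sharpness in the sufficiency, i.e.\ closing the gap between $\frac{3m-1}{m+1}$ and $3-\frac{2}{m}$. Any estimate that decouples $f$ from $u_n$ via a single Hölder application is doomed to produce the weaker exponent; the sharp threshold genuinely requires a coupled estimate intertwining the $H^1_0$-regularity of $u_n^{(\gamma+1)/2}$, the pointwise boundary decay $u_n\gtrsim d^{2/(\gamma+1)}$, and the $L^m$-integrability of $f$ through weighted Sobolev/Hardy-type inequalities tailored to the expected profile $u\sim d^{(2-a)/(\gamma+1)}$.
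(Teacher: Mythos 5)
Your scaling heuristic is correct and correctly locates the threshold, and your split into sufficiency and necessity matches the structure of the paper's proof. However, there are two genuine gaps, one in each direction.

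\textbf{Sufficiency.} You correctly observe that the single-Hölder argument with the lower bound $u_n\gtrsim d^{2/(\gamma+1)}$ only yields $\gamma<\frac{3m-1}{m+1}$, and you speculate that a ``weighted Hardy-type inequality'' intertwining the uniform $H^1_0$-bound on $u_n^{(\gamma+1)/2}$ with the boundary decay would close the gap. But you never produce such an inequality, and it is not at all clear that one exists in the form you envision; as written this direction of your proposal is a plan, not a proof. The paper instead sidesteps the entire a priori estimate on $u_n$ by invoking a result of Sun and Zhang \cite{sz}: for $\gamma>1$ and $0<f\in L^1(\Omega)$, the problem \eqref{pbbo} has a solution in $H^1_0(\Omega)$ \emph{if and only if} there exists a single test profile $u_0\in H^1_0(\Omega)$ with $\int_\Omega f\,u_0^{1-\gamma}<\infty$. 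With this characterization in hand, sufficiency reduces to checking $\int_\Omega f\,d^{t(1-\gamma)}<\infty$ for $u_0=d^t$ with $t$ slightly above $1/2$, which is a single Hölder computation valid precisely when $\gamma<3-\frac{2}{m}$. This is qualitatively different from (and far simpler than) the weighted Hardy route, and it removes the need for any refined lower barrier on $u_n$.

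\textbf{Necessity.} Your logic has a jump you should not trust: knowing that the solution satisfies the two-sided pointwise bound $c_1 d^{(2-a)/(\gamma+1)}\le u\le c_2 d^{(2-a)/(\gamma+1)}$ does \emph{not} imply $|\nabla u|^2\sim d^{2((2-a)/(\gamma+1)-1)}$ pointwise, nor does it directly give $\int_\Omega|\nabla u|^2=\infty$ — a function pinched between two equivalent powers of $d$ can in principle have small or oscillating gradient. The argument that actually works (and is what the paper does) is a contradiction argument using only the \emph{upper} barrier: if $u\in H^1_0(\Omega)$, one may test \eqref{pbbo} with $u$ itself (via Lemma~\ref{lemextest}) to obtain
\[
\beta\int_\Omega|\nabla u|^2\ \ge\ \int_\Omega f\,u^{1-\gamma}\ \ge\ \int_\Omega f\,(C\varphi_{1,A}^{t})^{1-\gamma},
\]
using $u\le C\varphi_{1,A}^{t}$ and $1-\gamma<0$, and the last integral is shown to diverge for the chosen $f$. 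Your choice of datum $d^{-a}$ with $a\in\bigl(\tfrac{3-\gamma}{2},\tfrac1m\bigr)$ for $\gamma>3-\tfrac{2}{m}$ and a log-corrected power at the borderline is fine in spirit; the paper uses a single formula $f=\max\bigl(d^{-1/m}/\log(1/d),\,1\bigr)$ with supersolution exponent $t=\tfrac{2}{\gamma+1}-\tfrac{1}{m(\gamma+1)}\le\tfrac12$ that covers both cases at once, so that the divergence is logarithmic exactly at $\gamma=3-\tfrac{2}{m}$. Replace ``the computation above then yields $\int|\nabla u|^2=+\infty$'' with the test-function contradiction and your necessity argument becomes sound.
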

\begin{proof}
Assume first that $\gamma <3-\frac{2}{m}$. To show that $u$ belongs to $H^1_0(\Omega)$  we employ a version of a  result in \cite{sz}; in fact,  a line by line re-adaptation to the case a bounded matrix of the proof of \cite[Theorem 1]{sz} allows us to state that,  
for $\gamma>1$ and $0<f\in L^1(\Omega)$, there exists a unique solution $u\in H^1_0(\Omega)$ to \eqref{pbbo} if and only if there exists a function $u_0\in H^1_0(\Omega)$ such that 
	\begin{equation}
	\int_{\Omega}fu_0^{1-\gamma}<\infty.
	\label{condcinese}
	\end{equation}

 We aim to apply  \eqref{condcinese} with $u_{0}=d(x)^{t}$. Indeed, by \eqref{hopfvarphi} it is possible to choose a suitable $t>\frac{1}{2}$ such that,  as $\gamma <3-\frac{2}{m}$, by H\"older inequality one has
	$$
	\int_\Omega f u_0^{1-\gamma}\le  C\left(\io d^{t(1-\gamma)m'}\right)^\frac{1}{m'}<\infty.
	$$ 
	\noindent In order to prove optimality we let $\gamma\geq 3-\frac{2}{m}$,  	\begin{equation*}
	f(x) :=  \max \left(\frac{1}{d(x)^{\frac{1}{m}}\log\left(\frac{1}{d(x)}\right)},1\right),
	\label{condizionef}
	\end{equation*}
	and $f_{n}:=T_{n}(f)$. 
	\\Then one can show the existence of a constant $C>0$ such that $C\varphi_{1,A}^{t}$ with 
	\begin{equation*}
	\displaystyle t = \frac{2}{\gamma+1} - \frac{1}{m(\gamma+1)}.
	\end{equation*}
	(observe that $0<t\leq \frac12$) is a super-solution to the approximating problems
	\begin{equation*}\begin{cases}
	\dis-\operatorname{div}(A(x)\nabla u_n)= \frac{f_n}{(u_n+\frac{1}{n})^\gamma} & \text{in} \ \Omega,\\
	u_n=0 & \text{on} \ \partial\Omega.
	\end{cases}\end{equation*}
	\\ \noindent Indeed, since 
	$$\frac{f}{C^\gamma\varphi_{1,A}^{t\gamma}} \ge \frac{f_n}{(C\varphi_{1,A}^{t}+\frac{1}{n})^\gamma},$$
	then we only need to show that 
	$$\displaystyle -\operatorname{div}(A(x)\nabla (C\varphi_{1,A}^t)) = \frac{1}{C^\gamma\varphi_{1,A}^{\gamma t}}\left(C^{1+\gamma}t(1-t)A(x)\nabla \varphi_{1,A}\cdot\nabla \varphi_{1,A} \varphi_{1,A}^{t-2+\gamma t} + C^{1+\gamma}\lambda_1t\varphi_{1,A}^{t+\gamma t}\right)\ge \frac{f}{C^\gamma\varphi_{1,A}^{\gamma t}},$$
	that is implied by
	\begin{equation}
	\alpha C^{1+\gamma}t(1-t)|\nabla \varphi_{1,A}|^2\varphi_{1,A}^{t-2+\gamma t} + C^{1+\gamma}\lambda_1t\varphi_{1,A}^{t+\gamma t}\ge f.
	\label{stimaesempio1}
	\end{equation}
	\noindent Let $\dys \varepsilon < \frac{1}{e}$ be a small enough positive number such that $\Omega\setminus\overline{\Omega}_\varepsilon$ is smooth and compactly contained in $\Omega$ and observe that both terms on the left-hand of \eqref{stimaesempio1} are nonnegative.
	Thus, if $x \in \Omega\setminus\overline{\Omega}_\varepsilon$ we have  
	\begin{equation*}
	\displaystyle C\ge \left(\frac{||f||_{L^{\infty}(\Omega\setminus\overline{\Omega}_\varepsilon)}}{\lambda_1 t\displaystyle\min_{x \in \Omega\setminus{\Omega}_\varepsilon}{\varphi_{1,A}^{t+\gamma t}}}\right)^{\frac{1}{1+\gamma}}.
	\label{stimaesempio2}
	\end{equation*}
	Otherwise, if $x \in \Omega_\varepsilon$, we require 
	\begin{align*}
	\alpha C^{1+\gamma}t(1-t)|\nabla \varphi_{1,A}|^2\varphi_{1,A}^{t-2+\gamma t}\ge \frac{1}{d^{\frac{1}{m}}}\ge \frac{1}{d^{\frac{1}{m}}\log\left(\frac{1}{d}\right)} = f,
	\label{stimaesempio3}
	\end{align*}
	recalling that  $\dys d(x)<\varepsilon<\frac{1}{e}$ for any $x\in\Omega_\varepsilon$.
	Thus we need to show that 
	\begin{align*}
	\alpha C^{1+\gamma}t(1-t)|\nabla \varphi_{1,A}|^2\varphi_{1,A}^{t-2+\gamma t}&\ge  \frac{\alpha C^{1+\gamma}\displaystyle\min_{x \in \overline{\Omega}_\varepsilon} |\nabla \varphi_{1,A}|^2 t(1-t)}{\varphi_{1,A}^{-t+2-\gamma t}} 
	\\
	&\ge \frac{\alpha C^{1+\gamma}\displaystyle\min_{x \in \overline{\Omega}_\varepsilon} |\nabla \varphi_{1,A}|^2 t(1-t)c_d}{d^{-t+2-\gamma t}} \ge \frac{1}{d^\frac{1}{m}}.
	\end{align*}
	The last inequality in the previous holds for $C$ big enough since it follows from the choice of $t$ that $2-t-\gamma t=\frac{1}{m}$. 
	\noindent As $C\varphi_{1,A}^t$ and $u_n$ are continuous up to the boundary and regular inside we can apply a comparison principle (see, once again, Theorem $10.7$ and pages 45-46 of \cite{gt}) in order to deduce $C\varphi_{1,A}^t\ge u_n$ and so \begin{equation} C\varphi_{1,A}^t\ge u. \label{sopragamma>1} \end{equation} 
	
	 It is easy to check that $u$ is the unique (Theorem \ref{boca}) solution of problem \eqref{pbbo} found in Theorem  \ref{boesistenza}.  
	
	\noindent Now suppose by contradiction that $u\in H^1_0(\Omega)$, then  we can use $u$  as test function in \eqref{pbbo} obtaining, as $\gamma \ge 3-\frac{2}{m}$ and  recalling \eqref{hopfvarphi}, that
	$$\beta\displaystyle \int_{\Omega} |\nabla u|^2 \ge \int_{\Omega}\frac{f}{u^{\gamma-1}} \stackrel{\eqref{sopragamma>1}}{\ge} \int_{\Omega}f\varphi_{1,A}^{t(1-\gamma)}=\infty,$$
	which is a contradiction. 
	
\end{proof}

We conclude with a summarizing table of the assumptions under which the solution to \eqref{op2_regular} has been shown to be $H^1_0(\Omega)$. Recall that $A$ satisfies \eqref{lip} and, in the last two lines, $A$ is also symmetric.

\begin{center}
	\begin{table}[H]
		\setlength{\tabcolsep}{8pt}
		\renewcommand{\arraystretch}{2.4} 
		\begin{tabular}{ | c | c | }  
			\cline{1-2}
			$h(s)$ & $f$
			\\\cline{1-2}
			$s^{-\gamma}$, $\gamma<1$
			&$L^{\left(\frac{2^*}{1-\gamma}\right)'}(\Omega) $
			\\\cline{1-2}
			\eqref{h1} with $\gamma\le 1$, \eqref{h2bis} with $\theta\ge 1$
			&$L^1(\Omega) $
			\\\cline{1-2}
			\eqref{h1} with $\gamma< 1$, \eqref{h2bis} with $\theta> 0$
			&$L^1(\Omega)\cap L^{m}(\Omega\setminus\Omega_{\varepsilon})$ with $m>\frac{N}{2}$
			\\\cline{1-2}
			\eqref{h1} with $1<\gamma< 2-\frac{1}{m}$, \eqref{h2bis} with $\theta> 0$
			&$L^m(\Omega)$ with $m>1$	
			\\\cline{1-2}
			$s^{-\gamma}$, $1<\gamma<3 -\frac{2}{m}$ 
			&$f>0$, $L^m(\Omega)$ with $m>1$ 	
			\\\cline{1-2}	
		\end{tabular}
		\vspace*{3mm}
		\caption{Assumptions to have at least a solution to \eqref{op2_regular} in $H^1_0(\Omega)$} 
	\end{table}
\end{center}

\subsection{Further remarks on the lower order term} 
\label{lowerorder}
We conclude this part with some comments on the regularity of the lower order term.
\medskip
We start with the following example.
\begin{example}\label{ex}
	Let  $u = (1-|x|^{2})^{\eta}$, $\eta>0$, and $\Omega=B_1(0)$. Then, if 
	\begin{equation}\label{al}\frac{1}{1+\gamma}<\eta < 1,\end{equation}  $u$ solves \eqref{pbbo}  with 
	$$
	f\sim \frac{1}{(1-|x|^{2})^{2-\eta -\eta\gamma}}\in L^{1}(\Omega). 
	$$
\end{example}
\noindent Some remarks are in order: first of all observe that, as $\eta<1$, then  $-\Delta u\notin L^{1}(\Omega)$ while $-\Delta u \in L^{1}(\Omega, d)$ for any $\eta>0$. 
\\ Moreover, using \eqref{al}, we have
\begin{itemize}
	\item [i)] if $\gamma=1$ the solution is always in $H^1_0(\Omega)$ as expected;
	\item [ii)] if $\gamma>1$ then $f$ is in $L^{m}(\Omega)$ provided 
	$$
	\eta >\frac{2-\frac{1}{m}}{\gamma+1},
	$$
	that is, $u\in H^1_0(\Omega)$ if $\gamma<3-\frac{2}{m}$. Also observe that for any $\gamma>1$ one can pick a datum $f\in L^m(\Omega)$, for suitable $m>1$,  for which the relative solution does not have finite energy;
	\item [iii)] if $\gamma<1$ then $u$ is always in $H^{1}_{0}(\Omega)$  and $f\in L^{m}(\Omega)$ for any
	$$
	m<\frac{1}{2-\eta-\eta\gamma}\,.
	$$
	We observe that 
	\begin{equation}\label{opt}
	\frac{1}{2-\eta-\eta\gamma}\nearrow\frac{1}{1-\gamma}\ \ \text{as $\eta\to1^{-}$}.
	\end{equation}
\end{itemize}

\medskip

The previous example shows that one cannot expect in general the lower order term $fu^{-\gamma}$ to be in $L^{1}(\Omega)$. The best one can expect, in general,  is the weighted summability one shall infer by  Lemma \ref{M1respectdelta} below. Nevertheless in the case $\gamma<1$ we have the following result which is optimal in view of \eqref{opt}. 

\begin{theorem}
	Let $A$ satisfy \eqref{lip}, $\gamma<1$, and $0\le f \in L^{m}(\Omega)$ with $m>\frac{1}{1-\gamma}$. Let $h$ be a function satisfying \eqref{h1} and \eqref{h2}. Then the solution $u$ to problem \eqref{op2_regular} is such that $h(u)f \in L^{1}(\Omega)$. 
	\label{gamma<1L1}
\end{theorem}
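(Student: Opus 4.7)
The plan is to argue on the approximating sequence $u_n$ solving \eqref{pbgeneralapprox} and to prove that $\int_\Omega h_n(u_n)f_n$ is bounded uniformly in $n$; since $u_n\to u$ a.e., Fatou's lemma will then yield $h(u)f\in L^1(\Omega)$. Splitting according to the value of $u_n$, the contributions $\{s_1\le u_n\le s_2\}$ and $\{u_n>s_2\}$ are harmless: the first is bounded by $\max_{[s_1,s_2]}h\cdot\|f\|_{L^1(\Omega)}$, and the second by $\sup_{s\ge s_2}h(s)\cdot\|f\|_{L^1(\Omega)}$, which is finite thanks to \eqref{h2}. So everything boils down to estimating $\int_{\{u_n\le s_1\}}h_n(u_n)f_n\le c_1\int_\Omega u_n^{-\gamma}f$, and the heart of the matter is a \emph{uniform-in-$n$ lower bound} of the type $u_n(x)\ge c_0\, d(x)$ near $\partial\Omega$.

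To produce such a bound, since $h$ is not assumed monotone, I would follow the device already used in the proof of Theorem \ref{teogammamaggiore}: choose a bounded, non-increasing function $\underline{h}\le h$, and let $v_n\in H^1_0(\Omega)$ solve $-\operatorname{div}(A(x)\nabla v_n)=\underline{h}_n(v_n)f_n$. A standard comparison (testing the difference with $(v_n-u_n)^+$, using that $\underline{h}$ is non-increasing and $\underline{h}\le h$) gives $u_n\ge v_n$, while a second comparison between $v_n$ and $v_1$ (testing with $(v_1-v_n)^+$ and using $\underline h_1\le\underline h_n$, $f_1\le f_n$) yields $v_n\ge v_1$. Since the datum of the problem for $v_1$ is bounded, elliptic regularity (together with \eqref{lip}) gives $v_1\in C^{1,\alpha}(\overline\Omega)$, hence by Hopf's Lemma \ref{hopf} one has $v_1(x)\ge c_0\, d(x)$ in $\Omega$. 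Chaining inequalities yields $u_n\ge c_0\,d(x)$ uniformly in $n$.

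With this at hand, the integral on $\{u_n\le s_1\}$ is controlled by
\begin{equation*}
c_1\int_\Omega u_n^{-\gamma}f\;\le\;c_1 c_0^{-\gamma}\int_\Omega \frac{f}{d^\gamma}\;\le\;c_1 c_0^{-\gamma}\,\|f\|_{L^m(\Omega)}\left(\int_\Omega d^{-\gamma m'}\right)^{1/m'},
\end{equation*}
by H\"older's inequality with conjugate exponent $m'=m/(m-1)$. The assumption $m>\frac{1}{1-\gamma}$ is exactly equivalent to $\gamma m'<1$, whence, since $\Omega$ is smooth, $\int_\Omega d^{-\gamma m'}<\infty$. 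Putting the three pieces together gives the sought uniform bound on $\int_\Omega h_n(u_n)f_n$, and Fatou concludes.

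The main obstacle is the lack of monotonicity of $h$, which prevents a direct use of the comparison principle between different $u_n$'s or with the limit $u$; the workaround via the monotone minorant $\underline{h}$ and the Hopf-type bound on $v_1$ is therefore the key technical step. Everything else—the splitting at $s_1,s_2$ and the Hölder estimate—is routine once the boundary lower bound $u_n\ge c_0 d$ is established, and the threshold $m>\frac{1}{1-\gamma}$ is sharp at the level of this estimate in view of \eqref{opt} in Example \ref{ex}.
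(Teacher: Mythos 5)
Your proposal is correct and follows essentially the same route as the paper: comparison with the auxiliary problem \eqref{auxsotto} driven by a bounded non-increasing minorant $\underline h\le h$, the chain $u_n\ge v_n\ge v_1$, elliptic regularity plus Hopf's Lemma to get $v_1\ge c_0 d$, and Hölder's inequality with $\gamma m'<1$. The only (minor) difference is that you keep the argument at the level of the approximants $u_n$ and pass to the limit via Fatou, whereas the paper estimates $\int_\Omega h(u)f$ directly from $u\ge v_1\ge Cd$; your version is arguably the cleaner way to justify the final inequality since $u_n$ need not be monotone in $n$ here.
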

\begin{proof}
   Let $v_n$ be the solution to \eqref{auxsotto} as introduced in Theorem \ref{teogammamaggiore}; this means that for a.e.  $x\in\Omega$
	$$u(x)\ge u_n(x)\ge v_1(x)\ge Cd(x).$$ 
	Thus
	\begin{align*}
	\int_\Omega h(u) f &\le  c_1\int_{\{u< s_1\}}  fu^{-\gamma} + \sup_{s\in [s_1,\infty)} h(s) \int_{\{u \ge s_1\}}  f  
	\\
	&\le c_1\int_\Omega f v_1^{-\gamma} + \sup_{s\in [s_1,\infty)} h(s) \int_{\{u \ge s_1\}}  f  \leq C
	\end{align*}
	since 
	$$c_1\int_\Omega f v_1^{-\gamma} \le  C\left(\int_\Omega d^{-m'\gamma}\right)^\frac{1}{m'},$$
	that is finite if $m> \frac{1}{1-\gamma}$.
\end{proof}
\begin{remark}\label{rembordo}
	
	 The boundary of $\Omega$ is needed to be $C^{1,1}$. Indeed, this is the required regularity in order to apply the Calderon-Zygmund theory (see for instance  \cite[Theorem $3.17$]{tro}). The regularity on the boundary is also necessary in order to apply the classical Hopf Lemma; though for the Hopf Lemma we just need $\Omega$ to satisfy the interior ball condition in every point of its boundary (see Lemma \ref{hopf} below). 
\triang \end{remark}

\section{Renormalized solutions for nonlinear equations with measure data}
\label{sec:measure}

  So far we have focused on problems with Lebesgue data and a linear principal operator.   The aim of this section is to extend, whenever possible,  some of the previous results  in  various directions; in particular,  we shall examine the case of \underline{measure data}, as well as the presence of a \underline{nonlinear operator} in the second order leading term.

 We will address the homogeneous Dirichlet boundary value problem  related to 

\begin{equation}\label{gen}
 -\operatorname{div}(a(x,\nabla u)) = h(u)\mu  \ \  \text{in} \  \Omega\,,
\end{equation}
where $a(x,\cdot)$ is a Leray-Lions type coercive operator with $(p-1)$-growth  ($p>1$), $\mu$ is a nonnegative Radon measure,   and $h$ is a nonnegative continuous function on $[0,\infty)$.

Measure data problems in case  $h(s)=1$,   have been  a huge field of investigation for decades. Pioneering contributions  to this topic have been given  by G. Stampacchia (\cite{st}), H. Brezis (\cite{Br}), L. Boccardo and T. Gallou\"et (\cite{bg}).   The theory of existence and uniqueness for such problems was then   implemented and mostly fulfilled by cornerstone papers as \cite{dpl}, \cite{b6}, \cite{bgo},  \cite{bl}, \cite{blmu},  and \cite{dmop} with the introduction of truncation methods that, eventually, brought to the notion of entropy  and  renormalized solutions for these kinds of problems (see also Section \ref{renorm} below). 

Let us also mention that regularity of SOLA solution (i.e. Solutions Obtained as a Limit of Approximation) to these kind of problems has been also addressed (see \cite{dall}); for a  compendium on this topic see \cite{min} and references therein. 

\medskip

Coming back to our non-autonomous and  possibly \underline{singular} scenario, namely problems as in \eqref{gen}, we will  first show that, if the measure $\mu$ is too \underline{concentrated} (see Definition \ref{defconcentrated}), nonexistence of solutions is possible in case of the model nonlinearity (i.e. $h(s)=s^{-\gamma}$, $\gamma>0$); then we prove existence of a distributional solution for suitable measure data.  Finally we prove a very general existence result of  \underline{renormalized solutions} that, in turn,  will allow us to prove  uniqueness of solutions  provided $h$ is non-increasing. 

\medskip

For this part we mainly refer to \cite{bo,do,ddo,orpe}. We also underline that essential  tools concerning Radon measures and capacity are summarized in Appendix \ref{app:radon}.
\medskip

\subsection{Nonexistence for concentrated measure data}
\label{sec:nonex}

Here we try to catch a glimpse to the case of a semilinear problem in presence of a measure as datum in the involved  lower order term. A  first phenomenon to be  highlighted is that the  combination of a concentrated measure and a degenerate nonlinearity (at infinity) lead to a nonexistence result in the sense of approximating problems (aka SOLA). 

\medskip

Let us consider
\begin{equation}
\begin{cases}
\displaystyle - \Delta u= \frac{\mu}{u^\gamma} &  \text{in} \ \Omega, \\
u=0 & \text{on} \ \partial \Omega,
\label{pb:misuramodello}
\end{cases}
\end{equation}
where  $\mu\in \mathcal{M}(\Omega)$ is nonnegative.  We consider a  natural approximating sequence of problems,   i.e. let $u_n\in H^1_0(\Omega)\cap L^\infty(\Omega)$ be a nonnegative weak  solution to 
\begin{equation}
\begin{cases}
\displaystyle - \Delta u_n= \frac{\mu_n}{(u_n+\frac{1}{n})^\gamma} &  \text{in} \ \Omega, \\
u_n=0 & \text{on} \ \partial \Omega,
\label{pbn_ren}
\end{cases}
\end{equation}
where $\mu_n$ is a suitable approximation of $\mu$ as given in Lemma \ref{lem_approssimazione} below.

A typical insight  when dealing with measure data problems 
suggests that  {solutions do blow-up on the set where the measure is} {\it too concentrated}; forcing this argument one can guess that, as $s^{-\gamma}$ vanishes at infinity, then solutions $u_n$ to \eqref{pbn_ren} tends to zero beyond certain concentration threshold for $\mu$. 

\medskip

We state and prove the following result:
\begin{theorem}
	\label{bononesistenza}
	Let $ 0\le \mu \in \mathcal{M}(\Omega)$ be concentrated on a Borel set $E$ of zero $q$-capacity. Let $0\le \mu_n \in L^\infty(\Omega)$, bounded in $L^1(\Omega)$, converging to $\mu$ in the narrow topology of measures. Let $u_n$ be a solution to \eqref{pbn_ren} then  
	\begin{itemize}
		\item[i)] if $\gamma\leq 1$ and $q= \left(\frac{N(\gamma+1)}{N-1+\gamma}\right)'$ then $u_n$ weakly converges to $0$ in $W^{1,\frac{N(\gamma+1)}{N-1+\gamma}}_0(\Omega)$;
		\item[ii)] if $\gamma>1$ and $q=2$ then $u_n^{\frac{\gamma+1}{2}}$ weakly converges to $0$ in $H^1_0(\Omega)$.
	\end{itemize}
\end{theorem}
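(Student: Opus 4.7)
The strategy of the proof is to bound $u_n$ uniformly in the relevant Sobolev space, extract a weak limit, and then exploit the capacitary hypothesis on $\mu$ to identify that limit with zero. The pivotal bound is obtained by testing \eqref{pbn_ren} with the admissible function $(u_n+1/n)^\gamma - (1/n)^\gamma \in H^1_0(\Omega)\cap L^\infty(\Omega)$, which yields
\begin{equation*}
\gamma \int_\Omega |\nabla u_n|^2 (u_n+1/n)^{\gamma-1} \leq \|\mu_n\|_{L^1(\Omega)} \leq \|\mu\|_{\mathcal{M}(\Omega)},
\end{equation*}
equivalently $\|u_n^{(\gamma+1)/2}\|_{H^1_0(\Omega)} \leq C$ uniformly in $n$; this is already the weak compactness needed for case ii). In case i) the same H\"older--Sobolev interpolation used in the proof of Theorem \ref{boesistenza} upgrades this to the stronger uniform bound $\|u_n\|_{W^{1,r}_0(\Omega)} \leq C$ with $r = \frac{N(\gamma+1)}{N-1+\gamma}$. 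Up to subsequences one then has $u_n^{(\gamma+1)/2} \rightharpoonup v$ in $H^1_0(\Omega)$ and, in case i), also $u_n \rightharpoonup u$ in $W^{1,r}_0(\Omega)$ with $v = u^{(\gamma+1)/2}$ a.e., so the whole task reduces to proving $v \equiv 0$.

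\textbf{Capacitary approximation.} Since $\operatorname{cap}_q(E) = 0$, for every $\varepsilon > 0$ one picks $\psi_\varepsilon \in W^{1,q}_0(\Omega)$ with $0 \leq \psi_\varepsilon \leq 1$, $\psi_\varepsilon \equiv 1$ on an open neighborhood $O_\varepsilon \supset E$ and $\|\psi_\varepsilon\|_{W^{1,q}_0(\Omega)} < \varepsilon$. Narrow convergence of $\mu_n$ to $\mu$ combined with the concentration of $\mu$ on $E$ (via Lemma \ref{lem_approssimazione}) guarantees that $\operatorname{supp}(\mu_n) \subset O_\varepsilon$ for every $n \geq n(\varepsilon)$; in particular the right-hand side $g_n := \mu_n(u_n+1/n)^{-\gamma}$ of the equation satisfies $g_n \equiv g_n \psi_\varepsilon$ and vanishes identically outside $O_\varepsilon$ for such $n$.

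\textbf{Identification in case i).} For $\varphi \in C_c^\infty(\Omega)$ and $n \geq n(\varepsilon)$,
\begin{equation*}
\left|\int_\Omega \nabla u_n \cdot \nabla \varphi\right| = \left|\int_\Omega g_n \varphi \psi_\varepsilon\right| \leq \|\varphi\|_{L^\infty(\Omega)} \int_\Omega g_n \psi_\varepsilon = \|\varphi\|_{L^\infty(\Omega)} \int_\Omega \nabla u_n \cdot \nabla \psi_\varepsilon \leq C\varepsilon \|\varphi\|_{L^\infty(\Omega)},
\end{equation*}
where we have tested the equation with $\psi_\varepsilon \in W^{1,q}_0(\Omega)$ and applied H\"older with conjugate exponents $r$ and $q = r'$. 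Sending $n \to \infty$ (via weak convergence in $W^{1,r}_0$) and then $\varepsilon \to 0^+$ yields $-\Delta u = 0$ in $\mathcal{D}'(\Omega)$; as $u \in W^{1,r}_0(\Omega)$ and $u \geq 0$, uniqueness for the Dirichlet problem forces $u \equiv 0$, whence $v \equiv 0$.

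\textbf{Main difficulty: case ii).} When $\gamma > 1$ and $q = 2$, the above pairing breaks down because step 1 no longer supplies an $H^1_0$-bound on $u_n$ itself, so the quantity $\int \nabla u_n \cdot \nabla \psi_\varepsilon$ cannot be controlled directly. The workaround is that $g_n \equiv 0$ outside $O_\varepsilon$ makes $u_n$ harmonic in $\Omega \setminus \overline{O_\varepsilon}$; coupling this with the uniform $L^{N(\gamma+1)/(N-2)}$ bound provided by the Sobolev embedding of $u_n^{(\gamma+1)/2} \in H^1_0(\Omega)$, interior elliptic regularity yields local uniform convergence $u_n \to u$ on compact subsets of $\Omega \setminus E$, so that the limit $u$ is harmonic off $E$ and $v = u^{(\gamma+1)/2} \in H^1_0(\Omega)$. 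A removable singularity argument, valid precisely because $\operatorname{cap}_2(E) = 0$, then extends $u$ harmonically to all of $\Omega$, and the zero trace together with nonnegativity force $u \equiv 0$, hence $v \equiv 0$. The delicate point is exactly this closing step: transferring the vanishing information from $\Omega \setminus E$ to the entire domain in the absence of any $H^1_0$-bound on $u_n$ is what makes case ii) qualitatively harder than case i).
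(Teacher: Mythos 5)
The step that fails is the \emph{capacitary approximation} step. You claim that narrow convergence of $\mu_n$ to $\mu$, together with the concentration of $\mu$ on $E$, guarantees $\operatorname{supp}(\mu_n)\subset O_\varepsilon$ for $n\ge n(\varepsilon)$. This is false: narrow convergence controls no more than the \emph{masses} of the $\mu_n$ on closed sets, via Portmanteau, so at best one gets $\limsup_n \mu_n(\Omega\setminus O_\varepsilon)\le \mu(\Omega\setminus O_\varepsilon)=0$. It does not force the supports to shrink; for example, one can add a uniformly spread term of mass $1/n$ to any approximation of $\mu$ without losing narrow convergence, yet $\operatorname{supp}(\mu_n)=\overline{\Omega}$ for all $n$. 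Consequently the identity $g_n\equiv g_n\psi_\varepsilon$, on which both your cases hinge, is not available. And the weaker mass bound does not save the argument either, because $g_n=\mu_n(u_n+1/n)^{-\gamma}$ carries the factor $(u_n+1/n)^{-\gamma}\le n^\gamma$, which can amplify the small residual mass of $\mu_n$ outside $O_\varepsilon$ arbitrarily; there is no way to conclude $\int_{\Omega\setminus O_\varepsilon}g_n\varphi\to 0$. In case ii) the same spurious support claim is what makes $u_n$ harmonic off $O_\varepsilon$, so the removable-singularity argument never gets off the ground.

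The paper circumvents this entirely by never touching the support of $\mu_n$: one tests with $T_k(u_n)(1-\Psi_\eta)$ (resp.\ $T_k^\gamma(u_n)(1-\Psi_\eta)$ for $\gamma>1$), where $\Psi_\eta$ is the cap$_q$-small cutoff of Lemma \ref{dalmaso}. The crucial observation is that $T_k(u_n)(u_n+1/n)^{-\gamma}\le k^{1-\gamma}$ when $\gamma\le 1$ (and $T_k^\gamma(u_n)(u_n+1/n)^{-\gamma}\le 1$ when $\gamma>1$), so the right-hand side is bounded by $k^{1-\gamma}\int_\Omega(1-\Psi_\eta)\mu_n$, and narrow convergence against the fixed bounded continuous function $1-\Psi_\eta$ gives $\int_\Omega(1-\Psi_\eta)\mu_n\to\int_\Omega(1-\Psi_\eta)\mu\le\eta$. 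In other words, the vanishing is injected through the \emph{test function} multiplied by the bounded singular weight, not through a (nonexistent) localization of $\mu_n$. Your energy estimate and the idea of exploiting $\operatorname{cap}_q(E)=0$ through a small-norm cutoff are on the right track, but the cutoff has to sit inside the test function so that only the narrowly convergent pairing with $\mu_n$ is needed.
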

\begin{proof} 
	{\bf Proof of i).} 
	  First observe that, following the proof of Theorem \ref{boesistenza}, one has that $u_n$ is bounded in $W^{1,\frac{N(\gamma+1)}{N-1+\gamma}}_0(\Omega)$ with respect to $n$. Moreover Lemma \ref{dalmaso} gives the existence of $\Psi_\eta \in C^1_c(\Omega)$ such that 
	\begin{equation}\label{psieta}
	\displaystyle 0\le \Psi_\eta\le 1, \ \ \ 0\le \int_{\Omega} (1-\Psi_\eta) \mu \le \eta, \ \ \ \int_{\Omega} |\nabla \Psi_\eta|^{q} \le \eta.
	\end{equation}
	Hence we take $T_k(u_n)(1-\Psi_\eta)$ ($k>0$) as a test function in the weak formulation of \eqref{pbn_ren}, yielding to
	\begin{equation*}
		\displaystyle \int_{\Omega} |\nabla T_k(u_n)|^2(1-\Psi_\eta) - \int_{\Omega} \nabla u_n \cdot \nabla \Psi_\eta T_k(u_n) \le \int_{\Omega} \frac{\mu_n T_k(u_n)(1-\Psi_\eta)}{\left(u_n+\frac{1}{n}\right)^\gamma} \le k^{1-\gamma}\int_{\Omega} (1-\Psi_\eta)\mu_n.
	\end{equation*}		
	The estimate on $u_n$ and the properties of $\Psi_\eta$ allow us  to take the limit  firstly as  $n\to\infty$ and then to let $\eta\to 0^+$ in order to deduce
	\begin{equation*}
		\displaystyle \int_{\Omega} |\nabla T_k(u)|^2 \le 0, 
	\end{equation*}
	for any $k>0$, implying that $u=0$ almost everywhere in $\Omega$. \\
	{\bf Proof of ii).} Following once again the proof of Theorem \ref{boesistenza}, one has that $u_n^{\frac{\gamma+1}{2}}$ is bounded in $H^1_0(\Omega)$ and $u_n$ is locally bounded in $H^1(\Omega)$ with respect to $n$.
	In this case, we take $T_k^\gamma(u_n)(1-\Psi_\eta)$ as a test function in the weak formulation of \eqref{pbn_ren} where $\Psi_\eta$ is as in \eqref{psieta}. Then one deduces  
		\begin{equation*}
		\frac{4\gamma}{(\gamma+1)^2}\displaystyle \int_{\Omega} |\nabla T_k^{\frac{\gamma+1}{2}}(u_n)|^2(1-\Psi_\eta) - \int_{\Omega} \nabla u_n \cdot \nabla \Psi_\eta T_k^\gamma(u_n) \le \int_{\Omega} \frac{\mu_n T_k^\gamma(u_n)(1-\Psi_\eta)}{\left(u_n+\frac{1}{n}\right)^\gamma} \le \int_{\Omega} (1-\Psi_\eta)\mu_n.
	\end{equation*}		
	The estimates on $u_n$ permit to reason as for the first case, allowing to take $n\to\infty$ and $\eta\to 0^+$, in order to deduce 	
	\begin{equation*}
	\displaystyle  \int_{\Omega} |\nabla T_k^{\frac{\gamma+1}{2}}(u)|^2 \le 0,
	\end{equation*}
	for any $k>0$. This concludes the proof. 
\end{proof}
\begin{remark}
	Observe that if $\gamma=1$ then $u_n$ weakly converges to $0$ in $H^1_0 (\Omega)$.  It is also  worth to point out that, reasoning as above, if $\mu_n$ is as in Theorem \ref{bononesistenza} and if we consider problem \eqref{pbn_ren} with right-hand $(g_n+\mu_n)\left(u_n+\frac{1}{n}\right)^{-\gamma}$ with $g_n$ converging to $g$ in $L^1(\Omega)$ then $u_n$ converges to $u$ solution of  problem \eqref{pb:misuramodello}  with right-hand $gu^{-\gamma}$. Thus,  the Lebesgue part overcomes the approximation while too concentrated measures do not in general.
\triang \end{remark}

\subsection{Existence for diffuse measure.  An approach by  monotone approximation}

 The above arguments suggest   that, working by approximation,  measure data to be handle with need not to be purely singular at least for  degenerate nonlinearities $h$'s.  Therefore a major question relies on  the possibility of coupling sufficiently smooth measures with degenerate nonlinearities.

\medskip 

We also stress that  a re-adaptation of the proof of  Theorem \ref{boesistenza} is not straightforward for a general measure as datum since,  in general, it may  not be approximated monotonically. 

\medskip

Here we consider problem \eqref{pb:misuramodello} in case of a measure datum $\mu$ which is \underline{diffuse} (see Definition \ref{defdiffuse} below); we will take advantage that nonnegative diffuse measures, among other relevant features, can be approximated increasingly.

\medskip

We state and prove the following:

\begin{theorem}
	\label{orpeesistenza2}
	Let  $0\le \mu\in \mathcal{M}(\Omega)$ be diffuse with respect to the $2$-capacity. Then there exists a distributional solution $u$ to  \eqref{pb:misuramodello}.
\end{theorem}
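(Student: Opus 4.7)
The plan is to combine the monotone approximation scheme used for $L^1$ data in Theorem \ref{boesistenza} with the fact that nonnegative measures diffuse with respect to the $2$-capacity admit a non-decreasing approximation by $L^\infty$ functions (as recalled in Appendix \ref{app:radon}). Concretely, I would fix $0\le \mu_n \in L^\infty(\Omega)$ with $\mu_n \nearrow \mu$ and consider nonnegative weak solutions $u_n \in H^1_0(\Omega)\cap L^\infty(\Omega)$ to
\[
-\Delta u_n = \frac{\mu_n}{(u_n+\tfrac{1}{n})^\gamma} \quad \text{in } \Omega, \qquad u_n = 0 \text{ on } \partial\Omega,
\]
whose existence is standard. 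The same comparison argument as in Theorem \ref{boesistenza} -- testing the difference of the $n$-th and $(n+1)$-th equations with $(u_n-u_{n+1})^+$ -- yields the monotonicity $u_n \le u_{n+1}$ a.e., since both $\mu_n \le \mu_{n+1}$ and $(u_n+1/n)^\gamma \ge (u_{n+1}+1/(n+1))^\gamma$ on $\{u_n > u_{n+1}\}$. The strong maximum principle then guarantees $u_n \ge u_1 \ge c_\omega > 0$ on every $\omega \subset\subset \Omega$.

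Next I would derive a priori estimates following the lines of Theorems \ref{boesistenza} and \ref{esistenzah}. Testing with $T_j(G_k(u_n))$ gives
\[
\int_\Omega |\nabla T_j(G_k(u_n))|^2 \le j\int_{\{u_n>k\}}\frac{\mu_n}{(u_n+\tfrac{1}{n})^\gamma} \le j\,k^{-\gamma}\|\mu\|,
\]
so by the standard Boccardo--Gallou\"et argument $G_k(u_n)$ is bounded in $W^{1,q}_0(\Omega)$ for every $q<N/(N-1)$, while a cutoff test of the form $T_k(u_n)\varphi^2$ produces the companion local $H^1$-bound for $T_k(u_n)$; together these yield $u_n$ bounded in $W^{1,q}_{\mathrm{loc}}(\Omega)$ for every $q<N/(N-1)$. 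To recover the Dirichlet datum in the weak sense of Definition \ref{lweakdef}, the global test with $(u_n+1/n)^\sigma - (1/n)^\sigma \in H^1_0(\Omega)\cap L^\infty(\Omega)$ (with $\sigma=\max(1,\gamma)$ as in \eqref{sigma}) gives
\[
\frac{4\sigma}{(\sigma+1)^2}\int_\Omega \bigl|\nabla (u_n+\tfrac{1}{n})^{(\sigma+1)/2}\bigr|^2 \le \|\mu\|,
\]
so that $u^{(\sigma+1)/2}\in W^{1,1}_0(\Omega)$ by weak lower semicontinuity.

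Finally, the limit $u_n \nearrow u$ is pointwise by monotonicity, and $u$ inherits the above regularity. The delicate step -- which I expect to be the main obstacle, since it is the genuinely new ingredient with respect to the $L^1$-datum case -- is passing to the limit in
\[
\int_\Omega \nabla u_n\cdot\nabla \varphi = \int_\Omega \frac{\mu_n\,\varphi}{(u_n+\tfrac{1}{n})^\gamma},\qquad \varphi \in C^1_c(\Omega).
\]
The left-hand side is handled by the local weak convergence of $\nabla u_n$. For the right-hand side, on $\omega = \operatorname{supp}\varphi$ the local lower bound gives $0\le \varphi/(u_n+1/n)^\gamma \le c_\omega^{-\gamma}\|\varphi\|_\infty$, and this quantity converges a.e.\ to $\varphi u^{-\gamma}$ on $\omega$. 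Combined with the narrow convergence $\mu_n\to\mu$, this allows to identify the limit with $\int_\Omega \varphi u^{-\gamma} d\mu$ via a standard Egoroff-plus-monotonicity argument, where the diffuse character of $\mu$ is essential in two ways: it provides the monotone approximation $\mu_n$, and it ensures -- in combination with the strong maximum principle -- that the integrand is well defined against $\mu$. This is in sharp contrast with the concentrated case, where the nonexistence result of Theorem \ref{bononesistenza} shows that the same approximation procedure collapses to the trivial solution.
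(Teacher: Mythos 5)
Your scheme of approximation, the monotonicity of $u_n$, the local lower bound from the strong maximum principle, and the a priori estimates all match the paper. There are, however, two issues, the second of which is a genuine gap.

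First, a side point: Appendix \ref{app:radon} does not provide a \emph{monotone} approximation of a nonnegative diffuse measure by $L^\infty(\Omega)$ functions. Proposition~\ref{approssimazionediffusecrescente} gives an increasing sequence in $W^{-1,p'}(\Omega)$ (i.e.\ $H^{-1}(\Omega)$ for $p=2$) converging strongly in $\mathcal{M}(\Omega)$; Lemma~\ref{lem_approssimazione} gives smooth approximants but with no monotonicity. The paper works with the $H^{-1}$ monotone sequence (so existence of $u_n$ comes from \cite{mupo} rather than classical theory). If you insist on $L^\infty$ and monotonicity you should justify that such an approximation exists — this is not recalled in the paper.

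Second, and more seriously: your argument for passing to the limit in the right-hand side
\begin{equation*}
\int_\Omega \frac{\varphi\,\mu_n}{(u_n+\tfrac{1}{n})^\gamma}
\end{equation*}
does not go through as written. You only establish a uniform $L^\infty$ bound on $g_n:=\varphi/(u_n+\tfrac{1}{n})^\gamma$ on $\operatorname{supp}\varphi$ together with a.e.\ (Lebesgue) convergence to $\varphi u^{-\gamma}$. But narrow convergence of $\mu_n$ combined with bounded a.e.-Lebesgue convergence of $g_n$ is \emph{not} sufficient to conclude $\int g_n\,\mu_n \to \int g\,\mu$: the exceptional set from Egoroff has small Lebesgue measure, not small $\mu_n$- or $\mu$-measure, and in general nothing controls $\mu_n$ on it. Invoking "a standard Egoroff-plus-monotonicity argument" and "well-definedness" of the integrand against $\mu$ does not supply the missing compactness. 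The paper overcomes precisely this by a crucial extra estimate: choosing $-\varphi(u_n+\tfrac{1}{n})^{-(2\gamma+1)}$ as a test function shows that $\varphi/(u_n+\tfrac{1}{n})^\gamma$ is bounded in $H^1_0(\Omega)$ (in addition to $L^\infty(\Omega)$), so it converges weakly in $H^1_0(\Omega)$ and $*$-weakly in $L^\infty(\Omega)$; then, writing $\mu_n = (\mu_n-\mu) + \mu$ with $|\mu_n-\mu|(\Omega)\to 0$ from the strong convergence and using the decomposition $\mu\in H^{-1}(\Omega)+L^1(\Omega)$ of Theorem~\ref{diffuse}, one passes to the limit in both pieces. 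This $H^1_0$ estimate on the singular quotient, paired with the $H^{-1}+L^1$ structure of diffuse measures, is the genuinely new ingredient that your proposal identifies as "the main obstacle" but does not actually overcome.
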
 
\begin{proof}
	 We consider the following approximation scheme 
	\begin{equation*}
	\begin{cases}
	\displaystyle - \Delta u_n= \frac{\mu_n}{(u_n+\frac{1}{n})^\gamma} &  \text{in} \ \Omega, \\
	u_n=0 & \text{on} \ \partial \Omega,
	\label{pborpe}
	\end{cases}
	\end{equation*}
	where $\mu_n\in H^{-1}(\Omega)$ is an increasing sequence converging to $\mu$ strongly in $\mathcal{M}(\Omega)$ (see Proposition \ref{approssimazionediffusecrescente}). The existence of such nonnegative $u_n \in H^1_{0}(\Omega)$ follows  as in  \cite{mupo}. 
	
	As already done in the proof of Theorem \ref{boesistenza}, one can take  $(u_n-u_{n+1})^+$ as a test function in the difference of formulations solved by $u_n$ and $u_{n+1}$ allowing to deduce that $u_n$ is    non-decreasing with respect to  $n$.
	Moreover, since $\frac{\mu_1}{(u_1+1)^\gamma}$ is not null, one obtains from the strong maximum principle that  
	\begin{equation}\label{stimaorpe1}
	\forall\omega\subset\subset\Omega \ \ \exists c_\omega>0 : \ \ \ u_n \ge u_1 \ge  c_\omega >0 \ \text{in} \ \omega.
	\end{equation}
	Now let observe that, concerning the a priori estimates, one can strictly follow the arguments of Theorem's \ref{boesistenza} proof. 	
	
	Now let first $\gamma = 1$ and take $u_n$ as a test function in the weak formulation of \eqref{pborpe} then one has
	$$\int_\Omega |\nabla u_n|^2 = \int_\Omega \frac{u_n}{u_n+\frac{1}{n}} \mu_n \le |\mu|(\Omega),$$
	which implies that $u_n$ is bounded in $H^1_0(\Omega)$ with respect to $n$.
	In case $\gamma>1$ one can take  $T_k^\gamma(u_n) \in H^1_0(\Omega)$ as a test function which implies that 
	$$\frac{4\gamma}{(\gamma+1)^2}\int_\Omega |\nabla T_k^{\frac{\gamma+1}{2}}(u_n)|^2 \le |\mu|(\Omega),$$
	and, taking $k\to \infty$, one has that $u_n^\frac{\gamma+1}{2}$is bounded in $H^1_0(\Omega)$ with respect to $n$. Moreover, by $|\nabla u_n^\frac{\gamma+1}{2}|=  |\nabla u_n|u_n^\frac{\gamma-1}{2}$ and  by \eqref{stimaorpe1} one deduces that $u_n$ is also locally bounded in $H^1(\Omega)$.
	When $\gamma<1$ one can take $(u_n+\varepsilon)^\gamma -\varepsilon^\gamma \in H^1_0(\Omega)$ ($0<\varepsilon<\frac{1}{n}$) as a test function in the weak formulation of \eqref{pborpe}. This allows to reason exactly as in the proof of Theorem \ref{boesistenza} in order to deduce that $u_n$ is bounded in 
	$W^{1,\frac{Nm(\gamma+1)}{N-m(1-\gamma)}}_0(\Omega)$. 
	
	\medskip
	
	Now we have to pass to the limit with respect to $n$ in the weak formulation of \eqref{pborpe}. Clearly, we can pass to the limit by weak convergence in the term of the left-hand. We re-write the right-hand as  
	\begin{equation}\label{stimaorpe2}
		\int_\Omega \frac{\varphi}{(u_n+\frac{1}{n})^\gamma}  \mu_n = 	\int_\Omega \frac{\varphi}{(u_n+\frac{1}{n})^\gamma} ( \mu_n -  \mu) + \int_\Omega \frac{\varphi}{(u_n+\frac{1}{n})^\gamma}  \mu.  
	\end{equation}	
	Recalling  that $\varphi \in C^1_c(\Omega)$, and using \eqref{stimaorpe1} with $\omega={\rm supp}\varphi$,  one has  	$$\lim_{n\to\infty}\int_\Omega \frac{\varphi}{(u_n+\frac{1}{n})^\gamma} (\mu_n - \mu) \le \lim_{n\to\infty} \frac{||\varphi||_{L^\infty(\Omega)}}{c^{\gamma}_{\supp \varphi}} |\mu_n-\mu|(\Omega) = 0.$$
	Let us focus on the second term in \eqref{stimaorpe2}. As  $0\le \varphi\in C^1_c(\Omega)$ we  take $-\frac{\varphi}{\left(u_n+\frac{1}{n}\right)^{2\gamma+1}}$ as a test function in \eqref{pborpe} deducing that
	\begin{equation*}
		(2\gamma +1)\int_\Omega \frac{|\nabla u_n|^2\varphi}{\left(u_n+\frac{1}{n}\right)^{2\gamma+2}} - \int_\Omega  \frac{\nabla u_n \cdot\nabla \varphi}{\left(u_n+\frac{1}{n}\right)^{2\gamma+1}} \le 0, 
	\end{equation*}
	that  implies 	\begin{equation}\label{stimaorpe}
	(2\gamma +1)\int_\Omega \frac{|\nabla 	u_n|^2\varphi}{\left(u_n+\frac{1}{n}\right)^{2\gamma+2}} \le  \int_\Omega  \frac{|\nabla u_n||\nabla \varphi|}{\left(u_n+\frac{1}{n}\right)^{2\gamma+1}} \le \frac{1}{c^{2\gamma + 1}_{\supp \varphi}} \int_\Omega |\nabla u_n||\nabla \varphi| \le C, 
	\end{equation}	
	for some constant which does not depend on $n$. Since 
		$$\nabla \left(\frac{\varphi}{(u_n+\frac{1}{n})^\gamma} \right) =  \frac{\nabla\varphi}{(u_n+\frac{1}{n})^\gamma} - \frac{\gamma\nabla u_n\varphi}{(u_n+\frac{1}{n})^{\gamma+1}}, $$
	then estimate \eqref{stimaorpe} implies that $\frac{\varphi}{(u_n+\frac{1}{n})^\gamma}$ is bounded both in $H^1_0(\Omega)$ and in  $L^\infty(\Omega)$. Then it weakly converges in both space and this is sufficient to take $n\to\infty$ in the second term on the right-hand of \eqref{stimaorpe2} since, from Theorem \ref{diffuse}, $\mu$ can be decomposed as $H^{-1}(\Omega)+L^1(\Omega)$. This concludes the proof.
	
\end{proof}

\subsection{The method  of renormalized solutions}

\label{renorm} 

As we have already observed in Theorem \ref{boca}, there is at most one distributional solution $u\in H^1_0(\Omega)$ to \eqref{pbgeneralh} when $h$ is non-increasing. By the way, along Section \ref{sec:reg},  and in particular in Example \ref{exnoth1}, we encountered explicit examples of  infinite energy solutions. 
To extend the existence and uniqueness results from the previous sections to a more general nonlinear setting, we apply  renormalization techniques.

\medskip

 For the reader's convenience, we briefly want to sum up  the renormalized approach to solve PDEs  boundary value problems. The main idea was introduced in \cite{dpl} for the study of Boltzmann equation to handle with infinite energy solutions. This technique was then adapted to the case of nonlinear elliptic equations with unbounded convection terms (\cite{bdgm1,bdgm2}) and, later, in order to study   uniqueness of solutions for nonlinear elliptic equations with measure  data (\cite{mu,dmop}).

\medskip 

Now for the sake of the exposition,  let us focus on a linear elliptic equation; in this case the theory of renormalized solutions turns out to coincide with the one of duality solutions developed by G. Stampacchia \cite{st}.

As we are concerned with uniqueness of solutions, a crucial motivation to deal with the renormalization method comes from the
celebrated paper \cite{serrin} (see also \cite{prignet}); here the author finds a nontrivial  distributional solution of  problem
\begin{equation*}\label{serrine}
\begin{cases}
	\displaystyle -\operatorname{div}(A^\varepsilon(x) \nabla u) = 0 & \text{in}\ \Omega,\\
	u=0 &\text{on}\ \partial\Omega,
	\end{cases}
\end{equation*}
  where $A^\varepsilon(x)$ is defined by 
$$a^\varepsilon_{ij} = \delta_{ij} + \left(\frac{N-1}{\varepsilon(N-2+\varepsilon)}-1\right)\frac{x_ix_j}{|x|^2}.$$

 Note that, by linearity, this example implies a strong non-uniqueness result for distributional solutions to the Dirichlet problem associated to 
$$	\displaystyle -\operatorname{div}(A^\varepsilon(x) \nabla u) = f \ \   \text{in}\ \Omega\,,$$
for any reasonable datum $f$ (e.g. $f\in L^1(\Omega)$). Let us stress that in this latter case SOLA belong to $W^{1,q}_0(\Omega)$ for any $q<\frac{N}{N-1}$,  while the \underline{pathological} ones found by Serrin are less regular, i.e. they belong to $W^{1,q}_0(\Omega)$ for any $q<\frac{N}{N-1+\varepsilon}$. One may then wonder if  uniqueness holds in the class of functions belonging to $W^{1,q}_0(\Omega)$ for every $q<\frac{N}{N-1}$   but this is not the case in $N\geq 3$ as shown in \cite{prignet}. Moreover, if $N=2$, uniqueness holds in this class due to Meyers summability  Theorem.  

\medskip 

A key observation is, in fact,   that  truncations $T_k(u)$ of the right solutions should have, in general, finite energy.  
This regularity property is quite natural;  if $u$ is a solution of the homogeneous Dirichlet problem associated to  
\begin{equation}\label{frameren}
 -\operatorname{div}(A(x) \nabla u) = f
\end{equation}
where $A$ satisfies \eqref{Abounded} and $f\in L^1(\Omega)$, formally multiply by $T_k(u)$ to obtain 
\begin{equation*}\label{tronch1}
\alpha \int_{\Omega} |\nabla T_k(u)|^2 = \int_\Omega fT_k(u) \le k||f||_{L^1(\Omega)}.
\end{equation*}
Let us remark that the previous estimate which is only formal can be make rigorous at an approximation level. 
Ultimately  any solution $u$ to the homogeneous Dirichlet problem associated to  \eqref{frameren} obtained by approximation enjoys  $$T_k(u)\in H^1_0(\Omega)\ \  \text{for any}\ \ \  k>0\,.$$  
A natural question is then if  a  right notion of solution in order to expect uniqueness is the fact that  $T_k(u)\in H^1_0(\Omega)$ for any  $k>0$. This is, in general, an open problem. Although, the idea to look at the equation solved by $T_k(u)$ is in some sense the core of the renormalization idea which, as we already said,  was firstly introduced in \cite{dpl} in the context  of Boltzmann equations (where logarithmic type truncations appeared) and then adapted to the elliptic setting with irregular  data in \cite{LiMu, mu} and, later, in \cite{dmop}.

\medskip

To be more concrete let us   multiply \eqref{frameren} by $S(u)$ where $S$ is a Lipschitz function on $\mathbb{R}$ with compact support, one has 
 \begin{equation}\label{frameren2}
 -\operatorname{div}(A(x) S(u)\nabla u)  + A(x)\nabla u\cdot \nabla u S'(u) = f S(u).
 \end{equation}
 If $T_k(u) \in H^1_0(\Omega)$ then each term in \eqref{frameren2} is well defined;  uniqueness of renormalized solutions is then related to the fact that, roughly speaking, one can extend  the set of test functions in \eqref{frameren2} to be in $H^1_0(\Omega)\cap L^\infty(\Omega)$ and  one is allowed to take $T_k(u)$ as a test function and, hopefully (and this will be the case), getting a comparison between solutions.

\medskip

Of course one also has to provide information on the zone where the solution is large which is missed in the truncated equation. To better understand that, let us fix $S=V_m$ in \eqref{frameren2} ($V_m$ is defined in \eqref{not:Vdelta}), so that 
 \begin{equation}\label{frameren3}
-\operatorname{div}(A(x) V_m(u)\nabla u)  + \frac{1}{m}A(x)\nabla u\cdot \nabla u \chi_{\{m<u<2m\}} = f V_m(u).
\end{equation}
If one requires that distributionally 
\begin{equation}\label{frameren4}
	\lim_{m\to\infty} \frac{1}{m}A(x)\nabla u\cdot \nabla u \chi_{\{m<u<2m\}} = 0, 
\end{equation}
then by taking $m\to\infty$ in  \eqref{frameren3} one obtains that  \eqref{frameren} holds in the sense of distributions. Conditions as in \eqref{frameren4} are typically  the right ones in order  to extend the notion of distributional solution and to obtain uniqueness also in the case of low regularity of the data. This condition essentially expresses the fact that,   even if we are not capable to assure that the function has finite energy,  the energy of $u$ on their superlevels does not  grow too fast.

Therefore, we can summarize as follows: a  \underline{renormalized solution} to the Dirichlet problem associated to \eqref{frameren} is a function $u$ satisfying $T_k(u) \in H^1_0(\Omega)$ for any $k>0$, \eqref{frameren2} and \eqref{frameren4}. 

\medskip

Let us finally recall that, if $f$ is diffuse with respect to the $2$-capacity, then the notion of renormalized solution coincides the one of entropy solution introduced in \cite{b6}.

\medskip
In the next section we will extend it to the case of a nonlinear operator of $p$-Laplace type, measure data, and generic, possibly singular, zero order terms.

\subsection{Existence and uniqueness of solutions in case of measure data and nonlinear principal operator}
\label{sec:measureex}

We deal with the following general problem: 

\begin{equation}
\begin{cases}
\displaystyle -\operatorname{div}(a(x,\nabla u)) = h(u)\mu &  \text{in} \  \Omega, \\
u=0 & \text{on}\ \partial \Omega,
\label{pbmain}
\end{cases}
\end{equation}
where $\displaystyle{a(x,\xi):\Omega\times\mathbb{R}^{N} \to \mathbb{R}^{N}}$ is a Carath\'eodory function satisfying the classical Leray-Lions structure conditions for $1<p<N$, namely
\begin{align}
&a(x,\xi)\cdot\xi\ge \alpha|\xi|^{p}, \ \ \ \alpha>0,
\label{cara1}\\
&|a(x,\xi)|\le \beta|\xi|^{p-1}, \ \ \ \beta>0,
\label{cara2}\\
&(a(x,\xi) - a(x,\xi^{'} )) \cdot (\xi -\xi^{'}) > 0,
\label{cara3}	
\end{align}
for every $\xi\neq\xi^{'}$ in $\mathbb{R}^N$ and for almost every $x$ in $\Omega$. The datum $0\le \mu \in \mathcal{M}(\Omega)$ and it is worth to point out (see Theorem \ref{decmeas} below) that $\mu$ admits an unique decomposition into $\mu_d+\mu_c$, where $\mu_d$ is a diffuse measure with respect to the $p$-capacity and $\mu_c$ is a measure concentrated on a set of zero $p$-capacity. Let us also assume that 
\begin{equation}\label{hmu}
	\mu_d\not\equiv0
\end{equation}
in order to avoid degeneracy phenomena of the approximation solutions as in the previous section.

The function $h:[0,\infty)\to (0,\infty]$ is continuous and finite outside the origin satisfying \eqref{h1} and \eqref{h2} which we recall here for the sake of completeness:
\begin{equation}
\exists\;\gamma \ge 0, \ c_1,s_1>0: \  h(s) \le \frac{c_1}{s^\gamma}\;\text{ if }\;s<s_1,
\label{h1ren}
\end{equation}
and 
\begin{equation}
\displaystyle \limsup_{s\to \infty} \ h(s)=:h(\infty)<\infty.
\label{h2ren}
\end{equation}
Let us observe that the strict positivity of $h$ is a technical assumption in order to handle with the concentrated part of the measure. In Section \ref{H=0}, as we will see, the case when $h$ degenerates  is simpler and bounded solutions are shown to exist even in presence of rough data. Let us explicitly mention that, for the entire Section \ref{sec:measureex} we require $\gamma\le 1$. Indeed, in this case, we exploit that $T_k(u)\in W^{1,p}(\Omega)$ for any $k>0$ which is something unexpected, in general, in case $\gamma>1$. 

In Section \ref{sec:distrmagg1} below, we state the existence of a distributional solution if $\gamma>1$ and we outline the proof's main steps.  

\medskip

 Firstly we precisely set the notion of \underline{renormalized solution} for our case:
\begin{defin}\label{renormalized}
	Let $a$ satisfy \eqref{cara2} then a nonnegative function $u$, which is almost everywhere finite on $\Omega$, is a {\it renormalized solution} to problem \eqref{pbmain} if $T_k(u) \in W^{1,p}_0(\Omega)$ for every $k>0$ and if 
	\begin{equation}\label{ren0}
		h(u)S(u)\varphi \in L^1(\Omega,\mu_d)
	\end{equation}
	and
	\begin{gather}
	\int_{\Omega} a(x,\nabla u)\cdot\nabla \varphi S(u) + \int_{\Omega}a(x,\nabla u)\cdot\nabla u S'(u)\varphi = \int_{\Omega}h(u)S(u)\varphi \mu_d \label{ren1}\\
	\forall S \in W^{1,\infty}(\mathbb{R})\;\text{ with compact support and}\;\forall\varphi \in W^{1,p}_0(\Omega)\cap L^\infty(\Omega),\notag
	\end{gather}
	\begin{equation}
	\label{ren2}
	\lim_{m\to \infty} \frac{1}{m}\int_{\{m< u< 2m\}} a(x,\nabla u)\cdot \nabla u \varphi = h(\infty)\int_{\Omega}\varphi \mu_c\quad\forall \varphi \in C_b(\Omega).
	\end{equation}
\end{defin}
  Let us now precise what we mean by distributional solution to problem \eqref{pbmain} in this more general situation. Let us also recall that $\sigma:=\max\left(1, \gamma\right)$ which was already set in \eqref{sigma}. 
\begin{defin}\label{distributional}
	A nonnegative and measurable function $u$ such that $|a(x,\nabla u)| \in L^1_{\textrm{loc}}(\Omega)$ is a {\it distributional solution} to problem \eqref{pbmain} if $h(u) \in L^1_{\textrm{loc}}(\Omega,\mu_d)$, and the following hold
	\begin{equation}\label{troncate}
	T_k^\frac{\sigma-1+p}{p}(u) \in W^{1,p}_0(\Omega)\quad\forall k>0,
	\end{equation}
	and
	\begin{equation} \displaystyle \int_{\Omega}a(x,\nabla u) \cdot \nabla \varphi =\int_{\Omega} h(u)\varphi \mu_d + h(\infty)\int_{\Omega} \varphi \mu_c \ \ \ \forall \varphi \in C^1_c(\Omega).\label{distrdef}\end{equation}
\end{defin}
The notion of renormalized solution is much more general than the distributional one. It holds the following result in case $\gamma\leq1$:
\begin{lemma}
	\label{equivrindis}
	  Let $a$ satisfy \eqref{cara1} and \eqref{cara2},  let $h$ satisfy \eqref{h1ren} with $\gamma\leq1$ and \eqref{h2ren},  and let $0\le\mu\in \mathcal{M}(\Omega)$. Then a renormalized solution to \eqref{pbmain} is also a distributional solution to \eqref{pbmain}. 
\end{lemma}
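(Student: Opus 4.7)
Since $\gamma\le 1$ forces $\sigma=1$, condition \eqref{troncate} reduces to $T_k(u)\in W^{1,p}_0(\Omega)$, which is already part of the renormalized definition. My plan is to insert the Lipschitz cut-off $S=V_m$, with $V_m$ defined in \eqref{not:Vdelta}, into the renormalized formulation \eqref{ren1} and to let $m\to\infty$; the asymptotic condition \eqref{ren2} will then automatically produce the singular contribution $h(\infty)\mu_c$ appearing in \eqref{distrdef}.

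A preparatory step is to prove that $|a(x,\nabla u)|\in L^1(\Omega)$, which will make the dominated-convergence argument below legitimate. Testing \eqref{ren1} with $S=V_m$ and $\varphi=T_k(u)\in W^{1,p}_0(\Omega)\cap L^\infty(\Omega)$, and letting $m\to\infty$ using \eqref{ren2} with $\varphi\equiv 1$, I would arrive at
$$\int_{\{u<k\}} a(x,\nabla u)\cdot\nabla u \;=\; k\,h(\infty)\mu_c(\Omega)\;+\;\lim_{m\to\infty}\int_\Omega h(u)V_m(u)T_k(u)\,\mu_d.$$
At this point the assumption $\gamma\le 1$ enters crucially: on $\{u<s_1\}$ one has $h(u)u\le c_1 u^{1-\gamma}\le c_1 s_1^{1-\gamma}$ by \eqref{h1ren}, while on $\{u\ge s_1\}$ the function $h$ is bounded by continuity and \eqref{h2ren}, so $h(s)T_k(s)\le C(1+k)$ uniformly in $s$. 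Coercivity \eqref{cara1} then yields $\|\nabla T_k(u)\|_{L^p}^p\le Ck$, and Sobolev embedding converts this into $|\{u\ge k\}|\le C k^{-N(p-1)/(N-p)}$. Coupling this with the bound $\frac{1}{m}\int_{\{m<u<2m\}}|\nabla u|^p\le C$ (another consequence of \eqref{ren2} and \eqref{cara1}) and applying H\"older's inequality on the dyadic shells $\{2^j k_0\le u<2^{j+1}k_0\}$ produces $\int_\Omega|\nabla u|^{p-1}<\infty$, hence $|a(x,\nabla u)|\in L^1(\Omega)$ by \eqref{cara2}.

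With this in hand, for $\varphi\in C^1_c(\Omega)$ I would plug $S=V_m$ into \eqref{ren1} to get
$$\int_\Omega a(x,\nabla u)\cdot\nabla\varphi\,V_m(u)\;-\;\frac{1}{m}\int_{\{m<u<2m\}} a(x,\nabla u)\cdot\nabla u\,\varphi\;=\;\int_\Omega h(u)V_m(u)\varphi\,\mu_d,$$
and pass to the limit $m\to\infty$ in every term. The first left-hand term converges to $\int_\Omega a(x,\nabla u)\cdot\nabla\varphi$ by dominated convergence with dominant $\beta|\nabla u|^{p-1}|\nabla\varphi|\in L^1(\Omega)$. The second is exactly the quantity appearing in \eqref{ren2}, so it tends to $h(\infty)\int_\Omega\varphi\,\mu_c$. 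For the right-hand side, the standard capacity estimate $\operatorname{cap}_p(\{\widetilde{T_k(u)}\ge k\})\le C k^{1-p}$ applied to the quasi-continuous representatives of the $T_k(u)\in W^{1,p}_0(\Omega)$ gives $\operatorname{cap}_p(\{u=\infty\})=0$; since $\mu_d$ does not charge sets of zero $p$-capacity, $V_m(u)\nearrow 1$ holds $\mu_d$-a.e.\ and monotone convergence yields
$$\int_\Omega h(u)V_m(u)\varphi\,\mu_d\;\longrightarrow\;\int_\Omega h(u)\varphi\,\mu_d \in [0,+\infty].$$
Since the two left-hand limits are finite, this limit must also be finite, delivering simultaneously $h(u)\in L^1_{\rm loc}(\Omega,\mu_d)$ and the distributional identity \eqref{distrdef}.

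The main obstacle is the $L^1$-bound on $a(x,\nabla u)$: transferring the finite-energy information of each single truncation into a genuine global integrability requires simultaneously exploiting \eqref{ren2}, the assumption $\gamma\le 1$ to keep $h(u)T_k(u)$ bounded near the singularity, and a Marcinkiewicz-type dyadic summation on the superlevel sets. Without this bound, the passage to the limit in the renormalized identity collapses and one is left only with the renormalized formulation rather than the distributional one.
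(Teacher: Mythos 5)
Your proposal is correct and follows essentially the same route as the paper: derive $\int_\Omega|\nabla T_k(u)|^p\le C(1+k)$ by testing \eqref{ren1} with $S=V_m$, $\varphi=T_k(u)$ and using $\gamma\le 1$ via \eqref{h1ren}; bootstrap to $|\nabla u|^{p-1}\in L^1(\Omega)$ (the paper cites Lemmas~4.1--4.2 of \cite{b6}, which is exactly the dyadic/Marcinkiewicz bookkeeping you spell out); then take $\varphi\in C^1_c(\Omega)$, $S=V_m$, and let $m\to\infty$ using dominated convergence on the principal term, \eqref{ren2} on the renormalization error, and capacity-quasi-everywhere finiteness of $u$ on the singular term.

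The one genuine variation is how you obtain $h(u)\in L^1_{\rm loc}(\Omega,\mu_d)$. The paper deduces it upfront from condition \eqref{ren0} (which guarantees $h(u)V_1(u)\varphi\in L^1(\Omega,\mu_d)$) together with Lemma~\ref{dalmaso3}, and then passes to the limit in the right-hand side of \eqref{rendis2} by dominated convergence. You instead deduce finiteness a posteriori from the balance: once the two left-hand limits exist and are finite, the monotone limit of $\int_\Omega h(u)V_m(u)\varphi\,\mu_d$ must also be finite. This works, but be careful on two points you left implicit: (i) the monotone-convergence step requires $\varphi\ge 0$ (for signed $\varphi$ take a nonnegative $\psi\in C^1_c(\Omega)$ with $\psi\ge|\varphi|$, apply the argument to $\psi$, and conclude by domination), and (ii) the individual finiteness of $\int_\Omega h(u)V_m(u)\varphi\,\mu_d$ for each fixed $m$—which you need for the MCT to say anything—is precisely \eqref{ren0} with $S=V_m$; so \eqref{ren0} is being used after all, just at one remove. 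Modulo these small clarifications the argument is sound.
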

\begin{proof}
	Let $u$ be a renormalized solution to \eqref{pbmain}.
	Obviously \eqref{troncate} holds. Let us take in \eqref{ren1} $S=V_m \ (m>0)$, where $V_m$ is defined in \eqref{not:Vdelta}, and $\varphi=T_k(u)$ ($s_1<k<m$); hence we have
	\begin{equation*}
	\int_\Omega a(x,\nabla u)\cdot \nabla T_k(u) V_m(u)\leq\frac{k}{m}\int_{\{m<u<2m\}} a(x,\nabla u)\cdot \nabla u + \int_\Omega h(u)T_k(u) V_m(u)\mu_d.
	\end{equation*}
	Using \eqref{cara1} and \eqref{h1ren}, it holds
	\begin{align*}
	\alpha\int_\Omega |\nabla T_k(u)|^p &\leq \frac{k}{m}\int_{\{m< u< 2m\}} a(x,\nabla u)\cdot \nabla u +\int_{\{u<s_1\}}h(u)T_k(u)V_m(u)\mu_d
	\\
	&+\int_{\{u\geq s_1\}} h(u)T_k(u)V_m(u)\mu_d \leq \frac{k}{m}\int_{\{m<u<2m\}} a(x,\nabla u)\cdot \nabla u
	\\
	&+c_1s_1^{1-\gamma}|\mu_d|(\Omega) +k \sup_{s\in [s_1,\infty)}h(s) |\mu_d|(\Omega).
	\end{align*}
	From the previous, as $m\to\infty$, one has 
	\begin{equation}
	\label{rendis1}
	\int_\Omega |\nabla T_k(u)|^p\leq C(k+1), \quad \forall k>0,
	\end{equation}
	for some positive constant $C>0$.

	By \eqref{rendis1} and by Lemma \ref{dalmaso2}, one deduces that $u$ is cap$_p$-almost everywhere finite and cap$_p$-quasi continuous; moreover, using Lemmas $4.1$ and $4.2$ of \cite{b6}, one has that $\displaystyle u,|\nabla u|^{p-1}\in L^1(\Omega)$. Now taking $\varphi\in C_c^1(\Omega)$ and $S=V_m$ in \eqref{ren1} one gets
	\begin{equation}
	\label{rendis2}
	\int_\Omega a(x,\nabla u)\cdot \nabla\varphi V_m(u)=\frac{1}{m}\int_{\{m<u<2m\}} a(x,\nabla u)\cdot \nabla u\varphi + \int_\Omega h(u)\varphi V_m(u)\mu_d.
	\end{equation}
	By \eqref{ren0} it results $h(u)V_1(u)\varphi\in L^1{(\Omega,\mu_d)}$, and so, using Lemma \ref{dalmaso3}, one has
	\begin{align*}
	&\int_\Omega h(u)|\varphi| \mu_d=\int_{\{u<1\}}h(u)|\varphi| \mu_d+\int_{\{u\geq 1\}}h(u)|\varphi|\mu_d \\
	&\leq \int_\Omega h(u) V_1(u)|\varphi|\mu_d+\sup_{s\in [1,\infty)}h(s) ||\varphi||_{L^{\infty}(\Omega)}|\mu_d|(\Omega)<\infty,
	\end{align*}
	that implies $h(u)\in L^1_{\textrm{loc}}(\Omega,\mu_d)$. 
	\\
	This fact and \eqref{ren2} allow to take $m\to\infty$ in the right-hand of \eqref{rendis2} using also the Lebesgue Theorem for the second term. The Lebesgue Theorem also applies, as $m\to \infty$, for the left-hand observing that $a(x,\nabla u)\in L^1(\Omega)$ thanks to \eqref{cara2} since $|\nabla u|^{p-1}\in L^1(\Omega)$. This shows that $u$ is a distributional solution to \eqref{pbmain}.
\end{proof}
 
Now we are ready to state the existence of the renormalized solution to \eqref{pbmain}.
\begin{theorem}\label{teoexrinuniqueness}
	Let $a$ satisfy \eqref{cara1}-\eqref{cara3}, let $0\le \mu\in \mathcal{M}(\Omega)$ satisfy \eqref{hmu} and let $h$ satisfy \eqref{h1ren} and \eqref{h2ren} with $\gamma\le 1$. Then there exists a renormalized solution $u$ to problem \eqref{pbmain} such that:
	\begin{itemize}
		\item [i)] if $1<p\leq2-\frac{1}{N}$ then $u^{p-1}\in L^q(\Omega)$ \ $\forall\, q<\frac{N}{N-p}$ and $\quad|\nabla u|^{p-1} \in L^q(\Omega)$ $\forall\, q<\frac{N}{N-1}$;
		\item [ii)] if $p>2-\frac{1}{N}$ then $u \in W_0^{1,q}(\Omega) \ \;\forall\,q<\frac{N(p-1)}{N-1}$.
	\end{itemize}
\end{theorem}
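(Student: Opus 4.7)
The plan is to argue by approximation, following the scheme already used in Section \ref{sec:integrable} but adapted to the renormalized framework of \cite{do,ddo}. We decompose $\mu = \mu_d + \mu_c$ and set $\mu_n = \mu_n^d + \mu_n^c$, where $\mu_n^d \in W^{-1,p'}(\Omega) \cap L^1(\Omega)$ is the non\-decreasing approximation of $\mu_d$ given by Proposition \ref{approssimazionediffusecrescente}, and $\mu_n^c \in L^\infty(\Omega)$ converges to $\mu_c$ in the narrow topology (cf. Lemma \ref{lem_approssimazione}). Setting $h_n := T_n(h)$, we consider the approximating problems
\begin{equation*}
\begin{cases}
-\operatorname{div}(a(x,\nabla u_n)) = h_n(u_n)\mu_n & \text{in } \Omega,\\
u_n = 0 & \text{on } \partial\Omega,
\end{cases}
\end{equation*}
whose existence of bounded nonnegative solutions $u_n\in W^{1,p}_0(\Omega)\cap L^\infty(\Omega)$ is standard by monotonicity arguments.

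The first step is to establish a uniform local lower bound. Exploiting the strict positivity of $h$ together with \eqref{hmu}, we compare $u_n$ with the increasing sequence of solutions of the auxiliary monotone problem with $\underline{h}(s)\le h(s)$ non\-increasing, as in Remark \ref{upositiva} (following \cite{do,ddo}); this yields $u_n \ge c_\omega > 0$ on every $\omega\subset\subset\Omega$ for $n$ large. The second step is the a priori estimate on truncations: testing with $T_k(u_n)$ and using \eqref{h1ren} with $\gamma\le 1$ and \eqref{h2ren} exactly as in the proof of Lemma \ref{equivrindis} we obtain
\begin{equation*}
\alpha \int_\Omega |\nabla T_k(u_n)|^p \le C\,(k+1) \qquad \forall k>0,
\end{equation*}
with $C$ independent of $n$. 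From this estimate, the classical Boccardo--Gallou\"et machinery (Lemmas 4.1--4.2 of \cite{b6}) gives at once the Marcinkiewicz estimates announced in i) and the Sobolev estimate of ii); in particular $u_n$ is cap$_p$-quasi continuous, almost everywhere finite and, up to subsequences, it converges a.e. on $\Omega$ to some nonnegative function $u$, while $T_k(u_n)\rightharpoonup T_k(u)$ in $W^{1,p}_0(\Omega)$.

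The third, and most delicate, step is the strong convergence $\nabla T_k(u_n) \to \nabla T_k(u)$ in $L^p(\Omega)^N$, which is necessary to pass to the limit in the nonlinear term $a(x,\nabla u_n)$. Here the standard Boccardo--Murat--Dal Maso--Orsina--Petitta argument is applied: one tests with a function of the form $\varphi_n = S_m(u_n)\bigl(T_k(u_n)-T_k(u)\bigr)$, where $S_m$ is a compactly supported truncation that isolates the contribution of the diffuse part from that of $\mu_c$ (which is handled via the cap$_p$-quasi continuity of $T_k(u)$ and the decomposition \eqref{hmu}). Combining the narrow convergence of $\mu_n^c$, the strong $W^{-1,p'}$-convergence of $\mu_n^d$ on the relevant bounded set, the local lower bound, and the monotonicity \eqref{cara3}, one deduces first $\nabla u_n \to \nabla u$ a.e.\ in $\Omega$, and then the desired strong $L^p$-convergence of truncated gradients, as well as the convergence of $a(x,\nabla u_n)$ in $L^q(\Omega)^N$ for suitable $q$.

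Once these convergences are available, passing to the limit in \eqref{ren1} for fixed $S\in W^{1,\infty}(\mathbb{R})$ with compact support and $\varphi\in W^{1,p}_0(\Omega)\cap L^\infty(\Omega)$ is immediate on the left-hand side, whereas on the right-hand side the compact support of $S$, the local lower bound on $u_n$, and the decomposition of $\mu_n$ (with the concentrated part vanishing on $\{u<\infty\}$ in the limit, cf. Section \ref{sec:nonex}) reduce the task to the Lebesgue theorem. Finally, the energy condition \eqref{ren2} is verified by testing the approximate problem with $\tfrac{1}{m}(u_n - T_m(u_n))^+\varphi$ for $\varphi\in C_b(\Omega)$: the left-hand side yields, up to a vanishing error, $\tfrac{1}{m}\int_{\{m<u_n<2m\}}a(x,\nabla u_n)\cdot\nabla u_n\,\varphi$, while the right-hand side, thanks to $h(\infty)<\infty$ from \eqref{h2ren}, converges first in $n$ to an integral against $h(\infty)\mu_c$ (the diffuse contribution vanishing because $u$ is cap$_p$-finite, cf. Lemma \ref{dalmaso}), and then produces \eqref{ren2} as $m\to\infty$.

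The main obstacle is clearly the third step, that is, the simultaneous handling of $\mu_d$ and $\mu_c$ when proving the a.e. convergence of $\nabla u_n$: the concentrated part of the measure makes the classical comparison with $T_k(u)$ fail on the level sets of $u$ of positive $\mu_c$-measure, and this is precisely what forces the renormalized framework rather than the distributional one, and what dictates the specific choice of cutoffs $S_m$ together with the narrow-convergence approximation of $\mu_c$.
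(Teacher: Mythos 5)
Your general roadmap (approximation, uniform lower bound, truncation estimates via Boccardo--Gallou\"et, a.e.\ gradient convergence through the monotonicity trick, then renormalized passage to the limit) is close to the paper's spirit, and the idea of exploiting the local lower bound from Remark \ref{upositiva} instead of the paper's $V_\delta$ device from Lemma \ref{convgradienti} is a legitimate variant that does appear in \cite{do,ddo}. However, there is a genuine gap in the way you set up the approximation. You replace $\mu$ by a \emph{single} one-parameter sequence $\mu_n=\mu_n^d+\mu_n^c$ and simultaneously truncate $h$ at level $n$, whereas the paper uses the two-parameter scheme \eqref{pbapproxeps} with $h_n$ and $\mu_m$ precisely to decouple these limits and, crucially, it does \emph{not} approximate $\mu_d$ at all (instead the approximating problems are solved as renormalized problems with the genuine measure $\mu_d$ on the right, invoking \cite{mupo,ddo}). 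The decoupling matters in the proof of \eqref{ddoconc}: the estimate of $\left|\int_\Omega (h_n(u_{n,m})-h(\infty))\varphi\,\mu_m\right|$ produces a term bounded by $2n\int_\Omega(1-\Psi_\nu)\mu_m$, with the factor $n$ coming from $\|h_n\|_\infty\le n$. In the two-parameter scheme one first sends $m\to\infty$ at $n$ fixed, so this term goes to $2n\int(1-\Psi_\nu)\mu_c\le 2n\nu$, then sends $\nu\to 0^+$ still at $n$ fixed, and only afterwards $n\to\infty$. In your one-parameter scheme $h_n$ and $\mu_n^c$ share the index, so the corresponding term is $\approx 2n\int(1-\Psi_\nu)\mu_n^c$, and since $\mu_n^c\to\mu_c$ only narrowly and $\nu$ is sent to zero last, you cannot make this term vanish without tying $\nu$ to $n$ in a way that a straightforward presentation does not permit. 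This is the step that forces the two-parameter scheme and it is missing from your proposal.

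Two smaller points. First, because you approximate $\mu_d$ by $\mu_n^d\in W^{-1,p'}(\Omega)\cap L^1(\Omega)$ (Proposition \ref{approssimazionediffusecrescente}), the right-hand side $h_n(u_n)(\mu_n^d+\mu_n^c)$ need not lie in $L^m(\Omega)$ with $m>N/p$; hence the claim that $u_n\in W^{1,p}_0(\Omega)\cap L^\infty(\Omega)$ by "standard monotonicity arguments" is not justified, and in fact the paper deliberately works with \emph{renormalized} solutions $u_{n,m}$ of the approximating problems rather than bounded weak ones. Second, your verification of \eqref{ren2} via the test function $\tfrac1m(u_n-T_m(u_n))^+\varphi$ produces $\tfrac1m\int_{\{u_n>m\}}a(x,\nabla u_n)\cdot\nabla u_n\,\varphi$ rather than $\tfrac1m\int_{\{m<u_n<2m\}}a(x,\nabla u_n)\cdot\nabla u_n\,\varphi$; the paper instead plugs $S=V_t$ into \eqref{new2} and $\pi_t(u_{n,m})(1-\Psi_\nu)$ into the renormalized formulation, which is what actually yields the energy condition on the band $\{t<u<2t\}$ and identifies the limit as $h(\infty)\int_\Omega\varphi\,\mu_c$.
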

Under a natural monotonicity request on $h$, the renormalized solution is unique as shown from the following result.

\begin{theorem}\label{teounique}
	Let $a$ satisfy \eqref{cara1}-\eqref{cara3}, and let $\mu\in \mathcal{M}(\Omega)$ such that $\mu_c\equiv 0$. Let $h$ be a non-increasing function satisfying \eqref{h1ren} with $\gamma\le 1$ then there is at most one renormalized solution to \eqref{pbmain}.
\end{theorem}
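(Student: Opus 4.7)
Let $u_1,u_2$ be two renormalized solutions in the sense of Definition \ref{renormalized} and set, for any $m>0$, the truncation $S_m\in W^{1,\infty}(\mathbb{R})$ given by $S_m=1$ on $[0,m]$, $S_m=0$ on $[2m,\infty)$, affine in between (the same $V_m-1$ style cut-off already used in the excerpt). Since $\mu_c\equiv 0$ the whole measure is $\mu=\mu_d$ and the renormalization condition \eqref{ren2} reduces to
\[
\lim_{m\to\infty}\frac{1}{m}\int_{\{m<u_i<2m\}}a(x,\nabla u_i)\cdot\nabla u_i\,\varphi=0,\qquad i=1,2,
\]
for every $\varphi\in C_b(\Omega)$, and then, by a standard density argument based on the uniform bounds \eqref{rendis1} and $|\nabla u_i|^{p-1}\in L^1(\Omega)$, for every $\varphi\in L^\infty(\Omega)$.

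The idea is to use, in both renormalized identities \eqref{ren1}, the truncated test function
\[
\varphi_{k,m}:=T_k\!\bigl(T_{2m}(u_1)-T_{2m}(u_2)\bigr)\in W^{1,p}_0(\Omega)\cap L^\infty(\Omega),
\]
coupled with the cut-off $S_m$ of the same height. First I would subtract the two equations, obtaining, after rearranging,
\begin{align*}
&\int_\Omega\bigl(a(x,\nabla u_1)S_m(u_1)-a(x,\nabla u_2)S_m(u_2)\bigr)\cdot\nabla\varphi_{k,m}\\
&\qquad+\int_\Omega\bigl(a(x,\nabla u_1)\cdot\nabla u_1\,S_m'(u_1)-a(x,\nabla u_2)\cdot\nabla u_2\,S_m'(u_2)\bigr)\varphi_{k,m}\\
&\qquad=\int_\Omega\bigl(h(u_1)S_m(u_1)-h(u_2)S_m(u_2)\bigr)\varphi_{k,m}\,\mu_d.
\end{align*}
On the set $\{u_1\le m\}\cap\{u_2\le m\}$ one has $S_m(u_i)=1$ and $\nabla\varphi_{k,m}=\nabla T_k(u_1-u_2)\,\chi_{\{|u_1-u_2|<k\}}$, so the leading piece of the first integral reproduces the monotone quantity
\[
\int_{\{u_1,u_2\le m,\ |u_1-u_2|<k\}}\bigl(a(x,\nabla u_1)-a(x,\nabla u_2)\bigr)\cdot\nabla(u_1-u_2)\ge 0,
\]
which is the only term we want to keep.

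Next I would send $m\to\infty$ with $k$ fixed. The $S_m'$-terms are controlled by $\frac{1}{m}\int_{\{m<u_i<2m\}}a(x,\nabla u_i)\cdot\nabla u_i\,|\varphi_{k,m}|$ with $|\varphi_{k,m}|\le k$, and by the renormalization condition they vanish in the limit. All remaining $S_m$-pieces outside the ``good set'' above are supported on $\{\max(u_1,u_2)>m\}$, on which, by \eqref{rendis1} and a Hölder argument à la \cite{dmop}, the integrals tend to zero as well. On the right-hand side, the bound $|h(u_i)S_m(u_i)\varphi_{k,m}|\le k(c_1 u_i^{-\gamma}\chi_{\{u_i<s_1\}}+\sup_{[s_1,\infty)}h)\in L^1(\Omega,\mu_d)$ (this is where \eqref{h1ren} with $\gamma\le 1$ and the estimate \eqref{rendis1} enter, giving $u_i^{-\gamma}\in L^1(\Omega,\mu_d)$ via the standard capacity argument since $\mu_d$ does not charge $\{u_i=0\}$) allows dominated convergence, and the right-hand side becomes
\[
\int_\Omega\bigl(h(u_1)-h(u_2)\bigr)T_k(u_1-u_2)\,\mu_d\le 0,
\]
because $h$ is non-increasing, $\mu_d\ge 0$, and $T_k(u_1-u_2)$ has the same sign as $u_1-u_2$.

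Combining the two inequalities yields
\[
\int_\Omega\bigl(a(x,\nabla u_1)-a(x,\nabla u_2)\bigr)\cdot\nabla T_k(u_1-u_2)\le 0,
\]
so by the strict monotonicity \eqref{cara3} we conclude $\nabla T_k(u_1-u_2)=0$ almost everywhere. Since $T_k(u_1-u_2)$ is the a.e.\ limit of $\varphi_{k,m}\in W^{1,p}_0(\Omega)$ with vanishing gradient in the limit, it must vanish identically, and letting $k\to\infty$ gives $u_1=u_2$.

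The routine bookkeeping is harmless; the genuine difficulty is the second paragraph above, namely showing that the ``bad'' terms arising from $S_m(u_1)\neq S_m(u_2)$ on the sets where one solution is large and the other is small actually disappear as $m\to\infty$. This is the classical measure-data uniqueness obstruction, and is the reason a double truncation in both $u_i$ and $\varphi_{k,m}$ is indispensable; the singular term $h(u)\mu_d$ adds no extra pathology here precisely because of the sign condition forced by the monotonicity of $h$.
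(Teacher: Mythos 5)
Your proposal follows the same overall strategy as the paper's proof — double truncation via a cut-off applied to both solutions, using \eqref{ren2} with $\mu_c\equiv 0$ to kill the renormalization remainders, the monotonicity of $h$ for the sign of the right-hand side, strict monotonicity \eqref{cara3} to conclude — but the choice of test functions differs, and this difference is not cosmetic. The paper does \emph{not} use the same $\varphi$ in both equations: it takes $\varphi = V_n(u_2)\,T_k(u_1-u_2)$ (with $S=V_n$) in the equation for $u_1$ and the symmetric $\varphi = V_n(u_1)\,T_k(u_1-u_2)$ in the equation for $u_2$. This asymmetric pairing makes the product $V_n(u_1)V_n(u_2)$ appear in front of the monotone quantity, and — crucially — every remainder term automatically carries a factor $\frac1n$ (from $V_n'$ or $\nabla V_n(u_j)$), so that each of them is killed cleanly by \eqref{ren2} and a single Hölder estimate \eqref{uniqueren}. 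By contrast, in your set-up with the same $\varphi_{k,m}$ and the same $S_m$ in both equations, the main integral contains $a(x,\nabla u_1)S_m(u_1)-a(x,\nabla u_2)S_m(u_2)$ rather than a common factor in front of the monotone difference; the residual term involving $S_m(u_1)-S_m(u_2)$ does not come with an explicit $1/m$ prefactor from the algebra (though $|S_m(u_1)-S_m(u_2)|\le |u_1-u_2|/m$ gives one after further estimation), so the bookkeeping you defer to ``a Hölder argument à la \cite{dmop}'' is genuinely heavier than in the paper's version — this is exactly the step you flag as the crux, and the paper's choice of test functions is precisely what removes it. Finally, a small point on the right-hand side: you invoke dominated convergence via the unjustified claim $u_i^{-\gamma}\in L^1(\Omega,\mu_d)$ (only the \emph{upper} bound $h(s)\le c_1 s^{-\gamma}$ is available, so \eqref{ren0} does not give this), but this is unnecessary: for each fixed $m$, $h(\cdot)S_m(\cdot)$ is a product of nonnegative non-increasing functions, hence non-increasing, and $\varphi_{k,m}$ has the sign of $u_1-u_2$, so the right-hand side is already $\le 0$ at every level $m$ — which is precisely how the paper avoids passing to the limit there and simply applies Fatou to the nonnegative monotone term.
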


\begin{remark}\label{linda}
	Let us once again highlight that we avoid to treat the case $\mu_d\equiv0$ to elude nonexistence results (in the approximation sense) similar to the ones given in the previous section.
	\\Let also observe that, in case $\mu_d\equiv0$, our notions of solution formally led us to deal with
	$$\begin{cases}
	\displaystyle -\operatorname{div}(a(x,\nabla u)) = h(\infty)\mu_c &  \text{in} \ \Omega, \\
	u=0 & \text{on}\ \partial \Omega,
	\end{cases}$$
	which could be studied using classical tools.
\triang \end{remark}

\subsubsection{\underline{Approximation scheme and a priori estimates}}
\label{sec:maggiore0}

We work through a two step approximation scheme keeping apart the truncation on $h$ and the regularization of $\mu$. 

\medskip
 
Namely we deal with: 
\begin{equation}\begin{cases}
\displaystyle -\operatorname{div}(a(x,\nabla u_{n,m})) = h_n(u_{n,m})(\mu_d + \mu_{m}) &  \text{in}\ \Omega, \\
u_{n,m}=0 & \text{on}\ \partial \Omega,
\label{pbapproxeps}
\end{cases}\end{equation}
where $h_n (s)=T_n(h(s))$ and $\mu_{m}$ is a sequence of nonnegative functions in $L^{p'}(\Omega)$, bounded in $L^1(\Omega)$, that converges to $\mu_c$ in the narrow topology of measures. 
\\
It follows from \cite{mupo,ddo} that there exists a renormalized solution $u_{n,m}$ to \eqref{pbapproxeps}. Moreover it holds:
\begin{itemize}
	\item [-] if $1<p\leq2-\frac{1}{N}$ then $u_{n,m}^{p-1}\in L^q(\Omega)$ for every $q<\frac{N}{N-p}$ and $\quad|\nabla u_{n,m}|^{p-1} \in L^q(\Omega)$ for every $q<\frac{N}{N-1}$;
	\item [-] if $p>2-\frac{1}{N}$ then $u_{n,m} \in W_0^{1,q}(\Omega)$ for every $q<\frac{N(p-1)}{N-1}$.
\end{itemize}
Let also observe that, from Lemma \ref{equivrindis}, $u_{n,m}$ is also a distributional solution to \eqref{pbapproxeps}.
\\For the sake of simplicity, since for deriving estimates is not necessary to distinguish between $n$ and $m$, we deal with following approximation in place of \eqref{pbapproxeps} until the passage to the limit
\begin{equation}\begin{cases}
\displaystyle -\operatorname{div}(a(x,\nabla u_{n})) = h_n(u_{n})(\mu_d + \mu_n) &  \text{in}\ \Omega, \\
u_{n}=0 & \text{on}\ \partial \Omega.
\label{pbapproxDDO}
\end{cases}
\end{equation}

Let us show a priori estimates on $u_n$.

\begin{lemma}\label{lemmastimehsing}
	Let $a$ satisfy \eqref{cara1}-\eqref{cara3}, let $0\le \mu\in \mathcal{M}(\Omega)$ satisfy \eqref{hmu},  and let $h$ satisfy \eqref{h1ren} with $\gamma\le 1$ and \eqref{h2ren}.  Moreover,  let $u_n$ be a solution to \eqref{pbapproxDDO}. Then $T_k(u_n)$ is bounded in $W^{1,p}_{0}(\Omega)$ with respect to $n$ for any $k>0$. It also holds that:
	\begin{itemize}
		\item[i)] if $p>2-\frac{1}{N}$, $u_n$ is bounded in $W^{1,q}_{0}(\Omega)$ for every $q<\frac{N(p-1)}{N-1}$;
		\item[ii)] if $1<p\le 2-\frac{1}{N}$, $u_n^{p-1}$ is bounded in $L^q(\Omega)$ for every $q<\frac{N}{N-p}$ and $|\nabla u_n|^{p-1}$ is bounded in $L^q(\Omega)$ for every $q<\frac{N}{N-1}$.
	\end{itemize}
	 Moreover,  there exists a measurable function $u$ to which  $u_n$ converges, up to a subsequence,   $\mu_d$-a.e.  in $\Omega$.  Finally, $u$ is cap$_p$-almost everywhere finite and cap$_p$-quasi continuous.
\end{lemma}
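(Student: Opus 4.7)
The plan is to derive all the required estimates from a single test function choice and then invoke standard measure-data regularity machinery. First I would test the weak formulation of \eqref{pbapproxDDO} with $T_k(u_n)\in W^{1,p}_0(\Omega)\cap L^\infty(\Omega)$, which by \eqref{cara1} yields
$$
\alpha\int_\Omega |\nabla T_k(u_n)|^p \le \int_\Omega h_n(u_n)T_k(u_n)(\mu_d+\mu_n).
$$
The right-hand side is the delicate term. To handle it, I would split according to $\{u_n<s_1\}$ and $\{u_n\ge s_1\}$: on the first set, since $\gamma\le 1$, \eqref{h1ren} gives $h_n(u_n)T_k(u_n)\le c_1 u_n^{1-\gamma}\le c_1 s_1^{1-\gamma}$; on the second, the finiteness of $h$ away from zero together with \eqref{h2ren} yields $h_n(u_n)T_k(u_n)\le k\sup_{s\ge s_1}h(s)$. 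Since $\mu_n$ is bounded in $L^1(\Omega)$ and $|\mu_d|(\Omega)<\infty$, this produces the fundamental estimate
\begin{equation}\label{stimatk}
\int_\Omega |\nabla T_k(u_n)|^p \le C(k+1),\qquad \forall k>0,
\end{equation}
uniformly in $n$, which is precisely the first claim.

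Next, starting from \eqref{stimatk} I would apply the by now classical Boccardo--Gallou\"et type arguments (Lemmas $4.1$ and $4.2$ of \cite{b6}), which directly yield i) and ii): if $p>2-\frac{1}{N}$, the uniform bound on $|\nabla T_k(u_n)|^p$ gives boundedness of $u_n$ in $W^{1,q}_0(\Omega)$ for every $q<\frac{N(p-1)}{N-1}$; if $1<p\le 2-\frac{1}{N}$, it produces Marcinkiewicz estimates $u_n^{p-1}\in M^{N/(N-p)}(\Omega)$ and $|\nabla u_n|^{p-1}\in M^{N/(N-1)}(\Omega)$ uniformly, from which the stated $L^q$ bounds follow since $|\Omega|<\infty$.

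Once $u_n$ (or $u_n^{p-1}$) is bounded in a Sobolev or Marcinkiewicz space, compactness of the embedding into $L^1_{\rm loc}(\Omega)$ gives, up to a subsequence, $u_n\to u$ a.e.\ in $\Omega$ for some measurable function $u$; in the regime $1<p\le 2-\frac{1}{N}$ one argues via $T_k(u_n)\to T_k(u)$ strongly in $L^1(\Omega)$ for each $k>0$, then combines with the Marcinkiewicz bound of $u_n$ to conclude a.e.\ convergence of $u_n$ itself via a diagonal procedure. Because $\mu_d$ is diffuse with respect to $\mathrm{cap}_p$ and $u_n\to u$ Lebesgue-a.e., the convergence also holds $\mu_d$-a.e. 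Finally, from \eqref{stimatk} and the fact that $T_k(u)\in W^{1,p}_0(\Omega)$ for every $k>0$ (by weak lower semicontinuity), Lemma \ref{dalmaso2} ensures that $u$ is $\mathrm{cap}_p$-almost everywhere finite and admits a $\mathrm{cap}_p$-quasi continuous representative.

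The main obstacle is exactly the right-hand side estimate: although the singularity of $h$ at zero could a priori clash with the concentrated part of the datum, the condition $\gamma\le 1$ makes $h(s)\cdot s$ bounded near $s=0$, so both $\mu_d$ and $\mu_n$ can be absorbed into a constant depending only on $k$, $|\mu_d|(\Omega)$ and $\sup_n\|\mu_n\|_{L^1(\Omega)}$. This is the crucial place where the assumption $\gamma\le 1$ enters the argument; for $\gamma>1$ this test would fail and one is forced to use $T_k^\sigma(u_n)$ instead, which is the obstruction addressed separately in Section \ref{sec:distrmagg1}.
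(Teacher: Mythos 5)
Your overall structure mirrors the paper: test to get $\int_\Omega|\nabla T_k(u_n)|^p\le C(k+1)$, invoke Boccardo--Gallou\"et for the regularity statements, pass to a.e.\ limits, and conclude the capacity properties. But two of the steps as written are not correct, and one of them is a genuine gap.

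The serious gap is the passage from Lebesgue-a.e.\ convergence to $\mu_d$-a.e.\ convergence. You assert that ``because $\mu_d$ is diffuse with respect to $\mathrm{cap}_p$ and $u_n\to u$ Lebesgue-a.e., the convergence also holds $\mu_d$-a.e.'' This does not follow: being diffuse with respect to $\mathrm{cap}_p$ is not the same as being absolutely continuous with respect to Lebesgue measure. A measure concentrated on a smooth hypersurface has zero Lebesgue mass but positive $p$-capacity, hence it can be diffuse in the capacitary sense while being singular with respect to $\mathcal{L}^N$. Lebesgue-a.e.\ convergence gives no information on such a set. The paper's route is genuinely different: by Theorem \ref{diffuse} one writes $\mu_d\in W^{-1,p'}(\Omega)+L^1(\Omega)$; since $T_k(u_n)\rightharpoonup T_k(u)$ in $W^{1,p}_0(\Omega)$ and $*$-weakly in $L^\infty(\Omega)$, one gets $T_k(u_n)\to T_k(u)$ in $L^1(\Omega,\mu_d)$, and only then, combined with the fact that $u$ is $\mathrm{cap}_p$-a.e.\ finite, does one deduce $\mu_d$-a.e.\ convergence of $u_n$. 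This step has to be redone; as stated your argument would also ``prove'' the false claim for an arbitrary diffuse measure and an arbitrary Lebesgue-null modification of the $u_n$.

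Two lesser issues. First, $u_n$ is a \emph{renormalized} solution of \eqref{pbapproxDDO}, not a weak solution, so you cannot ``test the weak formulation with $T_k(u_n)$'' directly; you must use the renormalized formulation with $S=V_r$, $\varphi=T_k(u_n)$ and then send $r\to\infty$ (using \eqref{ren2} with $\mu_c\equiv0$). Second, Lemma \ref{dalmaso2} only provides a gradient for functions that are already known to be a.e.\ finite; it does not yield $\mathrm{cap}_p$-a.e.\ finiteness. The paper obtains that separately, by testing with $T_k(G_1(u_n))$ to get $\int_\Omega|\nabla T_k(G_1(u_n))|^p\le Ck$ (the cleaner linear-in-$k$ form needed to run the Boccardo--Gallou\"et lemmas and the capacity estimate), passing to the limit by weak lower semicontinuity, and then estimating $\mathrm{cap}_p(\{u\ge k+1\})\le Ck^{1-p}\to0$. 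Quasi-continuity then follows from $T_k(u)\in W^{1,p}_0(\Omega)$. You should insert these intermediate steps rather than appealing to Lemma \ref{dalmaso2} for conclusions it does not give.
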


\begin{proof}
	Let us take $S=V_r$ ($V_r$ defined by \eqref{not:Vdelta}) and $\varphi=T_k(u_n)$ ($r>k$) in the renormalized formulation of \eqref{pbapproxDDO}. Using \eqref{ren2} (here $\mu_c\equiv 0$) and  letting $r\to\infty$,   one yields to  
	\begin{equation}\label{stimatk}
		\begin{aligned}
			\alpha \displaystyle \int_{\Omega} |\nabla T_k(u_{n})|^p &\le c_1s_1^{1-\gamma}\int_{\{u_n<s_1\}}(\mu_d + \mu_n)+k \sup_{s\in[s_1,\infty)}h(s) \int_{\{u_n\ge s_1\}}(\mu_d+\mu_n)\\ 
			&\le C(k+1).
		\end{aligned}
	\end{equation}
	An analogous reasoning with $\varphi= T_k(G_1(u_n))$ as a test function in the renormalized formulation of  \eqref{pbapproxDDO} gives that  
	\begin{equation}\label{tkg1}
	\io |\nabla T_k(G_1(u_n))|^p\leq C k \quad\forall k>0.
	\end{equation}
	Then an application of Lemmas $4.1$ and $4.2$ of \cite{b6} give that, if $p>2-\frac{1}{N}$,  $G_1({u_n})$ is bounded in $W_{0}^{1,q}(\Omega)$ for every $q<\frac{N(p-1)}{N-1}$. This proves i) since $T_1(u_n)$ is bounded in $W^{1,p}(\Omega)$ with respect to $n$ thanks to \eqref{stimatk}. Then there exists a nonnegative function $u$ belonging to $W_{\rm{loc}}^{1,q}(\Omega)$ for every $q<\frac{N(p-1)}{N-1}$ such that $u_n$ converges to $u$ almost everywhere in $\Omega$. 
	\\On the other hand, if $1<p\leq 2-\frac{1}{N}$, Lemmas $4.1$ and $4.2$ of \cite{b6} give that $u_n$ is bounded in $M^{\frac{N(p-1)}{N-p}}(\Omega)$ and that $|\nabla u_n|$ is bounded in $M^{\frac{N(p-1)}{N-1}}(\Omega)$. In particular $u_n^{p-1}$ is bounded in $L^q(\Omega)$ for every $q<\frac{N}{N-p}$ and $|\nabla u_n|^{p-1}$ is bounded in $L^q(\Omega)$ for every $q<\frac{N}{N-1}$. Now observe that $T_k(u_n)$ is a Cauchy sequence in $L^p(\Omega)$ for all $k>0$, so that, up to subsequences, it is a Cauchy sequence in measure for each $k>0$. Then one gets that
\begin{equation}\label{divido}
\{|u_n-u_m|>l\}\subseteq\{u_n\geq k\}\cup\{u_m\geq k\}\cup\{|T_k(u_n)-T_k(u_m)|>l\}.
\end{equation}

Now, if $\varepsilon>0$ is fixed, the estimates on $u_n$ imply that there exists a $\overline{k}>0$ such that 
$$\left|\{u_n> k\}\right|<\frac{\varepsilon}{3},\;\;\left|\{u_m> k\}\right|<\frac{\varepsilon}{3}\;\forall n,m\in\mathbb{N},\;\forall k>\overline{k},$$
while, using that $T_{k}(u_{n})$ is a Cauchy sequence in measure for each $k>0$, one has that there exists $\eta_{\varepsilon}>0$ such that
$$\left|\{|T_k(u_n)-T_k(u_m)\right|>l\}|<\frac{\varepsilon}{3}\;\forall n,m>\eta_{\varepsilon},\;\forall l>0.$$
Thus, if $k>\overline{k}$, from \eqref{divido} we obtain that 
$$\left|\{|u_n-u_m|>l\}\right|<\varepsilon\quad\forall n,m\geq\eta_{\varepsilon},\;\forall l>0,$$ 
and so that $u_n$ is a Cauchy sequence in measure. This means that there exists a nonnegative measurable function $u$ to which $u_n$ converges almost everywhere in $\Omega$. Moreover the fact that $u_n^{p-1}$ is  bounded in $L^q(\Omega)$ for every $q<\frac{N}{N-p}$, imply that $u$ is almost everywhere finite.

\medskip 

Now observe that, for any $p>1$,  the weak lower semicontinuity as $n\to\infty$ in \eqref{tkg1}  implies
\begin{equation*}\label{Tk}
\io |\nabla T_k(G_1(u))|^p\leq Ck \quad\forall k>0.
\end{equation*}
Then, recalling Definition \ref{a8},  one has  
$$\text{cap}_p(\{u\ge k+1\}) \le \int_\Omega \frac{|\nabla T_k(G_1(u))|^p}{k^p} \le \frac{C}{k^{p-1}},$$
and  taking $k\to\infty$, one obtains that $u$ is cap$_p$-almost everywhere finite. Moreover, since $T_k(u) \in W^{1,p}_0(\Omega)$, then $u$ is also cap$_p$-quasi continuous. 

\medskip

Finally let us recall that, by Theorem \ref{diffuse} below,  one has that  $\mu_d \in W^{-1,p'}(\Omega) + L^1(\Omega)$.  Since  $T_k(u_n)$ converges to $T_k(u)$ $*$-weakly in $L^\infty(\Omega)$ and weakly in $W^{1,p}_0(\Omega)$ as $n\to\infty$ for any $k>0$, then one deduces that $T_k(u_n)$ converges to $T_k(u)$ in $L^1(\Omega,\mu_d)$ as $n\to\infty$ and for any $k>0$; this argument, as $u$ is also cap$_p$-almost everywhere finite in $\Omega$, implies that $u_n$ converges to $u$ $\mu_d$-a.e.  in $\Omega$ as $n\to\infty$. This concludes the proof.
\end{proof}

 Next result guarantees a control for the possibly singular term as well as the identification of the limit of $\nabla u_n$ as $n\to\infty$. It is worth to stress that, from here on, $u$ is the function found in Lemma \ref{lemmastimehsing}. 

\begin{lemma}\label{convgradienti}
	Under the assumptions of Lemma \ref{lemmastimehsing}, it holds that for any $0\le \varphi\in W^{1,p}_0(\Omega)\cap L^\infty(\Omega)$
	\begin{equation}\label{l1locddo}
		\int_\Omega h_n(u_n)\varphi (\mu_d+\mu_n) \le C,
	\end{equation}
	 where $C$ does not depend on $n$. In particular
	\begin{equation}\label{stimavdeltaddo}
	\limsup_{n\to\infty}\int_{\{u_n\le \delta\}} h_n(u_n)\varphi (\mu_d+\mu_n) \le C_\delta,
	\end{equation}	
	where $C_\delta \to 0^+$ as $\delta \to 0^+$. Finally $\nabla u_n$ converges to $\nabla u$ almost everywhere in $\Omega$ as $n\to\infty$.
\end{lemma}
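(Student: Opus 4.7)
The plan is to deduce \eqref{l1locddo} directly from the renormalized formulation satisfied by $u_n$, then to use that bound together with a classical Boccardo-Murat type monotonicity argument to extract a.e.\ convergence of $\nabla u_n$, and finally to combine the two in order to prove \eqref{stimavdeltaddo}.

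To obtain \eqref{l1locddo}, I would test the renormalized identity \eqref{ren1} (which holds for $u_n$ with $h$ replaced by $h_n$ and $\mu_d$ by $\mu_d+\mu_n$) with $S=V_r$ and any nonnegative $\varphi\in W^{1,p}_0(\Omega)\cap L^\infty(\Omega)$. Since $V_r'\le 0$, discarding the nonpositive $S'$-term yields
$$
\int_\Omega h_n(u_n)V_r(u_n)\varphi\,(\mu_d+\mu_n)\le \int_\Omega a(x,\nabla u_n)\cdot\nabla\varphi\,V_r(u_n),
$$
and, because $V_r(u_n)$ is supported in $\{u_n<2r\}$, the right-hand side is bounded by $\beta\,\|\nabla T_{2r}(u_n)\|_{L^p}^{p-1}\|\nabla\varphi\|_{L^p}$, which is uniform in $n$ by \eqref{stimatk}. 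Choosing $r=1$ controls the contribution on $\{u_n\le 1\}$; on $\{u_n>1\}$ the assumption \eqref{h2ren} gives $h_n(u_n)\le\sup_{s\ge 1}h(s)<\infty$, so the remaining integral is controlled by the total mass of $\mu_d+\mu_n$. Putting the two pieces together yields \eqref{l1locddo}.

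For the a.e.\ convergence of the gradients, I would follow the classical Boccardo-Murat scheme: testing \eqref{pbapproxDDO} against $T_k\bigl(u_n-T_h(u)\bigr)$ (in renormalized form, cut off by a further $V_R$), using \eqref{l1locddo} to absorb the singular right-hand side, and then invoking the strict monotonicity \eqref{cara3}, one shows
$$
\lim_{n\to\infty}\int_\Omega \bigl(a(x,\nabla T_k(u_n))-a(x,\nabla T_k(u))\bigr)\cdot\bigl(\nabla T_k(u_n)-\nabla T_k(u)\bigr)=0
$$
for every $k>0$. A standard Leray-Lions type argument (see, e.g., Lemma~5 of \cite{b6}) then gives $\nabla T_k(u_n)\to\nabla T_k(u)$ in measure, and a diagonal extraction in $k$ delivers $\nabla u_n\to\nabla u$ a.e.\ in $\Omega$.

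Finally, \eqref{stimavdeltaddo} follows by setting $r=\delta$ in the identity from the first step and estimating
$$
\limsup_{n\to\infty}\int_{\{u_n\le\delta\}}h_n(u_n)\varphi(\mu_d+\mu_n)\le \limsup_{n\to\infty}\int_\Omega a(x,\nabla u_n)\cdot\nabla\varphi\,V_\delta(u_n).
$$
Using the uniform $L^p$-bound on $\nabla T_{2\delta}(u_n)$ from \eqref{stimatk}, the a.e.\ convergence of the gradients just established, and Vitali's theorem, the right-hand side converges to $\int_\Omega a(x,\nabla u)\cdot\nabla\varphi\,V_\delta(u)$, which in turn tends to $0$ as $\delta\to 0^+$ by dominated convergence, since $u>0$ almost everywhere in $\Omega$ (as follows from $u_n\ge u_1>0$ locally, cf.\ Remark~\ref{upositiva}). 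The main obstacle is the second step: one must run the Minty-type monotonicity argument while taming the singular lower-order term, and it is precisely here that the uniform bound \eqref{l1locddo} plays the decisive role.
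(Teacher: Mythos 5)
Your derivation of \eqref{l1locddo} essentially coincides with the paper's: test with $S=V_r$ (the paper uses $V_\delta$), discard the nonnegative $-S'$ term, and split the left-hand side between $\{u_n\le r\}$ and $\{u_n>r\}$ using \eqref{h2ren} on the latter set. Your observation that $a(x,\nabla u_n)V_r(u_n)$ only involves $\nabla T_{2r}(u_n)$ and is therefore uniformly bounded in $L^{p'}$ by \eqref{stimatk} makes the estimate clean.

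The gap is in the second step. You claim that testing against $T_k(u_n - T_h(u))$, absorbing the singular term via \eqref{l1locddo}, and invoking \eqref{cara3} yields
\begin{equation*}
\lim_{n\to\infty}\int_\Omega \bigl(a(x,\nabla T_k(u_n))-a(x,\nabla T_k(u))\bigr)\cdot\bigl(\nabla T_k(u_n)-\nabla T_k(u)\bigr)=0.
\end{equation*}
This limit is not obtainable at this stage. When you carry out the estimate, the renormalization term $\frac{1}{r}\int_{\{r<u_n<2r\}}a(x,\nabla u_n)\cdot\nabla u_n\, T_k(\cdot)$ and the contribution of $\mu_d+\mu_n$ to the right-hand side are each only controlled by a constant times $k$ (via \eqref{stimatk} and \eqref{l1locddo}); they do not vanish. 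What you obtain is
\begin{equation*}
\limsup_{n\to\infty}\int_\Omega \bigl(a(x,\nabla T_r(u_n))-a(x,\nabla T_r(u))\bigr)\cdot\nabla T_k\bigl(T_r(u_n)-T_r(u)\bigr)\varphi\le Ck,
\end{equation*}
with $C$ independent of $k$, which is exactly the paper's intermediate estimate. The passage from this $O(k)$ bound to a.e.\ convergence of $\nabla T_r(u_n)$ is not the standard ``limit equals zero implies convergence in measure'' route; it is the finer argument of \cite[Theorem~2.1, second part]{bm}, which exploits that the bound holds for every $k>0$. Your stronger claim (the limit vanishes) is in fact the content of Lemma~\ref{stronghsing}, proved later in the paper, and it requires additional cutoff functions $\Psi_\nu$ (from Lemma~\ref{dalmaso}) to tame the concentrated part of the measure; simply using \eqref{l1locddo} is not enough. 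So the conclusion you want (a.e.\ convergence of the gradients) is correct, but the stated intermediate step would fail and must be replaced by the $O(k)$ bound plus the Boccardo--Murat argument.

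Your derivation of \eqref{stimavdeltaddo} is structurally the same as the paper's, passing first $n\to\infty$ in the right-hand side of the $V_\delta$ inequality and then $\delta\to 0^+$. The paper does not need $u>0$ a.e.; it instead observes that $V_\delta(u)\to\chi_{\{u=0\}}$ and that $|\nabla u|^{p-1}\chi_{\{u=0\}}=0$ a.e., since the gradient of a Sobolev (truncated) function vanishes a.e.\ on its level sets. Your invocation of $u>0$ a.e.\ via Remark~\ref{upositiva} is a legitimate alternative, but it appeals to a comparison-principle argument that the paper deliberately avoids in this lemma; the $\{u=0\}$ route is more economical. Also note a small notational clash: using $h$ for the truncation level inside $T_h(u)$ collides with the symbol for the nonlinearity; the paper uses $T_r(u)$ to avoid this.
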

\begin{proof}
	Let us take $0\le \varphi\in W^{1,p}_0(\Omega)\cap L^\infty(\Omega)$ as a test function and $S=V_\delta$ ($\delta>0$) in the renormalized  formulation of \eqref{pbapproxDDO}. 
	Then, getting rid of the term involving $V'$ and using \eqref{cara2}, one gets 
	\begin{equation}\label{stimaHsing}
		\int_{\{u_n\le \delta\}} h_n(u_n) \varphi (\mu_d+\mu_n)\le \int_\Omega h_n(u_n)V_\delta(u_n)\varphi (\mu_d+\mu_n) \le \beta\int_\Omega |\nabla u_n|^{p-1} |\nabla \varphi|  V_\delta(u_n)\le C,
	\end{equation}
which holds thanks to the estimates of Lemma \ref{lemmastimehsing}.

Obviously one also has that
	\begin{equation}\label{stimaHsing2}
	\int_{\{u_n> \delta\}} h_n(u_n) \varphi (\mu_d+\mu_n)\le \sup_{s\in[\delta,\infty)}h(s)\int_\Omega (\mu_d+\mu_n) \le C.
\end{equation}
Hence \eqref{stimaHsing} and \eqref{stimaHsing2} give \eqref{l1locddo}.

\medskip

Now we show the almost everywhere convergence of the gradients. 

We take $T_k(T_r(u_n)-T_r(u))\varphi$  ($  r,k>0   \text{ and } 0\le\varphi\in W^{1,p}_0(\Omega)\cap L^\infty(\Omega)$) as a test function and set $S=V_r$ in the renormalized formulation of \eqref{pbapproxDDO}. 
This yields to
\begin{equation*}
\begin{aligned}
	&\int_\Omega (a(x,\nabla T_r(u_n)) - a(x,\nabla T_r(u))) \cdot \nabla T_k(T_r(u_n)-T_r(u)) \varphi \\
	&= - \int_{\{r<u_n<2r\}} a(x,\nabla u_n)  \cdot \nabla T_k(T_r(u_n)-T_r(u)) \varphi V_r(u_n) \\
	&+ \frac{1}{r}\int_{\{r<u_n<2r\}} a(x,\nabla u_n)\cdot \nabla u_n T_k(T_r(u_n)-T_r(u)) \varphi \\
	& -  \int_{\Omega} a(x,\nabla u_n)  \cdot \nabla \varphi T_k(T_r(u_n)-T_r(u)) V_r(u_n) \\
	& + \int_\Omega h_n(u_n) T_k(T_r(u_n)-T_r(u))\varphi V_r(u_n)(\mu_d+\mu_n) \\
	& -\int_\Omega a(x,\nabla T_r(u)) \cdot \nabla T_k(T_r(u_n)-T_r(u)) \varphi = (A)+(B)+(C)+(D)+(E).	
\end{aligned}
\end{equation*}
Recall that $T_r(u_n)$ is bounded in $W^{1,p}_0(\Omega)$ for any $r>0$ with respect to $n$ from Lemma \ref{lemmastimehsing}; then it weakly converges in $W^{1,p}_0(\Omega)$ and strongly in $L^q(\Omega)$ for any $q<\infty$ to $T_r(u)$ for any $r$.
These convergences imply that
$$\limsup_{n\to\infty} \  (A)+(C)+(E) = 0.$$
Now observe that it follows from \eqref{stimatk} that
$$(B)\le \frac{ck}{r}\int_{\{r<u_n<2r\}} |\nabla u_n|^p \le ck.$$
Estimate \eqref{l1locddo} also gives that $(D)\le ck$. 
All the arguments above show that
$$\limsup_{n\to\infty} \int_\Omega (a(x,\nabla T_r(u_n)) - a(x,\nabla T_r(u))) \cdot \nabla T_k(T_r(u_n)-T_r(u)) \varphi\le ck.$$
Now reasoning as in the second part of Theorem $2.1$ of \cite{bm} one deduces that, up to subsequences,   $\nabla T_r(u_n)$ converges almost everywhere to $\nabla T_r(u)$ in $\Omega$ as $n\to\infty$.

\medskip

It remains to prove \eqref{stimavdeltaddo}; let recall that \eqref{stimaHsing} gives 
\begin{equation*}
\int_{\{u_n\le \delta\}} h_n(u_n) \varphi (\mu_d+\mu_n)\le \int_\Omega h_n(u_n)V_\delta(u_n)\varphi (\mu_d+\mu_n) \le \beta\int_\Omega |\nabla u_n|^{p-1} |\nabla \varphi|  V_\delta(u_n).
\end{equation*}
Now, using that $\nabla u_n$ converges almost everywhere in $\Omega$ to $\nabla u$ in $n$ and using also that $T_k(u_n)$ is bounded in $W^{1,p}_0(\Omega)$, it is simple to convince that, as $n\to\infty$, one has
\begin{equation*}
\begin{aligned}
\limsup_{n\to\infty} \int_{\{u_n\le \delta\}} h_n(u_n) \varphi (\mu_d+\mu_n) &\le \beta\int_\Omega |\nabla u|^{p-1} |\nabla \varphi|  V_\delta(u) = C_\delta. 
\end{aligned}
\end{equation*} 
Finally note that
$$ \lim_{\delta\to 0^+} C_\delta = \lim_{\delta\to 0^+} \beta\int_\Omega |\nabla u|^{p-1} |\nabla \varphi|  V_\delta(u) = \beta\int_{\{u=0\}} |\nabla u|^{p-1} |\nabla \varphi| = 0.$$
This concludes the proof.
\end{proof}

\subsubsection{\underline{Strong convergence of truncations}}

In this section we show that the truncation at any level for the sequence $u_n$ strongly converges in $W^{1,p}_0(\Omega)$ with respect to $n$.
\begin{lemma}\label{stronghsing} Under the assumptions of Lemma \ref{lemmastimehsing}, $T_{k}(u_n)$ converges to $T_{k}(u)$ in $W^{1,p}_{0}(\Omega)$ as $n\to\infty$.
\end{lemma}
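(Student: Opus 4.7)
My approach is the classical monotonicity trick: if I can show
\begin{equation*}
L_n := \int_\Omega \bigl[a(x,\nabla T_k(u_n)) - a(x,\nabla T_k(u))\bigr] \cdot \bigl[\nabla T_k(u_n) - \nabla T_k(u)\bigr] \longrightarrow 0,
\end{equation*}
then, combining \eqref{cara3} with the a.e.\ convergence $\nabla u_n\to\nabla u$ provided by Lemma \ref{convgradienti}, a classical Boccardo--Murat type argument yields the strong convergence of $\nabla T_k(u_n)$ in $L^p(\Omega)$.

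To estimate $L_n$, the plan is to test the renormalized formulation of \eqref{pbapproxDDO} with $S=V_r$ for $r>k$ and $\varphi = T_\delta(T_k(u_n)-T_k(u))$ for $\delta>0$ small; set $w_n := T_k(u_n)-T_k(u)$. This $\varphi$ belongs to $W^{1,p}_0(\Omega)\cap L^\infty(\Omega)$ with $\|\varphi\|_\infty\le\delta$, and $\varphi\rightharpoonup 0$ weakly in $W^{1,p}_0(\Omega)$ as $n\to\infty$. After adding and subtracting $\int a(x,\nabla T_k(u))\cdot\nabla T_\delta(w_n)$ and exploiting that $\nabla T_k(u_n)=0$ on $\{u_n>k\}$, the left-hand side reduces, modulo terms which vanish by weak convergence and the $L^{p'}$-boundedness of $a(x,\nabla T_{2r}(u_n))$, to the nonnegative quantity
\begin{equation*}
\int_\Omega \bigl[a(x,\nabla T_k(u_n)) - a(x,\nabla T_k(u))\bigr]\cdot \nabla T_\delta(w_n),
\end{equation*}
which controls the portion of $L_n$ restricted to $\{|w_n|\le\delta\}$; the complementary portion on $\{|w_n|>\delta\}$ vanishes as $n\to\infty$ because $u_n\to u$ a.e.\ and $\nabla T_k(u_n)$ is uniformly bounded in $L^p(\Omega)$.

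The right-hand side produces two terms that I need to make small. The localization error
\begin{equation*}
\frac{1}{r}\int_{\{r<u_n<2r\}} a(x,\nabla u_n)\cdot\nabla u_n\, T_\delta(w_n)
\end{equation*}
is $O(\delta)$: testing \eqref{pbapproxDDO} with $T_1(G_\rho(u_n))$ for $\rho\ge s_1$ and using \eqref{h2ren} provides $\int_{\{\rho<u_n<\rho+1\}}|\nabla u_n|^p\le C$ uniformly in $n$ and $\rho$, hence $\frac{1}{r}\int_{\{r<u_n<2r\}}|\nabla u_n|^p\le C$. The genuinely singular term $\int_\Omega h_n(u_n)\,T_\delta(w_n)\,V_r(u_n)(\mu_d+\mu_n)$ I will split according to $\{u_n\le\eta\}\cup\{u_n>\eta\}$: on $\{u_n>\eta\}$ one has $h_n(u_n)\le \sup_{s\ge\eta}h(s)<\infty$ by \eqref{h2ren}, so this piece is $O(\delta)$; on $\{u_n\le\eta\}$ the pointwise bound $|T_\delta(w_n)|\le\delta$ together with \eqref{stimavdeltaddo} (whose constant $C_\eta$ vanishes as $\eta\to 0^+$) does the job.

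Taking the limits in the order $n\to\infty$, $r\to\infty$, $\delta\to 0^+$, and finally $\eta\to 0^+$ gives $\limsup_{n} L_n \le 0$, hence $L_n\to 0$ and the claimed strong convergence. I expect the hardest step to be precisely the treatment of the singular right-hand side: the possibly explosive $h_n(u_n)$ near $\{u_n=0\}$ combined with the concentrated piece of $\mu_n$ forces a very careful choice of test function, which is exactly what the cut-off $T_\delta$ paired with the a priori bound \eqref{stimavdeltaddo} of Lemma \ref{convgradienti} is designed to address.
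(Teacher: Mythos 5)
Your overall framework matches the paper's (both aim at showing that the monotonicity bracket $L_n:=\int_\Omega[a(x,\nabla T_k(u_n))-a(x,\nabla T_k(u))]\cdot\nabla(T_k(u_n)-T_k(u))\to 0$, then invoke a Boccardo--Murat/Boccardo--Murat--Puel type lemma), and your treatment of the localization error via the level-set energy bound is sound. The difference is that the paper tests with $\varphi=(T_k(u_n)-T_k(u))(1-\Psi_\nu)$, where $\Psi_\nu$ is the cut-off from Lemma \ref{dalmaso}, whereas you test with the nested truncation $\varphi=T_\delta(T_k(u_n)-T_k(u))$ and never use $\Psi_\nu$. This is where your argument has a genuine gap.

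The problematic term is the singular contribution carried by $\mu_n$ (the $L^{p'}$ approximation of the concentrated measure $\mu_c$). Write $w_n=T_k(u_n)-T_k(u)$. Since $L_n\ge 0$ by \eqref{cara3}, you need an upper bound on the right-hand side, hence an upper bound on $\int_{\{u_n\le\eta\}}h_n(u_n)V_r(u_n)T_\delta(w_n)^+\,\mu_n$. On $\{u_n\le\eta\}\cap\{T_\delta(w_n)^+>0\}$ one has $u_n>T_k(u)$, so $T_\delta(w_n)^+\le u_n$ and therefore $h_n(u_n)T_\delta(w_n)^+\le h_n(u_n)u_n\le\sup_{s}h(s)T_k(s)<\infty$ (using $\gamma\le 1$). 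But this only yields the bound $C\,\mu_n(\{u_n\le\eta\})$, and nothing in your argument makes $\mu_n(\{u_n\le\eta\})$ small: narrow convergence of $\mu_n$ to the concentrated measure $\mu_c$ controls $\|\mu_n\|_{\mathcal M(\Omega)}$, not the mass of $\mu_n$ on a level set of $u_n$. The pointwise bound $|T_\delta(w_n)|\le\delta$ does not help either, since $h_n(u_n)$ can be of order $n$ on $\{u_n\le\eta\}$, so $\delta h_n(u_n)$ is not small uniformly in $n$. Finally, \eqref{stimavdeltaddo} cannot be invoked as you do: that estimate is stated for a \emph{fixed} nonnegative $\varphi\in W^{1,p}_0(\Omega)\cap L^\infty(\Omega)$, and its constant $C_\delta=\beta\int_\Omega|\nabla u|^{p-1}|\nabla\varphi|V_\delta(u)$ depends on $\varphi$, so it does not transfer to the $n$-dependent test function $T_\delta(w_n)$. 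This is precisely the role played by $(1-\Psi_\nu)$ in the paper's proof: by Lemma \ref{dalmaso}, $\int_\Omega(1-\Psi_\nu)\mu_n\to\int_\Omega(1-\Psi_\nu)\mu_c\le\nu$, which kills the $\mu_n$-contribution uniformly in the other parameters. Your $T_\delta$ truncation addresses the explosive behaviour of $h$ but not the concentration of the measure; the two devices are not interchangeable, and the argument cannot close without something playing the role of $\Psi_\nu$.
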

\begin{proof}	 
	 The proof will be concluded by applying Lemma $5$ of \cite{bmp}
	after one shows that
	\begin{equation}\label{strong}
	\lim_{n\to\infty}\int_\Omega\big(a(x,\nabla T_{k}(u_n))-a(x,\nabla T_{k}(u))\big)\cdot\nabla(T_{k}(u_n)-T_{k}(u))=0.
	\end{equation}
	Let us take $(T_{k}(u_n)-T_{k}(u))(1-\Psi_{\nu})$ as a test function where, for $\nu>0$,  $\Psi_{\nu}$ is as in Lemma \ref{dalmaso} below and set $S=V_r$ ($r>k$) in the renormalized formulation of \eqref{pbapproxDDO}. This takes to
	\begin{equation}
	\begin{aligned}\label{1-4.1}
	&\int_\Omega a(x,\nabla T_k(u_n))\cdot\nabla(T_{k}(u_n)-T_{k}(u))(1-\Psi_{\nu})\\
	&=-\int_{\{k<u_n<2r\}}a(x,\nabla u_{n})\cdot\nabla(T_{k}(u_n)-T_{k}(u))V_r(u_n)(1-\Psi_{\nu}) \ \ \  \\
	&+\frac{1}{r}\int_{\{r<u_n<2r\}}a(x,\nabla u_{n})\cdot\nabla u_{n}(T_{k}(u_n)-T_{k}(u))(1-\Psi_{\nu}) \ \ \ \\
	&+\int_\Omega h_n(u_n)V_r(u_n)(T_{k}(u_n)-T_{k}(u))(1-\Psi_{\nu})\mu_d \ \ \ \\
	&+\int_\Omega h_n(u_n)V_r(u_n)(T_{k}(u_n)-T_{k}(u))(1-\Psi_{\nu})\mu_n \ \ \ \\
	&+\int_\Omega a(x,\nabla u_{n})\cdot\nabla\Psi_{\nu}(T_{k}(u_n)-T_{k}(u))V_r(u_n) = (A)+(B)+(C)+(D)+(E).\ \ \ 
	\end{aligned}
	\end{equation}
	Let us start estimating $(A)$ by 
	$$(A) \le \int_\Omega|a(x,\nabla u_{n})|V_r(u_n)|\nabla T_{k}(u)|\chi_{\{u_n>k\}}.$$
	The right-hand of the previous goes to zero as $n\to\infty$ since $|a(x,\nabla u_{n})|V_r(u_n)$ is bounded in $L^{p'}(\Omega)$
	with respect to $n$ and $\displaystyle |\nabla T_{k}(u)|\chi_{\{u_n>k\}}$ converges to zero in $L^p(\Omega)$.
	This shows that
	\begin{equation}\label{Aterm}
	\limsup_{n\to\infty} (A) \le 0.
	\end{equation}
	To estimate $(B)$ let us take $\pi_r(u_n)(1-\Psi_{\nu})$ ($\pi_r$ is defined into \eqref{not:pi}) as a test function in the renormalized formulation of \eqref{pbapproxDDO} and $S=V_m$, where $m>r$ (once again $\Psi_\nu$ as in Lemma \ref{dalmaso}). One has
	\begin{equation}\label{Bq}\begin{aligned}
	&\frac{1}{r}\int_{\{r<u_n<2r\}}a(x,\nabla u_{n})\cdot\nabla u_{n}V_m(u_n)(1-\Psi_{\nu})\\
	&=\frac{1}{m}\int_{\{m<u_n<2m\}}a(x,\nabla u_{n})\cdot\nabla u_{n}\pi_r(u_n)(1-\Psi_{\nu})\ \ \   \\
	&+\int_\Omega h_n(u_n)\pi_r(u_n)V_m(u_n)(1-\Psi_{\nu})\mu_d\ \ \  \\
	&+\int_\Omega h_n(u_n)\pi_r(u_n)V_m(u_n)(1-\Psi_{\nu})\mu_n\ \ \  \\
	&+\int_\Omega a(x,\nabla u_{n})\cdot\nabla\Psi_{\nu}\pi_r(u_n) V_m(u_n) = (B_1)+(B_2)+(B_3)+(B_4).\ \ \ 
	\end{aligned}\end{equation}
	Hence, let us take $m,n,r\to\infty$ and $\nu\to 0$ in the previous; for $(B_1)$ one has
	\begin{equation*}\label{Bq1}\begin{aligned}
	&\lim_{m\to\infty}\frac{1}{m}\int_{\{m<u_n<2m\}}a(x,\nabla u_{n})\cdot\nabla u_{n}\pi_r(u_n)(1-\Psi_{\nu})\\
	& \le \lim_{m\to\infty}\frac{1}{m}\int_{\{m<u_n<2m\}}a(x,\nabla u_{n})\cdot\nabla u_{n}=0, 
	\end{aligned}\end{equation*}
 	since $u_n$ is a renormalized solution and \eqref{ren2} holds.
	\\Concerning $(B_2)$, one gets that 
	$$(B_2) \le \sup_{s\in[r,\infty)}h(s) \int_\Omega \pi_r(u_n)\mu_d,$$
	and since $\pi_r(u_n)$ is bounded in $W^{1,p}_0(\Omega)\cap L^\infty(\Omega)$ with respect to $n$, one can   pass to the limit as $n\to \infty$ (recall Theorem \ref{diffuse} and Lemma \ref{lemmastimehsing}), yielding to
	$$\lim_{n\to\infty} \lim_{m\to\infty} (B_2) \le \sup_{s\in[r,\infty)}h(s) \int_\Omega \pi_r(u)\mu_d,$$
	and finally
	$$\lim_{r\to\infty} \lim_{n\to\infty} \lim_{m\to\infty} (B_2) = 0,$$
	since $u$ is cap$_p$-almost everywhere finite and one can apply the Lebesgue Theorem (with respect to $\mu_d$).

	As for $(B_3)$, one has only to recall that $\mu_n$ narrow converges to $\mu_c$ (see also Lemma \ref{dalmaso}); indeed it holds that
	$$\limsup_{n\to\infty} \lim_{m\to\infty}(B_3) \le \lim_{n\to\infty} \sup_{s\in[r,\infty)}h(s) \int_\Omega (1-\Psi_{\nu})\mu_{n}=\sup_{s\in[r,\infty)}h(s)\int_\Omega (1-\Psi_{\nu})\mu_c\le C\nu,$$
	and so
	$$\lim_{\nu\to 0^+}\limsup_{r\to\infty}\limsup_{n\to\infty} \lim_{m\to\infty}(B_3)  =  0.$$	
	
	An application of the Lebesgue Theorem for $(B_4)$ gives that
	\begin{equation*}\label{senzam}
	\lim_{m\to\infty}\int_\Omega a(x,\nabla u_{n})\cdot\nabla\Psi_\nu\pi_r(u_n)V_m(u_n)=\int_\Omega a(x,\nabla u_{n})\cdot\nabla\Psi_\nu\pi_r(u_n).
	\end{equation*}
	Since $\supp (\pi_r(t))=\{|t|\geq r\}$, $u$ is almost everywhere finite,  and $|\nabla u_n|^{p-1}$ is bounded in $L^q(\Omega)$ for any $q<\frac{N}{N-1}$, then applying the H\"older inequality one has
	\begin{align*}
	(B_4) \leq C \left(\int_{\Omega}|\nabla u_n|^{(p-1)q}\right)^{\frac{1}{q}}\left|\{u_n\geq r\}\right|^\frac{1}{q'}\leq C\,\left|\{u_n\geq r\}\right|^\frac{1}{q'},
	\end{align*}
	and $(B_4)$ goes to zero as $m,n,r \to\infty$. Hence, by taking $m,n,r\to\infty$ and $\nu \to 0^+$ in \eqref{Bq}, we have shown that 
	\begin{equation}\label{Bterm}
	\lim_{\nu\to 0^+}  \limsup_{r\to\infty}\limsup_{n\to\infty} 	(B)  =  0.
	\end{equation}
	Let us focus on $(C)$ in \eqref{1-4.1}. If $h(0)<\infty$, then one deduces that $(C)$ goes to zero as $n\to\infty$ since $T_k(u_n)$ converges to $T_k(u)$ in $L^q(\Omega,\mu_d)$ for any $q<\infty$ and $k>0$   as it is implied by Lemma \ref{lemmastimehsing}.
	
	So that, without loosing generality, we assume $h(0)=\infty$.    Lemma \ref{lemmastimehsing} ensures that $u_n$ converges to $u$ $\mu_d$-a.e.  in $\Omega$, then an application of the Fatou Lemma in \eqref{l1locddo} gives that $h(u) \in L^1_{\rm loc}(\Omega,\mu_d)$ which also implies
	\begin{equation}\label{u00}
		\mu_d(\{u=0\}) = 0.
	\end{equation}
	In particular, for some $\delta>0$,  one has
	\begin{equation}\label{C}
		(C) \le \int_{\{u_n\le \delta\}} h_n(u_n)V_r(u_n)(T_{k}(u_n)-T_{k}(u))(1-\Psi_\nu)\mu_d + \sup_{s\in [\delta,\infty)}h(s)\int_{\{u_n> \delta\}} |T_{k}(u_n)-T_{k}(u)|\mu_d,
	\end{equation}	
	and the second term on the right-hand of the previous goes to zero because  again  
      $T_k(u_n)$ converges to $T_k(u)$ in $L^q(\Omega,\mu_d)$ for any $q<\infty$ and for any $k>0$.
     Now, recalling  that $\gamma\le 1$ and $\sup_{s\in[0,\infty)}h(s)T_k(s)$ is finite, then  the first term on the right-hand of \eqref{C} can be estimated as follows 
	 	\begin{equation*}\label{C1}
	 	\int_{\{u_n\le \delta\}} h_n(u_n)V_r(u_n)(T_{k}(u_n)-T_{k}(u))(1-\Psi_\nu)\mu_d \le \sup_{s\in[0,\infty)}h(s)T_k(s) \int_{\{u_n\le \delta\}} \mu_d, \end{equation*}
	 	and the right-hand of the previous goes to zero as $n\to\infty$ and $\delta \to 0^+$ thanks to \eqref{u00} since $u_n$ converges  to $u$ $\mu_d$-a.e. in $\Omega$. 
	 	Therefore,  we have shown that 
	 	\begin{equation}\label{Cterm}
	 		\limsup_{n\to\infty} (C) \le 0.
	 	\end{equation}	
	Concerning  $(D)$, using again  that $\gamma\le 1$ and that  $\sup_{s\in(0,\infty)}h(s)T_k(s)$ is finite,  then
	$$\limsup_{n\to\infty} (D)\le \lim_{n\to\infty} \sup_{s\in(0,\infty)}h(s)T_k(s) \int_\Omega(1-\Psi_{\nu})\mu_{n}= \sup_{s\in(0,\infty)}h(s)T_k(s)\int_\Omega (1-\Psi_{\nu})\mu_c\le C\nu,$$
	which follows from the narrow convergence of $\mu_{n}$ to $\mu_c$ and from Lemma \ref{dalmaso}.  This clearly gives
	\begin{equation}\label{Dterm}
		\limsup_{\nu\to 0^+}\limsup_{r\to\infty}\limsup_{n\to\infty}(D) \le 0.
	\end{equation}
	Finally observe that
		\begin{equation}\label{Eterm}
		\limsup_{n\to\infty} (E)\le 0,
	\end{equation}
	since $\displaystyle |a(x,\nabla u_{n})||\nabla\Psi_{\nu}|V_r(u_n)$ is bounded in $L^{p'}(\Omega)$ and $T_k(u_n)$ strongly converges to $T_k(u)$ in $L^p(\Omega)$.
	
	Then, from \eqref{Aterm}, \eqref{Bterm}, \eqref{Cterm}, \eqref{Dterm}  and \eqref{Eterm}, one obtains from \eqref{1-4.1} that 
	\begin{equation}\label{convtronc}
			\limsup_{\nu\to 0^+}\limsup_{n\to\infty}\int_\Omega a(x,\nabla T_{k}(u_n)\cdot\nabla(T_{k}(u_n)-T_{k}(u))(1-\Psi_{\nu})\le 0.
	\end{equation}
	Now observe that 
	\begin{equation}
	\begin{aligned}\label{tesiq}
	&\int_\Omega \big(a(x,\nabla T_{k}(u_n)-a(x,\nabla T_{k}(u)\big)\cdot\nabla(T_{k}(u_n)-T_{k}(u))\\
	&=\int_\Omega\big(a(x,\nabla T_{k}(u_n))-a(x,\nabla T_{k}(u)\big)\cdot\nabla(T_{k}(u_n)-T_{k}(u))\Psi_{\nu}\\
	&+\int_\Omega a(x,\nabla T_{k}(u_n)\cdot\nabla(T_{k}(u_n)-T_{k}(u))(1-\Psi_{\nu})\\
	&-\int_\Omega a(x,\nabla T_{k}(u)\cdot\nabla(T_{k}(u_n)-T_{k}(u))(1-\Psi_{\nu}),\\
	\end{aligned}
	\end{equation}
	and, in order to conclude, we just need to pass to the limit the first term on the right-hand of the previous. We choose as test function $(k-u_{n})^+\Psi_{\nu}$ and $S=V_k$ in the renormalized formulation of \eqref{pbapproxDDO}, obtaining  
	\begin{align*}
	&-\int_\Omega a(x,\nabla T_k(u_{n}))\cdot\nabla T_{k}(u_{n})\Psi_{\nu} + \int_\Omega a(x,\nabla T_{k}(u_{n}))\cdot\nabla\Psi_{\nu} (k-u_{n})^+\\
	&=\int_\Omega h_n(u_{n})(k-u_{n})^+\Psi_{\nu}\mu_{d} + \int_\Omega h_n(u_{n})(k-u_{n})^+\Psi_{\nu}\mu_{n},
	\end{align*}
	which, since $\mu_{d}\geq 0$ and thanks to \eqref{cara1}, gives that
	\begin{align*}\label{u1}
	&\alpha\int_\Omega |\nabla T_{k}(u_{n})|^p\Psi_\nu + \int_\Omega h_n(u_{n})(k-u_{n})^+\Psi_{\nu}\mu_{n}\leq \int_\Omega a(x,\nabla T_{k}(u_{n}))\cdot\nabla\Psi_{\nu} (k-u_{n})^+.
	\end{align*}
	Since $T_{k}(u_n)$ is bounded in $\displaystyle W^{1,p}_0(\Omega)$, \eqref{cara2} and Lemma \ref{dalmaso} guarantee that the right-hand of the previous goes to zero as $n\to\infty$ and $\nu \to 0^+$.
	Hence we have proved that
	\begin{equation} \label{u3}
	\limsup_{\nu\to 0^+}\limsup_{n\to\infty}\int_\Omega |\nabla T_{k}(u_{n})|^p\Psi_\nu=0
	\end{equation}
	and
	\begin{equation}\label{u4}
	\limsup_{\nu\to 0^+}\limsup_{n\to\infty} \int_\Omega h_n(u_{n})(k-u_{n})^+\Psi_{\nu}\mu_{n}=0.
	\end{equation}
	Finally, by using \eqref{convtronc} and \eqref{u3} into \eqref{tesiq}, one has
	$$\limsup_{n\to\infty}\int_\Omega \big(a(x,\nabla T_{k}(u_n)-a(x,\nabla T_{k}(u)\big)\cdot\nabla(T_{k}(u_n)-T_{k}(u))=0,$$
	which is \eqref{strong} and the proof concludes.
\end{proof}

\begin{remark}\label{epsilon0} 
Let us focus a bit more on   Lemma \ref{stronghsing}. We  recall that, in Lemma \ref{lemmastimehsing}, we have shown that $u_{n}$ converges, up to subsequences, to $u$ $\mu_d$-a.e.  in $\Omega$; in particular this convergence property has been crucial in order to prove Lemma \ref{stronghsing}. By the way it is worth mentioning that, once that Lemma \ref{stronghsing} is in force, a finer convergence result holds for the sequence $u_n$. Indeed one is in position to apply \cite[Lemma $3.5$]{kkm}) in order to deduce, by a standard diagonal argument,  that $u_{n}$ converges, up to subsequences,  to $u$ cap$_p$-almost everywhere as $n\to\infty$ in $\Omega$. 
\triang \end{remark}

\begin{remark}\label{remconvddo}
Let us underline that from Lemma \ref{lemmastimehsing} and Lemma \ref{stronghsing} one has that, if $p>2-\frac{1}{N}$, $u_n$ converges to $u$ strongly in $W_{0}^{1,q}(\Omega)$ for every $q<\frac{N(p-1)}{N-1}$. If $1<p\leq2-\frac{1}{N}$, $u_n^{p-1}$ converges to $u^{p-1}$ strongly in $L^q(\Omega)$ for every $q<\frac{N}{N-p}$ and	$|\nabla u_n|^{p-1}$ converges to $|\nabla u|^{p-1}$ strongly in $L^q(\Omega)$ for every $q<\frac{N}{N-1}$. 
\triang \end{remark}

\subsubsection{\underline{Existence of a renormalized solution}}

In this section we prove Theorem \ref{teoexrinuniqueness}.
In the proof we first show that the almost everywhere limit of $u_n$ satisfies \eqref{distrdef}, which is instrumental for proving the existence of a renormalized solution.

Let underline that here it is where we take advantage of the two-parameters approximation given by \eqref{pbapproxeps}. By the way, for the sake of presentation and with a little abuse of notation, we denote by $u_n$ the almost everywhere limit of $u_{n,m}$ as $m\to\infty$.

\begin{proof}[Proof of Theorem \ref{teoexrinuniqueness}]
	Let $u_{n,m}$ be a renormalized solution to \eqref{pbapproxeps}; then it follows from Lemma \ref{lemmastimehsing} that there exists a function $u$ which is its almost everywhere limit as $m,n\to\infty$. Moreover, recalling Remark \ref{epsilon0},  $u_{n,m}$ converges, up to subsequences,  to $u$ cap$_p$-almost everywhere. Let also observe that an application of the Fatou Lemma as $n\to\infty$ in \eqref{l1locddo} gives that $h(u) \in L^1_{\rm loc}(\Omega,\mu_d)$. Moreover the regularity properties on $u$ standardly follow by Lemma \ref{lemmastimehsing}.
	 
	 \medskip
	 
	 As already mentioned we want to show \eqref{distrdef} passing to the limit first in $m$ and then in $n$ in
	\begin{equation}
	\int_\Omega a(x,\nabla u_{n,m})\cdot\nabla\varphi=\int_\Omega h_n(u_{n,m})\varphi \mu_d+\int_\Omega h_n(u_{n,m})\varphi\mu_m,\;\ \ \ \forall\varphi\in C^1_c(\Omega).
	\label{ndist}
	\end{equation}
	Lemmas \ref{lemmastimehsing} and \ref{convgradienti} (see also Remark \ref{remconvddo})  and assumption \eqref{cara2}  allow to pass to the limit in the first term on left-hand of the previous as $m,n\to\infty$. 
	
	\smallskip
	
	As for the right-hand let us firstly pass to the limit the second term, namely one has to show (let assume $n$ large enough)
	\begin{equation}\label{ddoconc}
		\lim_{m\to\infty} \int_\Omega h_n(u_{n,m})\varphi\mu_m = h(\infty)\int_\Omega \varphi\mu_c.
	\end{equation}
	Observe that one can estimate
	\begin{equation*}
	\begin{aligned}\label{dis3}
	\left|\int_\Omega h_n(u_{n,m})\varphi\mu_{m}-\int_\Omega h(\infty)\varphi \mu_{c}\right| \leq &\left|\int_\Omega (h_n(u_{n,m})-h(\infty))\varphi\mu_{m}\right| 
	\\
	&+ \left|\int_\Omega h(\infty)\varphi(\mu_{m}-\mu_{c})\right|  
	\end{aligned}
	\end{equation*}
	which, thanks to the narrow convergence of $\mu_m$, gives
	\begin{equation}
	\begin{aligned}\label{dis3bis}
	\limsup_{m\to\infty}\left|\int_\Omega h_n(u_{n,m})\varphi\mu_{m}-\int_\Omega h(\infty)\varphi \mu_{c}\right| \leq &\limsup_{m\to\infty}\left|\int_\Omega (h_n(u_{n,m})-h(\infty))\varphi\mu_{m}\right|.  
	\end{aligned}
	\end{equation}		
From now, without loss of generality, we assume $n>h(\infty)$ and  in order to estimate the right-hand of the previous observe that for every $\eta>0$ there exist $s_{\eta}>0$ and $L_{\eta}>0$ such that
	\begin{equation}
	\label{H1}
	|h_n(s)-h(\infty)|\leq\eta, \qquad \forall s \in (s_{\eta},\infty);
	\end{equation}
	and, since $h$ is positive, one also has 
	\begin{equation}\label{H2}
	\frac{|h_n(s)-h(\infty)|}{h_n(s)} \leq L_{\eta}(2s_{\eta}-s), \qquad \forall s\in[0,s_{\eta}].
	\end{equation}
	Hence \eqref{H1}, \eqref{H2}, and the properties of $\Psi_\nu$ (defined as in Lemma \ref{dalmaso}) and $\mu_m$ give that
	\begin{align*}
	\int_\Omega |h_n(u_{n,m})-h(\infty)|\mu_{m}=&\int_\Omega |h_n(u_{n,m})-h(\infty)|\Psi_\nu\mu_{m}+\int_\Omega |h_n(u_{n,m})-h(\infty)|(1-\Psi_\nu)\mu_{m}\\
	\leq &\eta\int_{\{u_{n,m}> s_{\eta}\}}\Psi_{\nu}\mu_{m}+L_{\eta}\int_{\{u_{n,m}\leq s_{\eta}\}} h_n(u_{n,m})(2s_{\eta}-u_{n,m})\Psi_\nu \mu_{m}
	\\
	&+2n\int_\Omega (1-\Psi_\nu)\mu_{m}.
	\end{align*}
	 As \eqref{u4} is in force, taking $m\to \infty$ and $\nu,\eta \to 0^+$ in the previous gives that the right-hand of \eqref{dis3bis} is zero, namely \eqref{ddoconc}. 
	
	\smallskip
	
	Let us now focus on the first term on the right-hand of \eqref{ndist}. If $h(0)<\infty$ one can simply pass to the limit by the Lebesgue Theorem for a general measure since $u_{n,m}$ converges cap${_p}$-almost everywhere in $\Omega$ to $u$ as $m,n\to\infty$. Hence, without loss of generality, from here we assume that $h(0)=\infty$.
	Obviously, once again, since $u_{n,m}$ converges cap${_p}$-almost everywhere in $\Omega$ to $u_n$ as $m\to\infty$, one can take $m\to\infty$ in the first term of the right-hand of \eqref{ndist}. Then, in order to take $n\to\infty$, we assume $\delta \not\in \{\eta: \mu_d(\{u=\eta\})>0\}$ which is at most a countable set since $u\in L^1(\Omega,\mu_d)$; then we write 
	\begin{equation*}\label{fatounodea}
	\int_\Omega h_n(u_n)\varphi \mu_d = \int_{\{u_n\le \delta\}} h_n(u_n)\varphi \mu_d + \int_{\{u_n> \delta\}} h_n(u_n)\varphi \mu_d.
	\end{equation*}	
	Recall that \eqref{stimavdeltaddo} gives that the first term goes to zero as the limsup is taken as $n\to\infty$ and then  $\delta \to 0^+$. For the second term let us apply the Lebesgue Theorem as $n\to\infty$ since $h_n(u_n)\chi_{\{u_n>\delta\}} \le  \sup_{s\in [\delta,\infty)}h(s)$ yielding to 
	\begin{equation}\label{lebesgueddo}
	\lim_{n\to\infty}\int_{\{u_n> \delta\}} h_n(u_n)\varphi \mu_d = \int_{\{u> \delta\}} h(u)\varphi \mu_d.
	\end{equation}
	As already noticed in the proof of Lemma \ref{stronghsing}, an application of the Fatou Lemma in \eqref{l1locddo} gives that $h(u) \in L^1_{\rm loc}(\Omega,\mu_d)$ which also implies
	\begin{equation}\label{u0}
	 \mu_d(\{u=0\}) = 0.
	\end{equation}
	This allows once again an application of the Lebesgue Theorem in \eqref{lebesgueddo}, which takes to
		\begin{equation*}\label{lebesgueddo2}
	\lim_{\delta \to 0^+}\lim_{n\to\infty}\int_{\{u_n> \delta\}} h_n(u_n)\varphi \mu_d = \int_{\{u> 0\}} h(u)\varphi \mu_d.
	\end{equation*}	
	Hence this is sufficient to deduce that 
	\begin{equation*}
	\lim_{n\to\infty}\int_\Omega h_n(u_n)\varphi \mu_d = \int_{\{u> 0\}} h(u)\varphi \mu_d \overset{\eqref{u0}}{=} \int_{\Omega} h(u)\varphi \mu_d.
	\end{equation*}	
	Therefore we have shown that $u$ satisfies \eqref{distrdef}, namely that $u$ is a distributional solution to \eqref{pbmain}.
	
	\medskip
	
	Now are ready to prove that $u$ is a solution to the problem even in a renormalized sense.

	Firstly we want to pass to the limit as $m,n\to\infty$ in the following 
	\begin{align}\label{new1}
	&\int_\Omega a(x,\nabla u_{n,m})\cdot\nabla\varphi S(u_{n,m}) + \int_\Omega a(x,\nabla u_{n,m})\cdot \nabla u_{n,m}S'(u_{n,m})\varphi
	\\
	&=\int_\Omega h_n(u_{n,m})S(u_{n,m})\varphi \mu_d+\int_\Omega h_n(u_{n,m})S(u_{n,m})\varphi\mu_m,\nonumber
	\end{align}	
	where $S\in W^{1,\infty}(\mathbb{R})$ with supp$(S)$ $\subset [-M,M]$ ($M>0$) and $\varphi\in W^{1,p}_0(\Omega)\cap L^\infty(\Omega)$.
	
	Regarding  the left-hand of \eqref{new1},  one  has
	\begin{align*}
	\lim_{n\to\infty}\lim_{m\to\infty}&\left(\int_\Omega a(x,\nabla u_{n,m})\cdot\nabla\varphi S(u_{n,m})+\int_\Omega a(x,\nabla u_{n,m})\cdot \nabla u_{n,m}S'(u_{n,m})\varphi\right)
	\\
	&=\int_\Omega a(x,\nabla u)\cdot \nabla\varphi S(u)+\int_\Omega a(x,\nabla u)\cdot \nabla u S'(u)\varphi,
	\end{align*}
	 since Lemma \ref{stronghsing} gives that $T_M(u_{n,m})$ converges to $T_M(u)$ in $W^{1,p}_{0}(\Omega)$ as $n,m\to\infty$ and thanks to \eqref{cara2} and to the generalized dominated convergence Lebesgue  Theorem. 
	
	For the first term on the right-hand of \eqref{new1} we can reason as in the first part of the proof in order to get 
	$$\lim_{n\to\infty}\lim_{m\to\infty}\int_\Omega h_n(u_{n,m})S(u_{n,m})\varphi \mu_d=\int_\Omega h(u)S(u)\varphi \mu_d.$$
	Let observe that here $\varphi \in W^{1,p}_{0}(\Omega)\cap L^\infty(\Omega)$ but this does not affect  the proof of the above equality. 
	
	For the second term on the right-hand of \eqref{new1} we have, once again, that there exist $k>0$ and $c_k>0$ such that
	\begin{equation}\label{app1}
	\begin{aligned}
	\int_\Omega h_n(u_{n,m})S(u_{n,m})\varphi\mu_m &\leq c_k||\varphi||_{L^{\infty}(\Omega)}\int_\Omega h_n(u_{n,m})(k-u_{n,m})^+\Psi_\nu\mu_m \\
	&+||S||_{L^{\infty}(\mathbb{R})}||\varphi||_{L^{\infty}(\Omega)}\int_\Omega h_n(u_{n,m})(1-\Psi_\nu)\mu_m. 
	\end{aligned}
	\end{equation}
	Using $S(s)=(k-|s|)^+$ and $\varphi=\Psi_\nu$ in the renormalized formulation of \eqref{pbapproxeps} and dropping positive terms we obtain  
	\begin{equation}\label{app2}
	\begin{aligned}
	\int_\Omega h_n(u_{n,m})(k-u_{n,m})^+\Psi_\nu\mu_m &\leq \int_\Omega a(x,\nabla T_k(u_{n,m}))\cdot \nabla\Psi_\nu (k-u_{n,m})^+ \\
	&\leq k ||T_k(u_{n,m})||_{W^{1,p}_0(\Omega)} ||\Psi_\nu||_{W^{1,p}_0(\Omega)}. 
	\end{aligned}
	\end{equation}
	Then, from \eqref{app1} and \eqref{app2}, we deduce, applying Lemma \ref{dalmaso} and Lemma \ref{lemmastimehsing}, that
	$$\lim_{n\to\infty}\lim_{m\to\infty}\int_\Omega h_n(u_{n,m})S(u_{n,m})\varphi\mu_m=0.$$
	Hence we have shown that
	\begin{equation}
	\label{new2}
	\int_\Omega a(x,\nabla u)\cdot\nabla\varphi S(u) + \int_\Omega a(x,\nabla u)\cdot \nabla u S'(u)\varphi=\int_\Omega h(u)S(u)\varphi \mu_d, \\
	\end{equation}
	for any $S \in W^{1,\infty}(\mathbb{R})$  with compact support and for any $\varphi\in W^{1,p}_0(\Omega)\cap L^\infty(\Omega)$, namely \eqref{ren1}. 
	
	\medskip
	
	Now we are going to show \eqref{ren2}; we take $S=V_t$ in \eqref{new2} and $\varphi \in C^1_c(\Omega)$, yielding to
	\begin{align*}
	\frac{1}{t}\int_{\{t< u< 2t\}}a(x,\nabla u)\cdot\nabla u \varphi=-\int_\Omega h(u)V_{t}(u)\varphi \mu_{d}+\int_\Omega a(x,\nabla u)\cdot\nabla\varphi V_{t}(u).
	\end{align*}
	Since $h(u) \in L^1_{\rm loc}(\Omega,\mu_d)$, $|\nabla u|^{p-1} \in L^1(\Omega)$  and \eqref{cara2} is in force, we can take $t\to\infty$ obtaining
	\begin{align*}
	\lim_{t\to\infty}\frac{1}{t}\int_{\{t< u< 2t\}}a(x,\nabla u)\cdot\nabla u \varphi=-\int_\Omega h(u)\varphi \mu_{d}+\int_\Omega a(x,\nabla u)\cdot\nabla\varphi,
	\end{align*}
	which implies, as $u$ satisfies \eqref{distrdef}, that
	\begin{equation}\label{new3}
	\lim_{t\to\infty}\frac{1}{t}\int_{\{t< u< 2t\}}a(x,\nabla u)\cdot\nabla u \varphi=h(\infty)\int_\Omega \varphi \mu_{c} \qquad \forall \varphi\in C^1_c(\Omega).
	\end{equation}
	By the density of $C^1_c(\Omega)$ in $C_c(\Omega)$, \eqref{new3} holds even if  $\varphi\in C_c(\Omega)$. Now, if $\varphi\in C_b(\Omega)$, we have $\varphi\Psi_\nu\in C_c(\Omega)$ and then
	\begin{equation}\label{new4}
	\lim_{t\to\infty}\frac{1}{t}\int_{\{t< u< 2t\}}a(x,\nabla u)\cdot\nabla u \Psi_\nu\varphi=h(\infty)\int_\Omega \varphi\Psi_\nu \mu_{c}\qquad \forall \varphi\in C_b(\Omega).
	\end{equation}
	We want to prove that 
	\begin{equation}\label{new5}
	\lim_{\nu \to 0^+}\lim_{t\to\infty}\frac{1}{t}\int_{\{t< u< 2t\}}a(x,\nabla u)\cdot\nabla u(1-\Psi_\nu)\varphi = 0\qquad \forall \varphi\in C_b(\Omega).
	\end{equation}
	To this aim we take  $\pi_t(u_{n,m})(1-\Psi_{\nu})$ as a test function  ($\pi_t$ is defined into \eqref{not:pi})  and  $S=V_r$  in the renormalized formulation of \eqref{pbapproxeps}. This takes to
	\begin{equation}
	\begin{aligned}\label{newabcd}
	&\frac{1}{t}\int_{\{t<u_{n,m}<2t\}}a(x,\nabla u_{n,m})\cdot\nabla u_{n,m}V_r(u_{n,m})(1-\Psi_{\nu})\\
	=&\frac{1}{r}\int_{\{r<u_{n,m}<2r\}}a(x,\nabla u_{n,m})\cdot\nabla u_{n,m}\pi_t(u_{n,m})(1-\Psi_{\nu}) \\
	&+\int_\Omega h_n(u_{n,m})\pi_t(u_{n,m})V_r(u_{n,m})(1-\Psi_{\nu})\mu_d \\
	&+\int_\Omega h_n(u_{n,m})\pi_t(u_{n,m})V_r(u_{n,m})(1-\Psi_{\nu})\mu_m  \\
	&+\int_\Omega a(x,\nabla u_{n,m})\cdot\nabla\Psi_{\nu}\pi_t(u_{n,m})V_r(u_{n,m}) = (A) +(B) +(C) +(D).  
	\end{aligned}
	\end{equation}
	The Lebesgue Theorem implies that 
	\begin{equation*}
	\lim_{r\to\infty}\int_\Omega a(x,\nabla u_{n,m})\cdot\nabla\Psi_\nu\pi_t(u_{n,m})V_r(u_{n,m})=\int_\Omega a(x,\nabla u_{n,m})\cdot\nabla\Psi_\nu\pi_t(u_{n,m}).
	\end{equation*}
	Since $u$ is almost everywhere finite, $|\nabla u_{n,m}|^{p-1}$ is bounded in $L^q(\Omega)$ for every $q<\frac{N}{N-1}$, one can use \eqref{cara2} and the H\"older inequality with exponents $q$ and $q'$, with $1<q<\frac{N}{N-1}$ fixed, in order to have 
	\begin{align*}
	\left|\int_\Omega a(x,\nabla u_{n,m})\cdot\nabla\Psi_\nu\pi_t(u_{n,m})\right|&\leq ||\nabla\Psi_\nu||_{L^\infty(\Omega)}\left(\int_\Omega|\nabla u_{n,m}|^{(p-1)q}\right)^{\frac{1}{q}}\left|\{x\in \Omega : u_{n,m}(x)\geq t\}\right|^\frac{1}{q'}\\
	&\leq C\,\left|\{x\in \Omega : u_{n,m}(x)\geq t\}\right|^\frac{1}{q'}.
	\end{align*}
	This shows that
	\begin{equation}
	\label{newd}
	\lim_{t\to\infty}\limsup_{n\to\infty}\limsup_{m\to\infty} \ (D) = 0.
	\end{equation}
	For (B)
	\begin{align*}
	(B)\le\sup_{s\in [t,\infty)}h(s)\int_\Omega \pi_t(u_{n,m})(1-\Psi_{\nu})\mu_d,
	\end{align*}
	that implies 
	\begin{equation}
	\label{newb}
	\lim_{t\to\infty}\lim_{n\to\infty}\lim_{m\to\infty} \ (B) = 0.
	\end{equation}
	Moreover
	$$  (C)\leq \int_\Omega h_n(u_{n,m})\pi_t(u_{n,m})(1-\Psi_{\nu})\mu_m\le  \sup_{s\in [t,\infty)}h(s) \int_\Omega(1-\Psi_{\nu})\mu_{m}.$$
	By the narrow convergence of $\mu_{m}$ and Lemma \ref{dalmaso} one has
	\begin{equation}
\label{newc}
\lim_{\nu\to 0^+}\limsup_{t\to\infty}\limsup_{n\to\infty}\limsup_{m\to\infty} \ (C) \le 0.
\end{equation}

	Finally,  as $u_{n,m}$ is a renormalized solution to \eqref{pbapproxeps}, one deduces 
	\begin{equation}
	\begin{aligned}\label{newa}
	&\lim_{r\to\infty} (A) \le  \lim_{r\to\infty} \frac{1}{r}\int_{\{r<u_{n,m}<2r\}}a(x,\nabla u_{n,m})\cdot\nabla u_{n,m} = 0.
	\end{aligned}	
	\end{equation}
	Letting $r,m,n,t \to \infty$  and $\nu\to 0^+$ in \eqref{newabcd} and using \eqref{newd}, \eqref{newb},\eqref{newc} and \eqref{newa}, we get
 \eqref{new5}. As a consequence of \eqref{new4} and \eqref{new5}, letting $\nu$ tend to zero, by Lemma \ref{dalmaso} we have 
	\begin{equation*}\label{ener}
	\lim_{t\to\infty}\frac{1}{t}\int_{\{t< u< 2t\}}a(x,\nabla u)\cdot\nabla u \varphi=h(\infty)\int_\Omega \varphi \mu_{c},
	\end{equation*}
	for all $\varphi\in C_b(\Omega)$ which is \eqref{ren2}. 
This concludes the proof.
\end{proof}

\subsubsection{\underline{Uniqueness of the renormalized solution}}\label{um1}

Here we show that uniqueness holds for renormalized solutions when $h$ is non-increasing and  $\mu$ is diffuse with respect to the $p$-capacity.

\begin{proof}[Proof of Theorem \ref{teounique}]
	Let us suppose that $u_1$ and $u_2$ are renormalized solutions to \eqref{pbmain}.  Let $S= V_n$ ($V_n$ is defined in \eqref{not:Vdelta}) and let us take $V_n(u_2) T_k(u_1-u_2)$ as a test function in the renormalized formulation of $u_1$ and $V_n(u_1) T_k(u_1-u_2)$ as a test function in the renormalized formulation of $u_2$.  We obtain 
	\begin{equation}\label{uniqueren0}
		\begin{aligned}
		&\int_\Omega (a(x,\nabla u_1)-a(x,\nabla u_2)\cdot \nabla T_k(u_1-u_2) V_n(u_1)V_n(u_2) - \frac{1}{n}\int_{\{n<u_1<2n\}} a(x,\nabla u_1)\cdot \nabla u_1 V_n(u_2)T_k(u_1-u_2)
		\\
		&+ \frac{1}{n}\int_{\{n<u_2<2n\}} a(x,\nabla u_2)\cdot \nabla u_2 V_n(u_1)T_k(u_1-u_2)- \frac{1}{n}\int_{\{n<u_2<2n\}} a(x,\nabla u_1)\cdot \nabla u_2 V_n(u_1)T_k(u_1-u_2)
		\\
		&+ \frac{1}{n}\int_{\{n<u_1<2n\}} a(x,\nabla u_2)\cdot \nabla u_1 V_n(u_2)T_k(u_1-u_2)   = \int_\Omega (h(u_1)-h(u_2)) V_n(u_1)V_n(u_2) T_k(u_1-u_2) \le 0. 
		\end{aligned}
	\end{equation} 
	 As $\mu_c\equiv 0$,  it follows from \eqref{ren2} that the second and the third term in \eqref{uniqueren0} vanish as $n\to \infty$. Moreover, recalling \eqref{cara1} and \eqref{cara2}, one obtains after an application of the H\"older inequality that
	\begin{equation}\label{uniqueren}
	\begin{aligned}
	&\frac{1}{n}\int_{\{n<u_2<2n\}} a(x,\nabla u_1)\cdot \nabla u_2 V_n(u_1)T_k(u_1-u_2)
	\\
	&\le \beta k\left(\frac{1}{n}\int_{\{n<u_1<2n\}}|\nabla u_1|^p\right)^{\frac{p-1}{p}} \left(\frac{1}{n}\int_{\{u_2<2n\}}|\nabla u_2|^p\right)^{\frac{1}{p}}  
	\\
	&\le \beta k\left(\frac{1}{\alpha n}\int_{\{n<u_1<2n\}}a(x,\nabla u_1)\cdot \nabla u_1\right)^{\frac{p-1}{p}} \left(\frac{1}{n}\int_{\{u_2<2n\}}|\nabla u_2|^p\right)^{\frac{1}{p}}, 
	\end{aligned}
	\end{equation}
	which goes to zero as $n\to \infty$ by \eqref{ren2} and by the fact that the last term in \eqref{uniqueren} is bounded in $n$. Indeed, this can be easily shown by taking $T_{2n}(u_2)$ as a test function and fixing $S=V_k$; after letting $k\to\infty$ one gains the estimate as $\gamma\le 1$.
	 
	Analogously, the last integral at the left-hand of \eqref{uniqueren0} degenerates as $n\to\infty$. Therefore, an application of the Fatou Lemma gives
	$$\int_\Omega (a(x,\nabla u_1)-a(x,\nabla u_2)\cdot \nabla T_k(u_1-u_2)\le 0,$$
	and taking $k\to \infty$ one has that $\nabla u_1= \nabla u_2$ almost everywhere in $\Omega$, which implies that $u_1=u_2$ almost everywhere in $\Omega$. This concludes the proof.
\end{proof}

\subsection{Some remarks on the assumptions on $h$}

In Section \ref{sec:measureex} we have required some restrictive assumptions on $h$: $h$ satisfied \eqref{h1ren} with $\gamma\le 1$ and it was needed strictly positive. In the remaining of the current section we just spend few words on extending the existence result relaxing these assumptions.

\subsubsection{\underline{Distributional solutions in case $\gamma>1$}}

\label{sec:distrmagg1}

As already observed, if $h$ blows up too fast in the origin (i.e. $\gamma>1$ in \eqref{h1ren}),  in general  the solution loses the weak trace in the classical Sobolev sense and this does not allow to find a renormalized solution anymore. In any case one is still able to prove that a distributional solution exists; indeed one can prove a  result  analogous to Lemma \ref{lemmastimehsing} showing that, at least locally, suitable a priori estimates hold for the approximation sequence given by the solution to \eqref{pbapproxDDO}. Here we only state the existence Theorem whose proof is detailed in \cite{ddo}.

\begin{theorem}\label{teoexistence}
	Let $a$ satisfy \eqref{cara1}-\eqref{cara3}, let $0\le \mu\in \mathcal{M}(\Omega)$ satisfy \eqref{hmu} and let $h$ satisfy \eqref{h1ren} and \eqref{h2ren}. Then there exists a distributional solution $u$ to problem \eqref{pbmain} in the sense of Definition \ref{distributional} such that: 
	$$u^{p-1}\in L^q_{\mathrm{ loc}}(\Omega)\ \;\forall\,q<\frac{N}{N-p}\quad\text{and}\quad|\nabla u|^{p-1} \in L^q_{\rm{loc}}(\Omega)\ \;\forall\,q<\frac{N}{N-1}.$$
\end{theorem}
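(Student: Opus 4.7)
The overall strategy mirrors the one developed in Sections \ref{sec:maggiore0}--\ref{sec:measureex}, but the loss of global Sobolev control when $\gamma>1$ forces all compactness arguments to be localized. I would work with the same two--parameter approximation scheme \eqref{pbapproxeps}, namely
\begin{equation*}
\begin{cases}
-\operatorname{div}(a(x,\nabla u_{n,m}))=h_n(u_{n,m})(\mu_d+\mu_m) & \text{in }\Omega,\\
u_{n,m}=0 & \text{on }\partial\Omega,
\end{cases}
\end{equation*}
where $h_n=T_n\circ h$ and $\mu_m\in L^{p'}(\Omega)$ is a nonnegative sequence, bounded in $L^1(\Omega)$, which converges to $\mu_c$ narrowly. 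The existence of renormalized solutions $u_{n,m}\ge 0$ is again furnished by \cite{mupo,ddo}. Since $h$ is strictly positive, a comparison with the auxiliary problem alluded to in Remark \ref{upositiva} (driven by the non-trivial diffuse part $\mu_d$, see \eqref{hmu}) yields the crucial lower bound
\begin{equation*}
\forall\,\omega\subset\subset\Omega\quad\exists n_0,c_\omega>0\,:\ u_{n,m}\ge c_\omega\quad\text{in }\omega,\ \forall n\ge n_0,\ \forall m.
\end{equation*}
This strict positivity, in the zone where the test functions of the distributional formulation are supported, replaces the role played by the strong maximum principle in Theorem \ref{orpeesistenza2} and allows to bypass the absence of monotonicity of the approximating sequence.

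The a priori estimates split into a local and a global part. For the local one, I would test with $T_k(u_{n,m})\psi$ for a cutoff $\psi\in C^\infty_c(\Omega)$, $0\le\psi\le 1$, using the positivity bound $u_{n,m}\ge c_\omega$ on $\omega={\rm supp}\,\psi$; together with \eqref{h2ren} this yields uniform boundedness of $T_k(u_{n,m})$ in $W^{1,p}_{\rm loc}(\Omega)$, and then, via the truncation scheme of \cite{b6} applied locally, the Marcinkiewicz bounds on $u_{n,m}^{p-1}$ and $|\nabla u_{n,m}|^{p-1}$ announced in the statement. For the global part, which encodes the boundary datum \eqref{troncate}, I would use $T_k^{\sigma}(u_{n,m})$ (with $\sigma=\gamma$) as test function, just as one does in case iii) of Theorem \ref{boesistenza}: \eqref{h1ren} and the choice $\sigma=\gamma$ kill the singularity on $\{u_{n,m}<s_1\}$ producing
\begin{equation*}
\frac{\alpha\,\sigma\,p^p}{(\sigma+p-1)^p}\int_\Omega |\nabla T_k^{\frac{\sigma+p-1}{p}}(u_{n,m})|^p\leq C\bigl(c_1|\mu|(\Omega)+k^\sigma\sup_{s\ge s_1}h(s)\,|\mu|(\Omega)\bigr),
\end{equation*}
so that $T_k^{(\sigma+p-1)/p}(u_{n,m})$ is bounded in $W^{1,p}_0(\Omega)$ uniformly in $n,m$. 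Passing first $m\to\infty$ with $n$ fixed, and then $n\to\infty$, one extracts a diagonal subsequence (still denoted $u_{n,m}$) and a limit $u$, a.e.\ finite on $\Omega$, to which $u_{n,m}$ converges a.e.\ in $\Omega$ and for which \eqref{troncate} holds by lower semicontinuity.

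The remaining task is the a.e.\ convergence of the gradients and the passage to the limit in the distributional formulation. Since test functions in \eqref{distrdef} have compact support, a localized version of Lemma \ref{stronghsing} (testing with $T_k(T_r(u_{n,m})-T_r(u))\psi$ for $\psi\in C^\infty_c(\Omega)$, and exploiting once again the lower bound $u_{n,m}\ge c_\omega$ on ${\rm supp}\,\psi$ to handle the singular term) yields the strong convergence of $T_k(u_{n,m})$ in $W^{1,p}_{\rm loc}(\Omega)$; combined with the Marcinkiewicz bounds, this gives $\nabla u_{n,m}\to\nabla u$ a.e.\ in $\Omega$ and $|a(x,\nabla u)|\in L^1_{\rm loc}(\Omega)$. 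The passage to the limit in $\int h_n(u_{n,m})\varphi\,\mu_d$ proceeds exactly as in Theorem \ref{teoexrinuniqueness}: splitting into $\{u_{n,m}\le\delta\}$ and $\{u_{n,m}>\delta\}$ (for a $\delta\notin\{\eta:\mu_d(u=\eta)>0\}$), one uses a localized version of \eqref{stimavdeltaddo} obtained by testing with $V_\delta(u_{n,m})\varphi$ on the first part, while on the second part the Lebesgue theorem applies directly; the identification $\mu_d(\{u=0\})=0$ follows again from the Fatou lemma applied to \eqref{l1locddo} localized. The term $\int h_n(u_{n,m})\varphi\,\mu_m\to h(\infty)\int\varphi\,\mu_c$ is handled verbatim as in \eqref{ddoconc}, exploiting \eqref{h2ren}, the narrow convergence of $\mu_m$, and the estimate \eqref{u4} (which survives localization).

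The main technical obstacle, in my view, is the localized identification of the gradient and the derivation of strong convergence of truncations in $W^{1,p}_{\rm loc}$: without a global $W^{1,p}$ control on $T_k(u_{n,m})$, the compactness argument of Lemma \ref{stronghsing}, which relied on the auxiliary functions $\Psi_\nu$ of Lemma \ref{dalmaso} distributed over the whole of $\Omega$, must be restructured using genuine interior cutoffs and the lower bound $u_{n,m}\ge c_\omega$, taking care that the singular contributions coming from $h_n(u_{n,m})(\mu_d+\mu_m)$ on the support of the cutoff remain harmless. Once this interior strong convergence is secured, the remaining limits are a localization of arguments already developed for $\gamma\le 1$.
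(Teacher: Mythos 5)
The paper does not prove Theorem \ref{teoexistence}; it refers the reader to \cite{ddo}. So you are on your own here, and your high-level strategy (two-parameter scheme, strict positivity by comparison to obtain $u_{n,m}\ge c_\omega$ on compacta, global estimate on $T_k^{(\sigma+p-1)/p}(u_{n,m})$ for \eqref{troncate}, localized strong convergence of truncations) is the right one and is consistent with the case $\gamma\le 1$ treated in Sections \ref{sec:maggiore0}--\ref{sec:measureex}.

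There is, however, a concrete gap in the derivation of the local Marcinkiewicz bounds. You propose to test with $T_k(u_{n,m})\psi$ on a cutoff and then ``apply the truncation scheme of \cite{b6} locally.'' That does not close. Testing with a cutoff version of $T_k(u_{n,m})$ (say $(T_k(u_{n,m})-k)\psi^p$, which is the form that lets you absorb the cross term as in the proof of Theorem \ref{esistenzah}) yields, after Young's inequality,
\begin{equation*}
\int_\Omega |\nabla T_k(u_{n,m})|^p\psi^p \le C\,k^p\int_\Omega|\nabla\psi|^p\,,
\end{equation*}
that is, a $k^p$-growth estimate, \emph{not} the linear-in-$k$ bound $\int|\nabla T_k|^p\le Ck$ that Lemmas $4.1$ and $4.2$ of \cite{b6} actually require. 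Feeding a $k^p$ bound into the Boccardo--Gallou\"et slicing argument produces nothing: one finds $|\{u_{n,m}\ge k\}\cap\omega|\le C$ with no decay in $k$. There is also no honest ``local version'' of \cite{b6} because its mechanism relies on the Sobolev inequality for $W^{1,p}_0$, i.e.\ on vanishing boundary values, which the cutoff destroys.

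The fix is the one the paper already uses in the simpler semilinear case (Theorem \ref{esistenzah}) and in Lemma \ref{lemmastimehsing}: separate the small and large ranges of $u_{n,m}$. Testing \emph{globally} with $T_j(G_{s_1}(u_{n,m}))$ (and $S=V_r$, $r\to\infty$) gives
\begin{equation*}
\alpha\int_\Omega|\nabla T_j(G_{s_1}(u_{n,m}))|^p \le j\,\sup_{s\ge s_1}h(s)\,\bigl|\mu_d+\mu_m\bigr|(\Omega)\le Cj\,,
\end{equation*}
because $T_j(G_{s_1}(u_{n,m}))$ vanishes on $\{u_{n,m}\le s_1\}$, so only assumption \eqref{h2ren} is used and the singularity at $u=0$ never enters. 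This is exactly the $Cj$ estimate that \cite{b6} needs, and it delivers the Marcinkiewicz bounds on $G_{s_1}(u_{n,m})^{p-1}$ and $|\nabla G_{s_1}(u_{n,m})|^{p-1}$ \emph{globally}, uniformly in $n,m$. For the remaining piece $T_{s_1}(u_{n,m})$, which is bounded by $s_1$, a single cutoff test with $(T_{s_1}(u_{n,m})-s_1)\psi^p$ gives $T_{s_1}(u_{n,m})$ bounded in $W^{1,p}_{\rm loc}(\Omega)$ uniformly in $n,m$; here is where the lower bound $u_{n,m}\ge c_\omega$ and the positivity of $h$ genuinely enter. Writing $u_{n,m}=T_{s_1}(u_{n,m})+G_{s_1}(u_{n,m})$, the two pieces together yield the stated local $L^q$ bounds on $u_{n,m}^{p-1}$ and $|\nabla u_{n,m}|^{p-1}$.

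The rest of the passage to the limit (splitting the diffuse right-hand side at $\delta$, the identification $\mu_d(\{u=0\})=0$ via Fatou, the treatment of $\int h_n(u_{n,m})\varphi\mu_m\to h(\infty)\int\varphi\mu_c$) is plausible as sketched and essentially transplants the arguments of Theorems \ref{esistenzah} and \ref{teoexrinuniqueness}; I agree that the localized strong convergence of truncations is the genuinely delicate step and deserves a careful write-up.
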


\subsubsection{\underline{Some remarks when $h$ degenerates}}
\label{H=0}

We have required $h$ to be strictly positive in order to deal with a datum not purely diffuse, i.e. the positivity was needed to face up to the concentrated part of the measure.  Let here discuss the case in which $h$ degenerates somewhere in presence of a diffuse measure as a datum; namely we study problem \eqref{pbmain} when $h$ satisfies \eqref{h1ren} and \eqref{h2ren} and also 
\begin{equation}\label{hdegenere}
	\exists \overline{s}>0: \ h(\overline{s})=0.
\end{equation} 
Under the above assumption, we will show that there exists a solution to \eqref{pbmain} which lies in $[0,\overline{s}]$.

\medskip
As we have already observed, roughly speaking one has that solutions tend to  blow up at the support of $\mu_c$; this means that it may  lose  sense to look for bounded solutions in presence of a concentrated measure.  From a dual point of view  one can also think that, in presence of a bounded solution, one can discard the behaviour at infinity governed by the term 
$$
h(\infty)\int_{\Omega} \varphi \mu_c; 
$$ 
this fact can be view as a  nonexistence result in the same spirit of    Section \ref{sec:nonex}.  
\begin{theorem}\label{teoexistencedegenere}
	Let $a$ satisfy \eqref{cara1}-\eqref{cara3}, let $0\le \mu\in \mathcal{M}(\Omega)$ such that $\mu_c\equiv 0$ and let $h$ satisfy \eqref{h1ren}, \eqref{h2ren} and \eqref{hdegenere}. Then there exists a distributional solution $u\in  W^{1,p}_{\rm loc}(\Omega)\cap L^\infty(\Omega)$ to \eqref{pbmain} such that $||u||_{L^\infty(\Omega)}\le \overline{s}$.  If $\gamma\le 1$ then $u \in W^{1,p}_0(\Omega)$.
\end{theorem}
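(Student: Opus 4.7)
The plan is to exploit the degeneracy of $h$ at $\overline{s}$ to truncate the nonlinearity and obtain an a priori $L^\infty$ bound by the maximum principle, before deriving the Sobolev estimates. Set $\tilde{h}(s):= h(s)\chi_{[0,\overline{s}]}(s)$, $\tilde{h}_n(s):= T_n(\tilde{h}(s))$, and let $\{\mu_n\}\subset W^{-1,p'}(\Omega)\cap L^1(\Omega)$ be a non-decreasing sequence converging to $\mu=\mu_d$ strongly in $\mathcal{M}(\Omega)$ (which exists by Proposition \ref{approssimazionediffusecrescente}, since $\mu_c\equiv 0$). Standard results give a nonnegative $u_n\in W^{1,p}_0(\Omega)\cap L^\infty(\Omega)$ solving the approximating problem. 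Testing with $G_{\overline{s}}(u_n)\in W^{1,p}_0(\Omega)$ and using that $\tilde{h}_n(u_n)\equiv 0$ on $\{u_n>\overline{s}\}$, the coercivity \eqref{cara1} yields $\alpha\int_\Omega|\nabla G_{\overline{s}}(u_n)|^p\le 0$, so $u_n\le\overline{s}$ almost everywhere. As a consequence $\tilde{h}_n(u_n)=h_n(u_n)$, with $h_n:=T_n\circ h$, and $u_n$ solves the approximation associated to the original $h$.

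Next I would derive the Sobolev estimates, distinguishing two cases. If $\gamma\le 1$, testing with $u_n$ and splitting the right-hand side over $\{u_n<s_1\}$ and $\{s_1\le u_n\le\overline{s}\}$ gives, via \eqref{h1ren},
\begin{equation*}
\alpha\int_\Omega|\nabla u_n|^p\le \Bigl(c_1 s_1^{1-\gamma}+\overline{s}\sup_{[s_1,\overline{s}]}h\Bigr)\mu_n(\Omega)\le C,
\end{equation*}
so $u_n$ is uniformly bounded in $W^{1,p}_0(\Omega)$. For the general case (including $\gamma>1$), I would use the local test function $\varphi^p T_k^\gamma(u_n)$ for $\varphi\in C^1_c(\Omega)$: since $h_n(s)s^\gamma\le c_1+\overline{s}^\gamma\sup_{[s_1,\overline{s}]}h$ on $[0,\overline{s}]$, the right-hand side is uniformly bounded, and after absorbing a term via Young's inequality one obtains a uniform bound for $u_n$ in $W^{1,p}_{\rm loc}(\Omega)$. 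This gives, up to a subsequence, $u_n\to u$ almost everywhere and weakly in the appropriate Sobolev space, with $0\le u\le\overline{s}$.

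The almost everywhere convergence of $\nabla u_n$ to $\nabla u$ follows from a Minty-Browder type argument in the spirit of Lemma \ref{convgradienti}, considerably simplified here by $\mu_c\equiv 0$ and by the uniform $L^\infty$ bound. The passage to the limit in the distributional formulation is then performed as in Theorems \ref{esistenzah} and \ref{teoexrinuniqueness}: the principal part passes by strong $W^{1,p}_{\rm loc}$ convergence of the gradients combined with \eqref{cara2}; for the right-hand side, I split
\begin{equation*}
\int_\Omega h_n(u_n)\psi\,\mu_n=\int_{\{u_n\le\delta\}}h_n(u_n)\psi\,\mu_n+\int_{\{u_n>\delta\}}h_n(u_n)\psi\,\mu_n,
\end{equation*}
with $\delta\notin\{\eta:\mu(\{u=\eta\})>0\}$. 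The second summand converges as $n\to\infty$ by dominated convergence to $\int_{\{u>\delta\}}h(u)\psi\,\mu$, which in turn converges to $\int_\Omega h(u)\psi\,\mu$ as $\delta\to 0^+$ once one shows $\mu(\{u=0\})=0$. The first summand is controlled by testing with $V_\delta(u_n)\psi$, which gives the bound $\beta\int_\Omega|\nabla u_n|^{p-1}|\nabla\psi|V_\delta(u_n)$; this vanishes in the double limit $n\to\infty$, $\delta\to 0^+$.

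The main obstacle will be the singular behaviour of $h$ at zero when $\gamma>1$, where one only gets local Sobolev bounds and the right-hand side $h(u)\mu$ may fail to be globally integrable. The delicate point is to verify, in the spirit of \eqref{u00}, that $\mu(\{u=0\})=0$, so that the contribution from the possibly singular set is negligible in the limit; this uses crucially the diffuseness of $\mu$ together with a Fatou argument applied to the local $L^1(\mu)$ estimate for $h_n(u_n)$ obtained from \eqref{stimaHsing}. The final assertion $u\in W^{1,p}_0(\Omega)$ for $\gamma\le 1$ is then immediate from the global uniform bound derived above and the weak lower semicontinuity of the Dirichlet norm.
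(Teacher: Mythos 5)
Your overall strategy is the same as the paper's: redefine the nonlinearity past $\overline{s}$ (your $\tilde h=h\chi_{[0,\overline s]}$ is the paper's $\tilde h$), test with $G_{\overline s}(u_n)$ to force $u_n\le\overline s$ a.e., deduce that the solutions of the modified problem also solve the original one, and then derive Sobolev estimates; the only cosmetic difference is that you approximate $\mu_d$ by a monotone sequence $\mu_n$ (à la Theorem \ref{orpeesistenza2}) while the paper keeps $\mu_d$ fixed and works with renormalized solutions of the approximating problems directly. That difference is immaterial.

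There is, however, a genuine gap in your local estimate for $\gamma>1$. Testing with $\varphi^p T_k^\gamma(u_n)$ gives, for the coercive part of the left-hand side,
\begin{equation*}
\gamma\alpha\int_\Omega |\nabla T_k(u_n)|^p\,T_k^{\gamma-1}(u_n)\,\varphi^p,
\end{equation*}
and the weight $T_k^{\gamma-1}(u_n)$ \emph{vanishes} at $\{u_n=0\}$ when $\gamma>1$. Bounding the right-hand side therefore controls only this degenerate weighted quantity (essentially $u_n^{\frac{\gamma-1+p}{p}}$ locally in $W^{1,p}$), not $\int_\omega|\nabla u_n|^p$, so it does not yield the stated $W^{1,p}_{\rm loc}(\Omega)$ regularity. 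The fix, and what the paper actually does, is to exploit the $L^\infty$ bound you already obtained: test with $(u_n-\overline s)\varphi^p$ (or $(\overline u_n-\overline s)\varphi$ with $S=V_r$ in the renormalized formulation). Since $u_n-\overline s\le 0$ while $h_n, \varphi, \mu_n\ge 0$, the right-hand side is nonpositive and drops out; the remaining term is controlled via \eqref{cara2} and Young, giving $\alpha\int_\Omega|\nabla u_n|^p\varphi^p\le C$ with $C$ independent of $n$. (A minor remark: your claim that $u_n\in L^\infty(\Omega)$ follows from ``standard results'' before the $G_{\overline s}$-test is more than Proposition \ref{approssimazionediffusecrescente} delivers for $\mu_n\in W^{-1,p'}(\Omega)$; fortunately it is also unnecessary, since the test with $G_{\overline s}(u_n)$ only needs $u_n\in W^{1,p}_0(\Omega)$.)
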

\begin{proof}
	Here we modify the scheme of approximation \eqref{pbapproxDDO} used so far, which is
	\begin{equation}\begin{cases}
			\displaystyle -\operatorname{div}(a(x,\nabla u_n)) = h_n(u_n)\mu_d &  \text{in}	\ \, \Omega, \\
			u_n=0 & \text{on} \ \partial \Omega.
			\label{pbapproxlimitatafinal}
	\end{cases}\end{equation}
	Let us define the function $\tilde{h}$ as follows
	\begin{equation}\label{H*}
		\tilde{h}(s)=\begin{cases}
			h(s) \quad&\text{if }\;s<\overline{s},\\
			0\quad&\text{if }\;s\geq \overline{s},
		\end{cases}
	\end{equation}
	and we consider the following problem
	\begin{equation}\begin{cases}
			\displaystyle -\operatorname{div}(a(x,\nabla \overline{u}_n)) =\tilde{h}(\overline{u}_n)\mu_{d} &  \text{in} \ \, \Omega, \\
			\overline{u}_n=0 & \text{on} \ \partial \Omega.
			\label{pbapprox*1}
		\end{cases}
	\end{equation}
	Let us highlight that $\overline{u}_n$ is a renormalized solution to \eqref{pbapproxlimitatafinal}. Hence one can take $T_k(G_{\overline{s}}(\overline{u}_n))$  ($k>0$) as a test function and $S=V_r \ (r>k)$. One gets
	$$\alpha\io|\nabla T_k(G_{\overline{s}}(\overline{u}_n))|^p \le \int_{\{r<\overline{u}_n<2r\}} |\nabla \overline{u}_n|^p T_k(G_{\overline{s}}(\overline{u}_n)),$$
	and, taking $r\to\infty$, it follows from \eqref{ren2} that 
	$$\alpha\io|\nabla T_k(G_{\overline{s}}(\overline{u}_n))|^p \le 0,$$	 
	which implies $\overline{u}_n\leq \overline{s}$ almost everywhere in $\Omega$. Hence, recalling \eqref{H*}, we conclude that $\overline{u}_n$ solves also \eqref{pbapproxlimitatafinal}. Now, since $\overline{u}_n$ is bounded in $L^\infty(\Omega)$ then one can take $(\overline{u}_n-\overline{s})\varphi$ ($0\le\varphi\in C^1_c(\Omega)$) as a test function and $S= V_r$ in the renormalized formulation of \eqref{pbapprox*1} deducing that $\overline{u}_n$ is locally bounded in $W^{1,p}(\Omega)$ with respect to $n$. Moreover, when $\gamma\le 1$, one can directly take $\overline{u}_n$ as a test function and $S= V_r$ in order to deduce that $\overline{u}_n$ is bounded in $W^{1,p}_0(\Omega)$ with respect to $n$. The passage to the limit can be carried on exactly as in the proof of Theorem \ref{teoexistence} or as in the proof of Theorem \ref{teoexrinuniqueness} for the case $\gamma\le 1$.	
\end{proof}

\section{Regularity of the solution in presence of a nonlinear operator}\label{regup}

In this section we briefly summarize some  \underline{regularity} results for the solution found in Sections \ref{sec:measureex} and \ref{sec:distrmagg1} in presence of a nonlinear operator and when the source datum is a Lebesgue function. 
\subsection{Regularity in presence of a  smooth datum}
 We start extending  part of the results of Section \ref{sec:lm} to the following quasilinear problem
\begin{equation}
\begin{cases}
\displaystyle - \Delta_p u= \frac{f}{u^\gamma} &  \text{in} \ \, \Omega, \\
u=0 & \text{on}\ \partial \Omega,
\label{pbplm}
\end{cases}
\end{equation}
where $\partial\Omega$ is smooth, $1<p<N$ and $0<m\le f\in L^\infty(\Omega)$.

\medskip 
Before stating the first regularity theorem a comment is needed: if $u$ is a weak solution to \eqref{pbplm}, i.e. $u\in W^{1,p}_0(\Omega)$ is a distributional solution to \eqref{pbplm}, then one can reason, with straightforward modifications,  as in  the proof of Lemma \ref{lemextest} (see also Remark \ref{remarkgeneralizzazione}) in order to extend the set of the test functions. Then one gets that a weak solution $u$ to \eqref{pbplm} satisfies 
\begin{equation}\label{extend_pi}
	\int_{\Omega} |\nabla u|^{p-2}\nabla u \cdot \nabla v = \int_{\Omega}fu^{-\gamma}v, \ \forall v\in W^{1,p}_0(\Omega).
\end{equation}

Next theorem provides a necessary and sufficient condition to have existence of finite energy solution to \eqref{pbplm};  this can be considered as the natural extension of Theorem \ref{teoreg} to the case of equations driven by the $p$-Laplacian.
\begin{theorem}
	Let $0<m\le f\in L^\infty(\Omega)$ then there exists a  distributional solution $u\in W^{1,p}_0(\Omega)$ to \eqref{pbplm} if and only if \begin{equation}\label{cond_gammap}\gamma<2+\frac{1}{p-1}\,.\end{equation}	 
\end{theorem}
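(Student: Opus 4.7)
My plan is to parallel the strategy of Theorems \ref{teoexlm} and \ref{teoreg}, with the Laplacian replaced by $\Delta_p$ and the first eigenfunction $\varphi_1$ replaced by $\varphi_{1,p}$ (see \eqref{not:phi1p}). The core computation to be exploited is that, for a positive power $w = b\varphi_{1,p}^t$, one has
\[
-\Delta_p(b\varphi_{1,p}^t) = b^{p-1}t^{p-1}\left[(1-t)(p-1)|\nabla \varphi_{1,p}|^p\varphi_{1,p}^{(t-1)(p-1)-1} + \lambda_1 \varphi_{1,p}^{t(p-1)}\right].
\]
Matching the two powers of $\varphi_{1,p}$ with $\varphi_{1,p}^{-t\gamma}$ (the expected behaviour of $f/w^\gamma$ near $\partial\Omega$) forces the choice
\[
t := \frac{p}{p-1+\gamma},
\]
which gives the Lazer--McKenna threshold: $\varphi_{1,p}^{t(1-\gamma)}$ is integrable near $\partial \Omega$ if and only if $t(\gamma-1) < 1$, i.e.\ if and only if $\gamma < 2 + \frac{1}{p-1}$.

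\textbf{Existence when $\gamma < 2+\frac{1}{p-1}$.} First I would consider the usual approximation
\[
-\Delta_p u_n = \frac{f_n}{(u_n + \tfrac{1}{n})^\gamma} \ \text{in }\Omega, \qquad u_n = 0 \ \text{on }\partial\Omega,
\]
with $f_n = T_n(f)$. By the sub/super-solution argument of Theorem \ref{teoexlm} adapted to the $p$-Laplacian (using Hopf's Lemma \ref{hopf} for $\varphi_{1,p}$ and the computation above), $u_n \in C^1(\overline{\Omega})$ with the pointwise estimates
\[
c_1 \varphi_{1,p}(x) \le u_n(x) \le c_2\, \varphi_{1,p}(x)^{t} \qquad \text{if } \gamma > p-1,
\]
(with $u_n$ bounded from below and above by smooth positive functions on compact subsets when $\gamma \le p-1$). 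Using $u_n$ as test function, together with Lemma \ref{lemextest}'s analogue for the $p$-Laplacian (see Remark \ref{remarkgeneralizzazione}), yields
\[
\int_\Omega |\nabla u_n|^p \le \int_\Omega f\, u_n^{1-\gamma}.
\]
For $\gamma \le 1$ this is bounded by $\|f\|_{L^\infty}$ times a Sobolev--Hölder quantity controlled by $\|\nabla u_n\|_{L^p}^{1-\gamma}$, which closes the estimate. For $1 < \gamma < 2+\tfrac{1}{p-1}$, the upper bound above and $\varphi_{1,p} \sim d(x)$ give
\[
\int_\Omega f\, u_n^{1-\gamma} \le C \int_\Omega d(x)^{-\frac{p(\gamma-1)}{\gamma+p-1}} < \infty,
\]
since the assumption $\gamma < 2+\tfrac{1}{p-1}$ is equivalent to $\tfrac{p(\gamma-1)}{\gamma+p-1} < 1$. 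Thus $u_n$ is bounded in $W^{1,p}_0(\Omega)$, and the passage to the limit reproduces that of Theorem \ref{boesistenza} (using the local positivity of $u_n$ and the Lebesgue Theorem on the right-hand side).

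\textbf{Nonexistence when $\gamma \ge 2+\frac{1}{p-1}$.} Here I would argue by contradiction: assume $u \in W^{1,p}_0(\Omega)$ is a distributional solution. First, the extension of Lemma \ref{lemextest} to the $p$-Laplacian (see Remark \ref{remarkgeneralizzazione}) produces, by density, a nonnegative sequence $w_k \in C^1_c(\Omega)$ with $w_k \to u$ in $W^{1,p}_0(\Omega)$; testing the equation with $w_k$ and applying Fatou's Lemma gives
\[
\int_\Omega |\nabla u|^p \ge \int_\Omega f\, u^{1-\gamma}.
\]
On the other hand, by the supersolution construction (step 1) together with comparison for the $p$-Laplacian, the unique solution built by approximation satisfies $u \le c_2 \varphi_{1,p}^{t}$ with $t = p/(p-1+\gamma)$, so
\[
\int_\Omega f\, u^{1-\gamma} \ge m\,c_2^{1-\gamma}\int_\Omega \varphi_{1,p}^{t(1-\gamma)} = +\infty,
\]
since $t(\gamma-1) \ge 1$. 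This contradicts $u \in W^{1,p}_0(\Omega)$.

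\textbf{Main obstacle.} The delicate point is the sharp boundary two-sided estimate $u_n \sim \varphi_{1,p}^{p/(p-1+\gamma)}$ for $\gamma > p-1$. For the Laplacian the computation in Theorem \ref{teoexlm} is immediate because $-\Delta$ is linear; for $-\Delta_p$ the extra $(p-1)$-factors in the exponents must be tracked carefully, and the comparison principle (used both to order sub/super-solutions and to transfer the upper bound from $\varphi_{1,p}^t$ to the true solution) has to be invoked in its nonlinear version. Once these pointwise bounds are secured the rest is a routine adaptation of the arguments already developed in Sections \ref{sec:classical} and \ref{sec:integrable}.
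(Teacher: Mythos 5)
Your proof follows the same route as the paper's: the same exponent $t=p/(p-1+\gamma)$ obtained by matching powers in $-\Delta_p(b\varphi_{1,p}^t)$, the same sub/super-solution construction to get the two-sided pointwise bound, the same energy estimate using the bound, and the same contradiction via the extended test-function formulation for nonexistence. Two small points. First, the threshold for the supersolution computation to produce a power $t<1$ is $\gamma>1$, not $\gamma>p-1$: indeed $t=p/(p-1+\gamma)<1$ if and only if $\gamma>1$, independently of $p$. Your slip $\gamma>p-1$ would, for $p>2$, leave a gap in the range $1<\gamma\le p-1$ that is covered neither by your pointwise estimate nor by your $\gamma\le 1$ energy argument; the correct split is $\gamma\le 1$ versus $\gamma>1$ exactly as in the paper. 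Second, in the nonexistence step you correctly use the \emph{upper} bound $u\le c_2\varphi_{1,p}^t$, which is what makes $\int f u^{1-\gamma}$ diverge when $t(\gamma-1)\ge 1$; the paper's text reads ``$u\ge \varphi_{1,p}^t$'', which appears to be a typo for $u\le b\varphi_{1,p}^t$. Finally, to apply that bound to the hypothetical $u\in W^{1,p}_0(\Omega)$ (not just to the SOLA limit), it is cleaner to compare $u$ directly against the supersolution $b\varphi_{1,p}^t$ using the extended test-function class (or to invoke uniqueness via Remark~\ref{remarkgeneralizzazione}); your sketch conflates these two routes, but the substance is fine.
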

\begin{proof}
We start proving that if $\gamma<2+\frac{1}{p-1}$ then there exists a distributional solution $u\in W^{1,p}_0(\Omega)$ to \eqref{pbplm}.

The proof will be concluded once that Sobolev estimates are obtained for the following approximating solutions 
\begin{equation}
\begin{cases}
\displaystyle - \Delta_p u_n= \frac{f}{\left(u_n+\frac{1}{n}\right)^\gamma} &  \text{in} \ \, \Omega, \\
u_n=0 & \text{on}\ \partial \Omega.
\label{pbplmapprox}
\end{cases}
\end{equation}
Indeed, once that one has $u_n$ bounded in $W^{1,p}_0(\Omega)$ with respect to $n$ the existence  of a weak solution $u$  can be carried on almost similar way to the proof of Theorem \ref{boesistenza} or Theorem \ref{esistenzah}. The only difference being the nonlinear operator which can be treat classically and also reasoning similarly to  Lemma \ref{convgradienti}  deducing that $\nabla u_n$ converges  to $\nabla u$ almost everywhere in $\Omega$ as $n\to\infty$.  

\smallskip

Firstly, if $\gamma\le 1$, we take $u_n$ as a test function in \eqref{pbplmapprox} obtaining
$$\int_\Omega |\nabla u_n|^p \le \int_\Omega  f  u_n^{1-\gamma} \le \|f\|_{L^\infty(\Omega)} \int_\Omega u_n^{1-\gamma},$$
which, after applications of the Young inequality and the Sobolev inequality, implies that $u_n$ is bounded in $W^{1,p}_0(\Omega)$ with respect to $n$. 

\medskip

Now let $\varphi_{1,p}$ be the solution to \eqref{not:phi1p} and  let $\gamma>1$.   A step by step re-adaptation of the proof  of Theorem \ref{teoexlm} (see also Remark \ref{remexlm}) gives  that
\begin{equation}a\varphi_{1,p}^t\le u_n\le b \varphi_{1,p}^t \ \ \text{in }\Omega,\label{distplap}\end{equation}
when  $t= \frac{p}{p-1+\gamma}$ and  for a suitable choice of $a<b$.
Hence, let us take $u_n$ as a test function in \eqref{pbplm}, one has
\begin{equation*}
\displaystyle \displaystyle \int_{\Omega}|\nabla u_n|^p \le \|f\|_{L^\infty(\Omega)}\int_{\Omega} a^{1-\gamma}\varphi_{1,p}^{t(1-\gamma)},
\end{equation*} 
and the right-hand of the previous is finite since $\gamma<2+\frac{1}{p-1}$ (see Remark \ref{remarksuautofunzioneplap} below). This shows that $u\in W^{1,p}_0(\Omega)$ is a solution to \eqref{pbplm}. 

\medskip

 Now we show that the assumption \eqref{cond_gammap} on $\gamma$ is necessary; let us assume by contradiction that there exists a  distributional  solution $u\in W^{1,p}_0(\Omega)$ of \eqref{pbplm} for some   $\gamma\ge 2+\frac{1}{p-1}$.
Then, as for \eqref{distplap}, one can show that $u\ge \varphi_{1,p}^{t}$ almost everywhere in $\Omega$.
Therefore, taking $u$ as test in \eqref{extend_pi}, one obtains 
\begin{equation*}
\displaystyle \int_{\Omega}|\nabla u|^p=  \int_{\Omega} \frac{f}{u^{\gamma-1}}\ge \int_{\Omega} \frac{m}{b^{\gamma-1}\varphi_{1,p}^{t(\gamma-1)}}=\infty.
\label{stimalm4plap}
\end{equation*} 
This gives the contradiction as $u\in W^{1,p}_0(\Omega)$ and the proof is concluded.
\end{proof}

One could wonder what happens when $f$ belongs to $L^m(\Omega)$ for $m>1$. In particular, as we have seen in Theorem \ref{teosharp}, the threshold to find finite energy solutions in case $p=2$
is given by
\begin{equation*}
	\label{eq:threshold_2}
	\gamma < 3-\frac{2}{m}.
\end{equation*}
One could ask how this result reads in the general case $p>1$. The next example (see \cite{maop}) suggests that the threshold to have finite energy solutions in this latter case is: 
\begin{equation}
	\label{eq:threshold_3}
	\gamma < \frac{2p-1}{p-1} - \left(\frac{p}{p-1}\right) \frac{1}{m} = \frac{\left(2-\frac{1}{m}\right)p -1}{p-1}.
\end{equation}
\begin{example}
	Let $p,\gamma>1$ and observe that, for $\eta>0$, $u = (1-|x|^{2})^{\eta}$ satisfies
	\begin{equation*}
		\begin{cases}
			\displaystyle -\Delta_p u= \frac{f(x)}{u^{\gamma}}   &\text{in }\ B_{1}(0), \\
			u=0 & \text{on }\ \partial B_{1}(0), 
		\end{cases}
	\end{equation*}
	where
	$$
	f(x)\sim \frac{1}{(1-|x|^{2})^{p-\eta(\gamma+p-1)}},
	$$ 
	which belongs $f\in L^m(\Omega)$ ($m\geq 1$) once one requires
	$$
	\eta >\frac{p-\frac{1}{m}}{\gamma+p-1}.
	$$
	Let us stress that  $u^{\frac{\gamma-1+p}{p}}\in W^{1,p}_0(\Omega)$ but $u\in W^{1,p}_0(\Omega)$ only for $\eta>\frac{p-1}{p}$ and, in particular, if
	$$
	\frac{p-\frac{1}{m}}{\gamma+p-1} > \frac{p-1}{p},
	$$
	then \eqref{eq:threshold_3} holds.
\end{example}

To conclude this section, we deal with the regularity of the solution found in Theorem \ref{teoexrinuniqueness} when $f$ is a possibly  {\it unbounded} function.  Let explicitly observe that here it is not necessary the regularity of the domain. Hence we deal with 
\begin{equation}
\label{pbapp}
\begin{cases}
\dis -\diver (a(x,\nabla u))= h(u)f & \text{in}\;\Omega,\\
u\ge 0 & \text{in}\;\Omega, \\
u=0 & \text{on}\;\partial\Omega,
\end{cases}
\end{equation}
where $a$ satisfies \eqref{cara1}-\eqref{cara3} and $f\in L^m(\Omega)$ is nonnegative. We are also concerned to the regularity of solution in connection with the rate to which $h$ possibly degenerates. Hence, $h$ satisfies once again \eqref{h1ren} and \eqref{h2bis} which we recall for the sake of completeness:

\begin{equation}
	\exists\;\gamma \ge 0, \ c_1,s_1>0: \  h(s) \le \frac{c_1}{s^\gamma}\;\text{ if }\;s<s_1,
	\label{h1reg}
\end{equation} 
and 
\begin{equation}\label{h2reg}
	\exists\;\theta \ge 0, \ c_2,s_2>0: \ h(s) \le \frac{c_2}{s^\theta}\; \ \text{if}\ \;s>s_2.
\end{equation}

\medskip

The strict connection between the datum $f\in L^m(\Omega)$ and the function $h$ is specified  by the following value: 
$$\overline{m}:= 
\begin{cases}
\left(\frac{p^*}{1-\theta}\right)' \ \ &\text{if }\theta<1,
\\
1 \ \ \ &\text{if }\theta\ge1,
\end{cases}
$$
 which, in the next theorem, provides the threshold to have finite energy solutions, at least far away from zero. 
Now recalling that $\sigma:=\max(1,\gamma)$, we state and prove the  following theorem:

\begin{theorem}
	Let $a$ satisfy \eqref{cara1}-\eqref{cara3}, let $0\le f\in L^m(\Omega)$ and let $h$ satisfy \eqref{h1reg} and \eqref{h2reg}. Then there exists a solution $u$ to \eqref{pbapp} such that for any $\varepsilon>0$:
	\begin{itemize}
		\item[i)] if $m>\frac{N}{p}$ then $G_\varepsilon(u) \in W^{1,p}_0(\Omega)\cap L^\infty(\Omega);$
		\item[ii)] if $ \overline{m}\le m<\frac{N}{p}$ then $G_\varepsilon(u) \in W^{1,p}_0(\Omega)\cap L^{\frac{Nm(\theta -1 + p)}{N-mp}}(\Omega);$
		\item[iii)] if $ 1\le m <\overline{m}$ then $|\nabla G_\varepsilon(u)|^{\frac{Nm(\theta-1+p)}{N-m(1-\theta)}} \in L^1(\Omega)$ and $G^{\frac{Nm(\theta-1+p)}{N-mp}}_\varepsilon(u)\in L^1(\Omega)$.
	\end{itemize} 
	Moreover $T_k^{\frac{\sigma-1+p}{p}}(u) \in W^{1,p}_0(\Omega)$ and $T_k(u) \in  W^{1,p}_{\rm loc}(\Omega)$ for any $k>0$.
\end{theorem}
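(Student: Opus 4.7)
The plan is to work at the approximation level, considering the sequence $u_n \in W^{1,p}_0(\Omega)\cap L^\infty(\Omega)$ solving
$$-\operatorname{div}(a(x,\nabla u_n))=h_n(u_n)f_n \quad \text{in }\ \Omega,$$
as in Section~\ref{sec:maggiore0}. The strategy is to establish each regularity statement as a uniform-in-$n$ a priori estimate; once in hand, weak lower semicontinuity combined with the strong convergence of truncations (Lemma~\ref{stronghsing}) and the a.e.\ convergence of gradients (Lemma~\ref{convgradienti}) will transfer the estimates to the limit $u$ constructed in Theorem~\ref{teoexrinuniqueness} (or, for $\gamma>1$, Theorem~\ref{teoexistence}).

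For the assertions on truncations, test the approximate equation with $T_k^\sigma(u_n)$. By \eqref{cara1} and the chain rule, the left-hand side dominates $\alpha\sigma(p/(\sigma-1+p))^p\int_\Omega|\nabla T_k^{(\sigma-1+p)/p}(u_n)|^p$; on the right split via \eqref{h1reg}: on $\{u_n<s_1\}$ one has $h_n(u_n)T_k^\sigma(u_n)\le c_1 s_1^{\sigma-\gamma}$ (recall $\sigma\ge\gamma$), while on $\{u_n\ge s_1\}$ the function $h$ is bounded, producing an overall bound $\le C(1+k^\sigma)\|f\|_{L^1(\Omega)}$. For the local assertion $T_k(u)\in W^{1,p}_{\mathrm{loc}}(\Omega)$, test with $T_k(u_n)\varphi^p$ for $\varphi\in C^1_c(\Omega)$ and absorb the cross term $\int a(x,\nabla u_n)\cdot\nabla\varphi\, T_k(u_n)\varphi^{p-1}$ by Young's inequality, using \eqref{cara2}.

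The $G_\varepsilon(u)$ estimates reduce to the case $\varepsilon\ge s_2$: for $\varepsilon<s_2$, the extra contribution of $\{\varepsilon<u\le s_2\}$ to $|\nabla G_\varepsilon(u)|^p$ is bounded by $\varepsilon^{-(\sigma-1)}|\nabla T_{s_2}^{(\sigma-1+p)/p}(u)|^p$, which is integrable by the previous step. Fixing $\varepsilon\ge s_2$, test with $G_\varepsilon(u_n)^{\eta(p-1)+1}$ for a parameter $\eta\ge 0$ to be chosen. By \eqref{cara1}, the chain rule, and Sobolev, the left-hand side controls $C\|G_\varepsilon(u_n)\|_{L^{p^*(\eta+1-1/p)}(\Omega)}^p$. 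On the right, since $u_n\ge G_\varepsilon(u_n)$ and $\varepsilon\ge s_2$, one has $h_n(u_n)\le c_2 G_\varepsilon(u_n)^{-\theta}$, and H\"older's inequality with exponents $m,m'$ yields
$$\|G_\varepsilon(u_n)\|_{L^{p^*(\eta+1-1/p)}(\Omega)}^p\le C\|f\|_{L^m(\Omega)}\,\|G_\varepsilon(u_n)\|_{L^{(\eta(p-1)+1-\theta)m'}(\Omega)}^{\eta(p-1)+1-\theta}.$$

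Case (ii), $\overline{m}\le m<N/p$, is then obtained by calibrating $\eta$ so that $p^*(\eta+1-1/p)=(\eta(p-1)+1-\theta)m'$; this choice both closes the inequality by absorption and produces the claimed exponent $Nm(\theta-1+p)/(N-mp)$, after which plugging the resulting $L^q$ bound back into the energy estimate gives $G_\varepsilon(u_n)\in W^{1,p}_0(\Omega)$ uniformly. Case (i), $m>N/p$, follows from a Stampacchia iteration on $G_k(u_n)$ for large $k$, using the enhanced datum $h_n(u_n)f_n\le c_2 f_n/k^\theta$ on $\{u_n>k\}$ to beat the classical boundedness threshold. Case (iii), $1\le m<\overline{m}$ (which only occurs for $\theta<1$), cannot close by absorption; instead, test with $T_k(G_\varepsilon(u_n))$ and argue as in \cite{bg}, combining H\"older, Sobolev, and level-set cuts $\{G_\varepsilon(u_n)>k\}$ to derive the stated Marcinkiewicz-type integrability. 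The principal obstacle is the calibration of $\eta$ together with the verification that admissibility of $G_\varepsilon(u_n)^{\eta(p-1)+1}$ as a test function holds at the approximate level (guaranteed by $u_n\in L^\infty$ for each fixed $n$), and the careful book-keeping needed at the borderline $m=\overline{m}$ separating the energy and sub-energy regimes.
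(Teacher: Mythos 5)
Your overall strategy --- working at the level of the approximating problems \eqref{pbapp_tronc}, testing with powers of $G_\varepsilon(u_n)$ calibrated to the interplay between $\theta$, $m$, $p$, and passing to the limit via Lemmas \ref{stronghsing} and \ref{convgradienti} --- is indeed the paper's strategy. Two of the concrete steps, however, contain errors or gaps that would make the argument miss the stated result.

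First, the Sobolev exponent you assign to the left-hand side after testing with $G_\varepsilon(u_n)^{\eta(p-1)+1}$ is wrong. Writing $v=G_\varepsilon(u_n)$ and $p_0=\eta(p-1)+1$, coercivity gives $\alpha p_0\int|\nabla v|^p v^{p_0-1}$, and $|\nabla v|^p v^{p_0-1}=c_\beta|\nabla v^\beta|^p$ with $\beta=\frac{p_0-1+p}{p}=\frac{\eta(p-1)+p}{p}$; hence the exponent coming from Sobolev is $\beta p^*=p^*\frac{\eta(p-1)+p}{p}$, \emph{not} $p^*(\eta+1-\frac{1}{p})$. The two coincide only when $\eta=1$. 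If you run the calibration with your formula you do not land on $\frac{Nm(\theta-1+p)}{N-mp}$; with the corrected exponent the calibration is exactly the one in the paper (where the test power is written as $G_\varepsilon^\eta$ and the condition reads $\frac{(\eta-1+p)p^*}{p}=(\eta-\theta)m'$), and the arithmetic then closes.

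Second, testing with $T_k(u_n)\varphi^p$ for the local estimate leaves the \emph{nonnegative} source term $\int_\Omega h_n(u_n)f_n\,T_k(u_n)\varphi^p$ on the right, and when $\gamma>1$ this is not absorbable by Young: on $\{u_n<s_1\}$ one has $h_n(u_n)T_k(u_n)\sim u_n^{1-\gamma}\to\infty$ as $u_n\to 0$. You would have to invoke the local $L^1$ bound on $h(u)f$ separately. The paper sidesteps this entirely by testing with $(T_k(u_n)-k)\varphi^p$, which makes the source term $\int h_n(u_n)f_n(T_k(u_n)-k)\varphi^p\le 0$ and thus drops it with no hypothesis on $\gamma$.

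Third, for case (iii) you replace the paper's fractional-power test function $(G_\varepsilon(u_n)+\delta)^{\eta}-\delta^{\eta}$ with $\theta\le\eta<1$ by a Boccardo--Gallou\"et level-set argument with $T_k(G_\varepsilon(u_n))$. That is a genuinely different route, and it only yields Marcinkiewicz-type decay of $|\{G_\varepsilon(u_n)>k\}|$ and $|\{|\nabla G_\varepsilon(u_n)|>k\}|$. The theorem, however, claims strong Lebesgue membership $G_\varepsilon(u)\in L^{\frac{Nm(\theta-1+p)}{N-mp}}(\Omega)$ and $|\nabla G_\varepsilon(u)|\in L^{\frac{Nm(\theta-1+p)}{N-m(1-\theta)}}(\Omega)$ at exactly those borderline exponents. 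The paper obtains these directly, using the power test function to get the $L^{\frac{Nm(\theta-1+p)}{N-mp}}$ bound on $G_\varepsilon$, then bounding $\int|\nabla G_\varepsilon(u_n)|^q$ by a H\"older split $\frac{|\nabla G_\varepsilon(u_n)|^q}{(G_\varepsilon(u_n)+\delta)^{q(1-\eta)/p}}(G_\varepsilon(u_n)+\delta)^{q(1-\eta)/p}$ with the calibrated $\eta$. A Marcinkiewicz estimate gives only $L^{q-\epsilon}$ for $\epsilon>0$, so your plan would need to be sharpened (or restated with the weaker conclusion) to match the statement.

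The reduction to $\varepsilon\ge s_2$ via the truncation bound is a clean observation not in the paper, and the calibration idea for case (ii), once the exponent is corrected, is the same. Case (i) is also the same underlying argument; the remark about an ``enhanced datum beating the classical threshold'' is unnecessary, since $m>\frac{N}{p}$ is already the Stampacchia threshold and the equation satisfied by $G_\varepsilon(u_n)$ has right-hand side bounded by $\sup_{s>\varepsilon}h(s)\,f_n\in L^m(\Omega)$.
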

\begin{proof}
	Let us consider the following scheme of approximation:
	\begin{equation}
	\label{pbapp_tronc}
	\begin{cases}
	\dis -\diver (a(x,\nabla u_n))= h_n(u_n)f_n & \text{in}\;\Omega,\\
	u_n=0 & \text{on}\;\partial\Omega,
	\end{cases}
	\end{equation}
	where $h_n(s):= T_n(h(s))$ and $f_n:= T_n(f)$. The existence of such nonnegative $u_n\in W^{1,p}_0(\Omega) \cap L^\infty(\Omega)$ follows from \cite{ll}. With some simplifications,  reasoning as in  Section \ref{sec:measureex}, it follows that $u_n$ converges, as $n\to\infty$, to a function $u$ which is a solution to \eqref{pbapp}. Therefore here we will just need to show suitable a priori estimates on $u_n$ and this will be sufficient to conclude the proof.
	
	\medskip

	\textbf{Proof of i).} We start taking $G_\varepsilon(u_n)$ as a test function in \eqref{pbapp_tronc} obtaining 
	\begin{equation}\label{appi}
	\alpha \int_\Omega |\nabla G_\varepsilon(u_n)|^p \le \int_\Omega a(x,\nabla u_n)\cdot \nabla G_\varepsilon(u_n) = \int_\Omega h_n(u_n) f_n G_\varepsilon(u_n) \le \sup_{s\in (\varepsilon,\infty)} h(s) \int_\Omega f_n G_\varepsilon(u_n).
	\end{equation}
	The previous estimate gives the possibility to reason as in Th\'eor\`eme $4.2$ of \cite{st} in order to the deduce that $G_\varepsilon(u_n)$ is bounded in $L^\infty(\Omega)$ with respect to $n$. Gathering this information with estimate \eqref{appi} then one has that $G_\varepsilon(u_n)$ is also bounded in $W^{1,p}_0(\Omega)$ with respect to $n$. This concludes the proof of i).

	\medskip

	\textbf{Proof of ii).} Once again we take $G_\varepsilon(u_n)$ as a test function in \eqref{pbapp_tronc} deducing
	\begin{equation}\label{appii}
	\begin{aligned}
	\alpha \int_\Omega |\nabla G_\varepsilon(u_n)|^p \le \int_\Omega a(x,\nabla u_n)\cdot \nabla G_\varepsilon(u_n) &= \int_\Omega h_n(u_n) f_n G_\varepsilon(u_n) 
	\\
	&\le \sup_{s\in (\varepsilon,s_2]} [h(s)s] ||f||_{L^1(\Omega)} + c_2\int_{\{u_n> s_2\}}  f_n G_\varepsilon^{1-\theta}(u_n).
	\end{aligned}
	\end{equation}
	Let firstly observe that if $\theta \ge 1$ then one has that $G_\varepsilon(u_n)$ is bounded in $W^{1,p}_0(\Omega)$ with respect to $n$ by simply requiring $f\in L^1(\Omega)$. 
	Otherwise, if $\theta<1$, from the H\"older inequality one has
	\begin{equation}\label{appii2}
	\int_{\{u_n> s_2\}}  f_n  G_\varepsilon^{1-\theta}(u_n) \le ||f||_{L^{\left(\frac{p^*}{1-\theta}\right)'}} \left(\int_\Omega  G_\varepsilon^{p^*}(u_n)\right)^{\frac{1-\theta}{p^*}}. 
	\end{equation}
	Hence, gathering \eqref{appii2} with \eqref{appii}, applying the Sobolev inequality, one obtains
	\begin{equation*}\label{appii3}
	\int_\Omega |\nabla G_\varepsilon(u_n)|^p \le \frac{1}{\alpha}\sup_{s\in (\varepsilon,s_2]} [h(s)s] ||f||_{L^1(\Omega)}  + \frac{c_2}{\alpha} ||f||_{L^{\left(\frac{p^*}{1-\theta}\right)'}} \mathcal{S}_p \left(\int_\Omega |\nabla  G_\varepsilon(u_n)|^p \right)^{\frac{1-\theta}{p}}.
	\end{equation*}
	It follows from an application of the Young inequality that the previous  implies that $G_\varepsilon(u_n)$ is bounded in $W^{1,p}_0(\Omega)$ with respect to $n$.

	Let us focus now on the Lebesgue regularity. If   $m=\overline{m}$ then the result follows by Sobolev embedding. Now assume   $m>\overline{m}$. In this case one takes $G^{\eta}_\varepsilon(u_n)$ for some $\eta>\max(1,\theta)$. Hence, applying the Sobolev inequality, one obtains 
	\begin{equation}\label{appii4}
	\begin{aligned}
	\frac{\alpha\eta}{\mathcal{S}_p^p} \left(\frac{p}{\eta-1+p}\right)^p &\left(\int_\Omega G^{\frac{(\eta-1+p)p^*}{p}}_\varepsilon(u_n) \right)^{\frac{p}{p^*}} \le \alpha\eta \left(\frac{p}{\eta-1+p}\right)^p \int_\Omega |\nabla G^{\frac{\eta-1+p}{p}}_\varepsilon(u_n)|^p 
	\\
	&\le  \sup_{s\in (\varepsilon,s_2]} [h(s)s^\eta] ||f||_{L^1(\Omega)} + c_2\int_{\{u_n> s_2\}}  f_n G_\varepsilon^{\eta-\theta}(u_n)
	\\
	&\le  \sup_{s\in (\varepsilon,s_2]} [h(s)s^\eta] ||f||_{L^1(\Omega)} + c_2 ||f||_{L^m(\Omega)} \left(\int_{\{u_n> s_2\}}  G_\varepsilon^{(\eta-\theta)m'}(u_n)\right)^{\frac{1}{m'}}.
	\end{aligned}
	\end{equation}
	If we require 
	$$\eta = \frac{\theta m(N-p) + (m-1)(p-1)N}{N-mp}$$
	then 
	$$\frac{(\eta-1+p)p^*}{p} = (\eta-\theta)m' = \frac{Nm(\theta -1 + p)}{N-mp}.$$
	Since it follows from $m<\frac{N}{p}$ that $\frac{p}{p^*}>\frac{1}{m'}$, then a simple application of the Young inequality in \eqref{appii4} concludes the proof for this case.

	\medskip

	\textbf{Proof of iii).}  Let us first observe that, in this case, one necessarily has $\theta<1$.

	Let $0<\delta<\frac{1}{n}$ and $\theta\le\eta<1$ and let us  take $(G_\varepsilon(u_n)+\delta)^{\eta} - \delta^{\eta}$ as a test function in \eqref{pbapp_tronc}, yielding to
	\begin{equation}\label{appiii1}
	\displaystyle \eta\alpha\int_{\Omega} |\nabla G_\varepsilon(u_n)|^p(G_\varepsilon(u_n)+\delta)^{\eta-1} \le \sup_{s\in (\varepsilon,s_2]} [h(s)(s+\delta)^\eta] ||f||_{L^1(\Omega)} + \int_{\{u_n>s_2\}} f_n (G_\varepsilon(u_n)+\delta)^{\eta-\theta}.
	\end{equation}
	For the left-hand of \eqref{appiii1}, one has
	\begin{equation*}
	\begin{aligned}
	\displaystyle \int_{\Omega} |\nabla G_\varepsilon(u_n)|^p(G_\varepsilon(u_n)+\delta)^{\eta-1} &= \left(\frac{p}{\eta-1+p}\right)^p\int_{\Omega} |\nabla G_\varepsilon(u_n)+\delta)^\frac{\eta-1+p}{p}-\delta^\frac{\eta-1+p}{p}|^p
	\\
	&\ge \left(\frac{p}{\mathcal{S}_p(\eta-1+p)}\right)^p\left(\int_{\Omega} |(G_\varepsilon(u_n)+\delta)^\frac{\eta-1+p}{p}-\delta^\frac{\eta-1+p}{p}|^{p^*}\right)^{\frac{p}{p^*}}.
	\end{aligned}
	\end{equation*}
	Thus, gathering the above inequalities and letting $\delta \to 0^+$  one gets
	\begin{equation}\label{appiii2}
	\displaystyle \eta \alpha\left(\frac{p}{\mathcal{S}_p(\eta-1+p)}\right)^p\left(\int_{\Omega} G_\varepsilon^\frac{p^*(\eta-1+p)}{p}(u_n)\right)^{\frac{p}{p^*}} \le \sup_{s\in (\varepsilon,s_2]} [h(s)s^\eta] ||f||_{L^1(\Omega)} + \int_{\Omega} f_n G_\varepsilon^{\eta-\theta}(u_n).
	\end{equation}	
	If $m=1$ then we fix $\eta=\theta$ obtaining the previous that $G_\varepsilon^\frac{N(\theta-1 +p)}{N-p}(u_n)$ is bounded in $L^{1}(\Omega)$.
	\\
	Otherwise, if $m>1$, we apply the H\"older  inequality on the second term of the right-hand of \eqref{appiii2}, yielding to 
	\begin{equation*}\label{appiii3}
	\int_{\Omega} f_n G_\varepsilon^{\eta-\theta}(u_n) \le ||f||_{L^m(\Omega)} \left(\int_{\Omega} G_\varepsilon^{(\eta-\theta)m'}(u_n)\right)^{\frac{1}{m'}}. 
	\end{equation*}
	Reasoning as in case ii), that is one can require $\frac{p^*(\eta-1+p)}{p} = (\eta-\theta)m'$, which gives that $G_{\varepsilon}^{\frac{Nm(\theta-1+p)}{N-mp}}(u_n)$ is bounded in $L^1(\Omega)$. 
	\\ Thus, for $q<p$, one has
	\begin{align*}
	\int_{\Omega} |\nabla G_\varepsilon(u_n)|^q = \int_{\Omega} \frac{|\nabla 
		G_\varepsilon(u_n)|^q}{(G_\varepsilon(u_n)+\delta)^{\frac{q(1-\eta)}{p}}}(G_\varepsilon(u_n)+\delta)^{\frac{q(1-\eta)}{p}} &\le \left(\int_{\Omega} \frac{|\nabla G_\varepsilon(u_n)|^p}{(G_\varepsilon(u_n)+\delta)^{(1-\eta)}}\right)^\frac{q}{p} \left( \int_{\Omega} (G_\varepsilon(u_n)+\delta)^{\frac{(1-\eta)q}{p-q}}\right)^{\frac{p-q}{p}} 
	\\
	& \le C\left( \int_{\Omega} (G_\varepsilon(u_n)+\delta)^{\frac{(1-\eta)q}{p-q}}\right)^{\frac{p-q}{p}},
	\end{align*}
	which is bounded with respect to $n$ if $q=\frac{Nm(\theta-1+p)}{N-m(1-\theta)}$.

	\medskip

	In order to conclude the proof we need estimates on the truncations of $u_n$. We take $T_k^\sigma(u_n)$ as a test function in \eqref{pbapp_tronc} deducing that
	\begin{equation*}
	\alpha\sigma \left(\frac{p}{\sigma-1+p}\right)^p \int_\Omega |\nabla T^{\frac{\sigma-1+p}{p}}_k(u_n)|^p 
	\le  c_1k^{\sigma-\gamma}||f||_{L^1(\Omega)} + k\sup_{s\in (s_1,\infty)} h(s) ||f||_{L^1(\Omega)},
	\end{equation*}
	which guarantees the global estimate in $T^{\frac{\sigma-1+p}{p}}_k(u_n)$. Concerning the local estimate we take $(T_k(u_n)-k)\varphi^p$ as a test function where $0\le \varphi \in C^1_c(\Omega)$, and after an application of the Young inequality, one  yields to
	\begin{equation*}
	\int_\Omega |\nabla T_k(u_n)|^p\varphi^p \le C,
	\end{equation*}
	for some constant independent of $n$. This concludes the proof.
\end{proof}

\section{Uniqueness of the distributional solution in the semilinear case}\label{unilinear}
 
In Theorem \ref{boca} we have proven that there is only one weak solution to \eqref{pbgeneralh} provided $h$ is non-increasing. The core of the  proof consisted in  showing that distributional solutions to \eqref{pbgeneralh} are unique in the class $H^1_0(\Omega)$;  in Theorem \ref{teounique}, we have also seen that  uniqueness   holds  for  renormalized solutions if $\gamma \le 1$ (i.e. $h(s) \approx s^{-{\gamma}}$ near zero),  again in presence of a non-increasing $h$ and for a purely diffuse measure as datum.

\medskip

Then, a natural question is whether previous  \underline{uniqueness} results can be  extended to the case of  solutions having  only local finite energy or in the case   \underline{$\gamma>1$}, even in presence of a general measure as datum. 

\medskip

A second interest relies on the possibility of presenting an unified discussion independent on the value of $\gamma$; as we will see, we fix a notion of solution which is fairly consistent with the others presented so far and which allows to deduce uniqueness.

\medskip

As some of the results strongly relies on classical linear elliptic regularity theory,  we restrict ourselves to the case of a  linear principal operator in a smooth domain $\Omega$. Most of the content of this section can be found in \cite{v} and \cite{OP}.

\medskip

 We deal with the following problem  
\begin{equation}
	\begin{cases}
		\displaystyle -\operatorname{div}(A(x)\nabla u) = h(u)\mu &  \text{in}\ \Omega, \\
		u=0 & \text{on}\ \partial \Omega,
		\label{op2_pbgeneral}
	\end{cases}
\end{equation}
where $\Omega$  is an open bounded subset of $\mathbb{R}^N$ ($N\ge 2$) with a smooth boundary and $A$ is such that:
\begin{equation}
A\in C^{0,1}(\overline{\Omega}): \ \exists \alpha,\beta >0  \ \ A(x)\xi \cdot \xi \ge \alpha|\xi|^2, \ |A(x)|\le \beta.\label{lipA}
\end{equation}   
for every $\xi$ in $\mathbb{R}^N$,  for every $x$ in $\Omega$,  and for $\alpha,\beta>0$.  The function $h:[0,\infty) \mapsto [0,\infty]$ is continuous, finite outside the origin and satisfying \eqref{h2}.

\medskip

  Let us set  the notion of  distributional solution to \eqref{op2_pbgeneral} in this context:
\begin{defin}\label{defunique}
	A nonnegative function $u\in L^1(\Omega)\cap W^{1,1}_{\textrm{loc}}(\Omega)$ is a {\it distributional solution} to \eqref{op2_pbgeneral} provided $h(u)\in L^{1}_{\textrm{loc}}(\Omega, \mu_d)$ and if
		\begin{equation}
		\lim_{\varepsilon\to 0^+} \frac{1}{\varepsilon} \int_{\Omega_\varepsilon}u=0,
		\label{op2_weakdef2}
	\end{equation}	
	and	
	\begin{equation} \displaystyle \int_{\Omega}A(x)\nabla u \cdot \nabla \varphi =\int_{\Omega} h(u)\varphi \mu_d + h(\infty)\int_{\Omega} \varphi \mu_c,\ \ \ \forall \varphi \in C^1_c(\Omega).\label{op2_weakdef3}\end{equation}	
	\label{op2_weakdef}
\end{defin}
\begin{remark}
	The main novelty in Definition \ref{defunique} with respect, for instance, to Definition \ref{distributional} is how the boundary datum is intended. 
	
 	Condition \eqref{troncate} is replaced by \eqref{op2_weakdef2}. This allows to uncouple the definition from the value of $\gamma$ and to assume, in general, no trace Sobolev assumption on the solution. 	Indeed, as we have widely seen so far, our solutions should not belong in general to $W^{1,1}_{0}(\Omega)$.

	However let finally underline that \eqref{op2_weakdef2}	is weaker than  the classical sense of traces for functions in  $W^{1,1}_{0}(\Omega)$ (see for instance \cite{sel,afp}). 
\triang \end{remark}
\begin{remark}
	Let us explicitly stress that, if $h$ satisfies \eqref{h1ren} and \eqref{h2ren} and $\mu \in \mathcal{M}(\Omega)$ is nonnegative, then the existence of a distributional solution $u$ in the sense of Definition \ref{distributional} follows from Theorem \ref{teoexistence}.

	Let us stress that $u$ is also a solution to \eqref{op2_pbgeneral} in the sense of Definition \ref{op2_weakdef}. Indeed, reasoning as in Theorem \ref{teoexistence}, one can show that $G_k(u)\in W^{1,q}_0(\Omega)$ for  any $q<\frac{N}{N-1}$ and for any $k>0$. One can be convinced by testing with $T_{r}(G_{k}(u_{n}))$, $r>0$,  the weak formulation of the  approximation scheme leading to 
	$$\displaystyle \int_\Omega |\nabla T_r(G_k(u_n))|^2\le Cr,\ \ \ \text{for any}\ r>0\,,$$
	which implies that  $G_k(u)\in W^{1,q}_0(\Omega)$ for $q<\frac{N}{N-1}$ for any $k>0$ and that also gives that $u\in L^1(\Omega)$.
	Therefore, in order to show that $u$ is a solution in the sense Definition \ref{op2_weakdef}, we are left to show that \eqref{op2_weakdef2} holds.
	Observe first that, if $\gamma\leq 1$, we have that $u\in W^{1,1}_{0}(\Omega)$ and the proof is complete. 
	Otherwise, if $\gamma>1$ and as $u = T_1(u) +G_1(u)$, one can apply the H\"older inequality yielding to 
	$$
	\begin{array}{l}
	\displaystyle \frac{1}{\varepsilon}\int_{\Omega_{\varepsilon}} u \leq \varepsilon^{\frac{1-\gamma}{\gamma+1}}\left( \frac{1}{\varepsilon}\int_{\Omega_{\varepsilon}} T_{1}^{\frac{\gamma+1}{2}}(u)\right)^{\frac{2}{\gamma+1}}|\Omega_{\varepsilon}|^{\frac{\gamma-1}{\gamma+1}} + \frac{1}{\varepsilon} \int_{\Omega_{\varepsilon}} G_{1}(u) \\ \\
	\displaystyle \leq C\left(\frac{1}{\varepsilon}\int_{\Omega_{\varepsilon}} T_{1}^{\frac{\gamma+1}{2}}(u)\right)^{\frac{2}{\gamma+1}}+  \frac{1}{\varepsilon}\int_{\Omega_{\varepsilon}} G_{1}(u)\stackrel{{\varepsilon}\to0}{\longrightarrow} 0
	\end{array}
	$$
	which gives \eqref{op2_weakdef2} since both $T_{1}^{\frac{\gamma+1}{2}}(u),G_1(u)\in W^{1,1}_0(\Omega)$. 
\triang \end{remark}
	  The main theorem of this section is the following:
\begin{theorem}\label{uniquenessmain}
	Let $A$ satisfy \eqref{lipA}, let $h$ be a non-increasing function satisfying  \eqref{h2ren},  and let $0\le \mu\in \mathcal{M}(\Omega)$. Then there is at most one distributional solution to \eqref{op2_pbgeneral} in the sense of Definition \ref{defunique}.
\end{theorem}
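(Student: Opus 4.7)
The plan is to prove $u_1\equiv u_2$ via a Stampacchia-type duality argument against the adjoint Dirichlet problem. Let $u_1,u_2$ be two distributional solutions in the sense of Definition \ref{defunique} and set $w:=u_1-u_2\in L^1(\Omega)\cap W^{1,1}_{\mathrm{loc}}(\Omega)$. Subtracting the two weak formulations, the concentrated contributions $h(\infty)\mu_c$ cancel exactly, so that
$$\int_\Omega A(x)\nabla w\cdot\nabla\varphi \;=\; \int_\Omega F\,\varphi\,d\mu_d\qquad\forall\,\varphi\in C^1_c(\Omega),$$
with $F:=h(u_1)-h(u_2)\in L^1_{\mathrm{loc}}(\Omega,\mu_d)$. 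The monotonicity of $h$ gives $Fw\le 0$ pointwise, and applying \eqref{op2_weakdef2} to both $u_i$ yields $\varepsilon^{-1}\int_{\Omega_\varepsilon}|w|\to 0$ as $\varepsilon\to 0^+$.

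For any $g\in L^\infty(\Omega)$ I would consider the weak solution $\psi_g\in H^1_0(\Omega)$ of the adjoint Dirichlet problem $-\operatorname{div}(A^T\nabla\psi_g)=g$ in $\Omega$, $\psi_g=0$ on $\partial\Omega$. Under \eqref{lipA} and the smoothness of $\partial\Omega$, classical Calderon-Zygmund theory gives $\psi_g\in W^{2,q}(\Omega)\cap C^{1,\alpha}(\overline{\Omega})$ for every $q<\infty$; in particular $\psi_g\in W^{1,\infty}(\Omega)$ and a Hopf-type bound $|\psi_g(x)|\le C_g\,d(x)$ holds, while $g\ge 0$ implies $\psi_g\ge 0$ by the maximum principle.

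The heart of the argument is to prove the Stampacchia-type identity
\begin{equation}\label{pl:dual}
\int_\Omega w\,g \;=\; \int_\Omega F\,\psi_g\,d\mu_d\qquad\forall\,g\in L^\infty(\Omega).
\end{equation}
To do so I would pick a cutoff $\eta_\varepsilon\in C^\infty(\overline{\Omega})$ equal to zero on $\Omega_\varepsilon$, equal to one on $\Omega\setminus\Omega_{2\varepsilon}$, with $|\nabla\eta_\varepsilon|\le C/\varepsilon$, and test the equation for $w$ with the admissible function $\psi_g\eta_\varepsilon\in C^1_c(\Omega)$. Integration by parts, legitimate since $\psi_g\in W^{2,q}$, transfers the divergence onto $\psi_g$ and invokes $-\operatorname{div}(A^T\nabla\psi_g)=g$; this produces two boundary-layer remainders of the form $\varepsilon^{-1}\int_{\Omega_{2\varepsilon}}|w||\nabla\psi_g|$ and $\varepsilon^{-1}\int_{\Omega_{2\varepsilon}}|\nabla w||\psi_g|$. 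The first vanishes as $\varepsilon\to 0$ thanks to $\psi_g\in W^{1,\infty}(\Omega)$ together with the boundary decay of $w$; the second vanishes because $|\psi_g|\le Cd\le 2C\varepsilon$ on $\Omega_{2\varepsilon}$ and $\nabla w\in L^1_{\mathrm{loc}}(\Omega)$ (supplemented, if needed, by a weighted bound $\int_\Omega|\nabla u_i|\,d\,dx<\infty$ obtained by testing the original equations with the distance function).

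With \eqref{pl:dual} at my disposal I would specialize to a smooth approximation of $g=\chi_{\{w>0\}}$, for which $\psi_g\ge 0$. Since $F\le 0$ on $\{w>0\}$ and $\psi_g\ge 0$ on the whole $\Omega$, one obtains in the limit the Kato-type inequality $-\operatorname{div}(A\nabla w^+)\le 0$ in $\mathcal{D}'(\Omega)$; combined with $\varepsilon^{-1}\int_{\Omega_\varepsilon}w^+\to 0$, the weak maximum principle forces $w^+\equiv 0$, and by symmetry $w^-\equiv 0$, giving $u_1\equiv u_2$. The main obstacle will be the rigorous justification of the duality identity \eqref{pl:dual}: controlling the two boundary-layer remainders requires sharp exploitation of the $C^{1,1}$-regularity of $\partial\Omega$, the Lipschitz regularity of $A$, and the weak boundary condition \eqref{op2_weakdef2}, none of which can be dispensed with.
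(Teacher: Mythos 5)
Your overall strategy---testing against the adjoint problem, using a cutoff to exploit the weak boundary condition \eqref{op2_weakdef2}, and invoking a Kato-type inequality with positivity of the adjoint solution---is the same skeleton as the paper's proof. However, there are two genuine gaps. The first and most serious is the boundary-layer remainder $\varepsilon^{-1}\int_{\Omega_{2\varepsilon}}|\nabla w|\,|\psi_g|$. Your bound $|\psi_g|\le Cd\le 2C\varepsilon$ on $\Omega_{2\varepsilon}$ reduces this to $C\int_{\Omega_{2\varepsilon}}|\nabla w|$, which tends to zero only if $\nabla w\in L^1(\Omega)$ \emph{globally}; but Definition \ref{defunique} only gives $u_i\in L^1(\Omega)\cap W^{1,1}_{\rm loc}(\Omega)$, and indeed the existence theory (e.g.\ Theorem \ref{teoexistence}) produces solutions that need not lie in $W^{1,1}(\Omega)$. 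Your proposed fallback, a weighted bound $\int_\Omega|\nabla u_i|\,d<\infty$, would at best control $\varepsilon^{-1}\int_{\Omega_{2\varepsilon}}|\nabla w|\,d$, and this quantity still need not vanish (try $|\nabla w|\sim d^{-3/2}$ near $\partial\Omega$, which has $|\nabla w|\,d\in L^1$). The correct device---and the one the paper uses---is to integrate by parts \emph{completely}, transferring the derivative entirely off $w$: one writes $\int_\Omega A\nabla w\cdot\nabla(\eta_k\phi)=-\int_\Omega w\,\operatorname{div}\bigl(A^*\nabla(\eta_k\phi)\bigr)$ and expands the divergence. Every remainder then contains $w$, not $\nabla w$, and all of them are controlled using \eqref{op2_weakdef2} together with the quantitative cutoff bounds $\|\nabla\eta_k\|_{L^\infty}\le Ck$, $\|\operatorname{div}(A^*\nabla\eta_k)\|_{L^\infty}\le Ck^2$, and $|\phi|\le C/k$ on $\operatorname{supp}\nabla\eta_k$.

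The second gap is in the concluding step. Specializing the identity $\int_\Omega wg=\int_\Omega F\psi_g\,d\mu_d$ to $g=\chi_{\{w>0\}}$ does \emph{not} give $\int_\Omega w^+\le0$: the adjoint solution $\psi_g$ is strictly positive on all of $\Omega$, including on $\{w<0\}$ where $F=h(u_1)-h(u_2)\ge0$, so the right-hand side has no definite sign. The duality identity alone cannot produce a Kato inequality for $w^+$; that requires an approximation argument on the very weak equation, which is exactly the content of the paper's Lemma \ref{Katoinequalitylocal} (used together with the observation, via Lemma \ref{M1respectdelta}, that $h(u_i)\mu_d\in\mathcal{M}(\Omega,d)$ and is diffuse, so that the Kato lemma applies). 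That latter observation is also what guarantees your pairing $\int_\Omega F\psi_g\,d\mu_d$ is absolutely convergent, which you cannot deduce from $F\in L^1_{\rm loc}(\Omega,\mu_d)$ alone since $|\psi_g|\lesssim d$.
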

\begin{remark}
	  Let us stress that, without loss of generality, we can assume $\mu_d\not\equiv 0$. Otherwise, one can deduce uniqueness as for the linear case (see Section \ref{veron} below). 
\triang \end{remark}

\subsection{The linear case}
\label{veron}

  In order to show Theorem \ref{uniquenessmain}, we need to present some insights for the linear case,  i.e. $h\equiv 1$. 

Let us consider the following problem   
\begin{equation}
	\begin{cases}
		\displaystyle -\operatorname{div} (A(x)\nabla u)= \mu &  \text{in}\ \Omega, \\
		u=0 & \text{on}\ \partial \Omega,
	\end{cases}\label{pbVeron}
\end{equation}
where $\mu$ belongs to $\mathcal{M}(\Omega,d)$. The matrix $A$ satisfies assumption \eqref{lipA}  and, from here on, $A^*$ denotes its adjoint matrix. 

  Now let us introduce the concept of \underline{very weak solution} to \eqref{pbVeron}: 
\begin{defin}\label{verondefin}
	A function $u\in L^1(\Omega)$ is a {\it very weak solution} to \eqref{pbVeron} if	
	\begin{equation*} \displaystyle -\int_{\Omega}u \operatorname{div}(A^*(x)\nabla \varphi) = \int_{\Omega} \varphi \mu,\label{veryweakdef1}\end{equation*}
	
	\noindent for every  $\varphi \in C^1_0(\overline{\Omega})$ such that  $\operatorname{div}(A^*(x)\nabla \varphi)\in L^\infty(\Omega)$.
	\label{veryweakdef}
\end{defin}

  Next existence and uniqueness result can be retrieved in  \cite[Theorem $2.9$]{v}. 

\begin{theorem} 
	Let $A$ satisfy \eqref{lipA} and let $\mu\in\mathcal{M}(\Omega,d)$. Then there exists a unique very weak solution to problem \eqref{pbVeron}. 
\end{theorem}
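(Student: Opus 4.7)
The proof follows the classical Stampacchia duality approach. I present separately the uniqueness and existence parts.

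\textbf{Uniqueness.} Suppose $u_1,u_2\in L^1(\Omega)$ are two very weak solutions and fix an arbitrary $\psi\in C^\infty_c(\Omega)$. The plan is to solve the adjoint Dirichlet problem
\begin{equation*}
-\operatorname{div}(A^*(x)\nabla \varphi)=\psi \ \ \text{in}\ \Omega,\qquad \varphi=0\ \text{on}\ \partial\Omega.
\end{equation*}
Since $\psi\in L^\infty(\Omega)$, $A^*$ has Lipschitz coefficients, and $\partial\Omega$ is smooth, classical Calderón–Zygmund/Schauder regularity gives $\varphi\in W^{2,p}(\Omega)\cap W^{1,p}_0(\Omega)$ for every $p<\infty$, hence $\varphi\in C^{1,\alpha}(\overline{\Omega})$ for any $\alpha<1$ by Morrey, and in particular $\varphi\in C^1_0(\overline{\Omega})$ with $\operatorname{div}(A^*\nabla\varphi)\in L^\infty(\Omega)$. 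Thus $\varphi$ is an admissible test function in Definition \ref{verondefin}, and plugging it into the formulation for $u_1$ and $u_2$ and subtracting yields $\int_\Omega (u_1-u_2)\psi=0$ for every $\psi\in C^\infty_c(\Omega)$, hence $u_1=u_2$ a.e. in $\Omega$.

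\textbf{Existence.} I would approximate $\mu$ by a sequence $\mu_n\in L^\infty(\Omega)$ with $\int_\Omega d\,|d\mu_n-d\mu|\to 0$ (for instance by convolution composed with a cut-off away from the boundary), and take $u_n\in H^1_0(\Omega)$ the classical weak solution of the corresponding problem, whose existence follows from Lax–Milgram thanks to \eqref{lipA}. The crucial ingredient is the following Stampacchia-type $L^1$ estimate: for any $\psi\in L^\infty(\Omega)$ with $\|\psi\|_{L^\infty}\le 1$, solve the adjoint problem as above to obtain $\varphi\in C^1_0(\overline{\Omega})$; using $\varphi$ by duality,
\begin{equation*}
\int_\Omega u_n\,\psi=\int_\Omega A(x)\nabla u_n\cdot\nabla\varphi=\int_\Omega\varphi\,\mu_n.
\end{equation*}
Since $\varphi=0$ on $\partial\Omega$ and $\varphi\in C^1(\overline{\Omega})$ with $\|\nabla\varphi\|_{L^\infty}\le C$ depending only on $\Omega$ and $A$, the mean value inequality yields the \emph{decisive} boundary decay
\begin{equation*}
|\varphi(x)|\le C\, d(x)\qquad\forall x\in\Omega.
\end{equation*}
Therefore $|\int_\Omega u_n\psi|\le C\int_\Omega d\,|d\mu_n|$, and taking the supremum over $\psi$ gives $\|u_n\|_{L^1(\Omega)}\le C\,\|\mu_n\|_{\mathcal{M}(\Omega,d)}$. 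Applied to the linear problem solved by $u_n-u_m$ with datum $\mu_n-\mu_m$, the same estimate gives $\|u_n-u_m\|_{L^1(\Omega)}\le C\int_\Omega d\,|d\mu_n-d\mu_m|\to 0$, so $u_n$ is Cauchy in $L^1(\Omega)$ and converges to some $u\in L^1(\Omega)$.

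\textbf{Passage to the limit.} For any admissible $\varphi$ as in Definition \ref{verondefin}, integration by parts gives
\begin{equation*}
-\int_\Omega u_n\,\operatorname{div}(A^*(x)\nabla\varphi)=\int_\Omega\varphi\,\mu_n.
\end{equation*}
The left-hand side passes to the limit since $u_n\to u$ in $L^1(\Omega)$ and $\operatorname{div}(A^*\nabla\varphi)\in L^\infty(\Omega)$. For the right-hand side, the admissibility of $\varphi$ together with $\varphi|_{\partial\Omega}=0$ again yields $|\varphi(x)|\le C\,d(x)$, so
\begin{equation*}
\Bigl|\int_\Omega \varphi\,d\mu_n-\int_\Omega\varphi\,d\mu\Bigr|\le \Bigl\|\tfrac{\varphi}{d}\Bigr\|_{L^\infty}\int_\Omega d\,|d\mu_n-d\mu|\longrightarrow 0,
\end{equation*}
which proves that $u$ satisfies the very weak formulation. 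The main obstacle in this scheme is precisely the boundary decay $|\varphi|\le C\,d$ under the sole assumption $\psi\in L^\infty$; this relies essentially on the Lipschitz regularity of $A$ and the smoothness of $\partial\Omega$ through classical $C^{1,\alpha}$ regularity up to the boundary, and it is exactly what forces the weight $d$ in the functional space $\mathcal{M}(\Omega,d)$ of admissible data.
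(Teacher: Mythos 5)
Your uniqueness argument is sound and is the standard Stampacchia duality: for $\psi\in C^\infty_c(\Omega)$ solve the adjoint Dirichlet problem, observe that Calder\'on--Zygmund and Morrey regularity (valid under \eqref{lipA} and the smoothness of $\partial\Omega$) make $\varphi$ an admissible test function, and subtract. The a priori estimate $\|u_n\|_{L^1(\Omega)}\le C\,\|\mu_n\|_{\mathcal{M}(\Omega,d)}$ via the barrier $|\varphi|\le C\,d$ is also exactly the right ingredient. Note that the paper itself does not give a proof --- it simply quotes \cite[Theorem 2.9]{v} --- so your attempt should be measured against Veron's argument, which likewise uses duality and this $L^1$ estimate but passes to the limit by a different mechanism.

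That is precisely where your existence argument has a genuine gap. You propose to choose $\mu_n\in L^\infty(\Omega)$ with $\int_\Omega d\,|\mu_n-\mu|\to 0$ and then deduce $\|u_n-u_m\|_{L^1(\Omega)}\to 0$ by linearity. But a general $\mu\in\mathcal{M}(\Omega,d)$ has a singular part, and absolutely continuous measures cannot approximate a singular measure in total variation --- weighted or not. For example, if $\mu=\delta_{x_0}$ with $x_0\in\Omega$, then for any $\mu_n\in L^1(\Omega)$ the measures $\mu_n$ and $\delta_{x_0}$ are mutually singular, so $\int_\Omega d\,|\mu_n-\delta_{x_0}|\ge d(x_0)>0$, and the Cauchy argument for $u_n$ never starts. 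The convolution-plus-cutoff construction you invoke approximates in the narrow (or weak-$*$) topology, not in the total variation norm you need. To close the gap one has two standard routes: either use only the weak approximation $\int_\Omega\phi\,\mu_n\to\int_\Omega\phi\,\mu$ for $\phi/d\in C(\overline\Omega)$ (the one the paper records in its proof of Lemma \ref{TroncataH1loc}), combine the uniform $L^1$ bound with the local $W^{1,q}$ estimate of Lemma \ref{veroncomplocale} to get a.e.\ convergence along a subsequence, and upgrade to $L^1$ convergence by a Vitali/equi-integrability argument; or bypass approximation entirely and define $u(y)=\int_\Omega G(y,x)\,d\mu(x)$ as a Green potential, checking $u\in L^1(\Omega)$ by Fubini and the barrier estimate $\int_\Omega G(y,x)\,dy\le C\,d(x)$ that follows from the Hopf Lemma for the adjoint solution of $-\operatorname{div}(A^*\nabla w)=1$.
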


Moreover, \cite[Corollary $2.8$]{v} gives the following local estimate:

\begin{lemma}\label{veroncomplocale}
	Let $A$ satisfy \eqref{lipA} and let $u, f\in L^1_{\rm{loc}}(\Omega)$ such that 
	\begin{equation*}
		\displaystyle  -\int_{\Omega}u \operatorname{div}(A^*(x)\nabla \varphi) = \int_{\Omega} f\varphi,
	\end{equation*}
	\noindent for every  $\varphi \in C^1_c(\Omega)$ such that $\operatorname{div}(A^*(x)\nabla \varphi)\in L^\infty(\Omega)$. Then  it holds 
	\begin{equation*}
		||u||_{W^{1,q}(G)} \le C (||f||_{L^1(G')} + ||u||_{L^1(G')}),
	\end{equation*}	
	for every $q< \frac{N}{N-1}$ and for every open subsets $G \subset \subset G' \subset \subset \Omega$.
\end{lemma}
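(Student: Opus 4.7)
The strategy is a standard duality argument (Stampacchia type), localized via a cutoff so that only the $L^1$ norms of $f$ and $u$ on $G'$ appear on the right hand side. Fix an intermediate smooth open set $\Omega_0$ with $G \subset\subset \Omega_0 \subset\subset G'$ and a cutoff $\zeta \in C^\infty_c(G')$ with $\zeta\equiv 1$ on $\overline{\Omega_0}$, $0\le \zeta\le 1$. The $W^{1,q}$ estimate in $G$ is equivalent, by duality, to bounding $\bigl|\int_\Omega u\, g\bigr|$ and $\bigl|\int_\Omega u\,\operatorname{div}\vec{g}\bigr|$ in terms of $\|g\|_{L^{q'}(G)}$ and $\|\vec g\|_{L^{q'}(G;\mathbb R^N)}$ for test functions $g\in C^\infty_c(G)$, $\vec g\in C^\infty_c(G;\mathbb R^N)$, with $q'>N$.

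\textbf{Construction of the test function.} For a given $g\in C^\infty_c(G)$, let $\varphi\in H^1_0(G')$ solve the auxiliary adjoint problem $-\operatorname{div}(A^*(x)\nabla\varphi)=g$ in $G'$, $\varphi=0$ on $\partial G'$. Since $A^*\in C^{0,1}(\overline{G'})$ and $g\in L^\infty$, classical Calderón–Zygmund regularity and Sobolev embedding (for $q'>N$) yield $\varphi\in W^{2,q'}(G')\hookrightarrow C^{1,\alpha}(\overline{G'})$ with
\[
\|\varphi\|_{W^{1,\infty}(G')}\le C\|g\|_{L^{q'}(G)}.
\]
Since $\varphi$ is not compactly supported in $\Omega$, replace it by $\psi:=\varphi\zeta\in C^1_c(G')\subset C^1_c(\Omega)$ and compute
\[
\operatorname{div}(A^*\nabla\psi)=\zeta\operatorname{div}(A^*\nabla\varphi)+2 A^*\nabla\varphi\cdot\nabla\zeta+\varphi\operatorname{div}(A^*\nabla\zeta).
\]
Each term is in $L^\infty(\Omega)$ (the first is $-g\zeta$; the other two involve bounded factors times $\nabla\zeta,\operatorname{div}(A^*\nabla\zeta)$ which are smooth with compact support), so $\psi$ is an admissible test function.

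\textbf{Extracting the estimate.} Plugging $\psi$ into the local very weak formulation and using $\zeta\equiv 1$ on a neighbourhood of $G\supset \supp g$ gives
\[
\int_G u\, g = \int_\Omega f\,\psi + 2\int_\Omega u\, A^*\nabla\varphi\cdot\nabla\zeta + \int_\Omega u\,\varphi\operatorname{div}(A^*\nabla\zeta),
\]
because $\operatorname{div}(A^*\nabla\psi)=-g$ on $\{\zeta=1\}$ and the remaining contributions are supported in $\supp(\nabla\zeta)\subset G'\setminus\overline{\Omega_0}\subset G'$. Bounding each piece using $\|\psi\|_{L^\infty}\le \|\varphi\|_{L^\infty}$, $\|\nabla\zeta\|_\infty$, $\|\operatorname{div}(A^*\nabla\zeta)\|_\infty\le C$,
\[
\left|\int_G u\,g\right|\le C\|\varphi\|_{W^{1,\infty}(G')}\bigl(\|f\|_{L^1(G')}+\|u\|_{L^1(G')}\bigr)\le C\|g\|_{L^{q'}(G)}\bigl(\|f\|_{L^1(G')}+\|u\|_{L^1(G')}\bigr),
\]
which yields the $L^q(G)$ bound on $u$ by duality. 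An entirely analogous argument with $\vec g\in C^\infty_c(G;\mathbb R^N)$, where one instead solves $-\operatorname{div}(A^*\nabla\varphi)=-\operatorname{div}\vec g$ in $G'$ (whose right hand side is smooth with compact support and, crucially, is in $L^\infty$, while the Calderón–Zygmund estimate gives $\|\nabla\varphi\|_{L^\infty(G')}\le C\|\vec g\|_{L^{q'}(G)}$), produces the gradient estimate $\|\nabla u\|_{L^q(G)}\le C(\|f\|_{L^1(G')}+\|u\|_{L^1(G')})$ after a routine regularization of $u$ (e.g.\ convolution with a mollifier at scale smaller than $\operatorname{dist}(G,\partial\Omega_0)$) which is justified once the $L^q_{\mathrm{loc}}$ regularity of $u$ is available.

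\textbf{Main obstacle.} The delicate point is the \emph{simultaneous} requirement that the test function $\psi$ lie in $C^1_c(\Omega)$ and satisfy $\operatorname{div}(A^*\nabla\psi)\in L^\infty$: this forces working with $q'>N$ (so that the Sobolev embedding delivers $\nabla\varphi\in L^\infty$) and relies crucially on the Lipschitz regularity of $A$ to invoke Calderón–Zygmund up to the $W^{2,q'}$ level. All remaining bookkeeping (sharp support considerations so that the commutator terms only see $u$ on $G'$, not on all of $\Omega$) is absorbed into the cutoff $\zeta$.
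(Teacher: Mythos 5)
Your overall strategy --- a Stampacchia-type duality argument with an interior cutoff, solving the adjoint problem on an intermediate smooth domain and feeding the truncated solution back into the very weak formulation --- is the right one, and it is consistent with the reference the paper cites for this statement (\cite[Corollary~2.8]{v}; the paper itself gives no proof). The $L^q$ bound on $u$ is sound modulo cosmetic adjustments: one should fix a smooth auxiliary domain strictly between $G$ and $G'$ to invoke boundary $W^{2,q'}$ regularity, and the factor $2$ in the commutator identity $\operatorname{div}(A^*\nabla(\varphi\zeta))=\zeta\operatorname{div}(A^*\nabla\varphi)+A^*\nabla\varphi\cdot\nabla\zeta+A^*\nabla\zeta\cdot\nabla\varphi+\varphi\operatorname{div}(A^*\nabla\zeta)$ only collapses to $2A^*\nabla\varphi\cdot\nabla\zeta$ if $A$ is symmetric, which \eqref{lipA} does not assume. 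The genuine gap is in the gradient step.

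When you solve $-\operatorname{div}(A^*\nabla\varphi)=-\operatorname{div}\vec g$ and claim ``the Calderón--Zygmund estimate gives $\|\nabla\varphi\|_{L^\infty(G')}\le C\|\vec g\|_{L^{q'}(G)}$'', this is false. For divergence-form data the Calderón--Zygmund theory gives $\|\nabla\varphi\|_{L^p}\le C\|\vec g\|_{L^p}$ for $1<p<\infty$, but does \emph{not} give an $L^\infty$ bound on $\nabla\varphi$ in terms of $\|\vec g\|_{L^{q'}}$, not even in terms of $\|\vec g\|_{L^\infty}$ (already for the Laplacian, a smoothed indicator $\vec g$ produces a logarithmic blow-up of $\nabla\varphi$, so no such estimate with constant uniform in $\vec g$ can hold). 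Since your duality argument needs a uniform $L^\infty$ bound on $\nabla\varphi$ to control the commutator term $\int u\,A^*\nabla\varphi\cdot\nabla\zeta$, as written the gradient estimate does not close. The fix uses exactly the geometry your cutoff was built for: $\operatorname{supp}(\nabla\zeta)\subset G'\setminus\overline{\Omega_0}$ is at positive distance from $\operatorname{supp}\vec g\subset\Omega_0$ and from $\partial G'$; on that set $\varphi$ solves the homogeneous equation, so interior regularity (De Giorgi--Nash plus Schauder, valid since $A\in C^{0,1}$) gives $\|\nabla\varphi\|_{L^\infty(\operatorname{supp}\nabla\zeta)}\le C\|\varphi\|_{L^1(G')}$, and $\|\varphi\|_{L^1(G')}\le C\|\nabla\varphi\|_{L^{q'}(G')}\le C\|\vec g\|_{L^{q'}(G)}$ by H\"older, Poincar\'e and the correct level-one Calderón--Zygmund estimate. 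With that substitution the duality bound goes through. A minor point at the very end: the ``routine regularization'' of $u$ you invoke is unnecessary; once $\bigl|\int_G u\operatorname{div}\vec g\bigr|\le C\|\vec g\|_{L^{q'}(G)}$ holds for all $\vec g\in C^\infty_c(G;\mathbb R^N)$, density and Riesz representation already identify the distributional gradient of $u$ as an $L^q(G)^N$ vector field.
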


To show uniqueness of solutions to \eqref{op2_pbgeneral}, we also need the following regularity result:
\begin{lemma}\label{TroncataH1loc}
Let $u$ be a very weak solution to \eqref{pbVeron} then $T_k(u)\in H^1_{\rm{loc}}(\Omega)$ for any $k>0$. Moreover, for any $\omega\subset\subset \Omega$,  one has 
	\begin{gather}\label{dopo}
		\int_{\omega}|\nabla T_k(u)|^2 \le Ck,\ \ \text{ for any $k>0$}. 
	\end{gather}
\end{lemma}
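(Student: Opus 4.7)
The plan is to establish the inequality first on a smooth regularization of $\mu$ and then pass to the limit by weak compactness. First, I would approximate $\mu$ by a sequence $\mu_n\in L^\infty(\Omega)$ chosen in such a way that $\mu_n\to\mu$ strongly enough in $\mathcal{M}(\Omega,d)$ for the corresponding classical weak solutions $u_n\in H^1_0(\Omega)\cap L^\infty(\Omega)$ of $-\operatorname{div}(A(x)\nabla u_n)=\mu_n$ to converge to $u$ at least in $L^1_{\rm loc}(\Omega)$; this is ensured by the existence and uniqueness of very weak solutions recalled just before the statement and by the local estimate of Lemma \ref{veroncomplocale} applied to $u_n-u$ with datum $\mu_n-\mu$.

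Fix then $\omega\subset\subset\omega'\subset\subset\omega''\subset\subset\Omega$ and pick $\varphi\in C^1_c(\Omega)$ with $0\le\varphi\le1$, $\varphi\equiv1$ on $\omega$ and $\supp\varphi\subset\omega'$. Since each $u_n$ lies in $H^1_0(\Omega)\cap L^\infty(\Omega)$, the function $T_k(u_n)\varphi^2$ is an admissible test function in the weak formulation of the approximate equation. Using the pointwise identity $\nabla u_n\cdot\nabla T_k(u_n)=|\nabla T_k(u_n)|^2$ together with the ellipticity assumption \eqref{lipA}, one obtains
\begin{equation*}
\alpha\int_\Omega|\nabla T_k(u_n)|^2\varphi^2 \le 2\beta k\,\|\nabla\varphi\|_{L^\infty(\Omega)}\int_{\omega'}|\nabla u_n|+k\int_{\omega'}|\mu_n|.
\end{equation*}
Lemma \ref{veroncomplocale} gives a uniform bound for $\|u_n\|_{W^{1,q}(\omega')}$ (for any $q<\frac{N}{N-1}$) in terms of $\|u_n\|_{L^1(\omega'')}$ and $\|\mu_n\|_{L^1(\omega'')}$, so that $\|\nabla u_n\|_{L^1(\omega')}\le C$. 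Moreover, since $d(x)\ge c_{\omega'}>0$ on $\omega'$, the assumption $\int_\Omega d\,|\mu|<\infty$ forces $|\mu|(\omega')<\infty$, and a careful construction of $\mu_n$ keeps the mass uniformly finite on $\omega'$. Altogether this yields $\int_\omega|\nabla T_k(u_n)|^2\le Ck$ with $C$ independent of $n$.

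To conclude, since $T_k$ is $1$-Lipschitz and $u_n\to u$ in $L^1(\omega)$, one has $T_k(u_n)\to T_k(u)$ in $L^1(\omega)$; coupled with the above uniform $H^1(\omega)$-bound, weak compactness and lower semicontinuity of the Dirichlet energy give $T_k(u)\in H^1(\omega)$ together with $\int_\omega|\nabla T_k(u)|^2\le Ck$. The arbitrariness of $\omega\subset\subset\Omega$ provides $T_k(u)\in H^1_{\rm loc}(\Omega)$ and the desired estimate.

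The main obstacle will be the first step: producing a regularization $\mu_n$ for which the associated solutions $u_n$ converge to $u$ in $L^1_{\rm loc}(\Omega)$. Since a generic $\mu\in\mathcal{M}(\Omega,d)$ may concentrate near $\partial\Omega$, a naive mollification is not well adapted; one typically localizes $\mu$ on an exhausting sequence of compact subsets before mollifying, and then exploits the linearity together with the uniqueness result stated just before Lemma \ref{TroncataH1loc} to transfer convergence of the data to convergence of the corresponding very weak solutions.
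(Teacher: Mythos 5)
Your argument is correct, but it follows a genuinely different route from the paper's. You test the approximate equation with $T_k(u_n)\varphi^2$ and bound the cross term $2\int A\nabla u_n\cdot\nabla\varphi\, T_k(u_n)\varphi$ directly by $2\beta k\|\nabla\varphi\|_{L^\infty}\int_{\supp\varphi}|\nabla u_n|$; this forces you to import the local $W^{1,1}$ estimate of Lemma~\ref{veroncomplocale} (a nontrivial input due to V\'eron) to make that gradient integral uniform in $n$. The paper instead tests with $T_k(u_n)\varphi$ and handles the same cross term by a second integration by parts, using the pointwise identity $u_n\nabla T_k(u_n)=T_k(u_n)\nabla T_k(u_n)$ to rewrite
\[
\int_\Omega T_k(u_n)A\nabla u_n\cdot\nabla\varphi
= -\int_\Omega T_k(u_n)\Bigl(u_n-\tfrac12 T_k(u_n)\Bigr)\operatorname{div}(A^*(x)\nabla\varphi)
\ge -k\int_\Omega|u_n|\,|\operatorname{div}(A^*(x)\nabla\varphi)|,
\]
so that the final estimate involves only $\|u_n\|_{L^1}$ (already uniformly bounded) and $\|\operatorname{div}(A^*\nabla\varphi)\|_{L^\infty}$ — no gradient estimate on $u_n$ is needed at all. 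The price of the paper's route is that one must take $\varphi$ smooth enough (say $C^2$) for $\operatorname{div}(A^*\nabla\varphi)$ to be bounded; the price of your route is the extra reliance on Lemma~\ref{veroncomplocale}. Since that lemma is already recorded in the paper for use in Lemma~\ref{Katoinequalitylocal}, your appeal to it is legitimate, and the linear-in-$k$ bound you obtain matches \eqref{dopo}. Your closing paragraph correctly identifies the remaining point — choosing the regularization $\mu_n$ so that $u_n\to u$ in $L^1_{\rm loc}$ — and this is precisely the approximation scheme from \cite{v} that the paper invokes, so there is no gap.
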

\begin{proof} 
	Let us consider the following problem
	\begin{equation}
		\begin{cases}
			\displaystyle -\operatorname{div} (A(x)\nabla u_n)= f_n &  \text{in}\ \Omega, \\
			u_n=0 & \text{on}\ \partial \Omega,
		\end{cases}\label{pbnVeron}
	\end{equation}
	which is the approximation scheme used in Theorem $2.9$ of \cite{v}. Here $f_n$ are smooth functions which are bounded in $L^1(\Omega,d)$ and which converge to $\mu$ in the following sense:
	$$\displaystyle \lim_{n\to \infty}\int_{\Omega} f_n \phi = \int_{\Omega} \phi \mu, \ \ \ \forall \phi: \frac{\phi}{d} \in C(\overline{\Omega}).$$ 
	The author also shows that $u_n$ is bounded in $L^1(\Omega)$ and it converges almost everywhere, as $n\to\infty$, to a solution of problem \eqref{pbVeron}.
	
	\medskip
	
	We consider $\varphi\in C^1_c(\Omega)$ such that $0\le\varphi\le 1$ and $\varphi = 1$ on  a set $\omega \subset\subset\Omega$; testing weak formulation of \eqref{pbnVeron} with $T_k(u_n)\varphi$ ($k>0$) and applying \eqref{lipA}, one has 
	\begin{equation}
	\begin{aligned}\label{AppendiceTroncateinH1loc1}
		\alpha\int_{\Omega} |\nabla T_k(u_n)|^2\varphi \le \int_{\Omega}A(x)\nabla u_n\cdot  \nabla T_k(u_n)   \varphi
		=  \int_{\Omega} T_k(u_n) f_n  \varphi - \int_{\Omega} T_k(u_n) A(x)\nabla u_n\cdot \nabla \varphi. 
	\end{aligned}
	\end{equation}
	For the second term on the right-hand of \eqref{AppendiceTroncateinH1loc1} we have
	\begin{align*}
		\int_{\Omega} T_k(u_n) A(x)\nabla u_n\cdot \nabla \varphi &= - \int_{\Omega} T_k(u_n) u_n\operatorname{div} (A^*(x)\nabla \varphi) - \int_{\Omega} T_k(u_n) A^*(x)\nabla \varphi\cdot \nabla T_k(u_n) 
		\\ \nonumber
		&=- \int_{\Omega} T_k(u_n) u_n \operatorname{div} (A^*(x)\nabla \varphi) - \frac{1}{2}\int_{\Omega} A^*(x)\nabla \varphi\cdot \nabla [T_k(u_n)]^2 
		\\ \nonumber
		&= - \int_{\Omega} T_k(u_n) u_n\operatorname{div} (A^*(x)\nabla \varphi) + \frac{1}{2}\int_{\Omega} \operatorname{div}(A^*(x)\nabla \varphi) [T_k(u_n)]^2   
		\\ \nonumber
		&= - \int_{\Omega} T_k(u_n) (u_n- \frac{1}{2}T_k(u_n))\operatorname{div} (A^*(x)\nabla \varphi) 
		\\ \nonumber
		&\ge - k \int_{\Omega} |u_n| |\operatorname{div} (A^*(x)\nabla \varphi)|.
	\end{align*}
	Then the previous estimate and \eqref{AppendiceTroncateinH1loc1} imply that  \eqref{dopo} holds
thanks also to the weak lower semicontinuity of the norm and to the fact that $f_n$ is  locally bounded in $L^1 (\Omega)$.
\end{proof}

We also need a  Kato local  type of inequality in the same spirit of \cite{bp}.
\begin{lemma}\label{Katoinequalitylocal}
	 Let $\mu \in \mathcal{M}(\Omega,d)$ be diffuse with respect to the $2$-capacity and let $u$ be the very weak solution to \eqref{pbVeron}.  Then
	\begin{equation*}
		\displaystyle - \int_{\Omega} u^+ \operatorname{div}(A^*(x)\nabla \varphi) \le \int_{\{u\ge 0\}} \varphi \mu,
	\end{equation*}	
	for every $\varphi\in C^1_c(\Omega)$ such that $\operatorname{div}(A^*(x)\nabla \varphi) \in L^\infty(\Omega)$.
\end{lemma}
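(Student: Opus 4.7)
The proof is a Kato-type inequality obtained by combining the smooth approximation used for existence of very weak solutions with a convex smoothing of the positive part. Throughout, one may assume $\varphi \geq 0$, which is the natural setting for this kind of one-sided bound. Let $\{f_n\}$ be the smooth approximation of $\mu$ from the proof of Lemma \ref{TroncataH1loc}, and let $u_n \in C^2(\Omega)\cap C(\overline\Omega)$ be the classical solution to $-\operatorname{div}(A(x)\nabla u_n) = f_n$ with zero boundary datum; then $u_n \to u$ in $L^1(\Omega)$ and a.e., and $T_k(u_n)$ is bounded in $H^1(\omega)$ for every $\omega\subset\subset \Omega$ and every $k>0$ by Lemma \ref{TroncataH1loc}. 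Pick a convex, non-decreasing $S_\delta \in C^2(\mathbb{R})$ with $S_\delta \equiv 0$ on $(-\infty,0]$, $S_\delta(s)\nearrow s^+$ as $\delta \to 0^+$, $0 \leq S_\delta' \leq 1$ and $S_\delta'(s) \to \chi_{\{s>0\}}$ pointwise.

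At the approximation level, a double integration by parts gives
$$-\int_\Omega S_\delta(u_n)\operatorname{div}(A^*(x)\nabla \varphi) \;=\; \int_\Omega S_\delta'(u_n)\, A(x)\nabla u_n \cdot \nabla \varphi,$$
while testing the PDE satisfied by $u_n$ with the admissible function $S_\delta'(u_n)\varphi$ yields
$$\int_\Omega S_\delta'(u_n)\, A\nabla u_n \cdot \nabla \varphi \;+\; \int_\Omega S_\delta''(u_n)\, A\nabla u_n \cdot \nabla u_n\, \varphi \;=\; \int_\Omega f_n\, S_\delta'(u_n)\,\varphi.$$
The second integral is nonnegative because $A$ is elliptic, $S_\delta$ is convex, and $\varphi \geq 0$; dropping it yields
$$-\int_\Omega S_\delta(u_n)\,\operatorname{div}(A^*\nabla \varphi) \;\leq\; \int_\Omega f_n\, S_\delta'(u_n)\,\varphi.$$

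It remains to pass to the limit, first in $n$ for fixed $\delta$, then in $\delta$. The left-hand side converges to $-\int_\Omega S_\delta(u)\operatorname{div}(A^*\nabla \varphi)$ since $u_n \to u$ in $L^1(\Omega)$, $S_\delta$ is Lipschitz, and $\operatorname{div}(A^*\nabla \varphi) \in L^\infty(\Omega)$. For the right-hand side one invokes the diffuseness of $\mu$: by Theorem \ref{diffuse}, on a neighbourhood of $\supp\varphi$ we can write $\mu = g - \operatorname{div} F$ with $g \in L^1$ and $F \in (L^2)^N$, and one may arrange $f_n = g_n - \operatorname{div} F_n$ with $g_n \to g$ in $L^1$ and $F_n \to F$ in $L^2$. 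Splitting $\int f_n S_\delta'(u_n)\varphi$ along this decomposition, the $L^1$-piece passes to the limit by dominated convergence (as $|S_\delta'(u_n)\varphi|\leq\|\varphi\|_\infty$ and $S_\delta'(u_n) \to S_\delta'(u)$ a.e.), while the $L^2$-piece is treated via $\nabla(S_\delta'(u_n)\varphi) = S_\delta''(u_n)\varphi\,\nabla u_n + S_\delta'(u_n)\nabla\varphi$, relying on the fact that $S_\delta''$ is supported in $[0,\delta']$ so that only $\nabla T_{\delta'}(u_n)$ enters—and this sequence converges weakly in $H^1_{\rm loc}$ by the estimate \eqref{dopo}. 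Altogether, $\int f_n S_\delta'(u_n)\varphi \to \int S_\delta'(u)\varphi\, d\mu$. Finally, sending $\delta \to 0^+$, dominated convergence gives $S_\delta(u) \to u^+$ in $L^1(\Omega)$ and $S_\delta'(u)\varphi \to \chi_{\{u>0\}}\varphi$ in $L^1(\Omega,|\mu|)$, producing
$$-\int_\Omega u^+\operatorname{div}(A^*\nabla\varphi) \;\leq\; \int_{\{u>0\}} \varphi\, d\mu \;\leq\; \int_{\{u \geq 0\}} \varphi\, d\mu,$$
the last inequality since $\varphi \geq 0$ and $\mu \geq 0$. The principal technical hurdle is the passage to the limit in $\int f_n S_\delta'(u_n)\varphi$: it is precisely the $2$-capacity diffuseness of $\mu$ that makes the local $H^{-1}+L^1$ structure available and compatible with the weak $H^1_{\rm loc}$ control on truncations of $u_n$ inherited from Lemma \ref{TroncataH1loc}.
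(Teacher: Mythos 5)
Your overall strategy is the same as the paper's: take the smooth approximation $u_n$ of $u$ from Lemma~\ref{TroncataH1loc}, multiply the equation by $S'(u_n)\varphi$ with $S$ convex, drop the nonnegative term $\int S''(u_n)\,A\nabla u_n\cdot\nabla u_n\,\varphi$, and pass to the limit first in $n$ (using the $W^{-1,2}+L^1$ structure of the diffuse measure and the local $H^1$ bound on truncations of $u_n$ from \eqref{dopo}) and then in the smoothing parameter.

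There is, however, a genuine gap at the very end. Your family $S_\delta$ vanishes on $(-\infty,0]$, so $S_\delta'\to\chi_{\{s>0\}}$ and the limit you obtain is
$$-\int_\Omega u^+\operatorname{div}(A^*\nabla\varphi)\le\int_{\{u>0\}}\varphi\,\mu,$$
and you then write ``$\le\int_{\{u\ge0\}}\varphi\,\mu$, the last inequality since $\varphi\ge0$ and $\mu\ge0$''. But the lemma does \emph{not} assume $\mu\ge0$: the measure $\mu$ is a general element of $\mathcal{M}(\Omega,d)$ diffuse with respect to $\operatorname{cap}_2$, and in the only place the lemma is used (the proof of Theorem~\ref{uniquenessmain}) it is applied with $\mu=(h(v)-h(w))\mu_d$, which is genuinely signed. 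For a signed $\mu$ one cannot pass from the set $\{u>0\}$ to the set $\{u\ge0\}$, so as written your proof does not yield the statement. The fix is cheap and is exactly what the paper does: replace $S_\delta$ by a family $\Phi_\varepsilon$ of $C^2$ convex functions with $\Phi_\varepsilon(t)=t$ for $t\ge0$, $|\Phi_\varepsilon(t)|\le\varepsilon$ for $t<0$, $0\le\Phi_\varepsilon'\le1$ and $\Phi_\varepsilon''$ compactly supported; then $\Phi_\varepsilon'(u)=1$ on $\{u\ge0\}$ for every $\varepsilon$ and $\Phi_\varepsilon'(u)\to0$ on $\{u<0\}$, so one lands directly on $\chi_{\{u\ge0\}}$ in the limit, with no sign assumption on $\mu$.

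A secondary, smaller point: Proposition~\ref{approssimazionediffuse} only gives \emph{weak} $L^1$ convergence of the $L^1$ component $g_n\rightharpoonup g$, not strong convergence as you assert, so ``dominated convergence'' is not quite the right tool for the term $\int g_n S_\delta'(u_n)\varphi$. One should instead combine the weak $L^1$ convergence (hence equi-integrability) of $g_n$ with the a.e. (or $L^\infty$-$*$-weak) convergence of the bounded sequence $S_\delta'(u_n)\varphi$, e.g.\ via Egorov. This is exactly the argument the paper leaves implicit under ``thanks to the structure of $df_n$''; your wording is misleading but the step is salvageable.
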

\begin{proof}
	Let consider   again \eqref{pbnVeron};   observe that, as $d\mu$ is diffuse with respect to the $2$-capacity and thanks to Theorem \ref{diffuse}, one has $d\mu=g-\operatorname{div}(G)$ where $g\in L^1(\Omega)$ and $G\in L^2(\Omega)^N$. Then it follows from Proposition \ref{approssimazionediffuse} that there exists $f_n$ such that $d f_n = g_n - \operatorname{div}(G_n)$ where $g_n$ weakly converges in $L^1(\Omega)$ to $g$ and $G_n$ strongly converges in $L^2(\Omega)^N$ to $G$.  
	
	\medskip
	
	Now let $\Phi :\mathbb{R} \to \mathbb{R}$ be a $C^2$-convex function with $0\le \Phi' \le 1$ and $\Phi''$ with compact support such that $\Phi(0)=0$, and let $0\le \varphi\in C^1_c(\Omega)$ such that $\operatorname{div}(A^*(x)\nabla \varphi) \in L^\infty(\Omega)$.
	
	We have 
	\begin{gather*}
		\displaystyle - \int_{\Omega} \Phi(u_n)\operatorname{div}(A^*(x)\nabla \varphi) =  \int_{\Omega} \nabla u_n \cdot A^*(x)\nabla \varphi \Phi'(u_n) \le \int_{\Omega} \varphi \Phi'(u_n) f_n.
	\end{gather*}
	Now we want to  take the limit as  $n\to\infty$ in  
	\begin{gather}\label{katolocale1}
		\displaystyle - \int_{\Omega} \Phi(u_n)\operatorname{div}(A^*(x)\nabla \varphi) \le \int_{\Omega} \varphi \Phi'(u_n) f_n.
	\end{gather}
	It follows from Lemma \ref{veroncomplocale} that  $u_n$ converges locally, at least in $L^1(\Omega)$; as
	$\Phi''$ has compact support, this is sufficient to take $n\to\infty$  by generalized dominated convergence Lebesgue Theorem on the left-hand of \eqref{katolocale1}. 
	
	For the right-hand it is sufficient to note that $\Phi'(u_n)$ converges to $\Phi'(u)$ in $L^\infty(\Omega)$ $*$-weak and almost everywhere; then, thanks to the structure of
	$d f_n$, to pass to the limit in the right-hand of \eqref{katolocale1} one only needs to check that $\Phi'(u_n)$
	is bounded in $H^1_{\textrm{loc}}(\Omega)$. 
	
	Hence observe that
	$$\nabla \Phi'(u_n)= \Phi''(u_n) \nabla u_n = \Phi''(u_n)\nabla T_Q(u_n),$$
	for some $Q>0$ since $\Phi''$ has compact support. 
	Then, using \eqref{dopo}, one deduces
	\begin{gather*}\label{katolocale2}
		\displaystyle - \int_{\Omega} \Phi(u)\operatorname{div}(A^*(x)\nabla \varphi) \le \int_{\Omega} \varphi \Phi'(u)\mu.
	\end{gather*}
	
	\medskip
	
	Finally one can apply the previous to a sequence of regular convex functions $\Phi_\varepsilon(t)$ with $\Phi_\varepsilon(t)=t$ on $t\ge 0$, $|\Phi_\varepsilon(t)|\le \varepsilon$ on $t<0$  and $\Phi''_\varepsilon$ with compact support.   Taking the limit as $\varepsilon\to 0^+$, it holds 
	\begin{gather*}
		\displaystyle - \int_{\Omega} u^+\operatorname{div}(A^*(x)\nabla \varphi) \le \int_{\{u\ge 0\}} \varphi \mu,
	\end{gather*}
	that concludes the proof.
\end{proof}

We conclude this section with a regularity result which will be useful in the sequel.

\begin{lemma}\label{M1respectdelta}
	Let $A$ satisfy \eqref{lipA} and let $u\in L^1(\Omega) \cap W^{1,1}_{\rm{loc}}(\Omega)$ be such that $-\operatorname{div}(A(x)\nabla u) = \mu$ in the sense of distributions for some nonnegative local measure $\mu$. Then $\mu\in \mathcal{M}(\Omega,d)$.	
\end{lemma}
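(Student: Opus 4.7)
By Hopf's lemma (see Lemma \ref{hopfvarphi2}), $\varphi_{1,A}$ is comparable to $d$ throughout $\Omega$, so it is enough to establish the equivalent statement $\int_\Omega \varphi_{1,A}\,d\mu<\infty$. The natural idea is to test the distributional identity $-\operatorname{div}(A\nabla u)=\mu$ against an approximation of $\varphi_{1,A}$ which is compactly supported in $\Omega$, integrate by parts twice, and exploit the eigenvalue equation $-\operatorname{div}(A\nabla\varphi_{1,A})=\lambda_1\varphi_{1,A}$ in order to transfer all the derivatives onto $\varphi_{1,A}$. This is consistent with the formal computation
$$\int_\Omega \varphi_{1,A}\,d\mu=\int_\Omega A\nabla u\cdot\nabla\varphi_{1,A}=-\int_\Omega u\operatorname{div}(A\nabla\varphi_{1,A})=\lambda_1\int_\Omega u\,\varphi_{1,A},$$
whose right-hand side is finite since $u\in L^1(\Omega)$ and $\varphi_{1,A}\in L^\infty(\Omega)$.

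To make the computation rigorous, I would fix $\theta\in C^2(\mathbb R)$ nondecreasing and convex with $\theta(s)=0$ for $s\leq 1/2$, $\theta(s)=s$ for $s\geq 1$, and $\|\theta''\|_\infty\leq C$, and set $\phi_\varepsilon:=\varepsilon\,\theta(\varphi_{1,A}/\varepsilon)\in C^1_c(\Omega)$. Then $0\leq\phi_\varepsilon\nearrow\varphi_{1,A}$ as $\varepsilon\to 0^+$, the support of $\phi_\varepsilon$ stays in $\{\varphi_{1,A}\geq\varepsilon/2\}\subset\subset\Omega$, and $\phi_\varepsilon$ is admissible in the distributional formulation. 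Since $\mathbf V_\varepsilon:=\theta'(\varphi_{1,A}/\varepsilon)\,A\nabla\varphi_{1,A}$ is compactly supported in $\Omega$ with $\operatorname{div}\mathbf V_\varepsilon\in L^\infty(\Omega)$ (thanks to $\varphi_{1,A}\in W^{2,q}\cap C^1(\overline\Omega)$ and the smoothness of $\theta$), standard mollification of $u\in W^{1,1}_{\rm loc}\cap L^1(\Omega)$ allows a second integration by parts, yielding
$$\int_\Omega\phi_\varepsilon\,d\mu=-\frac{1}{\varepsilon}\int_\Omega u\,\theta''(\varphi_{1,A}/\varepsilon)\,A\nabla\varphi_{1,A}\cdot\nabla\varphi_{1,A}+\lambda_1\int_\Omega u\,\theta'(\varphi_{1,A}/\varepsilon)\,\varphi_{1,A}.$$

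At this point, monotone convergence applied to the left-hand side (using $\mu\geq 0$ and $\phi_\varepsilon\nearrow\varphi_{1,A}$) and dominated convergence for the second summand on the right (using $u\in L^1(\Omega)$ and the uniform bound $|\theta'(\varphi_{1,A}/\varepsilon)\varphi_{1,A}|\leq 2\|\varphi_{1,A}\|_{L^\infty}$) give convergence to $\int_\Omega\varphi_{1,A}\,d\mu$ and $\lambda_1\int_\Omega u\varphi_{1,A}$ respectively, leaving the first remainder
$$R_\varepsilon:=\frac{1}{\varepsilon}\int_\Omega u\,\theta''(\varphi_{1,A}/\varepsilon)\,A\nabla\varphi_{1,A}\cdot\nabla\varphi_{1,A}$$
to be controlled. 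This is the \emph{main obstacle}: $\theta''(\varphi_{1,A}/\varepsilon)$ is bounded and supported in the thin boundary shell $\{\varepsilon/2\leq\varphi_{1,A}\leq\varepsilon\}$, so $|R_\varepsilon|\lesssim \varepsilon^{-1}\int_{\{\varphi_{1,A}\sim\varepsilon\}}|u|$; the $L^1$ integrability of $u$ alone is not enough to bound this averaged quantity uniformly in $\varepsilon$. The plan to close the argument is to exploit the sign condition $\mu\geq 0$: splitting $u=u^+-u^-$ and invoking a Kato-type inequality in the spirit of Lemma \ref{Katoinequalitylocal} gives $-\operatorname{div}(A\nabla u^+)\leq \mu$ and $-\operatorname{div}(A\nabla u^-)\leq 0$ in $\mathcal D'(\Omega)$. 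By the convexity of $\theta$ the corresponding remainders have a favorable sign, and combining the two inequalities allows one to \emph{absorb} the bad term into the left-hand side, producing a uniform bound $\int_\Omega \phi_\varepsilon\,d\mu\leq C(\|u\|_{L^1(\Omega)},\lambda_1,\|\varphi_{1,A}\|_\infty)$. Passing to the limit via monotone convergence then yields $\int_\Omega\varphi_{1,A}\,d\mu<\infty$, hence $\mu\in\mathcal M(\Omega,d)$.
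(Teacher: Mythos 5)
Your overall strategy is the same as the paper's: build compactly supported approximations $\phi_\varepsilon = \varepsilon\,\theta(\zeta/\varepsilon)$ (the paper writes $\varphi_n=\tfrac1n\Phi(n\xi)$) of a barrier function $\zeta$ comparable to $d$, integrate by parts twice to transfer derivatives onto $\zeta$, and use $u\in L^1(\Omega)$ together with Fatou/monotone convergence on the left. However, two points need to be fixed, one minor and one essential.

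\textbf{Choice of barrier.} You take $\zeta=\varphi_{1,A}$, the first eigenfunction, which only exists under the \emph{symmetry} assumption on $A$ in Lemma \ref{hopfdiaz}; but hypothesis \eqref{lipA} does not require $A$ to be symmetric, and throughout Section \ref{unilinear} the adjoint $A^*$ appears. The correct barrier is the one the paper uses, namely the solution $\xi$ of $-\operatorname{div}(A^*(x)\nabla\xi)=1$ with $\xi=0$ on $\partial\Omega$, which also satisfies $\xi\geq Cd$ by Hopf and for which $\operatorname{div}(A^*\nabla\xi)$ is bounded. With $\xi$ in place of $\varphi_{1,A}$, the eigenvalue term in your computation becomes $\int_\Omega u\,\theta'(\xi/\varepsilon)$, which is controlled by $\|\theta'\|_\infty\|u\|_{L^1(\Omega)}$; this is in fact slightly cleaner than your $\lambda_1\int u\,\theta'\varphi_{1,A}$.

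\textbf{The remainder $R_\varepsilon$: you do not need Kato, and the Kato plan would be circular.} After the double integration by parts, the identity reads
\begin{equation*}
\int_\Omega \phi_\varepsilon\,d\mu \;=\; -\,\frac{1}{\varepsilon}\int_\Omega u\,\theta''(\xi/\varepsilon)\,A^*\nabla\xi\cdot\nabla\xi \;+\;\int_\Omega u\,\theta'(\xi/\varepsilon).
\end{equation*}
You identify the first term as the obstacle and propose to handle it by splitting $u=u^+-u^-$ and invoking a Kato-type inequality. This cannot work as stated: Lemma \ref{Katoinequalitylocal} (and the auxiliary results around it) \emph{already assume} $\mu\in\mathcal M(\Omega,d)$, which is precisely what we are trying to prove, so the argument would be circular. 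More importantly, the remainder causes no trouble at all once one notes that this lemma is applied only to \emph{nonnegative} $u$ (see Definition \ref{defunique}, where $u\geq 0$ is part of the notion of solution; without it, the lemma is actually false — e.g.\ $\Omega=(0,1)$, $u(x)=\log\bigl(x(1-x)\bigr)\in L^1\cap W^{1,1}_{\rm loc}$ satisfies $-u''=x^{-2}+(1-x)^{-2}\geq 0$ while $\int_0^1 d(x)\,(-u'')\,dx=\infty$). With $u\geq 0$, convexity of $\theta$ gives $\theta''\geq 0$, and coercivity gives $A^*\nabla\xi\cdot\nabla\xi\geq\alpha|\nabla\xi|^2\geq 0$, so the remainder is nonnegative and can simply be \emph{dropped}:
\begin{equation*}
\int_\Omega \phi_\varepsilon\,d\mu \;\le\; \int_\Omega u\,\theta'(\xi/\varepsilon) \;\le\; \|\theta'\|_{L^\infty(\mathbb R)}\,\|u\|_{L^1(\Omega)} ,
\end{equation*}
a bound uniform in $\varepsilon$. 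Monotone convergence (or Fatou) on the left then yields $\int_\Omega\xi\,d\mu<\infty$ and hence $\mu\in\mathcal M(\Omega,d)$. This is exactly the paper's reasoning; your passage to the limit inside the identity and the Kato plan add no value and, in the Kato step, introduce a genuine gap.
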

\begin{proof}
	
Let $\xi$ be the smooth solution to 
	\begin{equation}\label{xi}
		\begin{cases}
			\displaystyle -\operatorname{div}(A^*(x)\nabla \xi) = 1 &  \text{in}\ \Omega, \\
			\xi=0 & \text{on}\ \partial \Omega.
		\end{cases}
	\end{equation}
	
	Then, in order to conclude the proof, one needs to show that 
	$$
	\int_\Omega \xi \mu \leq C,
	$$
	as, as a consequence of Hopf's Lemma \ref{hopf}, one has $\xi\ge Cd$ on $\Omega$. 
	
	\medskip
	
	 Let $\Phi$ be a convex smooth function with $\Phi'$ bounded and which vanishes in a neighborhood of $0$.  As an example,  for a   $k>0$, a possible choice is to consider $\Phi$ as  a convex smooth  function that agrees with  $|G_{k}(s)|$ for every $s$ but $|s|\in [k,k+1]$. 
	
	Now let us  consider $\varphi_{n}:= \frac{1}{n}{\Phi (n\xi(x))}$. Then it is easy to check that $\varphi_{n}$ has compact support in $\Omega$ and that $\varphi_{n}$ converges to $\xi$ almost everywhere in $\Omega$. 
	
	One also has that, in the sense of distributions, it holds
	\begin{align*}
		-\operatorname{div}(A^*(x)\nabla \varphi_{n}) &= -\operatorname{div}(A^*(x)\nabla \xi)\Phi'(n\xi) - A^*(x)\nabla \xi\cdot \nabla \xi \Phi''(n\xi) \\
		&\leq -\operatorname{div}(A^*(x)\nabla \xi)\Phi'(n\xi)=\Phi'(n\xi),
	\end{align*}		
	thanks to using the convexity of $\Phi$.
	Then 
	$$
	\int_\Omega \varphi_{n}\mu \le -\int_\Omega  u \operatorname{div}(A^*(x)\nabla\varphi_{n})\ \leq \|\Phi'\|_{L^{\infty}(\R)}\int_\Omega u\leq C,
	$$
	where we can apply the Fatou Lemma  as $C$ does not depend on $n$. This concludes the proof.  
\end{proof}

\subsection{Uniqueness of the distributional solution}	
\label{op2_uniqueness_distributional}

We are ready to prove the uniqueness theorem.
\begin{proof}[Proof of Theorem \ref{uniquenessmain}]
	Let $u$ be a solution to \eqref{op2_pbgeneral} in the sense of Definition \ref{defunique}. Then observe that an application of Lemma \ref{M1respectdelta} gives that $h(u)\mu_d + h(\infty)\mu_c\in \mathcal{M}(\Omega,d)$.  As $\mu_d$ is diffuse with respect to the $2$-capacity and $h(u)d$ is measurable with respect to $\mu_d$ then one can deduce that $h(u)\mu_d$ is also diffuse with respect to the $2$-capacity and it belongs to $\mathcal{M}(\Omega,d)$.
	
	\medskip
	
	The function $u$ satisfies
	\begin{equation*} 
		\displaystyle \int_{\Omega}A(x)\nabla u \cdot \nabla \varphi =\int_{\Omega} h(u)\varphi \mu_d + h(\infty)\int_{\Omega} \varphi \mu_c, \ \ \ \forall \varphi \in C^1_c(\Omega),
	\end{equation*}
	 where we fix  $\varphi= \eta_k \phi$ where $\phi \in C^1_0(\overline{\Omega})$ such that $\operatorname{div}(A(x)\nabla \phi)\in L^\infty(\Omega)$ and $0\le\eta_k\le1$ is such that $\eta_k\in C^1_c(\Omega)$,  and $\eta_k=1$ when $d(x) > \frac{1}{k}$, $||\nabla \eta_k||_{L^\infty(\Omega)}\le k$ and $||\operatorname{div}(A(x)\nabla \eta_k)||_{L^\infty(\Omega)}\le Ck^2$. This yields to
	\begin{equation}\label{uniquenessteo1} 
		\displaystyle \int_{\Omega}A(x)\nabla u \cdot \nabla (\eta_k \phi) =\int_{\Omega} h(u) \eta_k \phi \mu_d + h(\infty)\int_{\Omega}  \eta_k \phi \mu_c, 
	\end{equation}
	and the aim becomes passing the previous to the limit as $k\to\infty$. An application of the Lebesgue Theorem allows to pass to the limit the right-hand of \eqref{uniquenessteo1} since $h(u)\mu_d + h(\infty)\mu_c$ belongs to $\mathcal{M}(\Omega,d)$.

	The left-hand of \eqref{uniquenessteo1} can be written as
	\begin{equation}
	\begin{aligned}\label{uniquenessteo1lhs} 
		\displaystyle \int_{\Omega}A(x)\nabla u \cdot \nabla (\eta_k \phi) = &- \int_{\Omega}A^*(x) \nabla \eta_k\cdot \nabla \phi u - \int_{\Omega}u\eta_k \operatorname{div}(A^*(x)\nabla \phi)
		\\ &- \int_{\Omega}A^*(x) \nabla \phi\cdot \nabla \eta_k u - \int_{\Omega}u\phi \operatorname{div}(A^*(x)\nabla \eta_k) . 
	\end{aligned}
	\end{equation}
	For the first term on the right-hand of \eqref{uniquenessteo1lhs} one can observe that it follows from \eqref{op2_weakdef2} that
	\begin{equation*}
		\displaystyle \lim_{k\to\infty}\int_{\Omega}|A^*(x) \nabla \eta_k\cdot \nabla \phi u| \le  \lim_{k\to\infty}\beta Ck\int_{\{x \in \Omega : d(x)< \frac{1}{k}\}}u=0.
	\end{equation*}
	The same reasoning applies also for the third term on the right-hand of \eqref{uniquenessteo1lhs}.
	
	An application of the Lebesgue Theorem allows to pass to the limit 
	in the second term on the right-hand of \eqref{uniquenessteo1lhs}. For the fourth term one has
	\begin{equation*}
		\displaystyle\lim_{k\to\infty}\int_{\Omega}|u\phi \operatorname{div}(A(x)\nabla \eta_k)| \le \lim_{k\to\infty}\beta Ck^2\int_{\{x \in \Omega : d(x)< \frac{1}{k}\}}u|\phi|\le \lim_{k\to\infty}\beta Ck\int_{\{x \in \Omega : d(x)< \frac{1}{k}\}}u = 0.
	\end{equation*}
	Then the above argument allows to take $k\to\infty$ into \eqref{uniquenessteo1lhs}, yielding to
	\begin{equation}\label{formulazioneconC2} 
		\displaystyle - \int_{\Omega}u \operatorname{div}(A^*(x)\nabla \phi) =\int_{\Omega} h(u) \phi \mu_d + h(\infty)\int_{\Omega} \phi \mu_c,
	\end{equation}
	for every $\phi \in C^1_0(\overline{\Omega})$ such that $\operatorname{div}(A^*(x)\nabla \phi)\in L^\infty(\Omega)$. 
	
	\medskip
	
	Now we are in position to apply \eqref{formulazioneconC2} to the difference of two solutions $v$ and $w$ to \eqref{op2_pbgeneral}, obtaining
	\begin{equation*} 
		\displaystyle - \int_{\Omega}(v-w) \operatorname{div}(A^*(x)\nabla \phi) =\int_{\Omega} (h(v)-h(w)) \phi \mu_d.
	\end{equation*}
		  As $(h(v)-h(w))\mu_d \in \mathcal{M}(\Omega,d)$ is diffuse with respect to the $2$-capacity,  it follows from Lemma \ref{Katoinequalitylocal} that 
	\begin{equation*} 
		\displaystyle - \int_{\Omega}(v-w)^+ \operatorname{div}(A^*(x)\nabla \varphi) =\int_{\{v-w\ge 0\}} (h(v)-h(w)) \varphi \mu_d\le 0.
	\end{equation*}		
	for every $\varphi\in C^1_c(\Omega)$ such that $\operatorname{div}(A^*(x)\nabla \varphi)\in L^\infty(\Omega)$.
	Now a reasoning analogous to what done above in proving \eqref{formulazioneconC2}, it allows to deduce that
	\begin{equation*} 
		\displaystyle - \int_{\Omega}(v-w)^+ \operatorname{div}(A^*(x)\nabla \xi) \le 0,
	\end{equation*}		
	for every $0<\xi\in C^1_0(\overline{\Omega})$ such that $\operatorname{div}(A^*(x)\nabla \xi)\in L^\infty(\Omega)$.	Finally, fixing $\xi$ as defined in \eqref{xi}, one gets that $v\le w$. The proof concludes by switching $v$ and $w$ in order to obtain that $v=w$ almost everywhere in $\Omega$.
\end{proof}

\appendix

\section{Radon measures and capacities}
\label{app:radon}

 In this appendix we provide  some useful results regarding Radon measures and capacities for which we mainly refer to \cite{hkm} and \cite{dms}.

\medskip

 Let recall that, if $\Omega$ is an open bounded subset of $\rn$, $N\geq 1$, then  $\mathcal{M}(\Omega)$ denotes the space of the real valued Borel measures $\mu$ with bounded total variation $|\mu|(\Omega)$. 

\medskip

Let us briefly  list some notions   widely used along the paper: 
\begin{defin}\label{defconcentrated}
	The measure $\mu \in \mathcal{M}(\Omega)$ is said to be {\itshape concentrated} on a Borel subset $E$ of $\Omega$ if, for every Borel set $B\subseteq \Omega$, then 
	$$\mu(B)=\mu(B \cap E).$$ 
	\label{concdef}
\end{defin}
\begin{defin}
	Let $\mu, \lambda \in \mathcal{M}(\Omega)$ then $\mu$ is said to be {\it diffuse} with respect to $\lambda$ if
	$$\lambda (E)=0 \text{ implies }\mu (E)=0.$$
We denote this property by $\mu \ll \lambda$.
	\label{diffdef}
\end{defin}
\begin{defin}
	The measure $\mu, \lambda \in \mathcal{M}(\Omega)$ are said to be {\itshape orthogonal} if there exists a set $E\subset \Omega$ such that
	$$\mu(E)=0 \text{ and } \lambda = \lambda \mathcal{b}_E.$$
	\label{ortdef}
	We denote this property as $\mu \bot \lambda$.
\end{defin}
\begin{defin} 
	A sequence $\mu_n\in \mathcal{M}(\Omega)$ converges in the  {\it narrow topology of measures} to $\mu$ if
	$$\lim_{n\to\infty}\io \varphi \mu_n=\io \varphi \mu, \quad\forall\varphi\in C_b(\Omega).$$
\end{defin}
\begin{remark}
	\noindent  It is possible to prove, for nonnegative measures,  that $\mu_n$ narrow converges to $\mu$ if and only if $\mu_n$ converges to $\mu$  $*$-weakly in $\mathcal{M} (\Omega)$ and $|\mu_n|(\Omega)$ converges to $|\mu|(\Omega)$.
\triang \end{remark}
\noindent We present the following decomposition theorem:
\begin{theorem}	\label{secdec}
	Let $\mu, \lambda \in \mathcal{M}(\Omega)$, then there exists a unique pair $(\mu_0,\mu_1) \in [\mathcal{M}(\Omega)]^2$ such that 
	$$\mu=\mu_0+\mu_1, \ \ \text{where }\mu_0 \ll \lambda \ \text{and}\ \ \mu_1 \bot \lambda.$$
\end{theorem}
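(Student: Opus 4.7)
The plan is to derive the statement from the Radon--Nikodym theorem applied to a reference measure that dominates both $\mu$ and $\lambda$. First I would reduce to the nonnegative case: by the Hahn--Jordan decomposition one can write $\mu=\mu^+-\mu^-$ and $\lambda=\lambda^+-\lambda^-$ with $\mu^\pm,\lambda^\pm\geq 0$, and if the result is known for nonnegative measures, it transfers to signed ones by linearity (taking $\mu_0,\mu_1$ as the differences of the decompositions of $\mu^+$ and $\mu^-$ with respect to $|\lambda|$). So from now on assume $\mu,\lambda\geq 0$.

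Next I would introduce the dominating measure $\nu:=\mu+\lambda\in\mathcal{M}(\Omega)$, which is nonnegative and finite. Since trivially $\mu\ll\nu$ and $\lambda\ll\nu$, the Radon--Nikodym theorem provides densities $f,g\in L^1(\Omega,\nu)$, $f,g\geq 0$, such that
\[
\mu=f\,\nu,\qquad \lambda=g\,\nu.
\]
Setting $E:=\{x\in\Omega:g(x)>0\}$, I would define
\[
\mu_0:=f\,\chi_E\,\nu,\qquad \mu_1:=f\,\chi_{\Omega\setminus E}\,\nu,
\]
so that clearly $\mu=\mu_0+\mu_1$. To check $\mu_0\ll\lambda$, observe that if $\lambda(B)=0$ then $\int_B g\,d\nu=0$, so $g=0$ $\nu$-a.e.\ on $B$, i.e.\ $\nu(B\cap E)=0$, whence $\mu_0(B)=\int_{B\cap E}f\,d\nu=0$. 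To check $\mu_1\perp\lambda$, note that $\mu_1$ is concentrated on $\Omega\setminus E$ while $\lambda(\Omega\setminus E)=\int_{\Omega\setminus E}g\,d\nu=0$, so the set $E$ realizes the required mutual singularity in the sense of Definition \ref{ortdef}.

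For uniqueness, suppose $(\mu_0,\mu_1)$ and $(\mu_0',\mu_1')$ are two decompositions of $\mu$ with the required properties. Then $\rho:=\mu_0-\mu_0'=\mu_1'-\mu_1$ is simultaneously absolutely continuous and singular with respect to $\lambda$: on the one hand, $\lambda(B)=0$ implies $\rho(B)=0$; on the other, there exists a Borel set $F$ with $\lambda(F)=0$ on which $\rho$ is concentrated. Testing the first property against subsets of $F$ forces $\rho(B\cap F)=0$ for every Borel $B$, while concentration forces $\rho(B)=\rho(B\cap F)$; hence $\rho\equiv 0$, giving $\mu_0=\mu_0'$ and $\mu_1=\mu_1'$. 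The main (mild) technical point is really only the reduction to nonnegative measures, since once that is done the rest is a direct consequence of Radon--Nikodym and the elementary fact that the only measure which is both absolutely continuous and singular with respect to $\lambda$ is the zero measure.
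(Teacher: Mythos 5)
The paper states Theorem~\ref{secdec} in the appendix without proof; it is the classical Lebesgue decomposition theorem, recorded for reference. Your argument is the standard textbook proof: dominate both measures by $\nu=\mu+\lambda$, apply Radon--Nikodym to produce densities $f,g$, split along $E=\{g>0\}$, and settle uniqueness by noting the only measure both diffuse and singular with respect to $\lambda$ is zero. All the individual verifications (that $\lambda(B)=0$ forces $\nu(B\cap E)=0$, that $E$ witnesses the orthogonality of $\mu_1$ and $\lambda$ in the sense of Definition~\ref{ortdef}, and the ``concentrated on a $\lambda$-null set while absolutely continuous'' squeeze in the uniqueness step) are correct for nonnegative $\mu,\lambda$, which is the only case the paper actually uses (there $\lambda$ is a capacity and $\mu\ge 0$).

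One small caveat about your reduction to the nonnegative case: the paper's Definition~\ref{diffdef} reads ``$\lambda(E)=0$ implies $\mu(E)=0$'' for possibly signed $\lambda$, which is strictly stronger than the usual ``$|\lambda|(E)=0$ implies $\mu(E)=0$'' that you implicitly prove by decomposing with respect to $|\lambda|$. A signed $\lambda$ can annihilate a set $E$ without $|\lambda|(E)=0$, so $\mu_0\ll|\lambda|$ does not literally give the paper's $\mu_0\ll\lambda$. This is a quirk of the paper's definition rather than a defect of your argument; if you want the signed case to match the paper verbatim you should either state the theorem with $|\lambda|$ or observe explicitly that the paper only invokes the theorem for nonnegative measures (and that absolute continuity with respect to $\lambda$ and $|\lambda|$ then coincide). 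With that remark in place, the proof is complete and correct.
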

\noindent We also recall a well known approximation scheme for Radon measures.

\begin{lemma}\label{lem_approssimazione}
	Let $0\leq\mu\in\mathcal{M}(\Omega)$ then there exists a sequence $0\leq g_n\in C^{\infty}(\Omega)$ such that 
	$\|g_n\|_{L^1(\Omega)}\leq\|\mu\|_{\mathcal{M}(\Omega)}$ and $g_n\to\mu$ in in the narrow topology of measures.
\end{lemma}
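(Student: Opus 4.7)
\smallskip

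\textbf{Proof plan.} The strategy is the classical mollification argument, with a preliminary inward restriction of $\mu$ so that the smoothed object is compactly supported in $\Omega$. Fix a nonnegative standard mollifier $\rho\in C^\infty_c(\mathbb R^N)$ with $\supp\rho\subset B_1(0)$ and $\int\rho=1$, and set $\rho_{\varepsilon}(x)=\varepsilon^{-N}\rho(x/\varepsilon)$. For $n\in\mathbb N$ large enough define
$$
\Omega_n:=\bigl\{x\in\Omega\,:\,d(x,\partial\Omega)>2/n\bigr\},\qquad \mu_n:=\mu\res\Omega_n,\qquad g_n:=\rho_{1/n}*\mu_n,
$$
where $\mu_n$ is extended by zero outside $\Omega_n$. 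Then $g_n\in C^\infty(\mathbb R^N)$, $g_n\ge 0$, and since $\supp\mu_n\subset\Omega_n$ and $\supp\rho_{1/n}\subset B_{1/n}(0)$, one gets $\supp g_n\subset\{x\in\Omega\,:\,d(x,\partial\Omega)>1/n\}\subset\subset\Omega$, so in particular $g_n\in C^\infty(\Omega)$.

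\smallskip

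The $L^1$ estimate is immediate from Fubini: $\|g_n\|_{L^1(\Omega)}=\int_\Omega g_n=\int_{\Omega_n}\bigl(\int\rho_{1/n}(x-y)\,dx\bigr)d\mu(y)=\mu(\Omega_n)\le\|\mu\|_{\mathcal M(\Omega)}$. For the narrow convergence, fix $\varphi\in C_b(\Omega)$. Since $\supp g_n\subset\subset\Omega$, Fubini yields
$$
\int_\Omega \varphi\,g_n=\int_{\Omega_n}\psi_n(y)\,d\mu(y),\qquad \psi_n(y):=(\rho_{1/n}*\varphi)(y),
$$
where $\psi_n$ is well defined on $\Omega_n$ because $y\in\Omega_n$ implies $B_{1/n}(y)\subset\Omega$, and $\|\psi_n\|_{L^\infty(\Omega_n)}\le\|\varphi\|_{L^\infty(\Omega)}$. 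Splitting
$$
\int_\Omega \varphi\,g_n-\int_\Omega \varphi\,d\mu=\int_{\Omega_n}(\psi_n-\varphi)\,d\mu-\int_{\Omega\setminus\Omega_n}\varphi\,d\mu,
$$
the second piece vanishes as $n\to\infty$ since $\Omega\setminus\Omega_n\downarrow\emptyset$ (points of $\Omega$ have positive distance from $\partial\Omega$) and $\mu$ is finite.

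\smallskip

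The main (and only delicate) point is handling the first piece: $\varphi$ is merely continuous on $\Omega$, hence not uniformly continuous in general. I will absorb this by splitting with a compact exhaustion. For $\delta>0$ let $K_\delta:=\{y\in\Omega:d(y,\partial\Omega)\ge\delta\}$, which is compact; on $K_{\delta/2}$ the function $\varphi$ is uniformly continuous with some modulus $\omega_\delta$. For $n$ so large that $1/n<\delta/2$ and $2/n<\delta$, any $y\in K_\delta$ satisfies $B_{1/n}(y)\subset K_{\delta/2}$, hence $|\psi_n(y)-\varphi(y)|\le\omega_\delta(1/n)\to 0$ uniformly on $K_\delta$. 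Therefore
$$
\Bigl|\int_{\Omega_n}(\psi_n-\varphi)\,d\mu\Bigr|\le \omega_\delta(1/n)\,\mu(\Omega)+2\|\varphi\|_{L^\infty(\Omega)}\,\mu(\Omega\setminus K_\delta),
$$
and passing first to $n\to\infty$ and then to $\delta\to 0^+$ (using again $\mu(\Omega\setminus K_\delta)\to 0$) gives the conclusion. This shows $\int\varphi\,g_n\to\int\varphi\,d\mu$ for every $\varphi\in C_b(\Omega)$, i.e. narrow convergence, and completes the proof.
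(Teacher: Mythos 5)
Your proof is correct and complete. The paper states this lemma without proof (citing it as a well-known approximation fact), and your argument is the standard mollification one: restricting $\mu$ to the inner set $\Omega_n$ before convolving ensures that $g_n$ is supported well inside $\Omega$ and that $\|g_n\|_{L^1(\Omega)}=\mu(\Omega_n)$, so no mass escapes --- which is precisely what upgrades weak$^*$ convergence to narrow convergence --- and the compact-exhaustion argument correctly addresses the fact that a function in $C_b(\Omega)$ need not be uniformly continuous on $\Omega$.
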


Let us introduce definition and properties of the $p$-capacity of a subset of $\Omega$. 

\begin{defin}\label{a8}   
	Let $1\le p< \infty$, and let $K \subset\subset \Omega$ be a compact set. The {\it $p$-capacity} is defined as  
	$$\dis \operatorname{cap}_p(K)=\inf\left\{\int_{\Omega}|\nabla \varphi |^p, \text{ with } \varphi \in C^{\infty}_c(\Omega) \text{ and } \varphi \ge 1 \text{ in } K\right\}.$$
	It can be considered the capacity of any subset in the following way:	if $A \subset \Omega$ is open then
	$$\operatorname{cap}_p(A)=\sup \{ \operatorname{cap}_p(K): K \subset\subset  A\},$$
	and,  more generally, if $A \subset \Omega$ is any Borel set, then
	$$\operatorname{cap}_p(A)=\sup \{ \operatorname{cap}_p(U): U \subset \Omega \text{ open and such that } A \subset U\},$$
	where is assumed $\inf \emptyset = \infty$ by convention.
	\label{capacity}
\end{defin}

By a truncation argument it is also possible to characterize the $p$-capacity of any Borel set $A\subseteq \Omega$, as
\begin{equation*}
\operatorname{cap}_p (A)=\inf\left\{\int_\Omega |\nabla v|^{p}: 0\leq v\in W^{1,p}_0 (\Omega),\ v=1\  \text{a.e. on}\ A \right\}.
\end{equation*}

  It can be shown that the capacity is an outer measure and that the following properties hold: 
\begin{theorem}
	The map $\operatorname{cap}_p:E\subset \Omega \to [0,\infty]$ satisfies:
	\begin{itemize}
		\item[i)] $\operatorname{cap}_p(\emptyset)=0$;
		\item[ii)] if $E_{1}\subseteq E_{2}$, then $\operatorname{cap}_p(E_1) \le \operatorname{cap}_p(E_2)$;
		\item[iii)] if $E=\displaystyle\bigcup_{n=1}^{\infty} E_{n}$, then $\operatorname{cap}_p(E) \le \displaystyle\sum_{n=1}^{\infty}\operatorname{cap}_p(E_n)$.
	\end{itemize}
\end{theorem}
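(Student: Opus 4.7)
The plan is to verify the three properties in order, with (i) and (ii) being immediate and (iii) requiring a bit of work. Throughout, it will be convenient to use the truncation-based equivalent characterization recalled right after Definition \ref{a8}, namely
\[
\operatorname{cap}_p(A)=\inf\Bigl\{\int_\Omega |\nabla v|^{p}\,:\,0\le v\in W^{1,p}_0(\Omega),\ v=1\ \text{a.e.\ on}\ A\Bigr\},
\]
which is valid for any Borel $A\subseteq\Omega$.

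\medskip

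For \emph{(i)}, the empty set is compact and every $\varphi\in C^\infty_c(\Omega)$ vacuously satisfies $\varphi\ge 1$ on $\emptyset$; taking $\varphi\equiv 0$ yields $\operatorname{cap}_p(\emptyset)\le 0$, and nonnegativity gives the reverse inequality. For \emph{(ii)}, if $E_1\subseteq E_2$ then every function admissible at the level of $E_2$ (in whichever of the three stages of the definition is relevant) is automatically admissible at the level of $E_1$; taking the infimum over a larger class can only decrease it, so monotonicity propagates through the compact, open and Borel layers of Definition \ref{a8}.

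\medskip

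The main point is \emph{(iii)}. I would argue as follows. We may assume $\operatorname{cap}_p(E_n)<\infty$ for every $n$, else the inequality is trivial. Fix $\varepsilon>0$ and, using the $W^{1,p}_0$-characterization above, pick for each $n$ a nonnegative $v_n\in W^{1,p}_0(\Omega)$ with $v_n\ge 1$ a.e.\ on $E_n$ and
\[
\int_\Omega |\nabla v_n|^p \le \operatorname{cap}_p(E_n)+\frac{\varepsilon}{2^n}.
\]
Set $w_k:=\max(v_1,\dots,v_k)$. By the standard lattice property of Sobolev functions, $w_k\in W^{1,p}_0(\Omega)$ and, pointwise a.e.,
\[
|\nabla w_k|^p \le \sum_{n=1}^{k}|\nabla v_n|^p,
\]
so that $\{w_k\}$ is bounded in $W^{1,p}_0(\Omega)$ by $\bigl(\sum_n \operatorname{cap}_p(E_n)+\varepsilon\bigr)^{1/p}$. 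The sequence $w_k$ is monotone nondecreasing and converges a.e.\ to $w:=\sup_n v_n$; weak $W^{1,p}_0$-compactness together with monotone convergence identifies $w\in W^{1,p}_0(\Omega)$ as the weak limit, and weak lower semicontinuity of the Dirichlet integral yields
\[
\int_\Omega|\nabla w|^p \le \liminf_{k\to\infty}\int_\Omega|\nabla w_k|^p \le \sum_{n=1}^\infty \operatorname{cap}_p(E_n)+\varepsilon.
\]
Since $w\ge 1$ a.e.\ on $\bigcup_n E_n$, the function $w$ is admissible in the above characterization of $\operatorname{cap}_p(E)$, so $\operatorname{cap}_p(E)\le \sum_n \operatorname{cap}_p(E_n)+\varepsilon$. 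Letting $\varepsilon\to 0^+$ concludes.

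\medskip

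The main technical obstacle is the lattice inequality $|\nabla\max(v_1,\dots,v_k)|^p\le\sum|\nabla v_n|^p$ and the fact that the supremum of a bounded monotone sequence stays in $W^{1,p}_0(\Omega)$; both are classical facts about Sobolev spaces (the first from $\nabla\max(u,v)=\nabla u\,\chi_{\{u\ge v\}}+\nabla v\,\chi_{\{u<v\}}$ a.e., the second from the Banach--Saks-type argument outlined above), but they are the place where the proof rests. Everything else is a matter of unwinding the layered definition and using monotonicity already established in (ii).
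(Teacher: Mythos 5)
The paper does not actually prove this theorem: it appears in Appendix~\ref{app:radon}, where the authors state it among the background facts on $p$-capacity and refer the reader to \cite{hkm} and \cite{dms}. So there is no internal proof to compare against; what you have produced is a self-contained argument, and it is essentially the standard one from the potential-theory literature (e.g.\ Theorem~2.2 in \cite{hkm}).

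Your argument is correct in substance. Parts (i) and (ii) are as immediate as you say. For (iii), the lattice inequality $|\nabla\max(v_1,\dots,v_k)|^p\le\sum_{n=1}^k|\nabla v_n|^p$ (valid because, a.e., $\nabla\max(u,v)$ agrees with $\nabla u$ on $\{u\ge v\}$ and with $\nabla v$ on $\{u<v\}$), combined with weak compactness in $W^{1,p}_0(\Omega)$, Rellich, and weak lower semicontinuity of the $p$-Dirichlet energy, is exactly the right machinery. Two small remarks. First, the paper's $W^{1,p}_0$-characterization requires $v=1$ a.e.\ on $A$, whereas your $w=\sup_n v_n$ satisfies only $w\ge1$ a.e.\ on $E$; this is harmless but you should say explicitly that one passes to $\min(w,1)\in W^{1,p}_0(\Omega)$, which has no larger Dirichlet energy and equals $1$ a.e.\ on $E$ — the same truncation argument the paper alludes to when stating the characterization. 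Second, a word of caution that is really about the paper rather than your proof: the statement ``$v=1$ a.e.\ on $A$'' should be understood in a quasi-everywhere (or neighbourhood) sense — otherwise, for a Lebesgue-null $A$ the condition is vacuous and the infimum collapses to zero — and likewise the displayed formula for $\operatorname{cap}_p$ of a general Borel set in Definition~\ref{a8} should read an infimum over open supersets, not a supremum. Neither issue invalidates your argument once the characterization is read as intended, but both are worth flagging.
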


 Let us specify what we mean by a  measure to be  diffuse with respect to $p$-capacity. 
\begin{defin}\label{defdiffuse}
	$\mu \in \mathcal{M}(\Omega)$ is a {\it diffuse measure} with respect to the $p$-capacity if for every Borel set $A \subset \Omega$ such that $\operatorname{cap}_p(A)=0$, then $\mu(A)=0$. 
	 This property is denoted by $\mu \ll \operatorname{cap}_p$ and the set of diffuse measures with respect to $p$-capacity is denoted by $\mathcal{M}^p_0(\Omega)$.
\end{defin}
Hence, in the above definition, by diffuse we mean that the measure $\mu$ does not charge set of zero $p$-capacity.

\noindent We need to define the concept of  cap$_p$-quasi continuous functions.
\begin{defin} 
	A function $u:\Omega\to\R$ is said to be {\it cap$_p$-quasi continuous} if for every $\varepsilon>0$ there exists an open set $E\subset\Omega$ such that $\operatorname{cap}_p(E)<\varepsilon$ and $u$ is continuous in $\Omega\setminus \overline{E}$.
\end{defin}

Broadly speaking the $p$-capacity plays, for functions in $W^{1,p}(\Omega)$, the same role played by the Lebesgue measure for measurable functions. In fact, it can be shown that any function $u$ in $W^{1,p}(\Omega)$ admits a cap$_p$-quasi continuous representative $\widehat{u}$ defined cap$_p$-almost everywhere in $\Omega$, i.e. outside a set of zero $p$-capacity. When dealing with a function $u \in W^{1,p}(\Omega)$, we will always consider its cap$_p$-quasi continuous representative. 

\medskip

The following two  decomposition theorems hold:

\begin{theorem}\label{decmeas}
	Let $\mu\in \mathcal{M}(\Omega)$. Then $\mu$ can be uniquely decomposed as
	$$\mu=\mu_d+\mu_c,$$
	\noindent where $\mu_d$ is diffuse with respect to the $p$-capacity and $\mu_c$ is concentrated on a set of zero $p$-capacity. Moreover, if $\mu \ge 0$, then $\mu_d,\mu_c\ge 0$.
\end{theorem}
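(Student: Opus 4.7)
The plan is to first establish the decomposition for nonnegative measures via a maximality argument, and then extend it to signed measures using the Jordan decomposition $\mu = \mu^+ - \mu^-$ and apply the nonnegative case to each of $\mu^{\pm}$.

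Assume first $\mu \geq 0$. I would define
$$\alpha := \sup\{\mu(B) : B \subseteq \Omega \text{ Borel}, \ \operatorname{cap}_p(B)=0\},$$
which is finite since $\mu(\Omega)<\infty$. Picking a maximizing sequence $B_n$ of Borel sets of zero $p$-capacity with $\mu(B_n)\to\alpha$, I would set $N:=\bigcup_n B_n$. The countable subadditivity of $\operatorname{cap}_p$ yields $\operatorname{cap}_p(N)=0$, and monotonicity gives $\mu(N)=\alpha$, so the supremum is attained. Then I define
$$\mu_c := \mu \mathbin{\llcorner} N, \qquad \mu_d := \mu - \mu_c = \mu\mathbin{\llcorner}(\Omega\setminus N).$$
By construction $\mu_c$ is concentrated on the zero $p$-capacity set $N$. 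To check $\mu_d\ll\operatorname{cap}_p$, take any Borel $E$ with $\operatorname{cap}_p(E)=0$; then $\operatorname{cap}_p(E\cup N)=0$ by subadditivity, so the maximality of $\alpha$ forces $\mu(E\cup N)=\mu(N)$, whence $\mu(E\setminus N)=0$ and therefore $\mu_d(E) = \mu(E\setminus N)=0$. Nonnegativity of $\mu_c$ and $\mu_d$ is immediate.

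For uniqueness, suppose $\mu = \mu_d^1 + \mu_c^1 = \mu_d^2 + \mu_c^2$ are two such decompositions, with $\mu_c^i$ concentrated on $N_i$ of zero $p$-capacity. Setting $\nu := \mu_d^1-\mu_d^2 = \mu_c^2-\mu_c^1$, the measure $\nu$ is simultaneously diffuse (as a difference of diffuse measures) and concentrated on the zero $p$-capacity set $N_1\cup N_2$. For any Borel $E$, every Borel subset $F\subseteq E\cap(N_1\cup N_2)$ has $\operatorname{cap}_p(F)=0$, so $\nu(F)=0$; the Hahn decomposition then forces $|\nu|(E\cap(N_1\cup N_2))=0$, and consequently $|\nu|(E)=0$. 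Hence $\nu\equiv 0$, proving uniqueness.

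Finally, for a signed $\mu$, applying the construction to the Jordan parts gives $\mu^{\pm} = (\mu^{\pm})_d + (\mu^{\pm})_c$, and I would set $\mu_d := (\mu^+)_d - (\mu^-)_d$ and $\mu_c := (\mu^+)_c - (\mu^-)_c$; uniqueness of the signed decomposition follows from the nonnegative case applied to $\nu^\pm$ where $\nu$ is the difference of two competing decompositions. The only technically delicate step is verifying that the supremum $\alpha$ is attained: this is where countable subadditivity of the $p$-capacity (from the third item of the theorem following Definition \ref{a8}) is essential, since without it the union $N$ need not have zero $p$-capacity.
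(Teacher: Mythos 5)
Your proof is correct and, unlike the paper, is self-contained: the paper merely refers the reader to Lemma~2.1 of \cite{fuku} without reproducing an argument. Your construction is the classical one, namely a Lebesgue-type decomposition with the null sets replaced by zero $p$-capacity sets: you exhibit a single Borel set $N$ with $\operatorname{cap}_p(N)=0$ carrying the maximal amount of $\mu$-mass among all such sets, and countable subadditivity of $\operatorname{cap}_p$ is exactly what guarantees this supremum is attained by the countable union of a maximizing sequence (you correctly identify this as the delicate point). The diffuseness of the remainder then falls out of maximality, and the uniqueness argument --- showing the difference $\nu$ of two candidate decompositions is simultaneously diffuse with respect to $\operatorname{cap}_p$ and concentrated on a zero $p$-capacity set, hence zero via the Hahn decomposition --- is clean. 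One minor expository point: that uniqueness argument makes no use of nonnegativity anywhere and therefore applies verbatim to signed $\mu$, so the final remark that uniqueness in the signed case ``follows from the nonnegative case applied to $\nu^\pm$'' is an unnecessary detour; the argument you already wrote settles the signed case directly. This is a redundancy, not a gap.
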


\begin{proof}
	See Lemma $2.1$ \cite{fuku}.
\end{proof}

\begin{theorem}\label{diffuse}
	Let $\mu \in \mathcal{M}(\Omega)$. Then 
	$\mu\in \mathcal{M}^p_0(\Omega)$   if and only if   $\mu\in L^1(\Omega)+W^{-1,p'}(\Omega)$.
\end{theorem}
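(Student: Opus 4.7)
The statement is a pair of implications; the forward implication is elementary, while the converse is the substantive content.

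\textbf{Sufficiency} ($\mu \in L^1 + W^{-1,p'}$ implies $\mu$ diffuse). Write $\mu = f - \operatorname{div}(G)$ with $f \in L^1(\Omega)$ and $G \in (L^{p'}(\Omega))^N$. For a Borel $E$ with $\operatorname{cap}_p(E)=0$, by inner regularity of the bounded Radon measure $\mu$ it suffices to prove $|\mu|(K)=0$ for every compact $K \subseteq E$. By the capacitary definition, I can choose $\varphi_n \in C^\infty_c(\Omega)$ with $0 \le \varphi_n \le 1$, $\varphi_n \equiv 1$ on an open neighborhood of $K$, and $\|\nabla \varphi_n\|_{L^p(\Omega)} \to 0$. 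Since $1<p<N$, zero $p$-capacity forces $|K|=0$, and by Poincaré's inequality a subsequence satisfies $\varphi_n \to 0$ in $W^{1,p}_0(\Omega)$ and a.e.\ in $\Omega$. Testing gives
$$\int_\Omega \varphi_n \, d\mu = \int_\Omega f \varphi_n + \int_\Omega G \cdot \nabla \varphi_n \longrightarrow 0,$$
by dominated convergence (with dominant $\|f\|_{L^1}$) and Hölder. Repeating the construction with test functions supported in neighborhoods of $K \cap P$ and $K \cap N$, where $\Omega = P \cup N$ is the Hahn decomposition of $\mu$, yields $\mu^+(K) = \mu^-(K) = 0$, hence $|\mu|(K)=0$.

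\textbf{Necessity} ($\mu$ diffuse implies $\mu \in L^1 + W^{-1,p'}$). By Hahn decomposition, $\mu^+$ and $\mu^-$ are both diffuse (they are concentrated on disjoint Borel subsets of $\Omega$, and a subset of a zero-capacity set has zero capacity), so it is enough to handle $0 \le \mu \in \mathcal{M}^p_0(\Omega)$. The plan is to approximate $\mu$ from below by a non-decreasing sequence $\mu_n \in W^{-1,p'}(\Omega) \cap L^\infty(\Omega)$ of positive elements with $\mu_n \to \mu$ in the narrow topology (cf.\ Proposition~\ref{approssimazionediffusecrescente}, whose statement is presumed established). For each $n$, I solve the auxiliary Dirichlet problem
$$-\operatorname{div}(|\nabla u_n|^{p-2}\nabla u_n) + |u_n|^{p-2}u_n = \mu_n \quad \text{in } \Omega,\qquad u_n=0 \ \text{on } \partial \Omega,$$
which admits a unique weak solution $u_n \in W^{1,p}_0(\Omega)\cap L^\infty(\Omega)$ by standard monotone-operator theory, and testing with $u_n$ yields uniform bounds $\|u_n\|_{W^{1,p}_0} + \|u_n\|_{L^p} \le C|\mu|(\Omega)^{1/p}$. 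After extracting a subsequence, $u_n \rightharpoonup u$ in $W^{1,p}_0(\Omega)$ and, by Minty's trick combined with an a.e.-gradient convergence argument (in the spirit of \cite{bm}), $\nabla u_n \to \nabla u$ a.e.; passing to the limit in the approximate equation against test functions $\varphi \in C^1_c(\Omega)$ identifies $\mu = |u|^{p-2}u - \operatorname{div}(|\nabla u|^{p-2}\nabla u)$ in $\mathcal{D}'(\Omega)$, with $|u|^{p-2}u \in L^1(\Omega)$ (since $u \in L^p \hookrightarrow L^{p-1}$ after adjustment) and $|\nabla u|^{p-2}\nabla u \in (L^{p'}(\Omega))^N$, giving the sought decomposition.

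\textbf{Main obstacle.} The delicate point is the monotone approximation $\mu_n \uparrow \mu$ by elements of $W^{-1,p'} \cap L^\infty$. This is where diffuseness is essentially used: on sets of zero $p$-capacity any $W^{-1,p'}$ element is quasi-everywhere unable to detect concentrated mass, so any approximation must respect the capacitary structure of $\mu$. The standard construction proceeds either via a Fukushima-type capacitary Radon--Nikodym theorem, or via an ad hoc duality/Hahn--Banach argument on the space of bounded quasi-continuous functions in $W^{1,p}_0(\Omega)$ paired against diffuse measures. The second non-trivial step is the a.e.\ convergence of $\nabla u_n$ needed to pass to the limit in the nonlinear principal part; this is routine once strong convergence of truncations is secured, but requires the specific structure of diffuse data to avoid loss of mass at the singular support of the limiting measure.
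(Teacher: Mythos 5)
Your sufficiency direction is essentially sound (modulo a small cleanup: you need to handle the signed measure by showing $\mu(K)=0$ for every compact $K$ of zero $p$-capacity, then noting that every Borel subset of a null-capacity set is again a null-capacity set, which gives $|\mu|(E)=0$; invoking test functions supported ``near $K\cap P$ and $K\cap N$'' is problematic because these sets need not be compact or open). The paper itself does not reprove this implication but simply cites \cite{bgo}.

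The necessity direction, however, contains a genuine gap. You claim that testing the regularized problem
$$-\operatorname{div}(|\nabla u_n|^{p-2}\nabla u_n)+|u_n|^{p-2}u_n=\mu_n$$
with $u_n$ yields a bound $\|u_n\|_{W^{1,p}_0(\Omega)}\le C\,|\mu|(\Omega)^{1/p}$. This is false. Testing gives
$\int_\Omega |\nabla u_n|^p+\int_\Omega |u_n|^p = \int_\Omega u_n\,d\mu_n$,
and the right-hand side cannot be absorbed: the $\mu_n$ are only uniformly bounded in total variation, not in $W^{-1,p'}(\Omega)$, and $u_n$ is not uniformly bounded in $L^\infty(\Omega)$. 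Indeed, if such a uniform $W^{1,p}_0$ bound held, the weak limit $u$ would itself lie in $W^{1,p}_0(\Omega)$ and one would conclude $\mu\in W^{-1,p'}(\Omega)$, which is false already for $\mu\in L^1(\Omega)\setminus W^{-1,p'}(\Omega)$. For measure (or $L^1$) data the solutions of $p$-Laplace-type equations obtained by approximation lie only in $W^{1,q}_0(\Omega)$ for $q<\frac{N(p-1)}{N-1}<p$ (as recalled in Section~\ref{sec:measureex}), so the candidate decomposition $\mu=|u|^{p-2}u-\operatorname{div}(|\nabla u|^{p-2}\nabla u)$ does not place $|\nabla u|^{p-2}\nabla u$ in $(L^{p'}(\Omega))^N$. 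Furthermore, you lean on Proposition~\ref{approssimazionediffusecrescente} to produce the monotone $W^{-1,p'}$-approximation $\mu_n\uparrow\mu$; this risks circularity, since the Baras--Pierre construction behind that proposition relies on structural information about diffuse measures that is close to the very statement being proved. The actual route in \cite{bgo} is measure-theoretic: one first represents $\mu$ as absolutely continuous with respect to a capacitary-type measure $\gamma\in W^{-1,p'}(\Omega)$ via a Radon--Nikodym argument for capacities, and then separates the bounded part of the density (giving a $W^{-1,p'}$ piece, because multiplying $\gamma$ by a bounded quasi-continuous function keeps it there) from the unbounded tail (giving an $L^1$ piece); no a priori energy estimate of the type you invoke is used.
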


\begin{proof}
	See Theorem $2.1$ of \cite{bgo}.
\end{proof}

Observe that, as $L^1(\Omega)\cap W^{-1,p'}(\Omega)\neq\{0\}$, the decomposition given by Theorem \ref{diffuse} is not unique.

\medskip

In view of the previous theorem, measures which are diffuse with respect to the $p$-capacity can be straightforwardly  approximated as follows:
\begin{proposition}\label{approssimazionediffuse}
	Let $\mu \in\mathcal{M}^p_0(\Omega)$ such that $\mu= f - \operatorname{div}(G)$.
	Then there exists a sequence of nonnegative functions $\mu_n$ in $L^2(\Omega)$ with
	$$\mu_n= f_n - \operatorname{div}(G_n)$$
	where $f_n\in L^2(\Omega)$ converges to $f$ weakly $L^1(\Omega)$ while $G_n$ converges strongly to $G$ in $(L^{p'}(\Omega))^N$.
\end{proposition}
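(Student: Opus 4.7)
The nonnegativity requirement on $\mu_n$ forces the implicit hypothesis $\mu\ge 0$, which I take for granted. By Theorem~\ref{diffuse} I fix a decomposition $\mu = f - \operatorname{div}(G)$ with $f \in L^1(\Omega)$ and $G \in (L^{p'}(\Omega))^N$. The strategy is regularization by convolution, exploiting the freedom in the decomposition supplied by Theorem~\ref{diffuse}.

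Let $(\rho_n)$ be a standard nonnegative mollifier with $\operatorname{supp}(\rho_n)\subset B_{1/n}(0)$ and $\int \rho_n = 1$, and extend $\mu$, $f$, $G$ by zero to $\mathbb{R}^N$. I would set
\[
\mu_n := \rho_n \ast \mu, \qquad f_n := \rho_n \ast f, \qquad G_n := \rho_n \ast G.
\]
All three are smooth. Since $\rho_n,\mu \ge 0$, one has $\mu_n \ge 0$, and boundedness of $\Omega$ together with smoothness force $\mu_n, f_n \in L^2(\Omega)$ and $G_n \in (L^{p'}(\Omega))^N$. Classical mollification estimates yield $f_n \to f$ strongly (hence weakly) in $L^1(\Omega)$ and $G_n \to G$ strongly in $(L^{p'}(\Omega))^N$.

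The nontrivial point is the identity $\mu_n = f_n - \operatorname{div}(G_n)$ on $\Omega$. Because convolution commutes with distributional differentiation on $\mathbb{R}^N$, the identity holds provided the zero-extended identity $\tilde\mu = \tilde f - \operatorname{div}(\tilde G)$ survives on $\mathbb{R}^N$. In general it does not: if the normal trace of $G$ on $\partial\Omega$ is nonzero, then $\operatorname{div}(\tilde G)$ picks up a singular contribution $-(G\cdot\nu)\,\mathcal{H}^{N-1}\!\llcorner\partial\Omega$ absent from $\tilde\mu$.

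\emph{Main obstacle.} To remove this spurious boundary mass, I would first pre-approximate $G$ by $G^{(k)} \in C_c^\infty(\Omega)^N$ strongly in $(L^{p'}(\Omega))^N$; the zero-extension of such a $G^{(k)}$ is smooth and introduces no boundary term. Rewriting the decomposition as $\mu = f^{(k)} - \operatorname{div}(G^{(k)})$ with $f^{(k)} := f + \operatorname{div}(G^{(k)}-G) \in L^1(\Omega) + W^{-1,p'}(\Omega)$, a diagonal extraction $k=k(n)\to\infty$ sufficiently fast and the mollifier theory applied to this corrected decomposition produce the desired $\mu_n$. The hardest step is verifying that the resulting $f_n$ still converges weakly to $f$ in $L^1(\Omega)$: the correction $\operatorname{div}(G^{(k)}-G)$ is only distributional, so one must lean on the strong $L^{p'}$-convergence $G^{(k)}\to G$ together with an equi-integrability argument for $\{f_n\}$, which in turn rests on the uniform bound $\|\mu_n\|_{L^1(\Omega)} \le \mu(\Omega)$ and the strong $L^1$ convergence $\rho_n\ast f\to f$.
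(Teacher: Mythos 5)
The paper states this proposition without proof, merely asserting that such an approximation can be obtained ``straightforwardly'' from Theorem~\ref{diffuse}, so there is no written argument to compare your attempt against. Evaluating your proposal on its own: you correctly identify the central obstacle, but the fix you propose does not resolve it, and I think the plan as written cannot be completed.

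The obstruction is intrinsic and cannot be removed by reshuffling the decomposition. On $\mathbb{R}^{N}$ one has, in the sense of distributions,
\[
\widetilde{f}-\operatorname{div}(\widetilde{G}) \;=\; \widetilde{\mu} + T,
\]
where $T$ is a distribution supported on $\partial\Omega$ (formally the normal trace $G\cdot\nu$ on $\partial\Omega$); a simple check with $\Omega=B_1$, $\mu=0$, $f=1$, $G(x)=x/2$ shows $T\neq 0$ in general. Your fix replaces $G$ by $G^{(k)}\in C^\infty_c(\Omega)^N$ and sets $f^{(k)}:=f+\operatorname{div}(G^{(k)}-G)$. But the only consistent zero-extension of $f^{(k)}$ is $\widetilde{f}+\operatorname{div}(\widetilde{G^{(k)}})-\operatorname{div}(\widetilde{G})$, and after cancellation one returns exactly to $\widetilde{f}-\operatorname{div}(\widetilde{G})$: the boundary distribution $T$ is unchanged. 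Consequently $\rho_n\ast\widetilde{\mu}\neq(\rho_n\ast\widetilde{f})-\operatorname{div}(\rho_n\ast\widetilde{G})$ on $\Omega$; the defect $\rho_n\ast T$ lives in the $1/n$-neighbourhood of $\partial\Omega$ but does not go to $0$ in $L^1(\Omega)$ — it converges weakly-$*$ to a singular measure. If you try to absorb it into $f_n$, the resulting sequence is not weakly convergent in $L^1$ to any $L^1$ function, since its weak-$*$ limit has a singular part on $\partial\Omega$. So the proof as outlined breaks at the key step, and the passage ``a diagonal extraction $k=k(n)\to\infty$ \ldots produce the desired $\mu_n$'' papers over a gap rather than closing it. The weak-$L^1$ convergence of $f_n$, which you flag as ``the hardest step,'' is in fact false for the construction you propose.

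The mollification idea can be made to work, but the correct implementation is to move the decomposition \emph{inward} before smoothing, rather than extend it by zero outward. For a smooth bounded domain one chooses a one-parameter family of diffeomorphisms $\Phi_\varepsilon:\overline{\Omega}\to\Omega$, $\Phi_\varepsilon\to\mathrm{id}$ (e.g.\ a dilation $x\mapsto(1-\varepsilon)x$ if $\Omega$ is star-shaped, or an inward normal shift of size $\varepsilon$ near $\partial\Omega$). The pushforward $\mu^\varepsilon:=(\Phi_\varepsilon)_\sharp\mu$ is nonnegative, supported compactly in $\Omega$, and a change of variables in $\int\varphi\,d\mu^\varepsilon=\int f\,(\varphi\circ\Phi_\varepsilon)+\int G\cdot\nabla(\varphi\circ\Phi_\varepsilon)$ produces a decomposition $\mu^\varepsilon=f^\varepsilon-\operatorname{div}(G^\varepsilon)$ with $f^\varepsilon\to f$ strongly in $L^1(\Omega)$ and $G^\varepsilon\to G$ strongly in $(L^{p'}(\Omega))^N$ as $\varepsilon\to0$. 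Since $\operatorname{supp}\mu^\varepsilon$, $\operatorname{supp}f^\varepsilon$ and $\operatorname{supp}G^\varepsilon$ are compactly contained in $\Omega$, one may now mollify by $\rho_\delta$ with $\delta$ small relative to $\varepsilon$; convolution then genuinely commutes with $\operatorname{div}$ on $\Omega$ and no boundary term appears. A diagonal extraction $\varepsilon=\varepsilon_n\to0$, $\delta=\delta_n\to0$, $\delta_n\ll\varepsilon_n$, yields nonnegative smooth $\mu_n\in L^2(\Omega)$, $f_n\to f$ strongly (hence weakly) in $L^1(\Omega)$ and $G_n\to G$ strongly in $(L^{p'}(\Omega))^N$, as required.
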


A monotone  approximation result for nonnegative measures also holds: 
\begin{proposition}\label{approssimazionediffusecrescente}
	Let $0\le\mu\in\mathcal{M}^p_0(\Omega)$ then there exists an increasing sequence $0\le\mu_n\in W^{-1,{p'}}(\Omega)$
	such that $\mu_n$ converges to $\mu$ strongly in $\mathcal{M}(\Omega)$.
\end{proposition}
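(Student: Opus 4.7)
The natural strategy is to construct the approximating sequence as $\mu_n:=\eta_n\mu$ for a suitable increasing family of bounded nonnegative Borel functions $\eta_n\nearrow 1$ $\mu$-a.e. With this choice, nonnegativity and monotonicity of $(\mu_n)$ are automatic, and the strong convergence in $\mathcal{M}(\Omega)$ follows at once from monotone convergence:
$$|\mu-\mu_n|(\Omega)=\int_\Omega(1-\eta_n)\,d\mu\xrightarrow{n\to\infty}0.$$
The entire content of the proof is therefore the choice of $\eta_n$ so that $\eta_n\mu$ belongs to $W^{-1,p'}(\Omega)$.

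\textbf{Choice of $\eta_n$.} Start from the decomposition $\mu=f_0-\operatorname{div}(G_0)$ provided by Theorem \ref{diffuse}, with $f_0\in L^1(\Omega)$ and $G_0\in (L^{p'}(\Omega))^N$. The plan is then to produce an auxiliary $\operatorname{cap}_p$-quasi continuous function $v\in W^{1,p}_0(\Omega)\cap L^\infty(\Omega)$ with $0\le v\le 1$ that is strictly positive on a set of full $\mu$-measure. A natural candidate is an appropriate truncation and normalization of the renormalized solution of the Dirichlet problem with right-hand side $\mu$, the positivity $v>0$ $\mu$-a.e. being obtained via the maximum principle combined with a Kato-type inequality in the spirit of Lemma \ref{Katoinequalitylocal}. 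Then set
$$\eta_n:=T_1(nv)\in W^{1,p}_0(\Omega)\cap L^\infty(\Omega),$$
so that $0\le\eta_n\le 1$ and $\eta_n\nearrow 1$ on $\{v>0\}$, hence $\mu$-a.e.

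\textbf{Verification that $\mu_n\in W^{-1,p'}(\Omega)$.} For any $\varphi\in C^\infty_c(\Omega)$, expanding the pairing $\langle\eta_n\mu,\varphi\rangle=\langle\mu,\eta_n\varphi\rangle$ via the decomposition of $\mu$ yields
$$\langle\eta_n\mu,\varphi\rangle=\int_\Omega \eta_n\varphi\,f_0+\int_\Omega\bigl(\eta_n\nabla\varphi+\varphi\,\nabla\eta_n\bigr)\cdot G_0.$$
The second integral is controlled by $\|G_0\|_{L^{p'}}\bigl(\|\eta_n\|_{L^\infty}\|\nabla\varphi\|_{L^p}+\|\nabla\eta_n\|_{L^p}\|\varphi\|_{L^\infty}\bigr)$, which is of the right $W^{-1,p'}$-type up to a constant depending on $n$ through $\|\nabla\eta_n\|_{L^p}$. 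The first integral is the delicate one, since $L^1(\Omega)\not\hookrightarrow W^{-1,p'}(\Omega)$ in general: to absorb it one needs to use a refined version of the decomposition (as in \cite{dmop}) in which $f_0$ is controlled in a weighted $L^1$-space adapted to $v$, so that $\eta_n$ effectively screens the $L^1$-contribution on the set where it could fail to pair continuously with $W^{1,p}_0$.

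\textbf{Main obstacle.} The technical heart of the argument is the coupled construction of the cut-off function $v\in W^{1,p}_0(\Omega)\cap L^\infty(\Omega)$ that is strictly positive $\mu$-a.e., together with a refined decomposition of $\mu$ whose $L^1$-component is controlled in terms of $v$; this is what allows $\eta_n\mu$ to belong to $W^{-1,p'}(\Omega)$ rather than to the larger space $L^1(\Omega)+W^{-1,p'}(\Omega)$. Without the $L^\infty$ bound on $v$, the truncations $\eta_n$ would not be admissible multipliers, and without the positivity of $v$ $\mu$-a.e. the $\eta_n$ would not converge to $1$ on the support of $\mu$. Once both ingredients are secured the remainder of the proof is a routine combination of duality estimates and monotone convergence.
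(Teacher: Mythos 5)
There is a genuine gap, and you in fact name it yourself in your ``Main obstacle'' paragraph: the membership $\mu_n=\eta_n\mu\in W^{-1,p'}(\Omega)$ is never established. Monotonicity and strong convergence in $\mathcal{M}(\Omega)$ are indeed immediate once one has a sequence $\eta_n\nearrow 1$ $\mu$-a.e.\ with $\eta_n\mu\in W^{-1,p'}(\Omega)$, so the entire content of the statement is concentrated exactly in the step you leave open. Moreover, your specific construction $\eta_n=T_1(nv)$ with $v\in W^{1,p}_0(\Omega)\cap L^\infty(\Omega)$ cannot, in general, produce that membership: a diffuse measure $\mu$ can have an $L^1$-part that does not belong to $W^{-1,p'}(\Omega)$, and multiplying it by a bounded $W^{1,p}_0$-function $\eta_n$ does not land it in $W^{-1,p'}(\Omega)$. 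Worse, as $n\to\infty$ your $\eta_n$ converge to $\mathbf{1}_{\{v>0\}}$, which is essentially $1$ on $\operatorname{supp}\mu$; so there is no ``screening'' at all in the limit, and the proposed fix of a ``refined decomposition with $f_0$ controlled in a weighted $L^1$-space adapted to $v$'' is precisely the missing lemma, not a routine ingredient.

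The paper's proof is simply a citation to Baras--Pierre \cite[Lemme 4.2]{bapi}, whose argument is of a different nature: one considers the family $\mathcal{A}:=\{\nu\in W^{-1,p'}(\Omega)\cap\mathcal{M}(\Omega):\,0\le\nu\le\mu\}$, shows it is upward directed (the max of two elements of $\mathcal{A}$ is again in $\mathcal{A}$), and extracts an increasing sequence $\mu_n\in\mathcal{A}$ with $\mu_n(\Omega)\to\sup_{\nu\in\mathcal{A}}\nu(\Omega)$. The crux is then that this supremum equals $\mu(\Omega)$; otherwise the residual $\mu-\sup_n\mu_n$ would be a nonzero nonnegative measure singular with respect to every element of $W^{-1,p'}(\Omega)\cap\mathcal{M}(\Omega)_+$, hence concentrated on a set of zero $p$-capacity, contradicting the diffuseness of $\mu$. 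An alternative, closer in spirit to what you were reaching for, uses the representation $\mu=\phi\gamma$ with $0\le\gamma\in W^{-1,p'}(\Omega)\cap\mathcal{M}(\Omega)$ and $0\le\phi\in L^1(\Omega,\gamma)$, and sets $\mu_n:=T_n(\phi)\gamma$: then $0\le\mu_n\le n\gamma$, and a nonnegative measure dominated by a nonnegative $W^{-1,p'}$-measure is itself in $W^{-1,p'}$ (since $|\langle\mu_n,\varphi\rangle|\le\int|\varphi|\,d\mu_n\le n\int|\varphi|\,d\gamma\le n\|\gamma\|_{W^{-1,p'}}\|\varphi\|_{W^{1,p}_0}$). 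In either case the right ``reference object'' against which one cuts off is a genuinely $W^{-1,p'}$ measure dominating $\mu_n$, not the Lebesgue measure via a function $v$; this is the ingredient your proof is missing.
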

\begin{proof} 
	See  \cite[Lemme $4.2$]{bapi}.
\end{proof}

Here we collect some useful technical results that can be found  in \cite{dmop}:
\begin{proposition}\label{linfmud}
	Let $\mu_d\in\mathcal{M}^p_0(\Omega)$ and let $v\in W^{1,p}_0(\Omega)$. Then the cap$_p$-quasi continuous representative of $v$ is measurable with respect to $\mu_d$. If moreover $v$ belongs to $L^{\infty}(\Omega)$, then the cap$_p$-quasi continuous representative of $v$ belongs to $L^{\infty}(\Omega;\mu_d)$ (and hence to $L^1(\Omega;\mu_d)$).
\end{proposition}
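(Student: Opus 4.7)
The plan is to leverage the tight interplay between $\operatorname{cap}_p$-quasi continuity and the diffuseness of $\mu_d$. First, I would fix a $\operatorname{cap}_p$-quasi continuous representative $\hat{v}$ of $v$. By the very definition of quasi-continuity, for each $n\in\mathbb{N}$ there exists an open set $E_n\subset\Omega$ with $\operatorname{cap}_p(E_n)<1/n$ such that $\hat{v}|_{\Omega\setminus\overline{E_n}}$ is continuous. Setting $N:=\bigcap_n\overline{E_n}$, the monotonicity of the $p$-capacity yields $\operatorname{cap}_p(N)\le \operatorname{cap}_p(E_n)<1/n$ for every $n$, so $\operatorname{cap}_p(N)=0$ and thus $\mu_d(N)=0$ since $\mu_d\in\mathcal{M}_0^p(\Omega)$. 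On $\Omega\setminus N=\bigcup_n(\Omega\setminus\overline{E_n})$, for every $a\in\mathbb{R}$ the set $\{\hat{v}>a\}\cap(\Omega\setminus\overline{E_n})$ is relatively open (by continuity), hence open in $\Omega$ (as $\Omega\setminus\overline{E_n}$ is open), so Borel. Consequently $\{\hat{v}>a\}\setminus N$ is a countable union of Borel sets, and $\{\hat{v}>a\}$ coincides with a Borel set up to the $\mu_d$-null set $\{\hat{v}>a\}\cap N$, which gives $\mu_d$-measurability of $\hat{v}$.

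For the essential boundedness, assume further $v\in L^{\infty}(\Omega)$ and set $M:=\|v\|_{L^\infty(\Omega)}$. Since $T_M$ is globally Lipschitz and preserves both membership in $W^{1,p}_0(\Omega)$ and $\operatorname{cap}_p$-quasi continuity, the function $T_M(\hat{v})$ is a $\operatorname{cap}_p$-quasi continuous representative of $T_M(v)$. On the other hand $T_M(v)=v$ Lebesgue-a.e.\ in $\Omega$, since $|v|\le M$ a.e.; by the uniqueness of the quasi continuous representative modulo sets of zero $p$-capacity, this forces $T_M(\hat{v})=\hat{v}$ $\operatorname{cap}_p$-quasi everywhere. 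Invoking once more the diffuseness of $\mu_d$, the same identity holds $\mu_d$-a.e., which is exactly to say $|\hat{v}|\le M$ $\mu_d$-a.e., i.e., $\hat{v}\in L^\infty(\Omega,\mu_d)$ with $\|\hat{v}\|_{L^\infty(\Omega,\mu_d)}\le\|v\|_{L^\infty(\Omega)}$. Finally, since $|\mu_d|(\Omega)<\infty$, one has the immediate inclusion $L^\infty(\Omega,\mu_d)\hookrightarrow L^1(\Omega,\mu_d)$, completing the proof.

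The pivotal step, which I expect to be the main obstacle to justify rigorously, is the uniqueness of the quasi-continuous representative modulo $\operatorname{cap}_p$: if two $\operatorname{cap}_p$-quasi continuous functions agree Lebesgue-a.e., they must agree $\operatorname{cap}_p$-quasi everywhere. The argument typically proceeds via the capacitary weak-type estimate $\operatorname{cap}_p(\{|w|>t\})\le t^{-p}\|\nabla w\|_{L^p(\Omega)}^p$ valid for $w\in W^{1,p}_0(\Omega)$, which follows from Definition \ref{a8} applied to a smooth approximation of $|w|/t$; applying this to $w:=\hat{v}-T_M(\hat{v})\in W^{1,p}_0(\Omega)$, which has zero Sobolev norm because $v\equiv T_M(v)$ a.e., one deduces $\operatorname{cap}_p(\{|w|>t\})=0$ for every $t>0$, and hence $w=0$ $\operatorname{cap}_p$-quasi everywhere. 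Once this identification principle is at hand, the rest of the argument is a direct application of the definitions of diffuse measure and $\operatorname{cap}_p$-quasi continuity.
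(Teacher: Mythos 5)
The paper does not prove this Proposition: it is quoted from \cite{dmop} without argument, so there is no internal proof to compare against. Your overall plan is the standard one found in that reference, but one step in the measurability half fails as written. You set $N:=\bigcap_n\overline{E_n}$ and invoke monotonicity to conclude $\operatorname{cap}_p(N)\le\operatorname{cap}_p(E_n)$. However $N\subseteq\overline{E_n}$, not $N\subseteq E_n$, so monotonicity only gives $\operatorname{cap}_p(N)\le\operatorname{cap}_p(\overline{E_n})$, and the closure of a set of small capacity can have arbitrarily large capacity: a countable union of open balls of shrinking radii centered at the rational points of $\Omega$ has arbitrarily small $p$-capacity while its closure contains all of $\Omega$. (This example also shows that the definition of $\operatorname{cap}_p$-quasi continuity as stated in Appendix~\ref{app:radon}, with $\Omega\setminus\overline{E}$, is vacuous, since $\Omega\setminus\overline E$ can be made empty; the overline is a typo, and the sources \cite{hkm} and \cite{dmop} use $\Omega\setminus E$.) With the standard definition the fix is immediate: take $N:=\bigcap_n E_n$, so $\operatorname{cap}_p(N)=0$ and $\mu_d(N)=0$, and note that each $\Omega\setminus E_n$ is now \emph{closed}, so $\{\hat v>a\}\cap(\Omega\setminus E_n)$ is the intersection of an open set with a closed set, hence Borel but not open as you wrote. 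Then $\{\hat v>a\}$ differs from a Borel set only by a subset of the $\mu_d$-null set $N$, which is exactly $\mu_d$-measurability.

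The $L^\infty$ half is correct. Reducing to the uniqueness of the $\operatorname{cap}_p$-quasi continuous representative modulo sets of zero $p$-capacity is the right lever, and your derivation of that identification principle via the capacitary weak-type estimate $\operatorname{cap}_p(\{|\hat w|>t\})\le t^{-p}\int_\Omega|\nabla w|^p$, applied to $w=\hat v-T_M(\hat v)$ whose Sobolev seminorm vanishes because $v=T_M(v)$ a.e., is the standard argument and is complete.
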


\begin{lemma}\label{dalmaso}
	Let $\lambda\in\mathcal{M}(\Omega)$ be nonnegative and concentrated on a set $E$ such that $\operatorname{cap}_p(E)=0$. Then, for every $\eta>0$, there exists a compact subset $K_{\eta}\subset E$ and a function $\Psi_{\eta}\in C^{\infty}_c(\Omega)$ such that the following hold
	$$\lambda(E\setminus K_{\eta})<\eta,\;\text{in}\;\;\Omega \ \ \text{with}\ \ \;\;\Psi_{\eta}\equiv1\;\text{in}\;K_{\eta}, \ \ \ \displaystyle 0\le \Psi_\eta\le 1,$$
	$$
	\lim_{\eta\to 0^+}||\Psi_{\eta}||_{W^{1,p}_0(\Omega)}=0\,,
	$$
	and, in particular
\begin{equation*}\label{serve}
	 \ \ \ 0\le \int_{\Omega} (1-\Psi_\eta)\ d \lambda \le \eta, \ \text{and}\ \ \ \int_{\Omega} |\nabla \Psi_\eta|^p\le \eta.
\end{equation*} 
\end{lemma}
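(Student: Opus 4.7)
\textbf{Proof plan for Lemma \ref{dalmaso}.}

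The plan is to build $\Psi_\eta$ in two independent steps: first isolate a compact portion of $E$ on which $\lambda$ is almost entirely concentrated, then exploit the vanishing capacity of that compact set to build a smooth cut-off equal to $1$ on it with small Dirichlet energy. Since $\lambda$ is a finite nonnegative Radon measure on $\Omega$, it is inner regular. Hence for every $\eta>0$ there exists a compact set $K_\eta\subseteq E$ with $\lambda(E\setminus K_\eta)<\eta$. By monotonicity of $\operatorname{cap}_p$ (cf. Definition~\ref{a8}), $\operatorname{cap}_p(K_\eta)\le \operatorname{cap}_p(E)=0$, so $\operatorname{cap}_p(K_\eta)=0$.

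Next, by the very definition of $p$-capacity, for every $\delta>0$ there exists $\varphi_\delta\in C_c^\infty(\Omega)$ with $\varphi_\delta\ge 1$ on $K_\eta$ and $\int_\Omega|\nabla\varphi_\delta|^p<\delta$. To force values in $[0,1]$ and preserve smoothness, fix once and for all a smooth non-decreasing cut-off $\rho:\mathbb{R}\to[0,1]$ with $\rho\equiv 0$ on $(-\infty,0]$, $\rho\equiv 1$ on $[1,\infty)$, and $\|\rho'\|_\infty\le 2$, and set $\Psi_\eta:=\rho\circ \varphi_\delta$. Then $\Psi_\eta\in C_c^\infty(\Omega)$, $0\le\Psi_\eta\le 1$, $\Psi_\eta\equiv 1$ on $K_\eta$ (because $\varphi_\delta\ge 1$ there), and a pointwise chain rule gives
\begin{equation*}
\int_\Omega|\nabla\Psi_\eta|^p = \int_\Omega|\rho'(\varphi_\delta)|^p|\nabla\varphi_\delta|^p\le 2^p\int_\Omega|\nabla\varphi_\delta|^p<2^p\delta.
\end{equation*}
Choosing $\delta:=2^{-p}\eta$ yields $\int_\Omega|\nabla\Psi_\eta|^p\le\eta$, and by Poincaré's inequality on the bounded domain $\Omega$ this also controls $\|\Psi_\eta\|_{W^{1,p}_0(\Omega)}$, so $\|\Psi_\eta\|_{W^{1,p}_0(\Omega)}\to 0$ as $\eta\to 0^+$.

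Finally, since $\lambda$ is concentrated on $E$ and $\Psi_\eta\equiv 1$ on $K_\eta\subseteq E$, the integrand $1-\Psi_\eta$ vanishes on $K_\eta$ and is bounded by $1$ elsewhere; hence
\begin{equation*}
0\le\int_\Omega(1-\Psi_\eta)\,d\lambda = \int_{E\setminus K_\eta}(1-\Psi_\eta)\,d\lambda\le \lambda(E\setminus K_\eta)<\eta,
\end{equation*}
which completes the list of required properties. The only real technical point is the smoothness of $\Psi_\eta$: the direct truncation $\min(\varphi_\delta^+,1)$ is merely Lipschitz, so replacing a hard truncation by the smooth composition $\rho\circ\varphi_\delta$ is the key device — a standard trick but the step I expect to need the most care to state cleanly, since it also has to preserve the capacity-type gradient estimate through the factor $\|\rho'\|_\infty$.
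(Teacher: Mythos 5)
The paper does not prove Lemma~\ref{dalmaso}; it is stated in Appendix~\ref{app:radon} as one of several facts imported from \cite{dmop}, so there is no in-paper proof to compare against. Your argument is correct and is the standard one: inner regularity of the finite Radon measure gives the compact $K_\eta\subset E$ with $\lambda(E\setminus K_\eta)<\eta$; monotonicity of $\operatorname{cap}_p$ gives $\operatorname{cap}_p(K_\eta)=0$, which by the compact-set definition of capacity provides $\varphi_\delta\in C_c^\infty(\Omega)$ with $\varphi_\delta\ge 1$ on $K_\eta$ and arbitrarily small $p$-Dirichlet energy; and the smooth post-composition $\rho\circ\varphi_\delta$ with a fixed profile $\rho$ produces the $[0,1]$-valued smooth cut-off while keeping the gradient estimate, after which Poincaré controls the full $W^{1,p}_0$-norm and the measure estimate follows since $1-\Psi_\eta$ vanishes on $K_\eta$ and $\lambda$ lives on $E$. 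The one point you handled carefully and correctly is precisely the one that a naive attempt via $\min(\varphi_\delta^+,1)$ would botch, namely preserving $C^\infty$ regularity rather than merely Lipschitz; everything else is routine.
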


\begin{lemma}\label{dalmaso3}
	 Let $\mu\in \mathcal{M}_0^p(\Omega)$ and let $u\in W^{1,p}_0(\Omega)\cap L^\infty(\Omega)$. Then, up to the choice of its cap$_p$-quasi continuous representative, $u\in L^\infty(\Omega,\mu)$ and
$$\io u \mu\leq ||u||_{L^\infty(\Omega)}|\mu|(\Omega).$$
\end{lemma}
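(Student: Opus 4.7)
The plan is to exploit the interplay between the cap$_p$-quasi continuous representative $\widehat{u}$ of $u$ and the fact that $\mu$ does not charge sets of zero $p$-capacity. Let $M:=\|u\|_{L^{\infty}(\Omega)}$; the key claim I will establish is that
\begin{equation*}
|\widehat{u}|\leq M\quad\text{cap}_p\text{-quasi everywhere in }\Omega,
\end{equation*}
from which the conclusion will follow at once.

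To prove the claim, I would first set $v:=(\widehat{u}-M)^{+}$. Since $u\in W^{1,p}_0(\Omega)\cap L^{\infty}(\Omega)$, one has $(u-M)^{+}=0$ a.e.\ in the Lebesgue sense, so the function $v$ (which coincides with $(u-M)^{+}$ a.e.) belongs to $W^{1,p}_0(\Omega)$ and vanishes a.e. On the other hand, $v$ is itself cap$_p$-quasi continuous, being the composition of the cap$_p$-quasi continuous function $\widehat{u}$ with the Lipschitz map $s\mapsto (s-M)^{+}$. I would then invoke the standard fact that a cap$_p$-quasi continuous function which is zero almost everywhere must be zero cap$_p$-q.e.\ in $\Omega$ (this follows by exhausting $\Omega$ with compacta on whose complement $v$ is continuous, and observing that the open set $\{v>\varepsilon\}$ has positive Lebesgue measure whenever it is nonempty outside the exceptional set). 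Hence $\widehat{u}\leq M$ cap$_p$-q.e.\ in $\Omega$, and arguing symmetrically with $(\widehat{u}+M)^{-}$ gives $\widehat{u}\geq -M$ cap$_p$-q.e.

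At this point the conclusion follows almost immediately. By Proposition \ref{linfmud} the cap$_p$-quasi continuous representative $\widehat{u}$ is $\mu_d$-measurable; since $\mu\in\mathcal{M}_0^p(\Omega)$, the set $E:=\{x\in\Omega : |\widehat{u}(x)|>M\}$ has zero $p$-capacity and therefore $|\mu|(E)=0$. Hence $|\widehat{u}|\leq M$ holds $|\mu|$-a.e., which proves $u\in L^{\infty}(\Omega,\mu)$ with $\|u\|_{L^{\infty}(\Omega,\mu)}\leq \|u\|_{L^{\infty}(\Omega)}$. Integrating against $\mu$ then yields
\begin{equation*}
\int_{\Omega}u\,\mu \leq \int_{\Omega}|\widehat{u}|\,|\mu| \leq \|u\|_{L^{\infty}(\Omega)}\,|\mu|(\Omega),
\end{equation*}
which is precisely the stated inequality.

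The only nontrivial point is the implication \emph{cap$_p$-quasi continuous and zero a.e.\ $\Longrightarrow$ zero cap$_p$-q.e.}; this is a classical property of cap$_p$-quasi continuous representatives and is where all the work is hidden. Everything else is bookkeeping: the measurability of $\widehat{u}$ is furnished by Proposition \ref{linfmud}, and the transfer from ``cap$_p$-q.e.'' to ``$|\mu|$-a.e.'' is exactly the definition of $\mathcal{M}_0^p(\Omega)$.
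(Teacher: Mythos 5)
Your argument is correct, and it follows the natural route: the pointwise bound $|\widehat u|\le\|u\|_{L^\infty(\Omega)}$ cap$_p$-q.e.\ is transferred to $|\mu|$-a.e.\ via the definition of $\mathcal{M}_0^p(\Omega)$, and $\mu$-measurability of $\widehat u$ is supplied by Proposition~\ref{linfmud}. Note, however, that the paper does not prove this lemma at all; it is listed among the ``useful technical results that can be found in \cite{dmop},'' so there is no in-text proof to compare against. The one step you quote as ``where all the work is hidden''---a cap$_p$-quasi continuous function vanishing a.e.\ must vanish cap$_p$-q.e.---is indeed the classical ingredient (it appears, e.g., in \cite{hkm}), and your one-line sketch of it is a little loose: $\{v>\varepsilon\}$ is only \emph{relatively} open in the complement of the exceptional set, and one must also use that sets of small $p$-capacity have small Lebesgue measure in order to extract a contradiction with $v=0$ a.e.; neither gap is serious, but a citation would be cleaner than the sketch.
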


As in some cases we deal with measurable functions whose truncations have finite energy we state the following result:

\begin{lemma}\label{dalmaso2}
	Let $u:\Omega\to\mathbb{R}$ be a measurable function  almost everywhere finite on $\Omega$ such that $T_k(u)\in W^{1,p}_0(\Omega)$ for every $k>0$. Then there exists a measurable function $v:\Omega\to\mathbb{R}^N$ such that
	$$\nabla T_k(u)=v\chi_{\{|u|\leq k\}},$$ 
	and we define the gradient of $u$ as $\nabla u=v$.  
\end{lemma}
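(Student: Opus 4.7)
The idea is to glue together the gradients of the truncations $T_k(u)$ consistently and check that the resulting field reproduces each $\nabla T_k(u)$ on the sublevel set $\{|u|\le k\}$.

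First I would record the following chain-rule/Stampacchia facts for Sobolev functions, all of which are standard once one knows that $T_h(u)\in W^{1,p}_0(\Omega)$: for every $h\ge k>0$ one has $T_k\circ T_h=T_k$, hence
\begin{equation*}
\nabla T_k(u)=\nabla T_k(T_h(u))=\chi_{\{|T_h(u)|<k\}}\nabla T_h(u)=\chi_{\{|u|<k\}}\nabla T_h(u)\quad\text{a.e. in }\Omega,
\end{equation*}
and moreover $\nabla T_k(u)=0$ a.e. on $\{|u|\ge k\}$, since $T_k(u)$ is constant there (Stampacchia's level-set lemma). These two identities supply both the \emph{compatibility} and the \emph{support property} I need.

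Next I would define $v$ by patching. On the set $E_n:=\{|u|<n\}$ put $v:=\nabla T_n(u)$. For $n\le m$ the compatibility identity above gives $\nabla T_n(u)=\chi_{E_n}\nabla T_m(u)=\nabla T_m(u)$ a.e. on $E_n$, so the definition is independent of the chosen truncation level; in particular $v$ is unambiguously (and measurably) defined on $\bigcup_n E_n$. Since $u$ is a.e.\ finite, $\Omega\setminus\bigcup_n E_n$ is negligible, and one extends $v$ by $0$ there to obtain a measurable $v:\Omega\to\mathbb{R}^N$.

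Finally I would verify the identity $\nabla T_k(u)=v\,\chi_{\{|u|\le k\}}$ a.e. by splitting $\Omega$ into three pieces. On $\{|u|<k\}$ both sides equal $v$ by construction. On $\{|u|>k\}$ both sides vanish: the left by Stampacchia applied to $T_k(u)$, the right because the indicator is $0$. The only delicate point, and the main (though mild) obstacle, is the level set $\{|u|=k\}$: here Stampacchia gives $\nabla T_k(u)=0$ a.e., but one must also show $v=0$ a.e. on this set. This follows by applying Stampacchia once more to $T_{k+1}(u)$, which is constantly equal to $\pm k$ on $\{|u|=k\}\subset E_{k+1}$, so that $v=\nabla T_{k+1}(u)=0$ a.e. there. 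Thus both sides agree on $\{|u|=k\}$ as well, and the claim is proved.
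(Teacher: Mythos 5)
The paper states this lemma without proof, attributing it to \cite{dmop}; there is therefore no ``paper's own proof'' to compare against, so I evaluate the argument on its merits. Your proof is correct and is essentially the standard gluing argument for defining the generalized gradient of a function whose truncations are Sobolev. The two ingredients you isolate are exactly the right ones: the chain rule identity $\nabla T_k(u)=\chi_{\{|u|<k\}}\nabla T_h(u)$ for $h\ge k$ (which supplies compatibility of the patches), and Stampacchia's level-set lemma (which supplies both the vanishing of $\nabla T_k(u)$ on $\{|u|\ge k\}$ and the vanishing of the patched field on each level set $\{|u|=k\}$). Your three-way split $\{|u|<k\}$, $\{|u|=k\}$, $\{|u|>k\}$ is the clean way to check the final identity; many expositions sweep the middle set under the rug.

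One small imprecision to tidy up: you build $v$ by patching over the integer sublevel sets $E_n=\{|u|<n\}$, but then you must verify the claimed identity for \emph{every} real $k>0$, not only integers. In particular, when you handle $\{|u|=k\}$ you invoke $T_{k+1}(u)$ and $E_{k+1}$, which only tacitly makes sense when $k$ is an integer. The fix is cosmetic: for an arbitrary real $k>0$ choose any integer $n>k$; then $\{|u|\le k\}\subset E_n$, on $\{|u|<k\}$ one has $v=\nabla T_n(u)=\nabla T_k(u)$ by the compatibility identity, and on $\{|u|=k\}$ one has $T_n(u)\equiv\pm k$ so Stampacchia gives $v=\nabla T_n(u)=0=\nabla T_k(u)$. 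With that wording adjustment the argument is complete. Measurability of $v$ is also fine as you observe (e.g.\ $v=\lim_n\chi_{E_n}\nabla T_n(u)$ pointwise a.e., a pointwise limit of measurable functions).
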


\section{The Hopf Lemma}

In order to be self-contained we give the proof of the Hopf Lemma for an operator in divergence form which is widely used in the manuscript. 
\begin{lemma}[\textbf{Hopf's Lemma}]\label{hopf}
	Let $A$ be a bounded elliptic matrix with coercivity $\alpha>0$ and coefficients $a_{ij}\in C^{0,1}(\overline{\Omega})$. Let $u\in C^1(\overline{\Omega})$ such that
	\begin{equation}\label{ipohopf0}
		-\operatorname{div}(A(x)\nabla u) \le 0 \text{   in   } \Omega,
	\end{equation}
	and assume that there exists a point $x_0\in \partial\Omega$ such that
	\begin{equation}\label{ipohopf} u(x_0)> u(x) \text{  for  all  $x$ in neighbourhood of $x_0$ }.\end{equation}
	Moreover let assume that $\Omega$ satisfies the interior ball condition at $x_0$ (i.e. there exists an open ball $B$ contained in $\Omega$ with $x_0\in \partial B$).
	\\Then
	$$\frac{\partial u}{\partial \nu}(x_0)>0.$$
\end{lemma}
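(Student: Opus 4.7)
The plan is to follow the classical Hopf strategy, adapted to the divergence-form setting with Lipschitz coefficients. Let $B=B_R(y)$ be the interior ball at $x_0$, chosen with $R$ small enough that $\overline{B_R(y)}$ is contained in the neighborhood of $x_0$ where $u(x_0)\ge u(x)$. Set $\nu=(x_0-y)/R$, which coincides with the outward unit normal both to $B$ and (by the interior-ball tangency) to $\Omega$ at $x_0$. The strategy is to build a barrier function on the annular region $\mathcal{A}:=B_R(y)\setminus\overline{B_{R/2}(y)}$ so that an elliptic comparison on $\mathcal{A}$ forces a lower bound on $\partial_\nu u(x_0)$.

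As barrier I would take the classical
\[
v(x)=e^{-\alpha|x-y|^{2}}-e^{-\alpha R^{2}},
\]
which is smooth, positive inside $B_R(y)$, and vanishes on $\partial B_R(y)$. A direct computation (using that $A$ is Lipschitz, so all derivatives below exist a.e. and are bounded) yields
\[
-\operatorname{div}(A(x)\nabla v)=2\alpha\, e^{-\alpha|x-y|^{2}}\Bigl[-2\alpha\,(x-y)^{T}A(x)(x-y)+\operatorname{div}\bigl(A(x)(x-y)\bigr)\Bigr].
\]
On $\mathcal{A}$ we have $|x-y|\ge R/2$, so by ellipticity $(x-y)^{T}A(x)(x-y)\ge \alpha R^{2}/4$, while $\operatorname{div}(A(x)(x-y))$ is uniformly bounded by a constant depending only on $\|A\|_{C^{0,1}(\overline{\Omega})}$ and $R$. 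Choosing $\alpha$ large enough (depending on $R,\alpha,\|A\|_{C^{0,1}}$) ensures $-\operatorname{div}(A(x)\nabla v)\le 0$ in $\mathcal{A}$.

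Now consider $w:=u(x_0)-u-\varepsilon v$ on $\mathcal{A}$. By \eqref{ipohopf0} and the choice of $\alpha$, $w$ satisfies $-\operatorname{div}(A(x)\nabla w)\ge 0$ in $\mathcal{A}$ in the weak sense. I check the sign of $w$ on $\partial\mathcal{A}$: on $\partial B_R(y)$ one has $v=0$, hence $w=u(x_0)-u\ge 0$ by \eqref{ipohopf} and the choice of $R$; on the compact set $\partial B_{R/2}(y)\subset\Omega\setminus\{x_0\}$, the strict inequality in \eqref{ipohopf} gives $\delta:=\min_{\partial B_{R/2}(y)}(u(x_0)-u)>0$, so picking $\varepsilon>0$ with $\varepsilon\,\|v\|_{L^{\infty}(\mathcal{A})}<\delta$ makes $w\ge 0$ there too. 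The weak maximum principle for divergence-form operators (applicable since $w\in C^{1}(\overline{\mathcal{A}})$) then yields $w\ge 0$ on $\mathcal{A}$.

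Since $w(x_0)=u(x_0)-u(x_0)-\varepsilon v(x_0)=0$ and $w\ge 0$ on $\mathcal{A}$, the function $w$ attains its minimum over $\overline{\mathcal{A}}$ at $x_0$ from the interior side. As $u,v\in C^{1}$ near $x_0$, this forces $\partial_\nu w(x_0)\le 0$, i.e.
\[
\frac{\partial u}{\partial\nu}(x_0)\ge -\varepsilon\,\frac{\partial v}{\partial\nu}(x_0).
\]
A direct calculation gives $\nabla v(x_0)=-2\alpha(x_0-y)e^{-\alpha R^{2}}$, hence $\partial_\nu v(x_0)=-2\alpha R\, e^{-\alpha R^{2}}<0$, which concludes $\partial_\nu u(x_0)\ge 2\varepsilon\alpha R\,e^{-\alpha R^{2}}>0$. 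The main technical point is ensuring the barrier inequality $-\operatorname{div}(A(x)\nabla v)\le 0$ with only Lipschitz coefficients — which is handled by the exponential in $v$ providing the $-2\alpha(x-y)^{T}A(x)(x-y)$ term that dominates the uniformly bounded divergence of $A(x)(x-y)$ once $\alpha$ is taken large.
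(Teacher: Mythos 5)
Your proof is correct and follows essentially the same approach as the paper: the same exponential barrier $e^{-\lambda|x-y|^2}-e^{-\lambda R^2}$ on the annulus $B_R\setminus\overline{B_{R/2}}$, the same comparison via the weak maximum principle, and the same one-variable argument at $x_0$. One small slip: you recycle the symbol $\alpha$ both for the ellipticity constant (fixed) and for the exponent in the barrier (to be chosen large), so ``choosing $\alpha$ large enough'' is notationally inconsistent — rename the barrier parameter, say $\lambda$ as in the paper, and the argument reads cleanly.
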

\begin{proof}
	 Without loss of generality, let us assume that, for $r>0$, $B_r(0)\subset \Omega$ is such that $x_0\in \partial B_r(0)$ and \eqref{ipohopf} holds for any  $x\in B_r(0)$.  
	Let consider the following function
	$$v(x)=e^{-\lambda|x|^2} - e^{-\lambda r^2},$$
	for $x\in B_r(0)$. Also, using  the regularity of $A$, let $L>0$ such that $|(a_{i j} (x))_{x_i}|\leq L$, a. e. in $\Omega$ for any $i,j=1, ..., N$. 
	Let $\delta^{ij}$ be the usual Kronecker symbol, then we have 
	\begin{equation}\begin{aligned}\label{hopf2}
		-\operatorname{div}(A(x)\nabla v)
		&= \sum_{i,j = 1}^N - \frac{\partial}{\partial x_i} \left (a_{ij}(x)\frac{\partial v}{\partial x_j}\right ) =  \sum_{i,j = 1}^N \left(- \frac{\partial a_{ij}(x)}{\partial x_i} \frac{\partial v}{\partial x_j} - a_{ij}(x)\frac{\partial v}{ \partial x_i\partial x_j}\right )\\  
		&=e^{-\lambda|x|^2} \sum_{i,j = 1}^N \left( \frac{\partial a_{ij}(x)}{\partial x_i} 2\lambda x_j + a_{ij}(x)(-4\lambda^2 x_i x_j + 2\lambda\delta^{ij})\right ) \\  
		&\le e^{-\lambda|x|^2}  \left(  2LN^2 \lambda |x| - 4 \alpha\lambda^2 |x|^2 + 2\lambda\textbf{Tr}A\right )\le 0,
	\end{aligned}
	\end{equation}
	where the last inequality in the previous holds taking $\lambda$ sufficiently large and for almost every $x\in B_r(0)\setminus B_{\frac{r}{2}}(0)$.
	 
	Now observe that it follows from \eqref{ipohopf} that one can pick $\varepsilon$ small enough so that  
	$$u(x_0) \ge u(x) + \varepsilon v(x) \ \ \text{   for   a.e. }x \in \partial B_{\frac{r}{2}}(0).$$
	Let us stress that the previous inequality still holds for a.e. $x \in \partial B_r(0)$ as here $v(x)=0$.
	
	From \eqref{ipohopf0} and  \eqref{hopf2},     one has
	$$-\operatorname{div}(A(x)\nabla (u+\varepsilon v - u(x_0))) \le 0$$
	and $u+\varepsilon v - u(x_0)\le 0$ on $\partial B_r(0)$ and on $\partial B_{\frac{r}{2}}(0)$. Thus we can apply the maximum principle in order to deduce 
	$$u+\varepsilon v - u(x_0)\le 0 \ \ x \in B_r(0)\setminus B_{\frac{r}{2}}(0).$$
	Having $u(x_0)+\varepsilon v(x_0) - u(x_0)= 0$ then this implies
	$$\frac{\partial u}{\partial \nu}(x_0) + \varepsilon\frac{\partial v}{\partial \nu}(x_0) \ge 0,$$
	and we conclude by observing
	$$\frac{\partial u}{\partial \nu}(x_0) \ge - \varepsilon\frac{\partial v}{\partial \nu}(x_0) =-\frac{\varepsilon}{r} \nabla v(x_0) \cdot x_0 = 2\lambda r e^{-\lambda r^2}>0.$$
\end{proof}
\begin{remark}
\noindent Observe that the previous applied to $-u$ instead of $u$ gives that the analogous  result holds for $A$-superharmonic  functions $u$, i.e. $$\frac{\partial u}{\partial \nu}(x_0)<0,$$
provided $$ u(x_0)< u(x)  \ \ \text{in a neighbourhood of $x_0$}.   $$
\triang\end{remark}

 As an application of Hopf's Lemma one has the following (see for instance  \cite[Lemma 2]{diazjfa}): 
\begin{lemma}\label{hopfdiaz}
	Let $A$ be a symmetric, elliptic and bounded matrix with coefficients $a_{ij}\in C^{0,1}(\overline{\Omega})$. Then there exist $\lambda_1>0$ and a function $\varphi_{1,A}\in W^{2,p}(\Omega) \cap H^1_0(\Omega)$ for every $p<\infty$ such that
	\begin{equation*}
		\begin{cases}
			\displaystyle -\operatorname{div}(A(x) \nabla \varphi_{1,A})= \lambda_1 \varphi_{1,A} &  \text{in}\ \Omega, \\
			\varphi_{1,A}=0 & \text{on}\ \partial \Omega.
		\end{cases}\label{evpb}
	\end{equation*}
	Moreover it holds
	\begin{equation}
		cd(x) \le \varphi_{1,A}(x) \le Cd(x), \ \ \ \forall x \in \Omega.
		\label{hopfvarphi2}
	\end{equation}
	for two constants $c,C>0$. 
\end{lemma}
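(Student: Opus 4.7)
The plan is to combine the classical variational construction of the first eigenpair, elliptic regularity, and Hopf's Lemma \ref{hopf}. First, since $A$ is symmetric, elliptic and bounded, the Rayleigh quotient
$$
\mathcal{R}(v)=\frac{\displaystyle\int_{\Omega}A(x)\nabla v\cdot\nabla v}{\displaystyle\int_{\Omega}v^{2}},\qquad v\in H^{1}_{0}(\Omega)\setminus\{0\},
$$
is bounded below by a positive constant. By the direct method (weak lower semicontinuity of the numerator and compactness of the embedding $H^{1}_{0}(\Omega)\hookrightarrow L^{2}(\Omega)$) it achieves its infimum $\lambda_{1}>0$, at some $\varphi_{1,A}\in H^{1}_{0}(\Omega)$ normalized by $\|\varphi_{1,A}\|_{L^{2}(\Omega)}=1$. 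Since $\mathcal{R}(|v|)=\mathcal{R}(v)$, we may assume $\varphi_{1,A}\geq 0$, and the Euler--Lagrange equation yields the equality $-\operatorname{div}(A(x)\nabla\varphi_{1,A})=\lambda_{1}\varphi_{1,A}$ in $\Omega$.

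The next step is regularity: by Moser iteration (or De~Giorgi--Nash) one gets $\varphi_{1,A}\in L^{\infty}(\Omega)$, so the right-hand side $\lambda_{1}\varphi_{1,A}$ lies in $L^{\infty}(\Omega)$. Since $a_{ij}\in C^{0,1}(\overline{\Omega})$ and $\partial\Omega$ is smooth, classical Calder\'on--Zygmund theory gives $\varphi_{1,A}\in W^{2,p}(\Omega)\cap H^{1}_{0}(\Omega)$ for every $1<p<\infty$; by Sobolev embedding, $\varphi_{1,A}\in C^{1,\eta}(\overline{\Omega})$ for any $\eta\in(0,1)$. The strong maximum principle then gives $\varphi_{1,A}>0$ in $\Omega$.

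The upper bound in \eqref{hopfvarphi2} is now immediate: as $\varphi_{1,A}\in C^{1}(\overline{\Omega})$ and $\varphi_{1,A}\equiv 0$ on $\partial\Omega$, for every $x\in\Omega$ there exists a projection $\bar x\in\partial\Omega$ with $|x-\bar x|=d(x)$, and
$$
\varphi_{1,A}(x)=\varphi_{1,A}(x)-\varphi_{1,A}(\bar x)\leq \|\nabla\varphi_{1,A}\|_{L^{\infty}(\Omega)}\,d(x)=:C\,d(x).
$$
For the lower bound, observe that $-\operatorname{div}(A(x)\nabla(-\varphi_{1,A}))=-\lambda_{1}\varphi_{1,A}\leq 0$, so $-\varphi_{1,A}$ is $A$-subsolution; moreover at every $x_{0}\in\partial\Omega$ we have $(-\varphi_{1,A})(x_{0})=0>(-\varphi_{1,A})(x)$ for all $x$ in a neighborhood of $x_{0}$ inside $\Omega$, and the smoothness of $\Omega$ ensures the interior ball condition. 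Hopf's Lemma \ref{hopf} therefore gives $\partial(-\varphi_{1,A})/\partial\nu(x_{0})>0$, i.e. the inner normal derivative of $\varphi_{1,A}$ at $x_{0}$ is strictly positive.

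To turn this pointwise fact into the uniform lower bound $\varphi_{1,A}\geq c\,d$, I would exploit the continuity of $\nabla\varphi_{1,A}$ up to the boundary together with the compactness of $\partial\Omega$: the inner normal derivative $\partial_{\vec n}\varphi_{1,A}$ is a continuous, strictly positive function on $\partial\Omega$, hence bounded below by some $2c>0$. Choosing $\varepsilon>0$ small so that on the tubular neighborhood $\Omega_{\varepsilon}$ (see \eqref{not:omegaeps}) the projection onto $\partial\Omega$ is well defined and $|\nabla\varphi_{1,A}(x)-\nabla\varphi_{1,A}(\bar x)|$ stays small, one obtains by a first-order Taylor expansion $\varphi_{1,A}(x)\geq c\,d(x)$ for $x\in\Omega_{\varepsilon}$; on the compact set $\Omega\setminus\Omega_{\varepsilon}$ one has $\varphi_{1,A}\geq \min_{\Omega\setminus\Omega_{\varepsilon}}\varphi_{1,A}>0$, and the bound propagates (upon reducing $c$) since $d$ is bounded there. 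The main technical point is precisely this uniform upgrade from Hopf's pointwise strict inequality to a quantitative global estimate, which requires both the $C^{1}$-regularity up to the boundary obtained in the previous step and the uniform interior ball condition provided by the smoothness of $\partial\Omega$.
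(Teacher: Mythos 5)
The paper does not prove Lemma \ref{hopfdiaz}: it states the result and refers to \cite[Lemma 2]{diazjfa} for the argument, so there is no internal proof to compare against. Your self-contained proof is the standard one and is correct in all essential respects: the variational characterization gives a nonnegative first eigenfunction, De~Giorgi/Moser boundedness plus Calder\'on--Zygmund (using $a_{ij}\in C^{0,1}(\overline\Omega)$ and $\partial\Omega$ smooth) yields $W^{2,p}$ for all $p<\infty$, Morrey upgrades this to $C^{1,\eta}(\overline\Omega)$, the strong maximum principle gives strict interior positivity, and then the upper bound follows from the Lipschitz estimate at the boundary while the lower bound follows from Hopf's Lemma \ref{hopf} combined with compactness of $\partial\Omega$ and the continuity of $\nabla\varphi_{1,A}$ up to the boundary.

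Two small remarks on presentation. First, you could shorten the Hopf step by invoking the paper's Remark immediately after Lemma \ref{hopf}: $\varphi_{1,A}$ is an $A$-supersolution with a strict minimum at each $x_0\in\partial\Omega$, so the remark gives $\partial\varphi_{1,A}/\partial\nu(x_0)<0$ (outward normal), hence the inner normal derivative is strictly positive; passing through $-\varphi_{1,A}$ is equally correct but slightly indirect. Second, the final ``uniform upgrade'' step, which you flag as the main technical point, is where the $C^1(\overline\Omega)$ regularity (not just interior $C^1$) is genuinely needed: without the uniform modulus of continuity of $\nabla\varphi_{1,A}$ on the tubular neighborhood $\Omega_\varepsilon$ one cannot propagate the pointwise Hopf inequality to a quantitative lower bound. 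It would be worth stating explicitly that you take $p>N$ so that $W^{2,p}(\Omega)\hookrightarrow C^{1,\eta}(\overline\Omega)$, since that embedding is the precise source of the boundary regularity used both in the Lipschitz upper bound and in the compactness argument for the lower bound.
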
 

	\begin{remark}\label{remarksuautofunzioneplap}
	 Let us stress that, thanks to the previous lemma and since $\int_{\Omega} d^r <\infty$ if and only if $r>-1$, one deduces that
	\begin{equation}
		\displaystyle \int_{\Omega}\varphi_{1,A}^r < \infty \quad \text{if and only if} \quad r>-1.
		\label{hopfvarphi}
	\end{equation}
	Let us also mention that, throughout this paper, we use that \eqref{hopfvarphi2} also holds for $\varphi_{1,p}$, i.e. for solutions to the nonlinear $p$-Laplace eigenvalue problem \eqref{not:phi1p}.  This result is classical (see for instance \cite[Proposition 3.1]{sak} or  \cite[Lemma $3.2$]{loc}). Then \eqref{hopfvarphi} holds even with  $\varphi_{1,p}$ in place of  $\varphi_{1,A}$. \triang
	\end{remark}

\section*{Acknowledgments} 
The authors  are partially supported by the Gruppo Nazionale per l’Analisi Matematica, la Probabilità e le loro Applicazioni (GNAMPA) of the Istituto Nazionale di Alta Matematica (INdAM).

\section*{Conflict of interest declaration}
The authors declare no competing interests.

\section*{Data availability statement}
 We do not analyse or generate any datasets, because our work
proceeds within a theoretical and mathematical approach. One
can obtain the relevant materials from the references below.

\section*{Ethical Approval} not applicable


\begin{thebibliography}{100}
\bibitem[AcShPe]{asp} A. Acrivos, M.J. Shah and E.E. Petersen, On the flow of non-Newtonian liquid on a rotating disk, J. Appl. Phys. 31 (1960), 963-968 

\bibitem[AdGiSa]{ags}  A. Adimurthi, J. Giacomoni and S. Santra, Positive solutions to a fractional equation with singular nonlinearity, J. Differential Equations 265 (4) (2018), 1191-1226

\bibitem[AmFuPa]{afp} L. Ambrosio, N. Fusco and D. Pallara, Functions of Bounded Variation and Free Discontinuity Problems, Oxford Mathematical Monographs, 2000

\bibitem[ArCaLeMaOrPe]{a6} D. Arcoya, J. Carmona, T. Leonori, P.J. Mart\'inez-Aparicio, L. Orsina and F. Petitta,
Existence and nonexistence of solutions for singular quadratic quasilinear equations, J. Differential Equations  246 (2009), 4006-4042 

\bibitem[ArBoLePo]{ABLP} D. Arcoya, L. Boccardo, T. Leonori and A. Porretta, Some elliptic problems with singular natural growth lower order terms, J. Differential Equations 249 (11) (2010), 2771-2795

\bibitem[ArMo]{am} D. Arcoya and L. Moreno-M\'erida, Multiplicity of solutions for a Dirichlet problem with a strongly singular nonlinearity, Nonlinear Anal. 95 (2014), 281-291	

\bibitem[BaPaZe]{bpz}Y. Bai, N.S. Papageorgiou and  S. Zeng, Parametric singular double phase Dirichlet problems,
Adv. Nonlinear Anal. 12 (1)  (2023) Paper No. 20230122, 20 pp 


\bibitem[BaPi]{bapi}  P. Baras and M. Pierre, Singularit\'es \'eliminables pour des \'equations semi-lin\'eaires, Ann. Inst. Fourier (Grenoble) 34 (1984), 185-206

\bibitem[BaDeMePe]{bdmp} B. Barrios, I. De Bonis,  M. Medina and I. Peral,  Semilinear problems for the fractional laplacian with a singular nonlinearity, Open Math. 13 (1) (2015), 390-407

\bibitem[BeBoGaGaPiVa]{b6} P. Benilan, L. Boccardo, T. Gallou\"{e}t, R.Gariepy, M. Pierre and J.L. Vazquez, An $L^1$ theory of existence and uniqueness of nonlinear elliptic equations,  Ann. Sc. Norm. Super. Pisa Cl. Sci. (4) 22 (1995), 240-273

\bibitem[Bl]{bl} D. Blanchard, Truncations and monotonicity methods for parabolic equations, Nonlinear Anal. 21 (10)  (1993), 725-743

\bibitem[BlMu]{blmu} D. Blanchard and F. Murat, Renormalised solutions of nonlinear parabolic problems with $L^1$ data: existence and uniqueness, Proc. Roy. Soc. Edinburgh Sect. A 127 (1997), 1137-1152

\bibitem[Bla]{Bla} H. Blasius, Grenzschichten in Fliissigkeiten mit Kleiner Reibung, Z. Math. Phys. 56 (1908), 1-37 

\bibitem[Bo]{B} L. Boccardo, Dirichlet problems with singular and gradient quadratic lower order terms, ESAIM Control Optim. Calc. Var. 14 (2008), 411-426

\bibitem[BoCa]{boca} L. Boccardo and J. Casado-D\'iaz, Some properties of solutions of some semilinear elliptic singular problems and applications to the $G$-convergence, Asymptot. Anal. 86 (2014), 1-15

\bibitem[BoDiGiMu]{bdgm1} L. Boccardo, J.I. D\'iaz, D. Giachetti and F. Murat, Existence of a solution for a weaker form of a nonlinear elliptic equation. Recent advances in nonlinear elliptic and parabolic problems, Pitman Res. Notes Math. Ser. 208 (1988), 229-246

\bibitem[BoDiGiMu2]{bdgm2} L. Boccardo, J.I. D\'iaz, D. Giachetti and F. Murat, Existence and regularity of renormalized solutions for some elliptic problems involving derivatives of nonlinear terms, J. Differential Equations 106 (2) (1993) 215-237

\bibitem[BoGa]{bg} L. Boccardo and T. Gallou\"et, Nonlinear elliptic and parabolic equations involving measure data, J. Funct. Anal. 87 (1989), 149-169

\bibitem[BoGaOr]{bgo} L. Boccardo, T. Gallou\"et and L. Orsina, Existence and uniqueness of entropy solutions for nonlinear elliptic equations with measure data, Ann. Inst. H. Poincar\'e Anal. Non Lin\'eaire 13 (1996), 539-551

\bibitem[BoMu]{bm} L. Boccardo and F. Murat, Almost everywhere convergence of the gradients of solutions to elliptic and parabolic equations, Nonlinear Anal. 19 (1992), 581-597

\bibitem[BoMuPu]{bmp} L.Boccardo, F. Murat and J.P. Puel, Existence of bounded solutions for nonlinear unilateral problems, Ann. Mat. Pura Appl. 152 (1988), 183-196

\bibitem[BoOr]{bo} L. Boccardo and L. Orsina, Semilinear elliptic equations with singular nonlinearities, Calc. Var. Partial Differential Equations 37 (2009), 363-380

\bibitem[BoGiHe]{bougia} B. Bougherara, J. Giacomoni and J. Hern\'andez, Existence and regularity of weak solutions for singular elliptic problems, Proceedings of the 2014 Madrid Conference on Applied Mathematics in honor of Alfonso Casal,  Electron. J.Differ. Equ. Conf. 22 (2015), 19-30

\bibitem[BrChTr]{bct} B. Brandolini,   F. Chiacchio and C. Trombetti, Symmetrization for singular semilinear elliptic equations, Ann. Mat. Pura Appl. 193 (2014), 389-404

\bibitem[Br]{Br} H. Brezis, Nonlinear elliptic equations involving measures, Contributions to nonlinear partial differential equations (Madrid, 1981), Pitman Res. Notes in Math. 89 (1983), 82-89

\bibitem[BrPo]{bp} H. Brezis and A.C. Ponce, Kato's inequality when $\Delta u$ is a measure, C. R. Math. Acad. Sci. Paris 338 (8) (2004), 599-604

\bibitem[Ca]{c} A. Canino, Minimax methods for singular elliptic equations with an application to a jumping problem, J. Differential Equations 221 (1) (2006), 210-223

\bibitem[CaDe]{cade} A. Canino and M. Degiovanni, A variational approach to a class of singular semilinear elliptic equations, J. Convex Anal. 11 (1) (2004), 147-162

\bibitem[CaMoScSq]{CaMoScSq} A. Canino, L.  Montoro, B. Sciunzi and M.  Squassina,  Nonlocal problems with singular nonlinearity, 
Bull. Sci. Math. 141 (3) (2017),  223-250
 
\bibitem[CaScTr]{CST} A. Canino, B. Sciunzi and A. Trombetta, Existence and uniqueness for {$p$}-Laplace equations
involving singular nonlinearities, NoDEA Nonlinear Differential Equations Appl. 23 (2) (2016), 8-18

\bibitem[CaMa]{cma} J. Carmona and P.J. Mart\'inez-Aparicio,  A singular semilinear elliptic equation with a variable exponent, Adv. Nonlinear Stud. 16 (3) (2016), 491-498 

\bibitem[CaMaMaMa]{cmmt} J.  Carmona, A.J.  Martínez Aparicio,  P.J.  Martínez-Aparicio and  M. Martínez-Teruel, Regularizing effect in singular semilinear problems,
Math. Model. Anal. 28 (4) (2023),  561-580

\bibitem[CiSkVe]{csv} S. Ciani, I. Skrypnik and V. Vespri, On the local behavior of local weak solutions to some
singular anisotropic elliptic equations, Adv. Nonlinear Anal. 12 (1) (2023),   237-265

\bibitem[CoCo]{CoCo} 	 G.M. Coclite and M.M. Coclite, 
Positive solutions for an integro-differential equation with singular nonlinear term, Differential Integral Equations 18 (9) (2005), 1055-1080

\bibitem[CoCo2]{CoCo2} G.M. Coclite and M.M. Coclite, On a Dirichlet problem in bounded domains with singular nonlinearity, Discrete Contin. Dyn. Syst. 33 (2013), 4923-4944

\bibitem[CoPa]{CoPa} M.M. Coclite and G. Palmieri, On a singular nonlinear Dirichlet problem, Comm. Partial Differential Equations 14 (10) (1989), 1315-1327

\bibitem[CrRaTa]{crt} M.G. Crandall, P.H. Rabinowitz and L. Tartar,  On a dirichlet problem with a singular nonlinearity, Comm. Partial Differential Equations 2 (1977), 193-222

\bibitem[Cr]{crocco} L.  Crocco, The Laminar Boundary Layer in Gases,  Monografie Scientifiche di Aeronautica 3 (1946), translated by I. Hodes and J. Castelfranco. Rep. CF 1038, North American Aviation, (1948)

\bibitem[DaMuOrPr]{dmop} G. Dal Maso, F. Murat, L. Orsina and A. Prignet, Renormalized solutions of elliptic equations with general measure data, Ann. Sc. Norm. Sup. Pisa Cl. Sci. (4) (1999), 741-808

\bibitem[DaSk]{dms} G. Dal Maso and I.V. Skrypnik,
Capacity theory for monotone operators, Potential Anal. 7 (4) (1997), 765-803

\bibitem[Dal]{dall} A. Dall'Aglio, Approximated solutions of equations with $L^1$ data. Application to the H-convergence of quasi-linear parabolic equations, Ann. Mat. Pura Appl. (4) 170 (1996), 207-240

\bibitem[De]{DCA} L.M. De Cave, Nonlinear elliptic equations with singular nonlinearities, Asymptot. Anal. 84 (3-4) (2013), 181-195

\bibitem[DeDuOl]{ddo} L.M. De Cave, R. Durastanti and F. Oliva, Existence and uniqueness results for possibly singular nonlinear elliptic equations with measure data, NoDEA Nonlinear Differential Equations Appl. (2018) 25:18    

\bibitem[DeOl]{do} L.M. De Cave and F. Oliva, Elliptic equations with general singular lower order terms and measure data, Nonlinear Anal. 128 (2015), 391-411

\bibitem[DeGiOlPe]{DGOP} V. De Cicco, D. Giachetti, F. Oliva and  F. Petitta, The Dirichlet problem for singular elliptic equations with general nonlinearities, Calc. Var. Partial Differential Equations 58 (4) (2019), Paper No. 129

\bibitem[DeOlSe]{DeOlSe}F. Della Pietra, F. Oliva and S. Segura de León, On a nonlinear Robin problem with an absorption term on the boundary and $L^1$ data, Advances in Nonlinear Analysis 13 (1) (2024), 20230118  

\bibitem[dP]{dp}  M.A. del Pino,   
A global estimate for the gradient in a singular elliptic boundary value problem, Proc. Roy. Soc. Edinburgh Sect. A 122 (3-4) (1992), 341-352

\bibitem[DiLi]{dpl} R.J. Di Perna and P.L. Lions, On the Cauchy problem for Boltzmann equations: global  existence and weak stability, Ann. of Math. 130 (1989), 321-366

\bibitem[DiHeRa]{dhr} J.I. D\'iaz, J. Hern\'andez and J.M. Rakotoson, On very weak positive solutions to some semilinear elliptic problems with simultaneous singular nonlinear and spatial dependence terms, Milan J. Math. 79 (2011), 233-245
 
\bibitem[DiMoOs]{DMO} J.I. D\'iaz, J.M. Morel and L. Oswald, An elliptic equation with singular nonlinearity, Comm. Partial Differential Equations 12 (1987), 1333-1344

\bibitem[DiRa]{diazjfa} J.I. D\'iaz and J.M. Rakotoson, On the differentiability of very weak solutions with right-hand side data integrable with respect to the distance to the boundary, J. Funct. Anal. 257 (2009), 807-831

\bibitem[Du]{dura} R. Durastanti, Asymptotic behavior and existence of solutions for singular elliptic equations, Ann. Mat. Pura Appl. (4) 199 (3) (2020), 925-954

\bibitem[DuGi]{dugi} R. Durastanti and L. Giacomelli, Spreading equilibria under mildly singular potentials: Pancakes Versus Droplets,  J. Nonlinear Sci. 32 (2022), Paper No. 71

\bibitem[ElDiRa]{edr} N. El Berdan, J.I. D\'iaz and J.M. Rakotoson, The Uniform Hopf Inequality for discontinuous coefficients and optimal regularity in bmo for singular problems, J. Math. Anal. Appl. 437 (2016), 350-379

\bibitem[EsSc]{EsSc} F. Esposito and B.  Sciunzi, 
On the Höpf boundary lemma for quasilinear problems involving singular nonlinearities and applications, J. Funct. Anal. 278 (4) (2020),  108346, 25 pp
 
\bibitem[FeRo]{fernandezros} X. Fern\'andez-Real and X. Ros-Oton,
Regularity theory for elliptic PDE, Zur. Lect. Adv. Math., 28, EMS Press, Berlin, (2022),  viii+228 pp

\bibitem[FeMeSe]{FeMeSe} A. Ferone, A.  Mercaldo, S.  Segura de León,  
A singular elliptic equation and a related functional, 
ESAIM Control Optim. Calc. Var. 27 (39) (2021), Paper No. 39, 17 pp

\bibitem[FeMeSe2]{fms2}A. Ferone, A.  Mercaldo and S.  Segura de León,  Singular elliptic equations having a gradient term with natural growth,  arXiv:2401.06237, 2024

\bibitem[Fo]{fo} R.H. Fowler, The solution of Emden’s and similar differential equations, Monthly Notices Roy. Astro. Soc.
91 (1930), 63-91

\bibitem[FuMa]{fulks} W. Fulks and J.S. Maybee, A singular nonlinear equation, Osaka J. Math. 12 (1960), 1-19

\bibitem[FuSaTa]{fuku} M. Fukushima, K. Sato and S. Taniguchi, On the closable part of pre-Dirichlet forms and the fine support of the underlying measures, Osaka J. Math. 28 (1991), 517-535

\bibitem[Ga]{Ga} P. Garain,  On a degenerate singular elliptic problem, 
Math. Nachr. 295 (7) (2022),   1354-1377 

\bibitem[GaPa]{gapa} L. Gasiński and N.S. Papageorgiou, 
Singular equations with variable exponents and concave-convex nonlinearities, Discrete Contin. Dyn. Syst. Ser. S 16 (6) (2023), 1414-1434

\bibitem[GaOlWa]{gow} J.A. Gatica, V. Oliker and P. Waltman, Singular nonlinear boundary-value problems for  second-order ordinary differential equations, J. Differential Equations 79 (1989), 62-78
 
\bibitem[GhRa]{gr} M. Ghergu and V. Radulescu, Subliner singular elliptic problems with two parameters, J. Differential Equations
195 (2) (2003), 520-536 

 \bibitem[GhRa2]{ghra}  M. Ghergu and V. Radulescu, Singular Elliptic Equations, Bifurcation and
Asymptotic Analysis, Oxford Lecture Ser. Math. Appl., vol. 37, The Clarendon Press, Oxford University
Press, Oxford, 2008
 
\bibitem[GiPeSe]{gps2} D. Giachetti, F. Petitta and S. Segura de Le\'on, Elliptic equations having a singular quadratic gradient term and a changing sign datum,  Comm. Pure Appl. Anal. 11 (5) (2012), 1875-1895
 
\bibitem[GiPeSe2]{gps1} D. Giachetti, F. Petitta and S. Segura de Le\'on, A priori estimates for elliptic problems with a strongly singular gradient term and a general datum, Differential Integral Equations 26 (9/10) (2013), 913-948

\bibitem[GiMaMu]{gmm} Giachetti, P.J. Mart\'inez-Aparicio and F. Murat, A semilinear elliptic equation with a mild singularity
at u = 0: Existence and homogenization, J. Math. Pures Appl. 107 (2017), 41-77

\bibitem[GiMaMu2]{GMM} D. Giachetti, P.J. Mart\'inez-Aparicio and F. Murat, Definition, existence, stability and uniqueness of the solution to a semilinear elliptic problem with a strong singularity at $u = 0$, Ann. Sc. Norm. Super. Pisa Cl. Sci. (5) 18 (4) (2018), 1395-1442 

\bibitem[GidSSa]{gss} J. Giacomoni, L.M. dos Santos and  C.A. Santos, Multiplicity for a strongly singular quasilinear problem via bifurcation theory, Bull. Math. Sci. 13  (1) (2023),   Paper No. 2250013, 25 pp

\bibitem[GiSa]{gisa} J. Giacomoni and K. Saoudi, Multiplicity of positive solutions for a singular and critical problem, Nonlinear Anal. 71 (2009), 4060-4077

\bibitem[GiTr]{gt} D. Gilbarg and N.S. Trudinger, Elliptic partial differential equations of second order, Springer-Verlag, Berlin, (1983)

\bibitem[GoGu]{GoGu} T. Godoy and A. Guerin, Existence of nonnegative solutions to singular elliptic problems, a variational approach, 
Discrete Contin. Dyn. Syst. 38 (3) (2018),   1505-1525

\bibitem[GuMa]{GuMa} U. Guarnotta and  S.A. Marano,  Infinitely many solutions to singular convective Neumann systems with arbitrarily growing reactions, 
J. Differential Equations 271 (2021), 849-863

\bibitem[GuMaMo]{GuMaMo} 
U. Guarnotta,  S.A. Marano   and  D.  Motreanu,  On a singular Robin problem with convection terms, 
Adv. Nonlinear Stud. 20 (4) (2020), 895-909

\bibitem[GuLi]{ghl} C. Gui and F. Lin, Regularity of an elliptic problem with a singular nonlinearity, Proc. Roy. Soc. Edinburgh Sect. A 123 (6) (1993), 1021-1029

\bibitem[Ha]{Ha} T. Hara,  
Trace inequalities of the Sobolev type and nonlinear Dirichlet problems, 
Calc. Var. Partial Differential Equations 61 (6) (2022), Paper No. 216, 21 pp

\bibitem[HeKiMa]{hkm} J. Heinonen, T. Kilpel\"ainen and O. Martio, Nonlinear potential theory of degenerate elliptic equations, Oxford University Press, Oxford, (1993)

\bibitem[HeMa]{hema}  J. Hern\'andez and F.J. Mancebo, Singular Elliptic and Parabolic Equations,
Handbook of Differential Equations, Stationary Partial Differential Equations,
volume 3 Edited by M. Chipot and R Quittner, 2006

\bibitem[HoSc]{loc} N. Hoang Loc and K. Schmitt, Boundary value problems for singular elliptic equations, Rocky
Mountain J. Math. 41 (2011), 555-572

\bibitem[Hu]{hunt} R. Hunt, On $L^{(p,q)}$ spaces, Enseign. Math. (2) 12 (1966), 249-276 

\bibitem[KiKiMa]{kkm} T. Kilpel\"ainen, J. Kinnunen and O. Martio, Sobolev Spaces with Zero Boundary Values on Metric Spaces, Potential Anal. 12 (2000), 233-247

\bibitem[LaSh]{ls} A.V. Lair and A.W. Shaker,  Classical and weak solutions of a singular semilinear elliptic problem, J. Math. Anal. Appl. 211 (2) (1997), 371-385

\bibitem[LaOlPeSe]{lops} M. Latorre, F. Oliva, F. Petitta and S. Segura de León, The Dirichlet problem for the 1-Laplacian with a general singular term and $L^1$-data, Nonlinearity 34 (2021), 1791-1816

\bibitem[LaMc]{lm} A.C. Lazer and P.J. McKenna, On a singular nonlinear elliptic boundary-value problem, Proc. Amer. Math. Soc. 111 (1991), 721-730

\bibitem[LeSa]{LeSa} N.Q. Le and O. Savin, Global $C^{1,\beta}$ and $W^{2,p}$ regularity for some singular Monge-Amp\`ere equations,  arXiv:2407.04586, 2024

\bibitem[LeLi]{ll} J. Leray and J.L. Lions, Quelques r\'esultats de V\v{i}sik sur les probl\'emes elliptiques semi-lin\'eaires par les m\'ethodes de Minty et Browder, Bull. Soc. Math. France 93 (1965), 97-107

\bibitem[LiMu]{LiMu} P. L. Lions, F. Murat, Solutions renormalisées d’équations elliptiques non linéaires, unpublished

\bibitem[MaOlPe]{maop} A.J. Martínez Aparicio, F. Oliva and F. Petitta, Optimal global BV regularity for 1-Laplace type BVP's with singular lower order terms, arXiv:2405.13793, 2024

\bibitem[Me]{Met}  A.B. Metzner,  in "Advances in Chemical Engineering," ed. by T. B. Drew and J. W. Hoopes, Jr., Vol. I, 79-150, Academic Press, New York (1956)
 
\bibitem[Mi]{min} G. Mingione, The Calder\'on-Zygmund theory for elliptic problems with measure data, Ann. Sc. Norm. Super. Pisa Cl. Sci. (5) 6 (2) (2007), 195-261

\bibitem[Mo]{mo} A. Mohammed, Existence and estimates of solutions to a singular Dirichlet problem for the Monge-Amp\'ere equation, J. Math. Anal. Appl. 340 (2) (2008), 1226-1234

\bibitem[MoMuSc]{mms}  L. Montoro, L. Muglia and B. Sciunzi,  Classification of solutions to $-\Delta u =u^{-\gamma} $ in the half-space,  Math. Ann. 389 (3) (2024),  3163-3179

\bibitem[MoMuSc2]{mms2}  L. Montoro, L. Muglia and B. Sciunzi, The Classification of all weak solutions to  $-\Delta u =u^{-\gamma} $ in the half-space,  arXiv:2404.03343, 2024 

\bibitem[Mu]{mu} F. Murat, Soluciones renormalizadas de EDP elipticas no lineales, published by 
C.N.R.S., Laboratoire d'Analyse Num\'erique, Univerit\'e P. and M. Curie (Paris VI), (1993)

\bibitem[MuPo]{mupo} F. Murat and A. Porretta, Stability properties, existence and nonexistence of renormalized solutions for elliptic equations with measure data, Comm. Partial Differential Equations 27 (2002), 2267-2310

\bibitem[NaCa]{nc} A. Nachman and A. Callegari, A nonlinear singular boundary value problem in the theory of pseudoplastic fluids, SIAM J. Appl. Math. 38 (2) (1980), 275-281

\bibitem[NaTa]{nt} A. Nachman  and S. Taliaferro, Mass transfer into boundary layers for power law fluids, Proc. R. Soc. Lond. A 365 (1979), 313-326

\bibitem[No]{nowo} P. Nowosad, On the integral equation $\kappa f=1/f$ arising in a problem in communication, J. Math. Anal. Appl. 14 (1966), 484-492

\bibitem[Ol]{Ol} F. Oliva, Existence and uniqueness of solutions to some singular equations with natural growth, Ann. Mat. Pura Appl. 200 (4) (2021), 287-314  

\bibitem[Ol2]{ol2}  F. Oliva, Regularizing effect of absorption terms in singular problems,  J. Math. Anal. Appl. 472 (1) (2019),  1136-1166

\bibitem[OlPe]{OP} F. Oliva and F. Petitta, Finite and infinite energy solutions of singular elliptic problems: existence and uniqueness,  J. Differential Equations 264 (1) (2018), 311-340 

\bibitem[OlPe2]{OP2} F. Oliva and F. Petitta, A nonlinear parabolic problem with singular terms and nonregular data, 
Nonlinear Anal. 194 (2020), 111472, 13 pp
 
\bibitem[OlPeSt]{OPS} F. Oliva and F. Petitta, M.F. Stapenhorst, Existence and non-existence phenomena for nonlinear elliptic equations with $L^1$ data and singular reactions, preprint

\bibitem[OrPe]{orpe} L. Orsina and F. Petitta, A Lazer-McKenna type problem with measures, Differential Integral Equations 29 (1-2) (2016), 19-36

\bibitem[PaRaRe]{prr} N.S. Papageorgiou, V.D. Radulescu and D. Repovs, Positive solutions for nonlinear Neumann
problems with singular terms and convection, J. Math. Pures Appl. (9) 136 (2020), 1-21

\bibitem[PaRaZh]{parazh}  N.S. Papageorgiou, V.  Radulescu, and Y. Zhang, Anisotropic singular double phase Dirichlet problems, Discrete Contin. Dyn. Syst. Ser. S 14 (12) (2021), 4465-4502

\bibitem[Po]{sel} A. C. Ponce, Elliptic PDEs, Measures and Capacities, Tracts in Mathematics 23, European Mathematical Society (EMS), Zürich, 2016, 463 pp

\bibitem[Pr]{prignet} A. Prignet, Remarks on existence and uniqueness of solutions of elliptic problems with right-hand side measures, Rendiconti di Matematica 15 (1995), 321-337

\bibitem[Sa]{sak} S. Sakaguchi, Concavity properties of solutions to some degenerate quasilinear elliptic Dirichlet problems, Ann. Scuola Norm. Sup. Pisa Cl. Sci. (4) 14 (3) (1987), 403-421 


\bibitem[SaGhCh]{SaGhCh} K. Saoudi, S.  Ghosh and  D. Choudhuri,   Multiplicity and Hölder regularity of solutions for a nonlocal elliptic PDE involving singularity, 
J. Math. Phys. 60 (10) (2019), 101509, 28 pp

\bibitem[Sat]{sat} D.H. Satinger, Monotone methods in nonlinear elliptic and parabolic boundary value problems, Indiana Univ. Math. J. 21 (1972), 979-1000

\bibitem[Sc]{Sch} W.R. Schowalter,  The application of boundary‐layer theory to power‐law pseudoplastic fluids: Similar solutions, AIChE Journal 6 (1) (1960), 24-28

\bibitem[Se]{serrin} J. Serrin, Pathological solutions of elliptic differential equations, Ann. Sc. Norm. Sup. Pisa Cl. Sci. (3) 18 (1964), 385-387

\bibitem[Si]{S} G. Singh, Weak solutions for singular quasilinear elliptic systems, Complex Var. Elliptic Equ. 61 (10) (2016), 1389-1408

\bibitem[SoTe]{SoTe} N. Soave  and S.  Terracini, 
The nodal set of solutions to some elliptic problems: singular nonlinearities, J. Math. Pures Appl. (9) 128 (2019), 264-296

\bibitem[St]{st} G. Stampacchia, Le probl\'eme de Dirichlet pour les \'equations elliptiques du seconde ordre \`a coefficientes discontinus, Ann. Inst. Fourier (Grenoble) 15 (1965), 189-258
 
\bibitem[Stu]{stuart} C.A. Stuart, Existence and approximation of solutions of nonlinear elliptic problems, Mathematics Report, Battelle Advanced Studies Center, Geneva, Switzerland, 86 (1976)

\bibitem[SuZh]{sz} Y. Sun and D. Zhang, The role of the power 3 for elliptic equations with negative exponents, Calc. Var. Partial Differential Equations, 49 (3-4) (2014), 909-922

\bibitem[Ta]{t} S. Taliaferro, A nonlinear singular boundary value problem, Nonlinear Anal. 3 (1979), 897-904
 
\bibitem[Tr]{tro} G.M. Troianiello, Elliptic Differential Equations and Obstacle Problems, Plenum Press, New York, (1987)

\bibitem[Ty]{Ty}  J. Tyagi,  
An existence of positive solutions to singular elliptic equations, 
Boll. Unione Mat. Ital. 7 (1) (2014),  45-53

\bibitem[VaSoMo]{vsm} K. Vajravelu, E. Soewono, and R.N. Mohapatra, On solutions of some singular, nonlinear differential equations arising in boundary layer theory, J. Math. Anal. Appl. 155 (2) (1991), 499-512

\bibitem[Ve]{v}  L. Veron, Elliptic Equations Involving Measures, M. Chipot, P. Quittner. Stationary Partial Differential equations, Vol. 1, Elsevier, Handbook of Differential Equations (2004), 593-712 	

\bibitem[Wo]{wo} J. Wong, On the generalized Emden–Fowler equation, SIAM Rev. 17 (1975), 339-360

\bibitem[ZhCh]{zach} Z. Zhang and J. Cheng, Existence and optimal estimates of solutions for singular nonlinear Dirichlet problems, Nonlinear Anal. 57 (3) (2004), 473-484	
\end{thebibliography}
\end{document}